\theoremstyle{plain}
\newtheorem{theorem}{Theorem}[section]
\newtheorem{proposition}[theorem]{Proposition}
\newtheorem{corollary}[theorem]{Corollary}
\newtheorem{lemma}[theorem]{Lemma}
\numberwithin{equation}{section}
\newtheorem{theoremalpha}{Theorem}
\newtheorem{corollaryalpha}[theoremalpha]{Corollary}
\newtheorem{propositionalpha}[theoremalpha]{Proposition}
\theoremstyle{definition}
\newtheorem{definition}[theorem]{Definition}
\newtheorem{remark}[theorem]{Remark}
\newtheorem{example}[theorem]{Example}
\newtheorem{question}[theorem]{Question}
\newtheorem{problem}[theorem]{Problem}
\newcommand{\lra}{\longrightarrow}
\newcommand{\noi}{\noindent}
\newcommand{\PP}{\mathbf{P}}
\newcommand{\PPP}{\PP_{\text{sub}}}
\newcommand{\RR}{\mathbf{R}}
\newcommand{\NN}{\mathbf{N}}
\newcommand{\ZZ}{\mathbf{Z}}
\newcommand{\CC}{\mathbf{C}}
\newcommand{\QQ}{\mathbf{Q}}
\newcommand{\OO}{\mathcal{O}}
\newcommand{\cO}{\mathcal{O}}
\newcommand{\fra}{\mathfrak{a}}
\newcommand{\frb}{\mathfrak{b}}
\newcommand{\bull}{_{\bullet}}
\newcommand{\bulll}[1]{_{{\bullet}, {#1}}}
\newcommand{\frakm}{\mathfrak{m}}
\newcommand{\KK}{\mathbf{K}}
\newcommand{\eps}{\varepsilon}
\newcommand{\sbl}{\vskip 3pt}
\newcommand{\lbl}{\vskip 6pt}
\newcommand{\HH}[3]{H^{{#1}} \big( {#2} , {#3}
\big) }
\newcommand{\HHH}[3]{H^{{#1}} \Big( {#2} , {#3}
\Big) }
\newcommand{\hh}[3]{h^{{#1}} \big( {#2} , {#3}
\big) }
\newcommand{\Effbar}{\overline{\textnormal{Eff}}}
\newcommand{\ord}{\textnormal{ord}}
\newcommand{\codim}{\textnormal{codim}}
\newcommand{\mult}{\textnormal{mult}}
\newcommand{\pr}{\prime}
\newcommand{\num}{ \equiv_{\text{num}} }
\newcommand{\lin}{\equiv_{\text{lin}} }
\newcommand{\vol}{\textnormal{vol}}
\newcommand{\dra}{\dashrightarrow}
\newcommand{\Bl}{\text{Bl}}
\newcommand{\CCC}{\textnormal{closed convex cone}}
\newcommand{\Linser}[1]{| \mspace{1.5mu} {#1}
\mspace{1.5mu} |}
\newcommand{\linser}[1]{\Linser{  {#1}  }}
\newcommand{\Bs}[1]{\textnormal{Bs}\big(
\mspace{1.1mu} | {#1} | \mspace{1.1mu} \big)}
\newcommand{\tn}[1]{\textnormal{{#1}}}
\newcommand{\Image}{\textnormal{Im}}
\newcommand{\Supp}{\textnormal{supp}}
 \newcommand{\pro}{\textnormal{pr}}
 \newcommand{\vecbull}{_{\Vec{\bullet}}}
 \newcommand{\interior}{\textnormal{int}}
\newcommand{\BBB}{\mathbf{B}}
\begin{document}

\title{Convex bodies associated to linear series}

\author{Robert Lazarsfeld}
  \address{Department of Mathematics, University of Michigan, Ann Arbor, MI
   48109}
 \email{{\tt rlaz@umich.edu}}
 \thanks{Research of the first author partially supported by NSF grant DMS-0652845.}

\author{Mircea Musta\c   t\v a}
  \address{Department of Mathematics, University of Michigan, Ann Arbor, MI
  48109}
 \email{{\tt mmustata@umich.edu}}
 \thanks{Research of the second author partially supported by NSF grant DMS-0500127 and a Packard Fellowship.}

\maketitle

\setlength{\parskip}{0 in}

\tableofcontents
\setlength{\parskip}{.1 in}

\section*{Introduction}
In his interesting papers \cite{Okounkov96} and \cite{Okounkov03}, Okounkov showed in passing that one could associate a convex body to a linear series on a projective variety, and then use convex geometry to study such linear systems. Although Okounkov was essentially working in the classical setting of ample line bundles, it turns out that the  construction goes through for an arbitrary big divisor. Moreover, one can recover and extend from this viewpoint most of the fundamental results  from the asymptotic theory of linear series. The purpose of this paper is to initiate a systematic development of this theory, and to give a number of applications and examples.

We start by describing Okounkov's construction. Let $X$ be a smooth irreducible  projective variety of dimension $d$ defined over an uncountable algebraically closed field $\KK$ of arbitrary characteristic.\footnote{In the body of the paper, we will relax many of the hypotheses appearing here in the Introduction.}  The construction depends upon the choice of  a fixed flag
\[  Y\bull  \ \  : \ \ X \, = \, Y_0 \  \supseteq \ Y_1
\
\supseteq Y_2 \ \supseteq \  \ldots \ \supseteq \  Y_{d-1} \
\supseteq
\ Y_d \, =
\, \{ \tn{pt} \},  
\]
where $Y_i$ is a smooth irreducible subvariety of codimension $  i$ in $X$.  Given a big divisor\footnote{Recall that by definition a divisor $D$ is \textit{big} if $\hh{0}{X}{\OO_X(mD)}$ grows like $m^d$.} $D$ on $X$, one defines a valuation-like function
\begin{equation}  
\nu = \nu_{Y\bull} =  \nu_{Y\bull, D} \, : \,  \big (\HH{0}{X}{\OO_X(D)} - \{0\} \big)    \lra \ZZ^d   \ \ , \ \ s \mapsto   \nu(s) \, = \, \big(
\nu_1(s) , \ldots, \nu_d(s) \, \big)
\tag{*} \end{equation}
as follows. First,   set $\nu_1 = \nu_1(s) = \ord_{Y_1}(s)$. Then $s$ determines in the natural way a section \[ \tilde{s}_1 \in \HH{0}{X}{ \OO_X(D- \nu_1 Y_1)}\] that does not vanish identically along $Y_1$, and
so we get by restricting a non-zero section
\[
s_1 \ \in \ \HH{0}{Y_1}{\OO_{Y_1}(D - \nu_1 Y_1)}. \]
Then take 
\[ \nu_2(s)  \ = \ \ord_{Y_2}(s_1), \]
and continue in this manner to define the remaining  $\nu_i(s)$. For example, when $X = \PP^d$ and $Y{\bull}$ is a flag of linear spaces, $\nu_{Y\bull}$ is  essentially the lexicographic valuation on polynomials. 

	Next,  define
\[  v(D) \ =  \ \tn{Im} \big(  \, (\HH{0}{X}{\OO_X(D)} - \{ 0 \})   \overset {\nu_Y} \lra \ZZ^d \,  \big) \]
to be the set of valuation vectors of non-zero sections of $\OO_X(D)$.
 It is not hard to check that
\begin{equation}\label{lattice.pt.count}  \# \,  v(D) \ = \ h^0(X, \OO_X(D)). \notag \end{equation}
Then finally set
\begin{equation}\label{Delta.Defn}
\Delta(D) \ = \ \Delta_{Y\bull}(D) \ = \ \ \text {closed convex hull} \Big(\bigcup_{m \ge 1} \tfrac{1}{m} \cdot v(mD) \Big). \notag \end{equation}
Thus $\Delta(D)$ is a convex body in $\RR^d = \ZZ^d \otimes \RR$, which we  call the \textit{Okounkov body} of $D$ (with respect to the fixed flag $Y\bull$).  

 One can view Okounkov's construction as a generalization of a correspondence familiar from toric geometry, where a torus-invariant divisor $D$ on a toric variety $X$ determines a rational polytope $P_D$. In this case, working with respect to a flag of invariant subvarieties of $X$,  $\Delta(D)$ is a translate of $P_D$. An analogous polyhedron on spherical varieties has been studied in \cite{Brion}, \cite{Okounkov97}, \cite{Alexeev.Brion}, \cite{Kaveh}.
 On the other hand, the convex bodies $\Delta(D)$ typically have a less classical flavor even when $D$ is ample. For instance, 
 let $X$ be an abelian surface having Picard number $\rho(X) \ge 3$, and choose an ample curve $C\subseteq X$ together with a smooth point $x \in C$, yielding the flag \[ X \ \supseteq \ C \ \supseteq  \ \{x\}. \] Given an ample divisor $D$ on $X$, denote by $\mu = \mu(D) \in \RR$ the smallest root of the quadratic polynomial $p(t) = (D - t C)^2 $: for most choices of $D$,  $\mu(D)$ is irrational.  Here the Okounkov body of $D$  is the trapezoidal region in $\RR^2$ shown in Figure \ref{Okounkov.Body.Picture}. Note that  in this case $\Delta(D)$, although polyhedral, is usually not rational.  We give in \S \ref{Round.Cone.Ex} a four-dimensional example where $\Delta(D)$ is not even polyhedral.
 \begin{figure} \label{Intro.Picture}
\includegraphics[scale = .5]{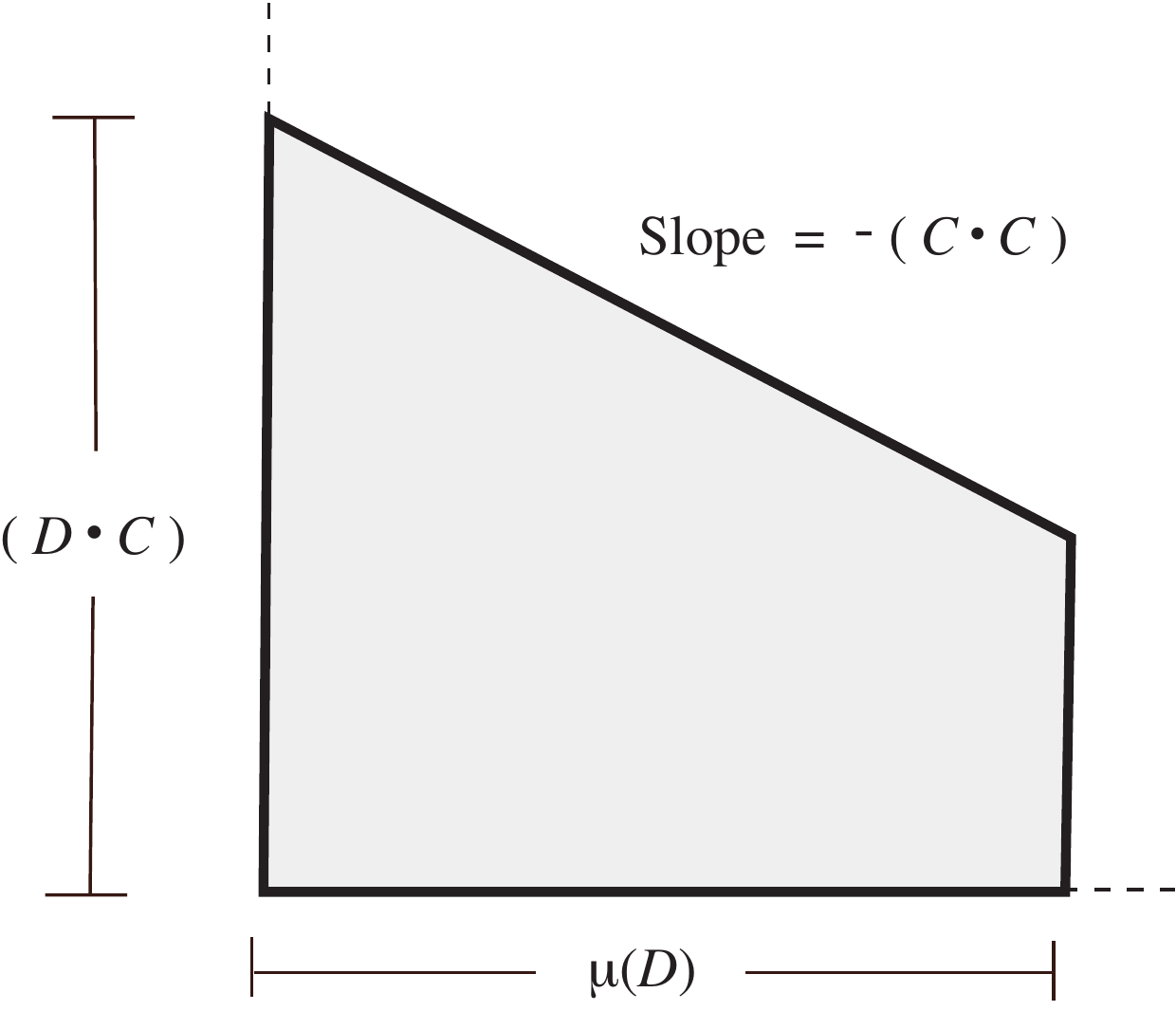}
\caption{Okounkov body of a divisor on  an abelian surface}
 \label{Okounkov.Body.Picture}
\end{figure}

As one might suspect, the standard Euclidean volume of $\Delta(D)$ in $\RR^d$ is related to the rate of growth of the groups $\hh{0}{X}{\OO_X(mD)}$. In fact,  Okounkov's arguments in \cite[\S 3]{Okounkov03} -- which are based on results \cite{Khovanskii} of Khovanskii -- go  through without change to prove 
\begin{theoremalpha} \label{Vol.Okounkov.Body.Eqn} If $D$ is any big divisor on $X$, then 
\[ \vol_{\RR^d}\big( \Delta(D) \big) \ = \ \frac{1}{d!} \cdot \vol_X(D).\] \end{theoremalpha}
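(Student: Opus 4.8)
The plan is to isolate the combinatorial content as a counting result for lattice semigroups, in the spirit of Khovanskii \cite{Khovanskii} and Okounkov \cite[\S 3]{Okounkov03}, and then apply it to the semigroup assembled from sections of the $\OO_X(mD)$. Using the multiplication maps $H^0(X,\OO_X(mD)) \otimes H^0(X,\OO_X(m'D)) \to H^0(X,\OO_X((m+m')D))$, set
\[
\Gamma \ = \ \Gamma(D) \ = \ \bigcup_{m \ge 0} \big( v(mD) \times \{m\} \big) \ \subseteq \ \ZZ^d \times \NN \ = \ \NN^{d+1}.
\]
First I would record the two basic properties of $\nu = \nu_{Y\bull}$ that make this go: (i) \emph{additivity} -- since $\nu_1 = \ord_{Y_1}$ is a valuation, and each subsequent $\nu_i$ is obtained by restricting to $Y_{i-1}$ and applying $\ord_{Y_i}$, one gets $\nu(s\cdot t) = \nu(s)+\nu(t)$, so that $\Gamma$ is closed under addition; and (ii) \emph{one-dimensional leaves} -- if $\nu(s)=\nu(s')$ for nonzero $s,s'\in H^0(X,\OO_X(mD))$, then some $\KK$-linear combination $s-\lambda s'$ either vanishes or has strictly larger value (here one uses that $\KK$ is algebraically closed, and at the last step that $Y_d$ is a point), so that a short downward induction yields $\#v(mD)=h^0(X,\OO_X(mD))$. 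Consequently the height-$m$ slice $\Gamma_m := \Gamma \cap (\ZZ^d\times\{m\})$ satisfies $\#\Gamma_m = h^0(X,\OO_X(mD))$, and by construction $\Delta(D)$ is precisely the slice at height $1$ of the closed convex cone $\Sigma(\Gamma)\subseteq\RR^{d+1}$ spanned by $\Gamma$.

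The key ingredient is then the following general fact: if $\Gamma\subseteq\NN^{d+1}$ is a sub-semigroup generating $\ZZ^{d+1}$ as a group, with $\Gamma_m\ne\emptyset$ for $m\gg0$ and with $\Delta(\Gamma):=\Sigma(\Gamma)\cap(\RR^d\times\{1\})$ bounded, then
\[
\lim_{m\to\infty}\frac{\#\Gamma_m}{m^d} \ = \ \vol_{\RR^d}\big(\Delta(\Gamma)\big).
\]
The upper bound $\#\Gamma_m \le \#\big(\ZZ^d\cap m\Delta(\Gamma)\big) = m^d\,\vol_{\RR^d}(\Delta(\Gamma)) + O(m^{d-1})$ is immediate from $\Gamma\subseteq\Sigma(\Gamma)$ together with standard lattice-point counting in a bounded convex body. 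For the lower bound I would run Khovanskii's argument: given $\eps>0$, choose finitely many elements of $\Gamma$ that generate $\ZZ^{d+1}$ and whose polyhedral cone has base of volume $\ge(1-\eps)\,\vol_{\RR^d}(\Delta(\Gamma))$; the sub-semigroup they generate contains every lattice point of that cone lying sufficiently deep in its interior, so its height-$m$ slice -- which sits inside $\Gamma_m$ -- already has at least $(1-\eps)\,m^d\,\vol_{\RR^d}(\Delta(\Gamma)) + O(m^{d-1})$ points; now let $\eps\to0$. Finally one checks the hypotheses for $\Gamma(D)$: the bounds $\nu_i(s)\le(\tn{const})\cdot m$ for $s\in H^0(X,\OO_X(mD))$ make $\Delta(D)$ bounded, and bigness of $D$ -- i.e.\ $h^0(X,\OO_X(mD))$ growing like $m^d$ -- forces $\Gamma_m\ne\emptyset$ for $m\gg0$, makes $\Delta(D)$ $d$-dimensional, and (after replacing $D$ by a fixed multiple if necessary, which rescales both sides of the asserted identity by the same power) makes $\Gamma(D)$ generate $\ZZ^{d+1}$.

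Assembling the pieces,
\[
\vol_X(D) \ = \ \limsup_m \frac{h^0(X,\OO_X(mD))}{m^d/d!} \ = \ d!\cdot\lim_m\frac{\#\Gamma_m}{m^d} \ = \ d!\cdot\vol_{\RR^d}\big(\Delta(D)\big),
\]
which incidentally shows that the $\limsup$ defining $\vol_X(D)$ is a genuine limit. The main obstacle is the semigroup counting lemma itself -- specifically its lower bound, where one must show that $\Gamma$ is dense enough in its cone at large height that $\#\Gamma_m$ is governed, to leading order, by the Euclidean volume of the base; all of the geometry of $X$ and $D$ enters only through the (essentially bookkeeping) verification of the lemma's hypotheses.
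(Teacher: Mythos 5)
Your overall architecture is exactly the paper's: the one\-/dimensional\-/leaves property giving $\#\Gamma_m=h^0(X,\OO_X(mD))$ (Lemma \ref{No.of.Val.Vectors}), the semigroup counting statement with the upper bound by lattice points in $m\Delta$ and the lower bound via Khovanskii's result $(z+\Sigma)\cap\ZZ^{d+1}\subseteq\Gamma$ applied to finitely generated approximations (Proposition \ref{semigroup.proposition}), and then the assembly. All of that is correct as you present it.

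The genuine gap is in your verification of the hypothesis that $\Gamma(D)$ generates $\ZZ^{d+1}$ as a group, which you dispatch with ``bigness \ldots (after replacing $D$ by a fixed multiple if necessary) makes $\Gamma(D)$ generate $\ZZ^{d+1}$.'' This hypothesis is essential --- Khovanskii's inclusion fails without it, and if $\Gamma$ only generated a finite-index subgroup the limit would acquire an index factor --- and it does not follow from bigness alone by rescaling. Passing from $D$ to $pD$ replaces $\Gamma$ by the reindexed sub-semigroup supported in degrees divisible by $p$, which if anything can shrink the generated group; and for an arbitrary admissible flag (say one whose center $Y_d$ lies in the stable base locus of $D$) it is not clear how to produce sections of $\OO_X(mD)$ whose valuation vectors differ by the standard basis vectors $e_1,\dots,e_d$, which is what spanning requires. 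The paper's Lemma \ref{Gamma.L.generates.Group} supplies the missing argument: write $D=A-B$ with $A,B$ very ample, use very ampleness to find sections of $A$ and $B$ with valuation vectors $0$ and $e_i$ near $Y_d$, then use bigness to make $mD-B$ effective, so that $mD\lin B+F_m$ and $(m+1)D\lin A+F_m$ produce elements of $\Gamma$ in two consecutive degrees whose differences exhibit the standard basis of $\ZZ^{d+1}$. (An alternative route, via the birationality of $\phi_{|mD|}$ for $m\gg0$ as in Lemma \ref{Cond.F.generates.generic.flag}, only works after choosing the flag generically, so it would not prove the theorem for every admissible flag.) With that lemma inserted, your proof closes up and coincides with the paper's.
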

\noi The quantity on the right is the \textit{volume} of $D$, defined as the limit \[  \vol_X(D) \ =_{\text{def}} \ \lim_{m \to \infty} \frac{ \hh{0}{X}{\OO_X(mD)}}{m^d / d!} .\]
In the classical case, when $D$ is ample, $\vol_X(D) = \int c_1(\OO_X(D))^d $ is just the top self-intersection number of $D$.  In general, the volume is an interesting and delicate invariant of a big divisor, which has lately  been the focus of considerable work (cf. \cite[Chapt.  2.2]{PAG}, \cite{Boucksom}, \cite{ELMNP2}). It plays a pivotal role in several important recent developments in higher dimensional geometry, eg \cite{BDPP}, \cite{Tsuji}, \cite{HM}, \cite{Tak}.

We study the variation of these bodies as functions of $D$. It is not hard to check that $\Delta(D)$ depends only on the numerical equivalence class of $D$, and that $\Delta(pD) = p \cdot \Delta(D)$ for every positive integer $p$. It follows that there is a naturally defined Okounkov body $\Delta(\xi) \subseteq \RR^d$ associated to every rational numerical equivalence class $\xi \in N^1(X)_{\QQ}$, and as before
$
\vol_{\RR^d}(\Delta(\xi)) = \frac{1}{d!} \cdot \vol_X(\xi)$. 
 We prove:
\begin{theoremalpha} \label{Global.Body.Intro.Theorem}
There exists a closed convex cone 
\[ \Delta(X) \     \subseteq \ \RR^d \times N^1(X)_{\RR}\] characterized by the property that in the diagram 
 \[
\xymatrix{
\Delta(X)\ar[dr]      & \subseteq & \RR^d \times N^1(X)_{\RR} \ar[dl]^{\tn{pr}_2}  \\
&  N^1(X)_{\RR},
}
\]
 the fibre 
 $\Delta(X)_\xi \subseteq \RR^d \times\{ \xi \} = \RR^d$ of $\Delta(X)$ over any big class $\xi \in N^1(X)_{\QQ}$ is $\Delta(\xi) $. 
\end{theoremalpha}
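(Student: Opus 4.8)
The natural strategy is to build $\Delta(X)$ as the closed convex cone generated by the "graph" of the individual Okounkov bodies over the rational big classes. Concretely, for each big class $\xi \in N^1(X)_{\QQ}$ we have the convex body $\Delta(\xi) \subseteq \RR^d$, and one forms
\[
\Delta(X) \ = \ \overline{\text{convex hull}}\Big( \bigcup_{\xi \text{ big in } N^1(X)_{\QQ}} \Delta(\xi) \times \{\xi\} \Big) \ \subseteq \ \RR^d \times N^1(X)_{\RR}.
\]
I would first check that this union is already a cone over $\QQ$: since $\Delta(p\xi) = p\cdot \Delta(\xi)$ for positive integers $p$ (noted in the text), the set $\bigcup_\xi \Delta(\xi)\times\{\xi\}$ is stable under multiplication by positive rationals, so its convex hull is a convex cone and its closure $\Delta(X)$ is a closed convex cone. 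This gives existence of a candidate; the content of the theorem is the fibre identification $\Delta(X)_\xi = \Delta(\xi)$ for big rational $\xi$.

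The inclusion $\Delta(\xi) \times\{\xi\} \subseteq \Delta(X)_\xi$ is immediate from the construction. For the reverse inclusion — which is the crux — I would argue that a point of $\Delta(X)_\xi$ is a limit of convex combinations $\sum_j t_j (v_j, \xi_j)$ with $v_j \in \Delta(\xi_j)$, $\sum t_j = 1$, and $\sum t_j \xi_j \to \xi$; one must show any such limit, when its $N^1$-component equals $\xi$, has first component in $\Delta(\xi)$. The key input is a \emph{uniform} description of the $\Delta(\xi_j)$ for $\xi_j$ near $\xi$: because $\xi$ is big, all nearby classes are big, and I expect one can realize their Okounkov bodies inside a common bounded region and, more importantly, show that $\Delta$ is "continuous from below" — i.e.\ for $\xi_j \to \xi$ with the $\xi_j$ big, $\limsup \Delta(\xi_j) \subseteq \Delta(\xi)$ fails in general but the convex-combination version survives because of concavity: the function assigning to $\xi$ the body $\Delta(\xi)$ should satisfy $\Delta(\xi + \xi') \supseteq \Delta(\xi) + \Delta(\xi')$ (superadditivity of sections via multiplication $H^0(D)\otimes H^0(D') \to H^0(D+D')$, which is compatible with the valuation $\nu_{Y\bull}$ since $\nu$ is additive on products). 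Superadditivity of the bodies is exactly what forces the graph's convex hull to have the "correct" fibres: a sup-graph of a concave, positively-homogeneous set-valued map is a cone whose fibres recover the map.

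Thus the plan has three steps: (1) prove $\nu_{Y\bull}$ is multiplicative/additive, hence $v(D) + v(D') \subseteq v(D+D')$ and $\Delta(\xi)+\Delta(\xi') \subseteq \Delta(\xi+\xi')$; (2) deduce that $\Gamma := \bigcup_\xi \Delta(\xi)\times\{\xi\}$ is already convex over the big cone, so that $\Delta(X)$ agrees with $\Gamma$ over big rational classes up to taking closure; (3) control the closure operation near a fixed big $\xi$, showing that limits of points in fibres over $\xi_j \to \xi$ that land in the fibre over $\xi$ stay inside $\Delta(\xi)$ — here I would use Theorem~A, which pins down $\vol_{\RR^d}\Delta(\xi) = \tfrac{1}{d!}\vol_X(\xi)$, together with continuity of the volume function on the big cone, to prevent the fibres from "jumping up" in the limit (a body strictly larger than $\Delta(\xi)$ would have volume exceeding $\tfrac1{d!}\vol_X(\xi)$, contradicting upper semicontinuity). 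The main obstacle is precisely step~(3): reconciling the a priori non-closed convex hull with the individual closed bodies, and ruling out spurious extra points in the fibre over $\xi$; I expect this to require both the superadditivity from step~(1) and the volume continuity from Theorem~A, and possibly an auxiliary argument that $\Delta(\xi)$ varies "continuously" as $\xi$ ranges over a maximal-dimensional rational subcone.
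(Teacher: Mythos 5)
Your steps (1) and (2) are fine: additivity of $\nu_{Y\bull}$ does give $\Delta(\xi)+\Delta(\xi')\subseteq\Delta(\xi+\xi')$ and homogeneity, so the closure of the convex hull of the graph is a closed convex cone containing each $\Delta(\xi)\times\{\xi\}$. The gap is in step (3), which you yourself identify as the crux but do not actually carry out, and the one mechanism you do state is not a proof. Theorem~A controls the volume of the individual bodies $\Delta(\xi)$, but it gives no a priori upper bound on the volume of the fibre $\Delta(X)_\xi$ of your closed hull; the phrase ``contradicting upper semicontinuity'' begs the question, since the assertion that the closure adds no points to the fibre over $\xi$ is exactly what has to be proved, and continuity of $\vol_X$ by itself does not rule out the fibre being a strictly larger convex body. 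After unwinding (approximating irrational convex weights by rational ones and using superadditivity), what you must show is: if $\eta_n\to\xi$ are rational big classes, $w_n\in\Delta(\eta_n)$ and $w_n\to w$, then $w\in\Delta(\xi)$. This can be rescued along your lines, but only with ingredients you do not supply: (a) uniform boundedness of $\Delta(\eta)$ for $\eta$ near $\xi$ (Remark \ref{Extn.of.Bddness.Lemma.Remark}); (b) a lower-semicontinuity device, e.g.\ writing $\eta_n=t_n\xi+(1-t_n)\gamma_n$ with rational $t_n\to 1$ and $\gamma_n$ rational, big, tending to $\xi$, so that $\Delta(\eta_n)\supseteq t_n\Delta(\xi)+(1-t_n)u_n$ for some point $u_n\in\Delta(\gamma_n)$; and (c) a quantitative volume step: if $w$ stayed at positive distance from $\Delta(\xi)$, then for large $n$ the body $\Delta(\eta_n)$ would contain the convex hull of an almost-full copy of $\Delta(\xi)$ and a point at definite distance from it, hence would have volume at least $\tfrac{1}{d!}\vol_X(\xi)+c$ with $c>0$ fixed, contradicting $\vol_{\RR^d}\Delta(\eta_n)=\tfrac{1}{d!}\vol_X(\eta_n)\to\tfrac{1}{d!}\vol_X(\xi)$. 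Note also a mild circularity: the continuity of $\vol_X$ on the big cone is deduced in this paper \emph{from} the theorem you are proving, so you must import it from \cite{PAG} (or prove it on rational classes from log-concavity) rather than treat it as ambient knowledge.

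For comparison, the paper's proof avoids all limiting and volume arguments. It chooses divisors $D_1,\dots,D_r$ whose classes form a basis of $N^1(X)$, arranged (using the pointedness of $\overline{\tn{Eff}}(X)$) so that every effective class is an $\NN$-combination of them, and defines $\Delta(X)$ as the closed convex cone spanned by the multigraded semigroup of all pairs $(\nu(s),(m_1,\dots,m_r))$ with $0\ne s\in\HH{0}{X}{\OO_X(m_1D_1+\cdots+m_rD_r)}$. The fibre identification over a big integral class then becomes a purely convex-geometric statement about semigroups: the cone spanned by $\Gamma\cap p^{-1}(L)$ equals $\Sigma(\Gamma)\cap p^{-1}(L)$ for a rational subspace $L$ meeting the interior of the support (Proposition \ref{Mircea.Prop} and Corollary \ref{Mircea.Corollary}, via Khovanskii), together with the fact that the multigraded semigroup generates $\ZZ^{d+r}$ (the multigraded analogue of Lemma \ref{Gamma.L.generates.Group}); rational classes follow by scaling. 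That route yields the fibres exactly, needs no exterior input about $\vol_X$, and generalizes verbatim to $\NN^r$-graded linear series, which is how the paper later treats restricted volumes.
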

\noi This is illustrated schematically in Figure \ref{Global.Body.Picture}. The image of $\Delta(X)$ in $N^1(X)_{\RR}$ is the so-called pseudo-effective cone $\overline {\tn{Eff}}(X)$ of $X$, i.e. the closure of the cone spanned by all effective divisors: its interior is the big cone $\tn{Big}(X)$ of $X$. Thus the theorem yields a natural definition of  $\Delta(\xi) \subseteq \RR^d$ for any big class $\xi \in N^1(X)_{\RR}$, viz. $\Delta(\xi) = \Delta(X)_{\xi}$.  It is amusing to note that already  in the example of an abelian surface considered above, the cone $\Delta(X)$ is non-polyhedral.\footnote{This follows for instance from the observation that $\mu(D)$ varies non-linearly in $D$.} 
 
\begin{figure}  
\includegraphics[scale = .6]{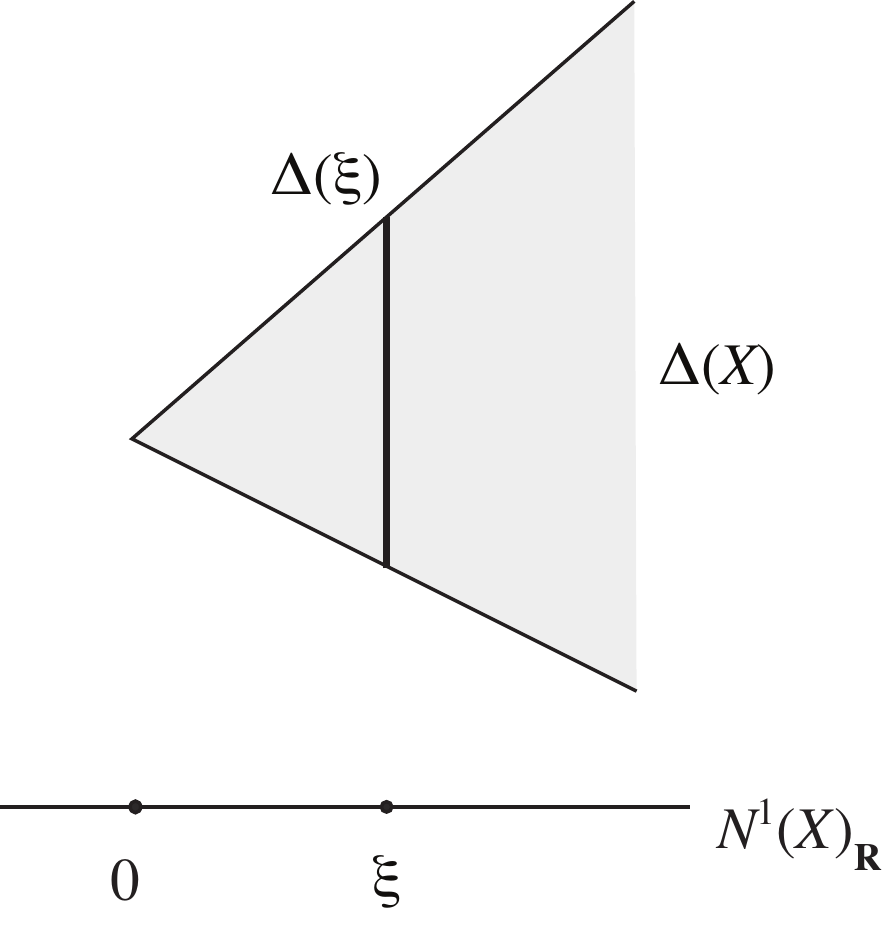}  
\caption{Global Okounkov body}
\label{Global.Body.Picture}
\end{figure}

Theorem \ref{Global.Body.Intro.Theorem} renders transparent several basic properties of the volume function $\vol_X$ established by the first author in \cite[2.2C, 11.4.A]{PAG}. First, since the volumes of the fibres $\Delta(\xi) = \Delta(X)_\xi$ vary continuously for $\xi$ in the interior of  $\pro_2(\Delta(X)) \subseteq \NN^1(X)_\RR$, one deduces that the volume of a big class is computed by a continuous function \[
\vol_X : \tn{Big}(X)  \lra \RR.
\]
Moreover $\Delta(\xi) + \Delta(\xi^\pr) \ \subseteq \ \Delta(\xi + \xi^\pr)$ for any two big classes $\xi, \xi^\pr \in N^1(X)_{\RR}$, and so the Brunn-Minkowski theorem yields the log-concavity relation
\[ 
\vol_X(\xi + \xi^\pr)^{1/d}  \ \ge \ \vol_X(\xi)^{1/d} + \vol_X(\xi^\pr)^{1/d}
\] 
for any two such classes.\footnote{In the classical setting, it was this application of Brunn-Minkowski that motivated Okounkov's construction in \cite{Okounkov03}. We remark that it was established in \cite{PAG} that $\vol_X$ is actually continuous on all of $N^1(X)_{\RR}$ -- i.e. that $\vol_X(\xi) \to 0$ as $\xi$ approaches the boundary of  the pseudo-effective cone $ \overline {\tn{Eff} }(X) $ -- but this doesn't seem to  follow directly from the present viewpoint. The continuity of volumes on compact complex manifolds was proven by Boucksom in \cite{Boucksom}, who works in fact with arbitrary $(1,1)$-classes.}

The Okounkov construction also reveals some interesting facts about the volume function that had not been known previously. For instance, let $E \subseteq X$ be a very ample divisor on $X$ that is general in its linear series, and choose the flag $Y\bull$ in such a way that $ Y_1 = E$. Now  construct  the Okounkov body $\Delta(\xi) \subseteq \RR^d$ of any big class $\xi \in \tn{Big}(X)$, and consider the mapping
\[
\pro_1 : \Delta(\xi) \lra \RR\]
obtained via the projection $\RR^d \lra \RR$ onto the first factor, so that $\pro_1$ is ``projection onto the $\nu_1$-axis." Write $e \in N^1(X)$ for the class of $E$, and given $t > 0$ such that $\xi - te$ is big, set
\[  \Delta(\xi)_{\nu_1 =t} \ = \ \pro_1^{-1}(t) \ \subseteq \ \RR^{d-1} 
\ \ , \ \ \Delta(\xi)_{\nu_1 \ge t} \ =  \ \pro_1^{-1}\big( [t, \infty) \big) \ \subseteq\ \RR^d. \]
 We prove that
\begin{gather*}
\Delta(\xi)_{\nu_1\ge t} \ =_{\text{up to translation}}  \Delta(\xi - te) \\
\vol_{\RR^{d-1}}\big( \Delta(\xi)_{\nu_1 = t} \big)  \ = \ \frac{1}{(d-1)!} \cdot \vol_{X|E}(\xi - te).
\end{gather*}
Here $\vol_{X|E}$ denotes the restricted volume function from $X$ to $E$ studied in \cite{ELMNP3}: when $D$ is integral, $\vol_{X|E}(D)$ measures the rate of growth of the subspaces of  $\HH{0}{E}{\OO_E(mD)}$ consisting of sections that come from $X$. The situation  is illustrated in Figure \ref{Slices.Picture}. 
\begin{figure}  
\includegraphics[scale = .6]{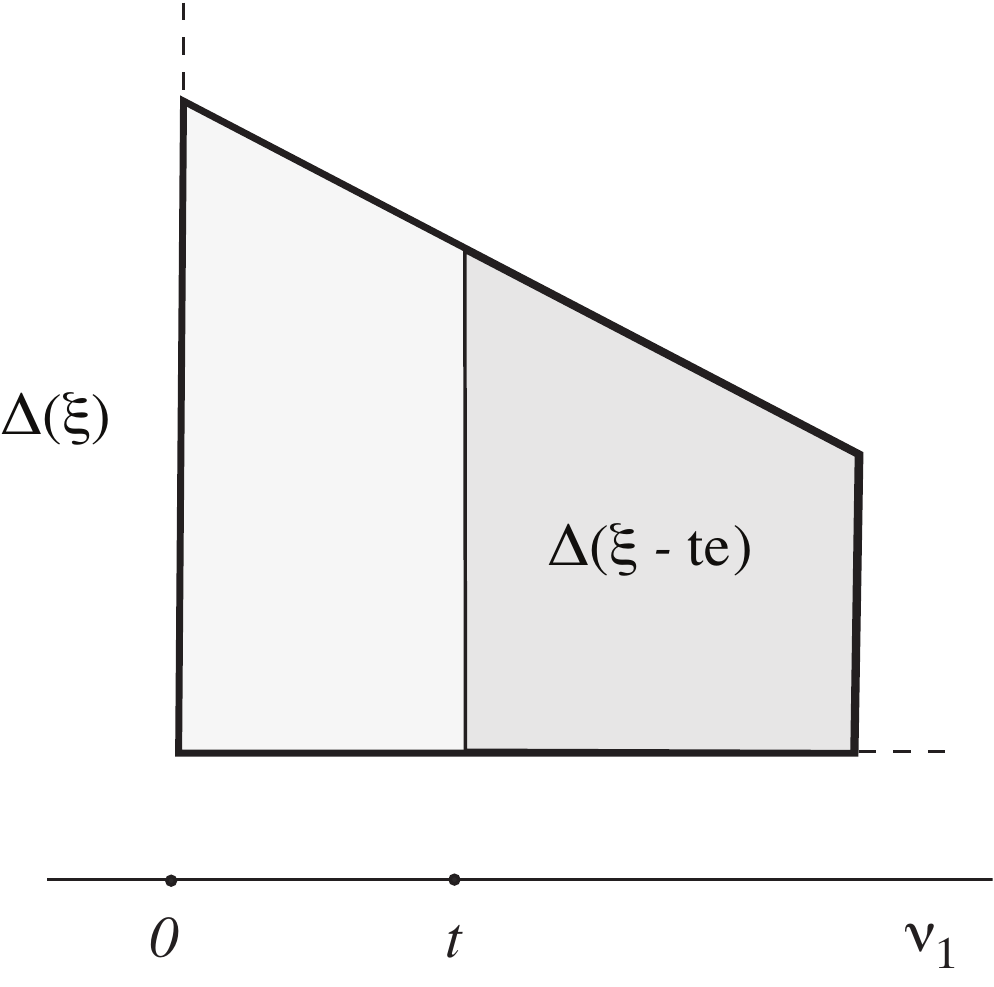}  
\caption{Slices of Okounkov body}
\label{Slices.Picture}
\end{figure}
Since one can compute the $d$-dimensional volume of $\Delta(\xi)$ by integrating the $(d-1)$-dimensional volumes of its slices, one finds:
\begin{corollaryalpha} \label{Integration.Intro}
Let $a >0$ be any real number such that $\xi - ae \in \tn{Big}(X)$. Then 
\[
\vol_X(\xi) - \vol_X(\xi - ae) \ = \ d \cdot \int_{-a}^{0} \vol_{X|E}(\xi + te) \,  dt.
\]
Consequently, the function $t \mapsto \vol_X(\xi + te)$ is differentiable at $t = 0$, and
\[
\frac{d}{dt} \,\big( \vol_X(\xi  + te) \big) |_{t = 0} \ = \ d \cdot \vol_{X|E}(\xi).
\]
\end{corollaryalpha}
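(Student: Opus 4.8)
The plan is to derive both assertions of the corollary from the two displayed ``slicing'' identities that are asserted just before its statement, treating those identities (whose proof will appear earlier in the paper) as known. The first step is to express $\vol_X(\xi)$ and $\vol_X(\xi-ae)$ as integrals of the volumes of the slices of the respective Okounkov bodies along the $\nu_1$-axis. Concretely, by Theorem A one has $\vol_X(\xi) = d!\cdot\vol_{\RR^d}(\Delta(\xi))$, and Fubini's theorem applied to the projection $\pro_1\colon\Delta(\xi)\to\RR$ gives
\[
\vol_{\RR^d}\big(\Delta(\xi)\big) \ = \ \int_{0}^{\infty}\vol_{\RR^{d-1}}\big(\Delta(\xi)_{\nu_1 = s}\big)\,ds.
\]
Substituting the slice formula $\vol_{\RR^{d-1}}(\Delta(\xi)_{\nu_1=s}) = \frac{1}{(d-1)!}\vol_{X|E}(\xi - se)$ and the analogous expression for $\Delta(\xi-ae)$, and then using the translation identity $\Delta(\xi)_{\nu_1\ge a} = \Delta(\xi-ae)$ (up to translation) to see that the tail $\int_a^\infty$ of the first integral equals $\vol_{\RR^d}(\Delta(\xi-ae))$, the difference $\vol_X(\xi)-\vol_X(\xi-ae)$ collapses to $d\int_0^a \vol_{X|E}(\xi-se)\,ds$. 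Reversing the sign of the integration variable, $s = -t$, turns this into $d\int_{-a}^0\vol_{X|E}(\xi+te)\,dt$, which is the first displayed equation of the corollary.

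The second step is the differentiability statement. Having established the integral formula for all admissible $a$, replace $a$ by a variable and also run the same computation with $\xi$ replaced by $\xi + t_0 e$ for small $t_0$; combining these gives, for all small $t$ with $\xi + te$ big,
\[
\vol_X(\xi + te) - \vol_X(\xi) \ = \ d\cdot\int_{0}^{t}\vol_{X|E}(\xi+se)\,ds.
\]
Thus $t\mapsto\vol_X(\xi+te)$ is an antiderivative of $s\mapsto d\cdot\vol_{X|E}(\xi+se)$. Differentiability at $t = 0$ and the value $d\cdot\vol_{X|E}(\xi)$ of the derivative then follow from the fundamental theorem of calculus, provided one knows that $s\mapsto\vol_{X|E}(\xi+se)$ is continuous at $s = 0$ (continuity at a single point suffices for the one-sided difference quotients to converge); this in turn follows from the continuity of restricted volumes on the appropriate open cone, or alternatively directly from the fact that the fibre volumes $\vol_{\RR^{d-1}}(\Delta(\cdot)_{\nu_1=s})$ vary continuously, which is built into Theorem B.

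The main obstacle, and the place where care is needed, is the range of validity of the slice formula and of Fubini's theorem: one must check that the slices $\Delta(\xi)_{\nu_1 = s}$ behave well (are genuinely $(d-1)$-dimensional, with the claimed volume) for \emph{all} $s$ in the open interval $(0, \sup\{u : \xi - ue\in\tn{Big}(X)\})$, and that no mass of $\Delta(\xi)$ is lost at the endpoints of the projection interval. Both points are handled by the hypothesis that $E$ is general and very ample with $Y_1 = E$, together with the identification $\Delta(\xi)_{\nu_1\ge s} = \Delta(\xi - se)$: this reduces any statement about an interior slice to a statement about the Okounkov body of a big class, where the earlier results apply. Once the bookkeeping of these ranges is in place, everything else is the two elementary calculus maneuvers (Fubini and the fundamental theorem of calculus) indicated above.
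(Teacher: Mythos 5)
Your proposal is correct and follows essentially the same route as the paper's own proof: the integral formula is obtained exactly as there, by combining the two slicing identities with $\vol_{\RR^{d-1}}\big(\Delta_{X|E}(\eta)\big)=\tfrac{1}{(d-1)!}\vol_{X|E}(\eta)$ and integrating the slice volumes of $\Delta(\xi)$ along the $\nu_1$-axis, while the differentiability statement is handled, as in the paper, by perturbing to $\xi+\eps e$ (legitimate since $E\not\subseteq\BBB_+(\xi+\eps e)$ for small $\eps$) to get the two-sided formula and then using continuity of the integrand.
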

\noi This leads to the fact that $\vol_X$ is $\mathcal{C}^1$ on $\tn{Big}(X)$. Corollary \ref{Integration.Intro}
 was one of the starting points of the interesting work \cite{BFJ} of Boucksom--Favre--Jonsson, who found a nice formula for the derivative of $\vol_X$ in any direction, and used it to answer some questions of Teissier.

 Okounkov's construction works   for incomplete as well as for complete linear series. Recall that a \textit{graded linear series} $W\bull$ associated to a big divisor $D$ on $X$ consists of subspaces 
 \[ W_m  \ \subseteq  \ \HH{0}{X}{\OO_X(mD)}\] satisfying the condition that $R(W\bull) = \oplus \, W_m$ be a graded subalgebra of the section ring $R(D) = \oplus \, \HH{0}{X}{\OO_X(mD)}$. These arise naturally in a number of situations (cf \cite[Chapter 2.4]{PAG}), and under mild hypotheses, one can attach to $W\bull$ an Okounkov body 
 \[ \Delta(W\bull)  \ = \  \Delta_{Y\bull}(W\bull) \ \subset \  \RR^d.\]  We use these to extend   several results  hitherto known only for global linear series. For example, we  prove  a version of the Fujita approximation theorem for graded linear series:
 \begin{theoremalpha} \label{FAT.Intro}
 Assume that the rational mapping defined by  $\linser{W_m}$ is birational onto its image for all $m \gg 0$, and fix $\eps > 0$. There exists an integer $p_0 = p_0(\eps)$ having the property that if $p \ge p_0$ then
 \[
 \lim_{k \to \infty} \frac
 { \dim \Image \big( S^k W_p \lra W_{kp}\big)}{p^dk^d/d!} \ \ge \ \vol(W\bull) - \eps.
 \]
 \end{theoremalpha}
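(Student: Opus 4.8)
The plan is to reduce the statement to a property of the Okounkov bodies $\Delta(W\bull)$, in the spirit of how Fujita approximation is handled in \cite{PAG} and \cite{ELMNP2}. First I would observe that $\vol(W\bull)$ is by definition $d!$ times $\vol_{\RR^d}\big(\Delta(W\bull)\big)$, so the target inequality is equivalent to the assertion that the image subalgebras $\Image\big(S^{\sbullet}W_p \to W_{\sbullet p}\big)$ have Okounkov bodies whose volumes converge to $\vol_{\RR^d}\big(\Delta(W\bull)\big)$ as $p \to \infty$. For fixed $p$, the subspaces $V^{(p)}_k := \Image\big(S^k W_p \to W_{kp}\big)$ form a graded linear series for the divisor $pD$, and one checks directly from the definition of $\nu$ that $v\big(V^{(p)}_k\big) \supseteq k\cdot v(W_p)$ inside $\ZZ^d$; passing to convex hulls this gives $\Delta\big(V^{(p)}\bull\big) \supseteq \tfrac{1}{p}\cdot\text{(convex hull of }v(W_p)\text{)}$ after the rescaling that identifies the body of $pD$ with $p$ times that of $D$. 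So the heart of the matter is to show that the convex bodies $\tfrac1p\cdot\mathrm{conv}\big(v(W_p)\big)$ exhaust $\Delta(W\bull)$ in volume as $p\to\infty$.

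The next step is therefore a purely convex-geometric exhaustion lemma, which I expect to prove using the structure established earlier in the paper: the semigroup $\Gamma(W\bull) = \bigcup_m \{m\}\times v(W_m) \subseteq \NN\times\ZZ^d$ generates (after the hypothesis that the maps $\linser{W_m}$ are birational, so that $W\bull$ ``contains an ample series'' and $\Gamma$ is not too degenerate) a cone whose slice at height $1$ is exactly $\Delta(W\bull)$. Given any rational point $\tfrac1p a$ in the interior of $\Delta(W\bull)$, some multiple $(q, qa)$ lies in $\Gamma$, i.e. $qa \in v(W_q)$; combined with the birationality hypothesis — which forces $W_m$ to separate enough jets that $0 \in v(W_m)$ up to the fixed translation and, more importantly, that the images $v(W_m)$ are ``asymptotically full'' — a standard argument (as in Khovanskii's semigroup theorem, or \cite[Prop.~2.3.3.2]{PAG}) shows that for all large $p$ the set $v(W_p)$ contains a subset whose convex hull, scaled by $\tfrac1p$, covers all but $\eps$ of the volume of $\Delta(W\bull)$.

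Assembling these: choose $p_0$ so that for $p \ge p_0$ the body $\tfrac1p\cdot\mathrm{conv}\big(v(W_p)\big)$ has volume at least $\vol_{\RR^d}(\Delta(W\bull)) - \tfrac{\eps}{d!}$; then for such $p$,
\[
\lim_{k\to\infty}\frac{\dim V^{(p)}_k}{p^d k^d/d!} \;=\; d!\cdot\vol_{\RR^d}\big(\Delta(V^{(p)}\bull)\big)\;\ge\; d!\cdot\vol_{\RR^d}\Big(\tfrac1p\cdot\mathrm{conv}\big(v(W_p)\big)\Big)\;\ge\;\vol(W\bull)-\eps,
\]
where the first equality is Theorem \ref{Vol.Okounkov.Body.Eqn} applied to the graded linear series $V^{(p)}\bull$ on $X$ (with respect to the divisor $pD$, using $\Delta_{Y\bull}(pD) = p\cdot\Delta_{Y\bull}(D)$ and the analogous homogeneity for incomplete series). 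The main obstacle is the exhaustion lemma in the middle step: one must use the birationality hypothesis carefully to guarantee that $v(W_p)$ really does fill up $\Delta(W\bull)$ — without some positivity/birationality assumption the semigroup could sit on a sublattice of infinite index and the discrete sets $v(W_p)$ would be too sparse to converge to the body. I expect that this is exactly where the hypothesis ``$\linser{W_m}$ birational onto its image for $m\gg 0$'' is used, via the general theory of such ``admissible'' graded linear series developed in the body of the paper.
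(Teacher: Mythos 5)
Your proposal is correct in outline, but it packages the argument differently from the paper, and it is worth recording the comparison. The paper first proves a self-contained semigroup statement (Proposition \ref{Semigroup.FAT}): writing $\Gamma_p$ for the set of valuation vectors of $W_p$, Khovanskii's translated-cone theorem (after reducing to finitely generated subsemigroups of almost full volume) gives $\vol_{\RR^d}\big(\tfrac1p\,\mathrm{conv}(\Gamma_p)\big)\to\vol_{\RR^d}(\Delta)$, while Khovanskii's sumset-counting result -- together with a lemma guaranteeing that $\Gamma_p$ generates $\ZZ^d$ for $p\gg 0$ -- gives $\lim_k \#\big(k*\Gamma_p\big)/k^d=\vol_{\RR^d}\big(\mathrm{conv}(\Gamma_p)\big)$; the geometric theorem then follows from the inclusion $k*\Gamma_p\subseteq \nu\big(V_{k,p}-\{0\}\big)$ and Lemma \ref{No.of.Val.Vectors}, which identifies $\dim V_{k,p}$ with the number of its valuation vectors. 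You instead treat the image algebra $V^{(p)}\bull$ as a graded linear series for $pD$ and apply the volume theorem for graded series to it; note the correct reference for that step is Theorem \ref{Vol.OB.GLS} rather than Theorem \ref{Vol.Okounkov.Body.Eqn}, and you must check that $V^{(p)}\bull$ satisfies condition (B) -- immediate, since $V^{(p)}_1=W_p$ already defines a birational map -- and that one flag is admissible simultaneously for $W\bull$ and all the $V^{(p)}\bull$, which Lemma \ref{Cond.F.generates.generic.flag} together with Remark \ref{Very.General.Flags.OK} supplies. This substitution buys you two things: the existence of the limit over $k$ comes for free, and you do not need the sumset count at all (indeed your appeal to $k$-fold sums is superfluous, since already $\Gamma(V^{(p)}\bull)_1=\nu(W_p-\{0\})$ gives $\Delta(V^{(p)}\bull)\supseteq\mathrm{conv}(\Gamma_p)$); the cost is the extra bookkeeping about conditions and flags for each $V^{(p)}\bull$. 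What the two routes share -- and where the real content lies -- is the exhaustion statement $\vol_{\RR^d}\big(\tfrac1p\,\mathrm{conv}(\Gamma_p)\big)\to\vol_{\RR^d}\big(\Delta(W\bull)\big)$, which is exactly the first half of the proof of Proposition \ref{Semigroup.FAT}. Be aware that your heuristic for it (``some multiple of each rational interior point lies in $\Gamma$'') is not by itself enough, since it only produces points of $\tfrac1q\Gamma_q$ for some $q$ depending on the chosen point; the uniform statement for all $p\ge p_0$ requires passing to a finitely generated subsemigroup whose cone captures all but $\eps$ of the volume and then applying Khovanskii's translated-cone theorem to it, precisely as the paper does. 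Since you name exactly this tool I would not call this a gap, but that uniformity is the step your write-up leaves implicit.
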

\noi When $W_m = \HH{0}{X}{\OO_X(mD)}$ is the complete linear series of a big divisor $D$, this  implies a basic theorem of Fujita (\cite{Fujita}, \cite{DEL},  \cite{Nakamaye}, \cite{Takagi}, \cite[Chapter 11.4]{PAG}) to the effect that the volume of $D$ can be approximated arbitrarily closely by the self-intersection of an ample divisor on a modification of $X$. As an application of Theorem \ref{FAT.Intro}, we give a new proof of a result  \cite{Mustata}  of the second author concerning multiplicities of graded families of ideals, and extend it to possibly singular varieties. We also prove for graded linear series an analogue of Theorem \ref{Global.Body.Intro.Theorem}, which leads to transparent new proofs of several  of the results of \cite{ELMNP3} concerning restricted volumes.  
 
Returning to the global setting, recall that Okounkov's construction depends upon picking a flag $Y\bull$ on $X$. We show that one can eliminate this non-canonical choice by working instead with generic infinitesimal data. Specifically, fix a smooth point $x \in X$, and a complete flag $V\bull$ of subspaces
 \[
 T_xX \ = \ V_0 \ \supseteq \ V_1 \ \supseteq \ V_2 \ \supseteq \ \ldots \ \supseteq \ V_{d-1} \ \supseteq \ \{ 0 \}
 \] 
 in the tangent space to $X$ at $x$. Consider the blowing up 
$ \mu : X^\pr = \Bl_x(X) \lra X$
of $X$ at $x$, with exceptional divisor $E= \PP(T_xX)$. Then the projectivizations of the $V_i$ give rise in the evident manner to a flag $F\bull = F(x; V\bull)$ of subvarieties of $X^\pr$. On the other hand, for a big divisor $D$  on $X$, write $D^\pr = \mu^* D$ and  note that
 \[ \HH{0}{X}{\OO_X(mD)} = \HH{0}{X^\pr}{\OO_{X^\pr}(mD^\pr)} \]
 for all $m$. So it is natural to define $\Delta_{F\bull}(D)$ to be the Okounkov body of $D^\pr$ computed on $X^\pr$ with respect to the flag $F\bull$.   \begin{propositionalpha}
For very general choices of $x$ and $V\bull$, the Okounkov bodies \[      \Delta_{F(x; V\bull)}(D) \ \subseteq \ \RR^d \]
all coincide. In particular, the resulting convex body $\Delta^\pr(D) \subseteq \RR^d$ is canonically defined.
 \end{propositionalpha}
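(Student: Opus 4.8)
The plan is to show that the Okounkov body $\Delta_{F(x;V\bull)}(D)$ depends on the pair $(x, V\bull)$ only through a countable union of proper closed conditions, so that "very general" choices all give the same body. First I would reduce to understanding the valuation $\nu_{F\bull}$ on the level of sections. Fix $m \ge 1$. The space $H^0(X, \OO_X(mD)) = H^0(X', \OO_{X'}(mD'))$ is a fixed finite-dimensional vector space, independent of $(x, V\bull)$, and the finite set of valuation vectors $v(mD) \subseteq \ZZ^d$ is what we must pin down. The first coordinate $\nu_1(s) = \ord_E(s)$, where $E$ is the exceptional divisor of $\mu: X' \to X$; this equals $\mult_x(s)$, the order of vanishing of $s$ at $x$, and the subsequent coordinates are obtained by passing to the leading term of $s$ at $x$ — a homogeneous polynomial of degree $\mult_x(s)$ on $\PP(T_xX)$ — and then reading off orders of vanishing along the flag of linear subspaces $\PP(V_i)$. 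Thus $\nu_{F\bull}(s)$ is determined by the behavior of the Taylor expansion of $s$ at $x$ relative to the linear flag $V\bull$.

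Next I would stratify the parameter space. Let $N = h^0(X, \OO_X(mD))$ and pick a basis $s^{(1)}, \dots, s^{(N)}$. For each point $x$ in a suitable dense open subset $U \subseteq X$ (where all the $s^{(j)}$ and $\OO_X(mD)$ behave nicely in local coordinates), the collection of all Taylor coefficients of all $s^{(j)}$ at $x$, up to some bounded order, varies algebraically with $x$; the conditions "$\mult_x s = k$" and "the leading form of $s$ vanishes to order $\ell$ along $\PP(V_i)$" are Zariski-closed conditions on $(x, V\bull)$ in $U \times \mathrm{Fl}(T_xX)$ (more precisely on the relative flag bundle over $U$). Hence there is a dense open subset of the flag bundle over which the function $(x, V\bull) \mapsto v(mD) \subseteq \ZZ^d$ is locally constant, and since the flag bundle over a dense open of $X$ is irreducible, this function is constant on a dense open set $\Omega_m$. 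Taking the intersection $\Omega = \bigcap_{m \ge 1} \Omega_m$ — a countable intersection of dense opens, hence containing the very general locus — all the finite sets $v(mD)$, and therefore $\Delta_{F\bull}(D) = \overline{\mathrm{conv}}\big(\bigcup_m \tfrac1m v(mD)\big)$, agree for $(x, V\bull) \in \Omega$.

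The one genuine subtlety — and the step I expect to be the main obstacle — is making the "algebraic variation of Taylor data" precise and uniform enough to conclude semicontinuity on the relative flag bundle, rather than merely on each fiber. Concretely: over the open set $U$ one trivializes $\OO_X(mD)$ and writes each section in local coordinates centered at a varying point $x$; one must check that the leading-form map $s \mapsto \mathrm{in}_x(s)$ assembles into a morphism of sheaves over $U$ (after stratifying by the value of $\mult_x$), and then that vanishing of this leading form along $\PP(V_\bullet)$ is a closed condition on the total space of the flag bundle. This is a routine but slightly fussy argument using the jet bundles $\mathcal{P}^n_X(\OO_X(mD))$ and the incidence correspondence in the flag bundle; the key point is only that everything in sight is algebraic and that the base $X$ (hence the flag bundle over its smooth locus) is irreducible, so constructibility plus lower-semicontinuity of $h^0$-type quantities forces genericity to suffice. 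Once this is in hand, the countable intersection over $m$ completes the proof and yields the canonically defined body $\Delta'(D)$.
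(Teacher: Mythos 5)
Your proposal follows essentially the same route as the paper: parametrize the pairs $(x,V_{\bullet})$ by the (irreducible) full-flag bundle of the tangent bundle over the smooth locus of $X$, show that for each $m$ the set of valuation vectors of $\HH{0}{X}{\OO_X(mD)}$ is constant on a dense open subset of this parameter space by an algebraicity/semicontinuity argument (your jet-bundle step), and then intersect over the countably many $m$ -- this is exactly what the paper packages as Theorem~\ref{Constancy.Of.OB.Thm} on constancy of Okounkov bodies in flat families, proved via the sheafified valuation filtration of Remark~\ref{sheafification.remark} together with the semicontinuity theorem, Lemma~\ref{No.of.Val.Vectors}, and the boundedness Lemma~\ref{boundedness}. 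The only caution is that your ``fussy step'' must be run on the dimensions of the subspaces of sections with valuation $\ge \sigma$ (ranks of Taylor-coefficient matrices), since the valuation vectors of a fixed basis $s^{(1)},\ldots,s^{(N)}$ do not by themselves determine $v(mD)$; tracking these dimensions is precisely what the flat sheaves $L^{\ge\sigma}$ accomplish in the paper's argument.
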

 \noi  Similarly there is a global cone
$\Delta^\pr(X)  \subseteq \ \RR^d \times N^1(X)_{\RR}$
 that does not depend on any auxiliary choices. We suspect that these carry interesting geometric information, but unfortunately they seem very hard to compute. We hope to return to this at a later date. The Proposition follows from a general result about the varying the flag in the Okounkov construction.

As the preparation of this paper was nearing completion, the interesting preprint \cite{KK} of Kaveh and Khovanskii appeared. Those authors study essentially the same construction as here, but from a rather different viewpoint. Starting (as did Okounkov)  with a finite dimensional subspace $L \subseteq \KK(X)$ of the field of rational functions on a variety $X$, Kaveh and Khovanskii  associate to $L$  a convex body $\Delta(L)$ depending on the choice of a valuation $\nu : \KK(X)^* \lra \ZZ^d$. They then relate  geometric invariants of these bodies   to some intersection-theoretic quantities that they define and study. They use the resulting correspondence between convex and algebraic geometry to give new proofs of some basic results on each side. 

Concerning the organization of the paper, we start in \S \ref{Okounkov.Construction.Section} by defining the Okounkov bodies attached to big divisors or graded linear systems. We observe in Proposition \ref{Realize.Convex.Body.Prop} that up to translation and scaling every convex body is realized as the Okounkov body of a graded linear series on projective space. Section \ref{Volume.Ok.Body.Section}
 is devoted to the proof of Theorem \ref{Vol.Okounkov.Body.Eqn} and to some conditions that lead to the corresponding statement for graded linear series. We show in \S  \ref{RLSS} that these conditions are satisfied in the important case of restricted linear series. In \S \ref{Fujita.Approximation.Section} we turn to Fujita's approximation theorem: we give a new proof of the classical result, establish  its extension Theorem \ref{FAT.Intro}, and present the application to graded systems of ideals. Section \ref{Varation.of.OB.Section} revolves around the variational theory of Okounkov bodies: we prove Theorem \ref{Global.Body.Intro.Theorem}
 and its extension to $\NN^r$-graded linear series, and establish Corollary \ref{Integration.Intro}. The infinitesimal constructions are discussed in \S \ref{Infinitesimal.Constructions.Section}. Section  \ref{Examples.Section}
 is devoted to examples. We treat the case of toric varieties, and  describe rather completely the Okounkov body of any big divisor on  a smooth complex surface. Section \ref{Open.Problem.Section} presents some open problems and questions. Finally, we prove in the Appendix a useful technical result concerning intersections of semigroups and cones with linear subspaces.

We are grateful to A. Barvinok, S.-Y. Jow,  A. Khovanskii, D. Maclagan and Ivan Petrakiev for valuable discussions and suggestions.

\setcounter{section}{-1}

\section{Notation and Conventions}
\label{Notation.Conventions.Section}

\noi (0.1)  \ We denote by $\NN$ the additive semigroup of non-negative integers. A \textit{convex body} is a compact convex set $K \subseteq \RR^d$ with non-empty interior.

\noi (0.2) \  We work over an uncountable algebraically closed field $\KK$ of arbitrary characteristic.\footnote{The uncountability hypothesis comes in only on a few occasions -- notably in  \S \ref{Infinitesimal.Constructions.Section}
 and (via Remark \ref{Very.General.Flags.OK}) \S \ref{Multi-graded.Lin.Series.Subsection} -- when we make arguments involving very general position.}
 A \textit{variety} is reduced and irreducible. $\PP(V)$ denotes the projective space of one-dimensional \textit{quotients} of a vector space or vector bundle $V$. The  projective space of one-dimensional subspaces is $\PPP(V)$. A property holds for a \textit{very general} choice of data if it is satisfied away from a countable union of proper closed subvarieties of the relevant parameter space.
 
 \noi (0.3)  \  Let $X$ be a projective variety of dimension $d$. We generally follow the conventions of \cite{PAG} concerning divisors and linear series. Thus  a \textit{divisor} on $X$  always means a Cartier divisor. A divisor $D$ on $X$ is \textit{big} if $\hh{0}{X}{\OO_X(mD)}$ grows like $m^d$. This is equivalent to asking that for any ample divisor $A$ on $X$, $mD - A$ is linearly equivalent to an effective divisor for $m \gg 0$. Bigness makes sense for $\QQ$- and $\RR$-divisors, and the bigness of a divisor depends only on its numerical equivalence class. We refer to \cite[Chapters 2.2.A, 2.2.B]{PAG} for a detailed account.
 
\noi (0.4)  \ We denote by $N^1(X)$ the N\'eron--Severi group of numerical equivalence classes of divisors on a projective variety $X$: it is a free abelian group of finite rank. The corresponding finite-dimensonal $\QQ$- and $\RR$-vector spaces are $N^1(X)_{\QQ}$ and $N^1(X)_\RR$. Inside  $N^1(X)_{\RR}$ one has the pseudo-effective and nef cones of $X$:
 \[  N^1(X)_{\RR} \ \supseteq \ \overline{\tn{Eff}}(X)  \ \supseteq \ \tn{Nef}(X) . \]
 By definition, the pseudo-effective cone $\overline{\tn{Eff}}(X)$ is the closed convex cone spanned by the classes of all effective divisors, whereas 
 \[ \tn{Nef}(X) \ = \ \big \{  \xi \mid  (\xi \cdot C) \ge 0 \text{ for all irreducible cuves $C \subseteq X$} \, \big \}. \]
These are closed convex cones, and the basic fact is: 
 \[
\interior \big( \tn{Nef}(X)  \big ) \ = \ \tn{Amp} (X)  \ \  , \ \ \interior \big( \overline{\tn{Eff}}(X) \big) \ = \ \tn{Big}(X). 
\] 
Here $\tn{Amp} (X) , \tn{Big}(X)   \subseteq N^1(X)_{\RR}$ denote the open cones of ample and big classes respectively. We refer to \cite[Chapters 1.4.C, 2.2.B]{PAG} for details.\footnote{This reference works with complex varieties, but   the discussion of these matters is characteristic-free once one knows that $N^1(X)_{\RR}$ is finite-dimensional.} For a survey of asymptotic invariants of linear series see \cite{ELMNP2}.

\noi (0.5)  \ We recall some facts about semigroups and the cones they span. Let $\Gamma \subseteq \NN^k$ be a finitely generated semigroup, and 
denote by
\[
\Sigma \ = \ \Sigma(\Gamma) \ \subseteq \ \RR^k 
\]
the closed convex cone it spans, i.e. the intersection of all the closed convex cones containing $\Gamma$. Thus $\Sigma$ is a rational polyhedral cone.
Then first of all, $\Sigma \cap \NN^k$ is the saturation of $\Gamma$, so that given
an integer vector $\sigma \in \Sigma \cap \NN^k$, there is a natural number $m = m_\sigma>0$ such that $m_\sigma \cdot \sigma \in \Gamma$. Secondly, if
$\Sigma^\pr \subseteq \RR^k$ is any rational polyhedral cone, then
\[
\CCC\big ( \Sigma^\pr \cap \ZZ^k \big) \ = \ \Sigma^\pr.
\]
See \cite[Proposition 1.1]{Oda}.  Finally, following \cite{Okounkov03}, we will also use some results of Khovanskii \cite{Khovanskii}. Specifically,  assume that $\Gamma$ generates $\ZZ^k$ as a group. Then Proposition 3 of \S 3 of \cite{Khovanskii} asserts  that  there exists an element $z \in \Sigma$ such that any integer vector lying in the translated cone $ z + \Sigma $ actually lies in $\Gamma$, i.e.
\begin{equation} \label{Khovanskii.Equation}
\big( z + \Sigma \big) \ \cap \ \ZZ^k \ \subseteq \ \Gamma. \tag{*}
\end{equation}
Note that the same statement then  holds automatically when $z$ is replaced by $z + z^\pr$ for any $z^\pr \in \Sigma$ (since $z + z^\pr + \Sigma \subseteq z + \Sigma$). So one can assume for instance that $z \in \Gamma$. Observe also that (*) fails if $\Gamma$ does not generate $\ZZ^k$ as a group.

%%%%% SECTION -- OKOUNKOV CONSTRUCTION %%%%%%%

\section{Okounkov's Construction} \label{Okounkov.Construction.Section}

This section is devoted to defining  the Okounkov bodies attached to divisors or graded linear series on an algebraic variety.

Let $X$ be an irreducible variety of dimension $d$. We fix throughout this section a flag 
\begin{equation} \label{admissible.flag.eqn}
Y_{\bull} \ \ : \ \ X \ = \ Y_0 \ \supseteq \ Y_1\ \supseteq \ Y_2 \ \supseteq \ \ldots \ \supseteq \ Y_{d-1} \  \supseteq \  Y_{d} \  = \  \{ \text{pt} \},\end{equation}
of irreducible subvarieties of $X$, where
\[ \text{codim}_X(Y_i) \ = \ i, \] and each $Y_i$  is non-singular at the point $Y_d$. We call this an \textit{admissible flag}. Given $Y\bull$, Okounkov's construction associates a convex body  $\Delta \subseteq \RR^d$ to    a divisor $D$ on $X$ (when $X$ is  complete), or more generally to a graded linear series $W\bull$ on $X$ (without any compactness hypotheses). One proceeds in two steps. First, one uses $Y\bull$ to define a valuative-like function on the sections of any line bundle. Then $\Delta$ is   built from the valuation vectors of all powers of the linear series in question.

\subsection{The Valuation Attached to a Flag} \label{Val.Attached.to.Flag}

Consider any divisor $D$ on $X$. We begin by defining a function
\begin{equation} \label{val.on.sections.eqn}
  \nu \ = \nu_{Y\bull} \ = \nu_{Y\bulll{D}} \ : \ \HH{0}{X}{\OO_X(D)} \lra \ZZ^d \cup \{ \infty\}\ \ , \ \ s \mapsto \nu(s) \, = \, \big( \nu_1(s), \ldots, \nu_d(s)\big) 
\end{equation}
satisfying three valuation-like properties:\footnote{Since we prefer to view $\nu_{Y\bull}$ as being defined on the spaces of sections of different line bundles, it is not strictly speaking a valuation. However for ease of discussion, we will use the term nonetheless.}
\begin{enumerate}
\item [(i).] $\nu_{Y\bull}(s) = \infty$ if and only if $s = 0$;
\item [(ii).] Ordering $\ZZ^d$ lexicographically, one has \[ \nu_{Y\bull}( s_1 + s_2) \, \ge \, \min\big\{ \nu_{Y\bull}(s_1) ,  \nu_{Y\bull}(s_2)\big\}\]  for any non-zero sections $s_1, s_2 \in \HH{0}{X}{\OO_X(D)}$;
\item[(iii).] Given non-zero sections  $s \in \HH{0}{X}{\OO_X(D)}$ and $ t \in \HH{0}{X}{\OO_X(E)}$, 
\[  \nu_{Y\bulll{D+E}}(s \otimes t) \ = \ \nu_{Y\bulll{D}}(s) \, + \, \nu_{Y\bulll{E}}(t).
\]
\end{enumerate}
In a word, the plan is produce the integers   $\nu_i(s)$ inductively by restricting to each subvariety  in the flag, and considering the order of vanishing along the  next smallest. 

Specifically, we may suppose after replacing $X$ by an open set that each $Y_{i+1}$ is a Cartier divisor on  $Y_{i}$: for instance one could take all the $Y_i$ to be smooth. Given \[ 0 \ne s \in \HH{0}{X}{\OO_X(D)},\]
set to begin with
\[ \nu_1\,  = \, \nu_1(s) \ = \ \ord_{Y_1}(s). \]
After choosing a local equation for $Y_1$ in $X$, $s$ determines a section
\[  \tilde s_1 \ \in \ \HH{0}{X}{\OO_X(D-\nu_1Y_1)} \]
that does not vanish (identically) along $Y_1$, and so we get by restricting a non-zero section
\[  s_1 \in \HH{0}{Y_1}{\OO_{Y_1}(D-\nu_1Y_1)}. \]
Then take 
\[
\nu_2 \, = \, \nu_2(s) \ = \ \ord_{Y_2}(s_1). 
\] 
In general, given integers $a_1, \ldots, a_i \ge 0$ denote by $\OO(D-a_1Y_1 - a_2Y_2 - \ldots - a_i Y_i)_{|Y_i}$
the line bundle 
\[  \OO_X(D)_{|Y_i} \otimes \OO_X(-a_1Y_1)_{| Y_i} \otimes \OO_{Y_1}(-a_2 Y_2)_{|Y_i} \otimes \ldots \ \otimes \OO_{Y_{i-1}}(-a_i Y_i)_{|Y_i} \]
on $Y_i$. Suppose inductively that for $i \le k$ one has constructed non-vanishing sections
\[ s_i \ \in \ \HHH{0}{Y_i}{\OO(D-\nu_1Y_1 - \nu_2Y_2 - \ldots - \nu_i Y_i)_{|Y_i} } ,  
\]  
with  $\nu_{i+1}(s) = \ord_{Y_{i+1}}(s_i)$, so that in particular 
\[ \nu_{k+1}(s) 
\ = \ \ord_{Y_{k+1}}(s_k). \]
 Dividing by the appropriate power of a local equation of $Y_{k+1}$ in $Y_k$ yields a section 
\[
 \tilde s_{k+1} \, \in \, \HHH{0}{Y_{k}}{\OO \big ( D-\nu_1Y_1 - \nu_2Y_2 - \ldots - \nu_k Y_k\big )_{|Y_k}\otimes \OO_{Y_k}(-\nu_{k+1}Y_{k+1})} \]
not vanishing along $Y_{k+1}$. Then take
\[  s_{k+1} \ = \ \tilde s_{k+1} | Y_{k+1} \  \in \ 
\HHH{0}{Y_{k+1}}{\OO \big(D-\nu_1Y_1 - \nu_2Y_2 - \ldots - \nu_{k+1}Y_{k+1}\big)_{|Y_{k+1}}} \]
to continue the process. Note that while the sections $\tilde s_i$ and $s_i$ will depend on the choice of a local equation of each $Y_i$ in $Y_{i-1}$, the values $\nu_i(s) \in \NN$ do not. 
It is immediate that properties (i) -- (iii) are satisfied.

\begin{example} \label{Lex.valuation.Pd}
On  $X = \mathbf{\PP}^d$, let $Y\bull$ be the flag of linear spaces defined in homogeneous coordinates $T_0, \ldots, T_d$ by
 $Y_i = \{T_1 = \ldots = T_{i} = 0\}$ and take  $\linser{D}$ to be the linear system of hypersurfaces of degree $m$. Then    $\nu_{Y\bull}$ is the lexicographic valuation determined on monomials of degree $m$ by
 \[   \nu_{Y\bull} (T_0^{a_0} T_1^{a_1} \cdots T_d^{a_d}) \ = \ (a_1 , \ldots , a_{d}). \qed \]
\end{example}

\sbl
\begin{example} \label{Van.Seq.Curves.Ex}
Let $C$ be a smooth projective curve of genus $g$, and fix a point $P \in C$, yielding the flag $C \supseteq \{P\}$. Given a divisor $D$ on $C$, the image of the resulting map
\[
\nu \, : \, \HH{0}{C}{\OO_X(D)} - \{ 0 \} \lra \ZZ 
\]
is the classical vanishing sequence of the complete linear series $\linser{D}$ at $P$ (cf \cite[p. 256]{Harris-Morrison}). If $c= \deg (D) \ge 2g +1$ this consists of $c + 1 - g$ non-negative integers lying in the interval $[0,c]$.  If $\tn{char}\, 
\KK = 0$ then for most choices of $P$ one has
\[
\Image (\nu)  \ = \ \big \{  0, 1, \ldots, c-g \big \},
\]
but for special $P$ there will  be  gaps. \qed
\end{example}

The following lemma expresses a basic property of the valuation $\nu_{Y\bull}$ attatched to a flag $Y\bull$.
\begin{lemma} \label{No.of.Val.Vectors}
Let $W \subseteq \HH{0}{X}{\OO_X(D)}$ be a subspace. Fix
\[  a \ = \ (a_1, \ldots, a_d) \ \in \ \ZZ^d \]
and set
\[   W_{\ge a} \ = \ \big \{ s \in W \, \big | \, \nu_{Y\bull}(s) \ge a \ \big \} \ \ ,  \ \ W_{> a} \ = \ \big \{ s \in W \, \big | \, \nu_{Y\bull}(s) > a \ \big \} , \]
where as above $\ZZ^d$ is ordered lexicographically. Then  
\[
\dim \big ( W_{\ge a}\, / \, W_{> a} \big ) \ \le \ 1. 
\]In particular, if $W$ is finite dimensional  then the number of valuation vectors arising from sections in $W$ is equal to the dimension of $W$: 
\[
\# \Big(  \ \Image \big( \,  (W - \{ 0 \}) \overset{\nu}\lra \ZZ^d\,  \big) \ \Big ) \ = \ \dim W.
\]
\end{lemma}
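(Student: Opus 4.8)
The plan is to prove the two assertions in turn, deducing the second (the count of valuation vectors) from the first (the one-dimensionality of the graded pieces).

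\textbf{Step 1: the inequality $\dim(W_{\ge a}/W_{>a}) \le 1$.} The point is that $\nu_{Y\bull}$ behaves like a valuation with values in the totally ordered group $\ZZ^d$ (lexicographic order). Concretely, suppose $s, s' \in W_{\ge a}$ both have $\nu_{Y\bull}(s) = \nu_{Y\bull}(s') = a$ exactly (if either lies in $W_{>a}$ there is nothing to prove). I want to produce a scalar $\lambda \in \KK^\times$ so that $s - \lambda s' \in W_{>a}$; this shows any two elements of $W_{\ge a}$ are congruent mod $W_{>a}$ up to scalar, hence the quotient has dimension at most one. To find $\lambda$, I trace through the inductive construction of $\nu$: restricting and dividing by local equations of $Y_1, \ldots, Y_d$ in turn sends $s$ to a section $s_d$ of a line bundle on the point $Y_d$, i.e. a number, and likewise $s'$ to a number; both are nonzero precisely because $\nu_i(s) = a_i$ at every stage (the "leading coefficient" is nonzero iff the order of vanishing is exactly $a_i$). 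Set $\lambda$ to be the ratio of these two numbers. Then by property (ii) — superadditivity under the lexicographic order — combined with the fact that the leading term of $s - \lambda s'$ along the flag vanishes, one gets $\nu_{Y\bull}(s - \lambda s') > a$, i.e. $s - \lambda s' \in W_{>a}$. The one subtlety is making precise that "the leading coefficients agree" forces strict inequality; this is exactly the standard argument that a valuation's residue field contributes only a one-dimensional graded piece, carried out step by step down the flag.

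\textbf{Step 2: the count.} Assume $W$ is finite dimensional, say $\dim W = n$. As $a$ ranges over $\ZZ^d$ in decreasing lexicographic order, the subspaces $W_{\ge a}$ form a decreasing, exhaustive (for $a$ very negative, $W_{\ge a} = W$) and separated (for $a$ very large, $W_{\ge a} = 0$, since all valuation vectors of the finitely many "independent directions" are bounded — more carefully, $\Image(\nu)$ is finite, which itself will follow once we know each jump is at most one-dimensional and $\dim W$ is finite) filtration. By Step 1, $W_{\ge a}/W_{>a}$ is nonzero (necessarily one-dimensional) exactly when $a \in \Image(\nu|_{W - \{0\}})$: indeed $W_{\ge a} \ne W_{>a}$ iff there is a section in $W$ with $\nu_{Y\bull} = a$. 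Summing dimensions of the successive quotients of the filtration gives $\dim W = \sum_{a} \dim(W_{\ge a}/W_{>a}) = \#\Image(\nu|_{W-\{0\}})$, which is the claim.

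\textbf{Main obstacle.} The genuinely substantive step is Step 1, and within it the bookkeeping of the inductive "divide by a local equation, then restrict" procedure: one must verify that the nonvanishing of the successive restricted sections $s_i$ is equivalent to the order of vanishing being exactly $a_i$ at each stage, so that the leading coefficient is well-defined and nonzero, and that subtracting $\lambda s'$ to kill the top leading coefficient genuinely increases the full lexicographic vector rather than just its first entry. Once that local analysis is pinned down, the finiteness of $\Image(\nu)$ and the dimension count in Step 2 are formal consequences of having a separated, exhaustive filtration with at-most-one-dimensional quotients on a finite-dimensional space.
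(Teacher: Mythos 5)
Your proof is correct and in substance the same as the paper's: the paper packages your ``leading coefficient along the flag'' as the observation that $W_{\ge a}/W_{>a}$ injects into the sections of the one-dimensional skyscraper sheaf $\OO(D-a_1Y_1-\cdots-a_{d-1}Y_{d-1})_{|Y_{d-1}}\otimes \OO_{Y_{d-1}}(-a_dY_d)/\OO_{Y_{d-1}}(-(a_d+1)Y_d)$ on the curve $Y_{d-1}$, which is exactly your scalar-matching argument expressed as a linear map with kernel $W_{>a}$. The counting step is then deduced from the filtration just as in your Step 2.
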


\begin{proof}
In fact, it is a consequence of the definition that $W_{\ge a}\, / \, W_{> a}$ injects into the space of sections of the one-dimensional skyscraper sheaf
\[
\OO \big(D -a_1Y_1 - \ldots - a_{d-1}Y_{d-1}\,  \big)_{|Y_{d-1}} \otimes \frac{ \OO_{Y_{d-1}}(-a_d Y_{d})}{ \OO_{Y_{d-1}}(-(a_d+1) Y_{d})}
\]
on the curve $Y_{d-1}$. The second statement follows. 
\end{proof}

We conclude this subsection with two technical remarks that will be useful later.
\begin{remark} \textbf{(Partial flags.)} 
\label{Partial.Flag.Remark}
A similar construction is possible starting from a partial flag
\[
Y_{\bull} ^\pr \ \ : \ \ X \ = \ Y_0 \ \supseteq \ Y_1\ \supseteq \ Y_2 \ \supseteq \ \ldots \ \supseteq \ Y_{r-1} \  \supseteq \  Y_{r}  \] 
where $\codim_X Y_i = i$ and each $Y_i$ is non-singular at a general point of $Y_r$. In fact,  just as above, such a flag defines for every $D$ a map
\begin{equation} 
  \nu_{Y\bull^\pr}   : \ \HH{0}{X}{\OO_X(D)} \lra \ZZ^r \cup \{ \infty\} \end{equation}
  satisfying the analogues of (i) - (iii). When $Y^\pr\bull$ is the truncation of a full flag $Y\bull$, it is natural to consider the value group $\ZZ^r$ of $\nu_{Y\bull^\pr}$ as a subgroup of the value group $\ZZ^d$ of $\nu_{Y\bull}$ via inclusion on the first $r$ coordinates.  Observe however that the analogue of the previous lemma fails for the valuation defined by an incomplete flag. \qed
\end{remark}

\begin{remark} \textbf{(Sheafification.)} \label{sheafification.remark}
Of course one can work with a line bundle $L$ on $X$ in place of a divisor, and with this notation it  is worthwhile  to note that the construction of $\nu_{Y\bull}$ sheafifies. Specifically, fix $\sigma = (\sigma_1, \ldots, \sigma_d)  \in \ZZ^d$. Then given any line bundle $L$ on $X$, there exists a coherent subsheaf $L^{\ge \sigma} \subseteq L$ characterized by  the property that \[
L^{\ge \sigma}(U) \ = \ \big \{ \,  s \in L(U) \, | \,  \nu_{Y\bull | U}(s) \ge \sigma \, \big \} 
\]
for any open set $U \subseteq X$, where $Y\bull | U$ is the (possibly partial) flag obtained by restricting $Y\bull$ to $U$, and  where as above $\ZZ^d$ is ordered lexicographically. Supposing first that  $Y_{i+1}$ is a Cartier divisor in $Y_i$ for every $i$, one can construct $L^{\ge \sigma}$ by an iterative procedure. In fact, take to begin with  $L^{\ge (\sigma_1) } =L(-\sigma_1 Y_1)$. Then define $L^{\ge (\sigma_1, \sigma_2)} $ to be the inverse image of the subsheaf $L (-\sigma_1Y_1 - \sigma_2 Y_2)_{|Y_1} \subseteq L  (-\sigma_1 Y_1)_{|Y_1}$ under the surjection $L(-\sigma_1 Y_1) \lra L(-\sigma_1Y_1)_{|Y_1} $:
\[
\xymatrix{
L^{\ge (\sigma_1, \sigma_2)}\ar@{->>} [r] 
\ar@{^{(}->} [d]&L(-\sigma_1Y_1 - \sigma_2 Y_2)_{|Y_1}  \ar@{^{(}->} [d]\\
L^{\ge (\sigma_1)} \ = \ L(-\sigma_1 Y_1) \ar@{->>} [r] &  L(-\sigma_1Y_1)_{|Y_1} .}
\]
Then continue in this manner to define inductively $L^{\ge (\sigma_1, \ldots, \sigma_k)}$ when each $Y_{i+1} \subseteq Y_i$ is a divisor. In general, take an open neighborhood $j :  V \subseteq X$ of $Y_d$, and put
\begin{equation} \label{Close.Up.Sheaf.Eqn}
L^{\ge \sigma} \ = \ j_* \big( (L|V)^{\ge \sigma} \big ) \, \cap \, L,
\end{equation}
the intersection taking place  in the constant sheaf $L \otimes \KK(X)$ determined by the stalk of $L$ at the generic point of $X$. Observe that a similar construction is possible starting with a partial flag. \qed
\end{remark}

\subsection{Construction of the Okounkov Body }  
Consider as above a divisor $D$  on $X$. We assume in this subsection that $X$ is projective, so that in particular the spaces of sections $\HH{0}{X}{\OO_X(mD)}$ are finite-dimensional.  It follows from the valuative properties of $\nu_{Y\bull}$ that the valuation vectors of sections of $\OO_X(D)$ and its powers form an additive semigroup in $\NN^{d}$. 
However, as in \cite{Okounkov03}  it will be convenient to work with a variant that keeps track of the grading:
\begin{definition} \textbf{(Graded semigroup of a divisor).} \label{Graded.Semigroup.of.Divisor}
The 
\textit{graded semigroup} of   $D$ is the sub-semigroup
\[ \Gamma  (D) \ = \ \Gamma_{Y\bull}(D)  \ = \ \big\{ ( \nu_{Y\bull}(s), m)  \, \big | \, 0 \ne s \in \HH{0}{X}{\OO_X(mD)} \, , \,  m \ge 0 \,  \big \}   \]
of  $\NN^d \times \NN = \NN^{d+1}$. \qed
 \end{definition}
\noi We consider $\Gamma(D)$ also as a subset
 \[  \Gamma(D) \ \subseteq \ \ZZ^{d+1} \ \subseteq \ \RR^{d+1} \]
 via the standard inclusions   $\NN \subseteq \ZZ \subseteq \RR$. 

\sbl

\begin{example} \textbf{(Failure of finite generation).} \label{Fail.Finite.Generation} The graded semigroup $\Gamma(D)$ is typically not finitely generated, even in very  simple situations.  For instance, consider as in  Example \ref{Van.Seq.Curves.Ex} a divisor $D$  of degree $c \ge 2g +1$ on a smooth complex curve $C$ of genus $g$. Working with the flag $C \supseteq \{ P \}$ for a very general choice of $P\in C$,  the semigroup in question is given by
\[ \Gamma(D) \ = \ \big \{ \, (0, 0) \, \big \} \ \cup \ \big \{ \, (k, m) \,  \big  | \, m \ge 1 \ , \ 0 \le k \le mc - g  \, \}  \ \subseteq  \ \NN^2. \] 
But as soon as  $g \ge 1$ this fails to be finitely generated. \qed
\end{example}

\sbl

  Writing $\Gamma = \Gamma(D)$, denote by 
\[ \Sigma(\Gamma)\  \subseteq \ \RR^{d+1} \] 
 the closed convex cone (with vertex at the origin) spanned by $\Gamma$, i.e. the intersection of all the closed convex cones containing $\Gamma$. 
 The Okounkov body of $D$ is then the base of this cone:
 \begin{definition} \textbf{(Okounkov body).}\label{Def.Ok.Body}
 The \textit{Okounkov body} of $D$ (with respect to the fixed flag $Y\bull$) is the compact convex set
 \[  \Delta(D) \ = \ \Delta_{Y\bull}(D) \ = \ \Sigma(\Gamma) \, \cap \,  \big ( \RR^d  \times \{1\}\big ). \]
 We view $\Delta(D)$ in the natural way as a closed convex subset of $\RR^d$; compactness follows from Lemma \ref{boundedness} below, which shows that it is bounded. Occasionally, when we want to emphasize the underlying variety $X$, we write $\Delta_{Y\bull}(X; D)$.\qed
 \end{definition}
 \noi Alternatively, let 
 \[  \Gamma(D)_m  \ = \ \Image \Big (  \big( \,\HH{0}{X}{\OO_X(mD)} - \{ 0 \} \, \big) \overset{\nu} \lra \ZZ^d\Big). \]
 Then 
 \begin{equation} \label{Alt.Descr.Okounkov.Body}
 \Delta(D) \ = \ \textnormal{ closed convex hull } \Big( \bigcup_{m \ge 1} \, \tfrac{1}{m} \cdot \Gamma(D)_m \Big) \ \subseteq \ \RR^d. 
 \end{equation}
 Observe that by construction $\Delta(D)$ lies in the non-negative orthant of $\RR^d$. 
 
 \begin{remark} \textbf{(Line bundles).}
 Sometimes it will be preferable to use the language of line bundles. If $L$ is a line bundle on $X$, we write $\Gamma(L) \subseteq \ZZ^{d+1}$ and $\Delta(L) \subseteq \RR^d$ for the graded semigroup and Okounkov body of a divisor $D$ with $\OO_X(D) = L$. \qed
 \end{remark}
 \sbl
  
   The compactness of $\Delta(D)$ follows from
 \begin{lemma} \textnormal{\bf(Boundedness).} \label{boundedness}
 The Okounkov body $\Delta(D)$ lies  in a  bounded subset of $\RR^d$.
 \end{lemma}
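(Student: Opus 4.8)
The plan is to bound, uniformly in $m$, how far from the origin the normalized valuation vectors $\tfrac1m\nu(s)$ can lie. Concretely, I will produce constants $C_1,\dots,C_d\ge 0$, depending only on the flag $Y\bull$ and on $D$, such that
\[
0\ \le\ \nu_i(s)\ \le\ C_i\,m\qquad(1\le i\le d)
\]
for every $m\ge 1$ and every nonzero $s\in\HH{0}{X}{\OO_X(mD)}$. Granting this, $\tfrac1m\Gamma(D)_m\subseteq\prod_{i=1}^d[0,C_i]$ for all $m$; since that box is compact and convex, the description \eqref{Alt.Descr.Okounkov.Body} of $\Delta(D)$ as the closed convex hull of the sets $\tfrac1m\Gamma(D)_m$ forces $\Delta(D)\subseteq\prod_i[0,C_i]$, which is exactly the assertion (the non-negativity is already noted in the text).

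To get the estimate I argue by induction on $i$, following the inductive definition of $\nu$. Fix once and for all an ample divisor $A$ on $X$. Recall from the construction that a nonzero $s\in\HH{0}{X}{\OO_X(mD)}$ produces nonzero sections $s_0=s,s_1,\dots,s_{d-1}$, with $s_i$ a section on $Y_i$ of $L_i=\OO\big(mD-\nu_1Y_1-\cdots-\nu_iY_i\big)_{|Y_i}$ (and $L_0=\OO_X(mD)$), and $\nu_{i+1}(s)=\ord_{Y_{i+1}}(s_i)$; the orders of vanishing are computed at the generic point of the relevant $Y_j$, which lies in the open neighbourhood of $Y_d$ on which the $Y_j$ are smooth and each $Y_{j+1}$ is Cartier in $Y_j$, so these orders — and the divisor classes below — agree with the ones computed on the projective varieties $Y_j$. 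Since $s_i$ vanishes along the prime divisor $Y_{i+1}\subseteq Y_i$ to order exactly $\nu_{i+1}$ while its divisor of zeros is effective, $\divisor_{Y_i}(s_i)-\nu_{i+1}Y_{i+1}$ is an effective (Weil) divisor on the $(d-i)$-dimensional projective variety $Y_i$; intersecting it with $d-i-1$ copies of the ample class $A_{|Y_i}$ gives a non-negative number, i.e.
\[
\nu_{i+1}\cdot\big(Y_{i+1}\cdot A_{|Y_i}^{d-i-1}\big)\ \le\ L_i\cdot A_{|Y_i}^{d-i-1}.
\]
The left-hand coefficient is the positive integer $\deg_A Y_{i+1}$, and expanding $L_i$ gives $L_i\cdot A_{|Y_i}^{d-i-1}=m\,a_i-\sum_{j=1}^i\nu_j\,b_{ij}$ for integers $a_i,b_{ij}$ depending only on $Y\bull$, $D$ and $A$. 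Using the inductive bounds $\nu_j\le C_j m$, the right side is at most $m\big(|a_i|+\sum_{j=1}^iC_j|b_{ij}|\big)$, so one takes $C_{i+1}=\max\{0,\,|a_i|+\sum_{j=1}^iC_j|b_{ij}|\}$; the base case $i=0$ is the same computation with $L_0=\OO_X(mD)$.

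The one point that genuinely needs care is the bookkeeping in the induction step: the ``intermediate'' line bundles $\OO_{Y_{j-1}}(Y_j)_{|Y_i}$ entering $L_i$ carry no positivity, so the integers $b_{ij}$ can have either sign, and the bound for $\nu_{i+1}$ really has to consume the previously established bounds $\nu_j\le C_j m$ rather than being settled in one stroke. Everything else — converting effectivity of a divisor into a numerical inequality, and checking that $\nu$ is computed only near $Y_d$ yet the relevant degrees are those of the projective $Y_i$ — is routine. (As an alternative to fixing $A$ at the outset, one can first reduce to the case of a very ample $D$ by writing $D=A-B$ with $A,B$ very ample, fixing $0\ne t\in\HH{0}{X}{\OO_X(B)}$, and invoking property (iii) to see that $s\mapsto s\otimes t^m$ embeds $\HH{0}{X}{\OO_X(mD)}$ into $\HH{0}{X}{\OO_X(mA)}$ with $\nu(s\otimes t^m)=\nu(s)+m\,\nu(t)\ge\nu(s)$, so that any bounding box for $\Delta(A)$ serves for $\Delta(D)$; but bounding $\Delta(A)$ still requires the same inductive estimate.)
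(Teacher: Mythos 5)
Your argument is correct and is essentially the proof in the paper: there too one fixes an ample divisor $H$ and bounds the $\nu_i$ inductively by $\nu_i(s)\le m b_i$, choosing $b_{i+1}$ so that the class of $\big(D-a_1Y_1-\cdots-a_iY_i\big)_{|Y_i}-b_{i+1}Y_{i+1}$ has negative degree against $H^{d-i-1}$ for all $0\le a_j\le b_j$, which is exactly your ``effective divisor has non-negative degree against ample powers'' step with the previously established bounds fed into the next stage. Your version only makes the bookkeeping (the constants $a_i,b_{ij}$ and the reduction of boundedness of $\Delta(D)$ to the linear bounds on $\nu$) explicit, so no further comment is needed.
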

 \begin{proof}
It suffices to show that if  $b  \gg 0$ is a sufficiently large integer (depending on $D$ as well as $Y\bull$), then
\begin{equation} \label{valuation.bounded.eqn}
  \nu_i(s) \ <\ mb \ \ \text{for every }  1 \le i \le d \ , m > 0 \ ,  \text{ and  } 0 \ne s \in \HH{0}{X}{\OO_X(mD)}. \end{equation}
To this end fix an ample divisor $H$, and choose first of all an integer $b_1$ which is sufficiently large that 
\begin{equation} \big( D - b_1 Y_1 \big) \cdot H^{d-1} < 0. \notag \end{equation}
This guarantees that $\nu_1(s) \le m b_1 $ for all $s$ as above. Next, choose $b_2 $ large enough so that on $Y_1$ one has 
\begin{equation} \big( (D - aY_1)_{|Y_1} - b_2Y_2 \big) \cdot H^{d-2} \ < \ 0 \notag \end{equation}
for all real numbers $0 \le a \le b_1$. Then $\nu_2(s) \le mb_2$ for all $0 \ne s \in \HH{0}{X}{\OO_X(mD)}$. 
Continuing in this manner one constructs integers $b_i > 0$ for $i = 1, \ldots, d$ such that $\nu_i(s) \le mb_i$, and then it is enough to take $b = \max \{ b_i \}$. \end{proof}

\begin{remark}{ \bf{(Extension to several divisors).}} \label{Extn.of.Bddness.Lemma.Remark}
Observe for later reference that a similar argument proves an analogous statement  for several divisors. Specifically, fix divisors $D_1, \ldots, D_r$ on $X$. We assert that then there exists a constant $b \gg 0$ with the property that 
\[ \nu_i(s) \ \le \ b \cdot \sum | m_i| \]
for any integers $m_1, \ldots, m_r$ and any non-zero section $0 \ne s \in \HH{0}{X}{\OO_X(m_1D_1 + \ldots + m_rD_r)}$. In fact, first choose $b_1 > 0$ such that
\[  \left ( \sum \lambda_i D_i \, - \, b_1Y_1\right ) \cdot H^{d-1} \ < \ 0 \] whenever $\sum  | \lambda_i | \le 1$. This implies that $\nu_1(s) < b_1 \cdot \sum |m_i| $. Next fix $b_2 > 0$ so that 
\[  \left( \big( \sum \lambda_i D_i - aY_1\big)|_{ Y_1} - b_2 Y_2 \right) \cdot H^{d-1} \ < \ 0 \]
for $\sum |\lambda_i | \le 1$ and $0 \le a \le b_1$. This yields $\nu_2(s) < b_2 \cdot \sum |m_i|$, and as above one continues in this manner. \qed
\end{remark}
 
 \begin{remark}
 For arbitrary divisors  $D$ it can happen that $\Delta(D) \subseteq \RR^d$ has empty interior, in which event $\Delta(D)$ isn't  actually  a convex body. (For instance, for the zero divisor $D = 0$, $\Delta(D)$ consists of a single point.) However we will be almost exclusively interested in the case when $D$ is big, and then 
 $ \text{int} \big ( \Delta(D) \big)$ is indeed non-empty. \qed
 \end{remark}

 \sbl
 \begin{example} \textbf{(Curves.)}
 \label{OB.Curves.Ex}
  Let $D$ be a divisor of degree $c > 0$ on a smooth curve $C$ of genus $g$. Then it follows from Example  \ref{Van.Seq.Curves.Ex}    that 
 \[  
 \Delta (D) \ = \ [0,c] \ \subset \ \RR  \]
 is the closed interval of length $c$ for any choice of flag $C \supseteq \{ P \} $.  \qed
   \end{example}

%\begin{figure}\label{Ok.Body.Ell.Curve}
%\begin{center}
% \includegraphics[scale=.9]{Elliptic_Curve_Body_3}
% \caption{Okounkov body on an elliptic curve}
%\label{default}
%\end{center}
%\end{figure}

\sbl
\begin{example}
Let $X = \PP^d$, $D = H$  a hyperplane divisor, and take $Y\bull$ to be the linear flag appearing in Example \ref{Lex.valuation.Pd}. Then it follows immediately from that example that $\Delta(D)$ is the simplex
\[  \big \{  (\xi_1, \ldots , \xi_d) \in \RR^d \, \big | \, \xi_1 \ge 0 \, , \,  \ldots \, , \,  \xi_d \ge 0  \  ,  \ \sum \xi_i \le 1 \, \big \}. \] 
This is a special case of Proposition \ref{Toric.Example.Prop}, which computes the Okounkov body of a toric divisor on a toric variety with respect to a toric flag. \qed
\end{example}

\subsection{Graded Linear Series.} We assumed in the previous paragraphs that $X$ is projective in order that the spaces $
\HH{0}{X}{\OO_X(mD)}$ appearing there be finite dimensional and so that the boundedness statement \ref{boundedness} holds.  However one can also define Okounkov bodies in a more general setting that does not require any completeness hypotheses.

Specifically, let $D$ be a divisor on $X$, which is no longer assumed to be complete, and let $ W\bull = \{ W_k \}$ be a graded linear series on $X$ associated to $D$. Recall that this consists of finite dimensional subspaces
\[  W_k \ \subseteq \ \HH{0}{X}{\OO_X(kD)}  \]
for each $k \ge 0$, with $W_0 = \KK$,  which are required to satisfy
the inclusion \begin{equation} W_k \cdot W_\ell \ \subseteq \ W_{k + \ell}   \tag{*} \end{equation}
for all $k , \ell \ge 0 $.  Here the product on the left denotes the image of $W_k \otimes W_\ell$ under the multiplication map $\HH{0}{X}{\OO_X(kD)} \otimes \HH{0}{X}{\OO_X(\ell D)} \lra \HH{0}{X}{\OO_X(({k + \ell})D)}$.  Equivalently, (*) demands that $R(W\bull) = \oplus W_m$ be a graded subalgebra of the section ring $R(X,D) = \oplus \HH{0}{X}{\OO_X(mD)}$. We refer to \cite[Chapter 2.4]{PAG} for further discussion and examples. 

One now proceeds exactly as before:
\begin{definition} Let $W\bull$ be a graded linear series on $X$ belonging to a divisor $D$. The \textit{graded semigroup} of $W\bull$ is
\[ \Gamma(W\bull) \ = \ \Gamma_{Y\bull}(W\bull) \ = \ \  \big\{ ( \nu_{Y\bull}(s), m)  \, \big | \, 0 \ne s \in W_m \, , \,  m \ge 0 \,  \big \}  \ \subseteq \ \ZZ^{d+1} .\]
The \textit{Okounkov body} of $W\bull$ is the base of the closed convex cone spanned by $\Gamma = \Gamma(W\bull)$:
 \[  \Delta(W\bull) \ = \ \Delta_{Y\bull}(W\bull) \ = \ \CCC(\Gamma) \, \cap \,  \big ( \RR^d  \times \{1\}\big ). \qed \]
\end{definition}
\noi Again $\Gamma(W\bull)$ is a closed convex subset of $\RR^d$. The alternative description \eqref{Alt.Descr.Okounkov.Body} also extends to the present context, namely:
 \begin{equation} \label{Alt.Descr.Okounkov.Body.GLS}
 \Delta(W\bull) \ = \ \textnormal{ closed convex hull } \Big( \bigcup_{m \ge 1} \, \tfrac{1}{m} \cdot \Gamma(W\bull)_m  \Big) \ \subseteq \ \RR^d,
 \end{equation}
where $\Gamma(W\bull)_m  \subseteq \NN^d$ denotes the image of $\nu_{Y\bull}: (W_m - \{0\}) \lra \ZZ^d$.

\begin{remark} {\textbf{(Pathology).}}
When $X$ is not complete,   $\Delta(W\bull)$ may fail to be a bounded subset  of $\RR^d$. (For example let $X = \mathbf{A}^1$, $D = 0$, and take $W_m$ to be the set of all polynomials of degree $\le m^2$.) In the sequel we will always impose further conditions to rule out this sort of pathology. \qed
\end{remark}

We conclude this section  by observing that essentially every convex body arises as the Okounkov body of a graded linear series on a projective variety.
\begin{proposition} \label{Realize.Convex.Body.Prop}
Let $K \subseteq \RR^d$ be an arbitrary convex body. Then after possibly translating and scaling $K$, there exists a graded linear series $W\bull$ on $\PP^d$ associated to the hyperplane divisor, and a flag $Y\bull$ on $ \PP^d$, such that $K = \Delta(W\bull)$.
\end{proposition}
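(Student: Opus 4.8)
The plan is to reverse-engineer a graded linear series on $\PP^d$ whose Okounkov body, with respect to the standard linear flag $Y\bull$ of Example \ref{Lex.valuation.Pd}, is the given convex body $K$. Recall that for that flag the valuation of a monomial is $\nu_{Y\bull}(T_0^{a_0}\cdots T_d^{a_d}) = (a_1,\ldots,a_d)$; so a subspace $W_m \subseteq \HH{0}{\PP^d}{\OO(m)}$ spanned by monomials contributes to $\Gamma(W\bull)$ exactly the pairs $((a_1,\ldots,a_d),m)$ with $T_0^{a_0}\cdots T_d^{a_d} \in W_m$, where $a_0 = m - \sum_{i\ge 1} a_i$. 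In other words, if we only use monomial subspaces, we have complete freedom: choosing $W_m$ amounts to choosing a set $S_m$ of lattice points in the simplex $m\Delta_d := \{a\in\NN^d : \sum a_i \le m\}$, and $\Gamma(W\bull)_m = S_m$. The only constraint is the subalgebra condition $W_k \cdot W_\ell \subseteq W_{k+\ell}$, which for monomial spaces becomes $S_k + S_\ell \subseteq S_{k+\ell}$, i.e. $\{S_m\}$ must be a graded sub-semigroup of $\bigcup_m (m\Delta_d \times \{m\})$.

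First I would reduce to the case where $K$ is a body with rational vertices contained in the interior of the standard simplex $\Delta_d$ — after translating and scaling, any convex body can be placed inside $\interior(\Delta_d)$, which is what the "translating and scaling" clause in the statement allows. (One does need $K$ to genuinely fit in the open simplex so that all the lattice points used have a strictly positive $a_0$-coordinate; a generic small scaling achieves this.) Next, the natural choice is to take
\[
S_m \ = \ \big( mK \big) \cap \NN^d, \qquad\text{i.e.}\qquad W_m = \tn{span}\big\{ T_0^{a_0}\cdots T_d^{a_d} : (a_1,\ldots,a_d) \in mK \cap \NN^d\big\}.
\]
I would then check two things. (a) \emph{Semigroup property:} if $a \in mK$ and $b \in \ell K$ then $a+b \in (m+\ell)K$ by convexity, so $S_m + S_\ell \subseteq S_{m+\ell}$; hence $R(W\bull)$ is a graded subalgebra and $W\bull$ is a legitimate graded linear series. (b) \emph{The Okounkov body is $K$:} by \eqref{Alt.Descr.Okounkov.Body.GLS}, $\Delta(W\bull)$ is the closed convex hull of $\bigcup_m \frac1m (mK \cap \NN^d)$. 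Each such set lies in $K$, so $\Delta(W\bull) \subseteq K$; conversely, since $K$ has nonempty interior, the lattice points of $mK$ become equidistributed at scale $1/m$ and $\frac1m(mK\cap\NN^d)$ Hausdorff-converges to $K$ as $m\to\infty$, giving $K \subseteq \Delta(W\bull)$. (One should also note $W_0 = \KK$, which holds since $0 \in K$ once we translate $K$ to contain the origin, or one simply adjoins the constants — this is harmless for a graded linear series.)

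The main obstacle — really the only non-formal point — is step (b), showing that the rescaled lattice-point sets actually fill up $K$ in the limit, so that no convexity is "lost" and $\Delta(W\bull)$ is not a proper sub-body of $K$. This is a standard fact about lattice points in dilates of a convex body with interior: for any $x \in \interior(K)$ and any $m$, the point $\lceil mx\rceil$ (rounded coordinatewise) lies in $mK$ for $m\gg 0$ and is within distance $\sqrt d/m$ of $mx$, so $x$ is a limit of points of $\frac1m(mK\cap\NN^d)$; taking closures and using that $K = \overline{\interior(K)}$ finishes it. If one wants to avoid any limiting argument entirely, an alternative is to choose $K$ rational with vertices in $\frac1{m_0}\ZZ^d$ and set $S_m = (mK)\cap\NN^d$ — then already $\frac1{m_0}(m_0 K \cap \NN^d)$ contains all vertices of $K$ and the convex hull is exactly $K$; but the equidistribution argument is cleaner and handles arbitrary (even irrational) $K$ at once, so I would present that.
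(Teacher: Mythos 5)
Your proposal is correct and follows essentially the same route as the paper: place $K$ in the standard simplex, take $W_m$ to be the span of the monomials with exponent vectors in $mK\cap\ZZ^d$, use convexity for the subalgebra condition, and apply \eqref{Alt.Descr.Okounkov.Body.GLS} with the linear flag of Example \ref{Lex.valuation.Pd}. The only cosmetic difference is in the last step, where the paper simply observes that $\bigcup_m \tfrac1m\Gamma(W\bull)_m = K\cap\QQ^d$ and that $K$ is the closure of its rational points, in place of your rounding/equidistribution argument.
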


\begin{proof}
We mimic a construction used by Wolfe \cite{Wolfe}, which was in turn inspired by \cite{Mustata}.  Specifically, let $T \subseteq \RR^d$ be the simplex 
\[  T \ = \ \big \{  (\xi_1, \ldots , \xi_d) \in \RR^d \, \big | \, \xi_1 \ge 0 \, , \,  \ldots \, , \,  \xi_d \ge 0  \  ,  \ \sum \xi_i \le 1 \, \big \}. \] 
We may assume that $K \subseteq T$. Given $v \in mT \cap \ZZ^d$, we view $v$ as the exponent vector of a monomial $x^v$ of degree $\le m$ in variables $x_1, \ldots, x_d$. Denote by $W_m^\pr$ the $\KK$-linear span of the monomials corresponding to integer points in $mK$:
\[  W^\pr_m \ = \ \text{span}_\KK  \ \big \langle x^v \ \big | \ v \in mK \cap \ZZ^d \ \big \rangle. \]
Then evidently $W^\pr_m \cdot W^\pr_\ell \subseteq W^\pr_{m + \ell}$ for all $m , \ell 
\ge 0$. On the other hand, $W^\pr_m$ determines by homogenization a subspace
\[ W_m \  \subseteq \ \HH{0}{\PP^d}{\OO_{\PP^d}(m)}, \]
and these form a graded linear series $W\bull$. If $Y\bull$ is the linear flag appearing in Example \ref{Lex.valuation.Pd}, then
\[ \Gamma(W\bull)_m  \ = \ mK \, \cap \, \ZZ^d, \]
where as above $ \Gamma(W\bull)_m  = \Image \big( (W_m - \{ 0 \}) \lra \ZZ^d\big)$. Therefore
\[
  \bigcup_{m \ge 1} \, \tfrac{1}{m} \cdot \Gamma(W\bull)_m \ = \ K \, \cap \, \QQ^d. 
\]
But $ K  = \text{closure} \big( K \cap \QQ^d \big)$ since $K$ is the closure of its interior. So it follows from \eqref{Alt.Descr.Okounkov.Body.GLS} that $K = \Delta(W\bull)$, as required.
\end{proof}

\section{Volumes of Okounkov Bodies } \label{Volume.Ok.Body.Section}

In this section we establish the basic Theorem \ref{Vol.Okounkov.Body.Eqn} computing the volume of $\Delta(D)$, and we introduce some conditions leading to the corresponding statement for graded linear series. In the final subsection we discuss  restricted linear series.

\subsection{Semigroups.} 
Following Okounkov \cite{Okounkov03}, the plan is to deduce the theorem in question from some results of Khovanskii \cite{Khovanskii} on sub-semigroups of $\NN^{d+1}$. 
Given any semigroup $\Gamma \subseteq \NN^{d+1}$, set
\begin{equation} \label{K.Delta.Defn}
\begin{aligned}
\Sigma \ &= \ \Sigma(\Gamma) \ = \ \CCC(\Gamma) \ \subseteq \RR^{d+1} , \\
\Delta \ &= \ \Delta(\Gamma) \ = \ \Sigma \, \cap \, \big( \RR^d \times \{ 1 \} \big). \end{aligned}
\end{equation}
Moreover for $m \in \NN$, put
\begin{equation} \label{Def.Gamma.m}
\Gamma_m \ = \ \Gamma \, \cap \,  \big( \NN^d \times \{ m \} \big),
 \end{equation}
which we view  as a subset $\NN^d$. 
We do not assume that $\Gamma$ is finitely generated, but we will suppose that it satisfies three conditions:
\begin{gather}
\label{zero} \Gamma_0 \ = \ \{0\} \, \in \, \NN^d; \\
 \label{bounded}
\begin{gathered} \text{$\exists$ finitely many vectors $(v_i, 1)$ spanning a semi-group $B \subseteq \NN^{d+1}$   such that}\\ \Gamma \subseteq B;   
 \end{gathered}\\
  \label{full} \Gamma \text{ generates } \ZZ^{d+1} \text{ as a group. } \end{gather}
 Observe that these conditions imply that $\Delta(\Gamma)$ -- which we consider in the natural way as a subset of $\RR^d$ --  is a convex body.

The essential point is the following
\begin{proposition}  \label{semigroup.proposition}
Assume that $\Gamma$ satisfies \eqref{zero} -- \eqref{full}. Then 
\[ 
\lim_{m \to \infty} \, \frac{\#   \, \Gamma_m }{m^d} \ = \ \vol_{\RR^d}( \Delta) . 
\]
\end{proposition}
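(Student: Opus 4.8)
The plan is to reduce the counting statement to a combinatorial fact about semigroups. First I would observe that the saturation of $\Sigma \cap \NN^{d+1}$ is exactly the cone $\Sigma$, so $\Delta = \Sigma \cap (\RR^d \times \{1\})$ is indeed the convex body carved out by $\Sigma$. The quantity $\# \Gamma_m$ is sandwiched between the number of lattice points in the slice $\Sigma \cap (\RR^d \times \{m\})$ and the number of lattice points in $B \cap (\RR^d \times \{m\})$, where $B$ is the containing semigroup from \eqref{bounded}; both of these behave like $\vol_{\RR^d}(\Delta)\cdot m^d + O(m^{d-1})$ by elementary convex geometry (counting lattice points in a dilated polytope). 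So the whole content is to show that $\Gamma_m$ captures ``most'' of the lattice points in $m\Delta$ as $m \to \infty$.

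The key step is to invoke Khovanskii's theorem, recalled in (0.5): since $\Gamma$ generates $\ZZ^{d+1}$ as a group and $\Sigma(\Gamma)$ is a rational polyhedral cone (being the cone on the polytope $B$'s base, hence finitely generated), there exists $z \in \Sigma$ with $(z + \Sigma) \cap \ZZ^{d+1} \subseteq \Gamma$. One subtlety is that (0.5) is stated for \emph{finitely generated} $\Gamma$, whereas here $\Gamma$ need not be finitely generated; but $\Gamma$ contains a finitely generated sub-semigroup $\Gamma'$ that still generates $\ZZ^{d+1}$ as a group (pick finitely many elements of $\Gamma$ spanning the group and generating a cone equal to $\Sigma$ — this uses condition \eqref{bounded} to know $\Sigma$ is finitely generated, so finitely many elements of $\Gamma$ suffice to span it), and then Khovanskii applied to $\Gamma'$ gives the inclusion $(z + \Sigma) \cap \ZZ^{d+1} \subseteq \Gamma' \subseteq \Gamma$. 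Writing $z = (z_0, \ell)$ with $\ell \ge 0$, the slice at level $m$ satisfies: for $m$ large, every lattice point of $\Gamma_m$ lies in $m\Delta \cap \ZZ^d$, while every lattice point of $(m-\ell)\Delta$-translate-by-$z_0$ that lands in $\ZZ^d$ lies in $\Gamma_m$. Since translation by a fixed vector and replacing $m$ by $m - \ell$ changes lattice point counts only by $O(m^{d-1})$, we get
\[
\# \Gamma_m \ = \ \vol_{\RR^d}(\Delta) \cdot m^d \, + \, O(m^{d-1}),
\]
which gives the limit after dividing by $m^d$.

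The main obstacle I expect is the non-finite-generation of $\Gamma$: Khovanskii's result as quoted requires finite generation, so the argument genuinely depends on the observation that condition \eqref{bounded} forces $\Sigma(\Gamma)$ to be a rational polyhedral cone whose extremal rays can be matched by finitely many honest elements of $\Gamma$, after which one passes to a finitely generated $\Gamma' \subseteq \Gamma$ with the same cone and the same group. Care is also needed to check that this $\Gamma'$ really does generate $\ZZ^{d+1}$ as a group — one should enlarge the finite generating set for the cone by finitely many further elements of $\Gamma$ witnessing that $\Gamma$ spans $\ZZ^{d+1}$, which is harmless since adding generators only enlarges $\Gamma'$ while keeping $\Gamma' \subseteq \Gamma$ and $\Sigma(\Gamma') = \Sigma(\Gamma)$. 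Everything else — the two-sided lattice-point sandwich and the asymptotics of lattice points in dilated rational polytopes — is routine convex geometry with error term $O(m^{d-1})$.
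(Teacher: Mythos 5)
There is a genuine gap, and it is located exactly where you flagged your ``main obstacle.'' Your resolution of the non-finite-generation issue rests on the claim that condition \eqref{bounded} forces $\Sigma(\Gamma)$ to be a rational polyhedral cone, ``being the cone on the polytope $B$'s base, hence finitely generated.'' This is false: $\Gamma\subseteq B$ only gives $\Sigma(\Gamma)\subseteq\Sigma(B)$, and a closed subcone of a rational polyhedral cone need be neither polyhedral nor rational. The paper itself supplies counterexamples in the geometric setting this proposition is designed for: for an ample divisor on an abelian surface the body $\Delta$ is a trapezoid with an irrational vertex (so $\Sigma(\Gamma)$ is polyhedral but not rational), and \S \ref{Round.Cone.Ex} exhibits a $\Delta(D)$ that is not even polyhedral. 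Consequently your proposed finitely generated $\Gamma'\subseteq\Gamma$ with $\Sigma(\Gamma')=\Sigma(\Gamma)$ cannot exist in general --- a finitely generated semigroup always spans a \emph{rational} polyhedral cone, and an irrational extremal ray of $\Sigma(\Gamma)$ contains no nonzero lattice points at all, so it cannot be ``matched by honest elements of $\Gamma$.'' The single application of Khovanskii with the full cone $\Sigma$ therefore breaks down, and with it the claimed two-sided estimate $\#\Gamma_m=\vol_{\RR^d}(\Delta)\cdot m^d+O(m^{d-1})$ (which is in any case stronger than the proposition asserts and should have been a warning sign).

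The correct repair is a limiting argument rather than a one-shot reduction: exhaust $\Gamma$ by an increasing chain of finitely generated sub-semigroups $\Gamma^1\subseteq\Gamma^2\subseteq\cdots$ with $\bigcup_i\Gamma^i=\Gamma$, each still satisfying \eqref{zero}--\eqref{full}. Khovanskii applies to each $\Gamma^i$ and yields $\liminf_m \#\Gamma_m/m^d\ \ge\ \liminf_m \#(\Gamma^i)_m/m^d\ \ge\ \vol_{\RR^d}(\Delta^i)$, where $\Delta^i=\Delta(\Gamma^i)$; one then lets $i\to\infty$ and uses $\vol_{\RR^d}(\Delta^i)\to\vol_{\RR^d}(\Delta)$. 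Note that no single $\Sigma(\Gamma^i)$ equals $\Sigma(\Gamma)$ --- only the volumes converge --- which is precisely why one gets a limit but no error term. Your upper bound $\Gamma_m\subseteq m\Delta\cap\ZZ^d$ and the ensuing $\limsup$ inequality are fine and agree with the paper.
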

\noi Here $\vol_{\RR^d}$ denotes the standard Euclidean volume on $\RR^d = \ZZ^d \otimes \RR$, normalized so that the unit cube $[0,1]^d$  has volume $= 1$.

\begin{proof} We repeat Okounkov's argument from \cite[\S 3]{Okounkov03}. 
One has \[  \Gamma_m \ \subseteq \ m \Delta \, \cap \, \ZZ^d, \]
and since
\[
\lim_{m \to \infty} \, \frac { \# \big( m \Delta \, \cap \, \ZZ^d \big)}{m^d} \ = \ \vol_{\RR^d}(\Delta)
\]
it follows that
\begin{equation}
\limsup_{m \to \infty} \, \frac{\#   \, \Gamma_m }{m^d} \le \ \vol_{\RR^d}( \Delta) . 
\notag \end{equation}
For the reverse inequality, assume to begin with that $\Gamma$ is finitely generated. Khovanskii \cite[\S 3, Proposition 3]{Khovanskii} shows that in this case there exists a vector $\gamma \in \Gamma$ such that
\[  \big ( \Sigma + \gamma \big) \, \cap \, \NN^{d+1} \ \subseteq \ \Gamma: \]
here one uses that $\Gamma$ generates $\ZZ^{d+1}$ as a group (see (0.5) in \S \ref{Notation.Conventions.Section}).
But evidently
\[
\lim_{m \to \infty} \frac {\# \left \{ (\Sigma + \gamma) \, \cap \, (\NN^d \times \{m\} ) \right \}}{m^d} \ = \ \vol_{\RR^d}(\Delta),
\]
and hence
\begin{equation}
\liminf_{m \to \infty} \, \frac{\#   \, \Gamma_m }{m^d} \ge \ \vol_{\RR^d}( \Delta) . 
\tag{*} \end{equation}
This proves the theorem when $\Gamma$ is finitely generated. 

In general, choose finitely generated sub-semigroups
\[  \Gamma^1 \ \subseteq \ \Gamma^2 \ \subseteq \ \ldots \ \subseteq  \
  \Gamma, \]
each satisfying \eqref{zero}--\eqref{full}, in such a manner that
$\cup \, \Gamma^i = \Gamma$. Then $\# \, \Gamma_m \ge \# \, (\Gamma^i)_m$ for all $m \in \NN$. Writing $\Delta^i = \Delta(\Gamma^i)$, it follows by applying (*) to $\Gamma^i$ that
\[
\liminf_{m \to \infty} \, \frac{\#   \, \Gamma_m }{m^d} \ge \ \vol_{\RR^d}( \Delta^i) .\]
for all $i$.
But $ \vol_{\RR^d}(\Delta^i) \to \vol_{\RR^d}(\Delta) $ and so (*) holds also for $\Gamma$ itself.
\end{proof}

\subsection{Global Linear Series.}
We return now to the geometric setting of Section 1. For global linear series, the discussion just completed applies without further ado thanks to: 
\begin{lemma} \label{Gamma.L.generates.Group}
Let $X$ be a projective variety of dimension $d$, and let $Y\bull$ be any admissible flag of subvarieties of $X$. If $D$ is any big divisor on $X$, then the graded semigroup \[ \Gamma \ = \ \Gamma_{Y\bull}(D)  \ \subseteq  \ \NN^{d+1} \]associated to $D$   satisfies the three conditions \eqref{zero} -- \eqref{full}. 
\end{lemma}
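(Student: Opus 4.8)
The plan is to verify the three conditions \eqref{zero}, \eqref{bounded}, \eqref{full} in turn for $\Gamma = \Gamma_{Y\bull}(D)$, the first two being essentially formal and the third being the one that uses bigness of $D$ in an essential way. Condition \eqref{zero} is immediate: $\Gamma_0$ records the valuation vectors of nonzero sections of $\OO_X$, and since $X$ is projective and irreducible, $\HH{0}{X}{\OO_X} = \KK$, whose only nonzero sections are constants with $\nu_{Y\bull}$-value $0$. Condition \eqref{bounded} is exactly the content of the Boundedness Lemma \ref{boundedness}: the proof there produces an integer $b \gg 0$ with $\nu_i(s) < mb$ for every $1 \le i \le d$ and every nonzero $s \in \HH{0}{X}{\OO_X(mD)}$, so $\Gamma$ is contained in the semigroup $B \subseteq \NN^{d+1}$ generated by the finitely many lattice points $(v,1)$ with $0 \le v_i < b$, as required.

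The substantive step is \eqref{full}: that $\Gamma$ spans $\ZZ^{d+1}$ as a group. First I would observe that, because $D$ is big, $\Delta(D)$ has nonempty interior in $\RR^d$ — equivalently $\Sigma(\Gamma)$ is a full-dimensional cone in $\RR^{d+1}$ — so the group generated by $\Gamma$ certainly has finite index in $\ZZ^{d+1}$; the point is to show the index is $1$. The natural approach is to exhibit explicit elements of the group $\langle \Gamma \rangle$ that together with $(0,\dots,0,1)$ generate everything. Using property (iii) of $\nu_{Y\bull}$ (additivity under tensor product) one knows $\langle\Gamma\rangle$ is closed under the relevant additions, so it suffices to produce, for large $m$, a section whose valuation vector is $(0,\dots,0,m)$ together with sections realizing each "coordinate step." Concretely: for $m \gg 0$, $mD$ is big, hence $\HH{0}{X}{\OO_X(mD)} \ne 0$; and after twisting one can arrange that restriction maps down the flag $X \supseteq Y_1 \supseteq \cdots \supseteq Y_d$ stay nonzero, so there is a section $s$ with $\nu_{Y\bull}(s) = (0,\dots,0)$, giving $(0,\dots,0,m) \in \Gamma$. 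To get the standard basis vectors $e_i$ of $\ZZ^d$ into $\langle\Gamma\rangle$, I would find, for each $i$, two sections of $\OO_X(mD)$ (same $m$) whose valuation vectors differ by exactly $e_i$; subtracting gives $e_i \in \langle\Gamma\rangle$. Such pairs exist because bigness makes the linear series $\linser{mD}$ large enough to separate these valuations — one can use that $\OO_X(mD)$, for $m$ large, can be written as $A + B$ with $A$ very ample, and very ample linear series have valuation images that hit every sufficiently small lattice increment; more robustly, one invokes that $mD$ is big so $\linser{mD}$ defines a birational map for $m \gg 0$, whence (by an argument like the one used for $\nu_1$ and then inductively restricting down the flag) the set $\Gamma(D)_m$ of valuation vectors contains, for suitable $m$, two points differing by $e_i$.

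I expect condition \eqref{full} — specifically producing the lattice-increment pairs realizing each $e_i$ — to be the main obstacle, since it is where bigness must be converted into a concrete statement about the fine structure of the valuation images $\Gamma(D)_m$. The cleanest route is probably: pick $m_0$ with $\linser{m_0 D}$ birational onto its image, note that then for the generic point of $Y_1$ (and inductively of each $Y_{i+1}$ inside $Y_i$) the restricted sections separate that point's local coordinates to first order, so that one gets sections with $\nu$-values $(0,\dots,0)$ and $e_i$ in $\Gamma(D)_{m_0}$ (or in $\Gamma(D)_{m_0}$ and $\Gamma(D)_{2m_0}$, which still suffices after using the grading generator $(0,\dots,0,m_0)$). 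Combining these with the generator along the last coordinate and with property (iii) of $\nu_{Y\bull}$ then shows $\langle\Gamma\rangle = \ZZ^{d+1}$, completing the verification and hence the lemma.
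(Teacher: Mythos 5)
Your verification of \eqref{zero} and \eqref{bounded} is fine and agrees with the paper (the latter is exactly the estimate from Lemma \ref{boundedness}). The gap is in \eqref{full}. You assert that for $m \gg 0$ there is a section $s \in \HH{0}{X}{\OO_X(mD)}$ with $\nu_{Y\bull}(s) = (0,\ldots,0)$ (``after twisting one can arrange that restriction maps down the flag stay nonzero''), and in your preferred route that birationality of the map defined by $\linser{m_0D}$ forces $\Gamma(D)_{m_0}$ to contain $0$ and the vectors $e_i$. Neither claim is true for an \emph{arbitrary} admissible flag: a big divisor $D$ can have $Y_1$, or just the point $Y_d$, contained in the base locus of \emph{every} $\linser{mD}$ (e.g.\ $Y_1$ a fixed component, or $Y_d \in \BBB(D)$), in which case every section vanishes along $Y_1$ (resp.\ at $Y_d$) and no section has valuation $0$; and $\phi_{m_0}$ is only a rational map, undefined along $\Bs{m_0D}$, so it separates nothing along a flag lying inside the base locus. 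There is also no ``twisting'' available: the only sections at your disposal are those of multiples of $D$ itself. This is precisely why the paper's Lemma \ref{Cond.F.generates.generic.flag}, which does use birationality, only produces \emph{some} suitably chosen flag (with $Y_d$ off the base locus), whereas the present lemma must hold for every flag. (Your opening assertion that $\Delta(D)$ has nonempty interior, hence finite index, is also unjustified at this stage --- nonemptiness of the interior is normally deduced from the volume formula, which rests on this very lemma --- but it is not load-bearing.)

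The paper avoids the issue by never asking for a $\nu=0$ section of $mD$: write $D = A - B$ with $A,B$ very ample (adding a further very ample divisor so that there exist $s_0 \in \HH{0}{X}{\OO_X(A)}$ and $t_0,\ldots,t_d \in \HH{0}{X}{\OO_X(B)}$ with $\nu(s_0)=\nu(t_0)=0$ and $\nu(t_i)=e_i$); bigness gives $mD \lin B + F_m$ with $F_m$ effective for $m \ge m_0$, so multiplying the $t_i$ by a section defining $F_m$ puts $(f_m,m), (f_m+e_1,m),\ldots,(f_m+e_d,m)$ in $\Gamma$, while $(m+1)D \lin A + F_m$ puts $(f_m,m+1)$ in $\Gamma$; the unknown vector $f_m$ cancels in differences, exhibiting the standard basis of $\ZZ^{d+1}$ in the group generated by $\Gamma$. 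Your first suggested mechanism --- $m_0D \lin A + E$ with $A$ very ample and $E$ effective, multiplying prescribed-valuation sections of $A$ by the fixed section of $E$ --- is essentially this trick and can be made to work (it gives the pairs differing by $e_i$, hence $(e_i,0)$ in the group; and then one does not need a $\nu=0$ section at all, since $\Gamma_m \ne \emptyset$ for two consecutive values of $m$ already yields $(0,\ldots,0,1)$ after reducing modulo the $(e_i,0)$). But as written your argument leans on the false existence of valuation-zero sections of $mD$ and on coordinate separation along a flag that may sit in the base locus, so the verification of \eqref{full} is not complete.
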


\begin{proof} That $\Gamma_0 = 0$ is clear. As for \eqref{bounded}, we noted in the proof of Lemma \ref{boundedness} that there is an integer $b \gg 0$ with the property that 
\[   \nu_i(s) \ \le \ mb \ \ \text{for every }  1 \le i \le d \ \text{ and every } 0 \ne s \in \HH{0}{X}{\OO_X(mD)}. \]
This implies that $\Gamma$ is contained in the semigroup $B \subseteq \NN^{d+1}$ generated by  all vectors $(a_1, \ldots, a_d, 1) \in \NN^{d+1}$ with $0 \le a_i \le b$. It remains to show that $\Gamma$ generates $\ZZ^{d+1}$ as a group. 

To this end  write $D = A - B$ as the difference of two very ample divisors. By adding a further very ample divisor to both $A$ and $B$, we can suppose that there exist sections $s_0 \in \HH{0}{X}{\OO_X(A)}$ and $ t_i \in \HH{0}{X}{\OO_X(B)}$ for $0 \le i \le d$ such that 
\[ \nu(s_0) \ = \  \nu(t_0) \ = \ 0 \ \ , \ \ \nu_i(t_i) \ = \  e_i \ \  (1 \le i \le d) ,\]
where $e_i \in \ZZ^d$ is the $i^{\text{th}}$ standard basis vector. In fact, it suffices  that $t_i$ is non-zero on $Y_{i-1}$, while the restriction $t_i|{Y_{i-1}}$ vanishes simply along $Y_i$ in a neighborhood of the point $Y_{d}$.  Next, since $D$ is big, there is an integer $m_0 = m_0(D)$ such that $mD - B$ is linearly equivalent to an effective divisor $F_m$ whenever $m \ge m_0$. Thus $mD \lin B + F_m$, and if $f_m \in \ZZ^d$ is the valuation vector of a section defining $F_m$, then we find that
\begin{equation}
(f_m , m)  \ , \ (f_m + e_1, m) \ , \ \ldots \ , \ (f_m + e_d, m)  \ \in \ \Gamma. \tag{*}
\end{equation}
On the other hand, $(m+1)D \lin A + F_m$, and so $\Gamma$ also contains the vector  $(f_m, m+1)$. Combined with  (*), this exhibits the standard basis of $\ZZ^{d+1}$ as lying in the group generated by $\Gamma$. 
\end{proof}

One then gets:
\begin{theorem} \label{Vol.eq.Vol}
Let $D$ be a big divisor on a projective variety $X$ of dimension $d$. Then \[  \vol_{\RR^d} \big ( \Delta(D) \big) \ = \ \frac{1}{d!} \, \vol_X(D), \]
where the Okounkov body $\Delta(D)$ is constructed with respect to any choice of an admissible flag $Y\bull$ as in \eqref{admissible.flag.eqn}. \end{theorem}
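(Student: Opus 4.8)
The plan is to combine Proposition \ref{semigroup.proposition} with Lemma \ref{Gamma.L.generates.Group}, so that the only remaining issue is to match the combinatorial quantity $\lim_{m\to\infty} \#\,\Gamma_m/m^d$ with the geometric quantity $\vol_X(D)$. Concretely, Lemma \ref{Gamma.L.generates.Group} guarantees that the graded semigroup $\Gamma = \Gamma_{Y\bull}(D) \subseteq \NN^{d+1}$ satisfies the hypotheses \eqref{zero}--\eqref{full} of Proposition \ref{semigroup.proposition}, and by definition $\Delta(\Gamma) = \Delta(D)$. Hence Proposition \ref{semigroup.proposition} immediately gives
\[
\lim_{m\to\infty} \frac{\#\,\Gamma(D)_m}{m^d} \ = \ \vol_{\RR^d}\big(\Delta(D)\big).
\]

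First I would observe that $\Gamma(D)_m$, the image of $\nu_{Y\bull}$ on the nonzero sections of $\OO_X(mD)$, has cardinality exactly $\hh{0}{X}{\OO_X(mD)}$: this is precisely the content of Lemma \ref{No.of.Val.Vectors} applied to $W = \HH{0}{X}{\OO_X(mD)}$, since the number of distinct valuation vectors arising from a finite-dimensional space equals its dimension. Therefore $\#\,\Gamma(D)_m = \hh{0}{X}{\OO_X(mD)}$ for every $m$, and so
\[
\lim_{m\to\infty} \frac{\hh{0}{X}{\OO_X(mD)}}{m^d} \ = \ \vol_{\RR^d}\big(\Delta(D)\big).
\]
Finally, by the definition of $\vol_X(D)$ recalled in the Introduction — namely $\vol_X(D) = \lim_{m\to\infty} \hh{0}{X}{\OO_X(mD)}\big/(m^d/d!)$ — the left-hand side equals $\tfrac{1}{d!}\vol_X(D)$, which is exactly the claimed identity. (One should note in passing that since $D$ is big, this limit exists and is positive, so $\Delta(D)$ genuinely has nonempty interior and is a convex body, consistent with the remark preceding the theorem.)

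There is essentially no obstacle here: the theorem is a formal corollary of the three preceding results (the semigroup Proposition, the generation Lemma, and the counting Lemma), together with the bookkeeping of the normalization constants $m^d$ versus $m^d/d!$. The genuine work has already been carried out — the hard part was Proposition \ref{semigroup.proposition}, whose difficult direction relies on Khovanskii's theorem from (0.5) and the approximation of a general semigroup by finitely generated subsemigroups; and Lemma \ref{Gamma.L.generates.Group}, whose nontrivial point was verifying that $\Gamma_{Y\bull}(D)$ generates $\ZZ^{d+1}$ as a group by exhibiting the standard basis vectors via the sections $s_0, t_i$ and the effective divisors $F_m$ coming from bigness. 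So the proof of Theorem \ref{Vol.eq.Vol} is just the short chain of equalities above; I would write it in two or three sentences, citing Lemma \ref{Gamma.L.generates.Group}, Proposition \ref{semigroup.proposition}, and Lemma \ref{No.of.Val.Vectors} in turn.
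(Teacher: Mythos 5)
Your proposal is correct and follows exactly the route the paper itself takes: apply Proposition \ref{semigroup.proposition} to $\Gamma_{Y\bull}(D)$ (whose hypotheses are verified by Lemma \ref{Gamma.L.generates.Group}), identify $\#\,\Gamma(D)_m$ with $\hh{0}{X}{\OO_X(mD)}$ via Lemma \ref{No.of.Val.Vectors}, and conclude from the definition of $\vol_X(D)$. Nothing is missing; the bookkeeping of the factor $d!$ is handled as in the paper.
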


\begin{proof}
Let $\Gamma = \Gamma(D)$ be the graded semigroup of $D$ with respect to $Y\bull$. Proposition \ref{semigroup.proposition} applies  thanks to the previous lemma, and  hence
\begin{equation} \label{Volume.Counts.Points.Eqn}
\vol_{\RR^d} \big ( \Delta(D) \big) \ = \ \lim_{m \to\infty}\, \frac{ \# \, \Gamma(D)_m}{m^d}. 
\end{equation}
On the other hand,  it follows from Lemma \ref{No.of.Val.Vectors} that $\# \Gamma(D)_m = \hh{0}{X}{\OO_X(mD)}$, and then by definition the limit on the right in \eqref{Volume.Counts.Points.Eqn}
 computes $\frac{1}{d!}\vol_X(D)$. 
\end{proof}

\subsection{Conditions on Graded Linear Series.} 
\label{Conds.GLS.subsection}
Turning to the setting of graded linear series, suppose that $X$ is an irreducible variety of dimension $d$, and that $W\bull$ is a graded linear series associated to a divisor $D$ on $X$. Fix an admissible flag $Y\bull$. We seek conditions on $W\bull$ and on $Y\bull$ in order that the corresponding graded semigroup $\Gamma_{Y\bull}(W\bull) \subseteq \NN^{d+1}$ satisfies the conditions \eqref{bounded} and \eqref{full}. 

For \eqref{bounded}, we propose the following:
\begin{definition} \textbf{(Condition (A)).} 
We say that $W\bull$ satisfies {condition} (A) with respect to $Y\bull$ if there is an integer $b \gg 0$ such that for every $0 \ne s \in W_{m}$, 
\[  \nu_i(s) \ \le \ mb  \]
for all $1 \le i \le d$. \qed
\end{definition}
\noi As in the proof of Lemma \ref{Gamma.L.generates.Group}, this indeed implies that  \eqref{bounded} holds. We note that (A) holds automatically if $X$ is projective: this was established in the course of proving Lemma \ref{boundedness}.

Concerning the spanning condition \eqref{full}, we start with:
 \begin{definition} \textbf{(Condition (B)).} \label{Cond.B.Def}
We will say that $W\bull$ satisfies  {condition} (B)  if  $W_m \ne0$ for all $m \gg 0$, and if for all sufficiently large $m$ the rational map
\[  \phi_m \, : \, X \dra \PP \ = \ \PP(W_m) \]
defined by $\linser{W_m}$ is birational onto its image. \qed
\end{definition}
\noi Equivalently, one could ask that $W_k \ne 0$ for all sufficiently large $k$, and that $\phi_m$ be birational onto its image for any one  $m > 0$. 

One then has
 \begin{lemma} \label{Cond.F.generates.generic.flag}
 If $W\bull$ satisfies condition \tn{(B)}, then there exists an admissible flag $Y\bull$ on $X$ with respect to which the graded semigroup $\Gamma_{Y\bull}(W\bull) \subseteq \NN^{d+1}$ generates $\ZZ^{d+1}$ as a group. 
 \end{lemma}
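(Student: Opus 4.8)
The plan is to build the admissible flag $Y\bull$ inductively, choosing at each step a subvariety that is "generic enough" to see all the valuation vectors we need. The key observation is that condition (B) gives us, for $m\gg 0$, a birational map $\phi_m\colon X\dra \PP(W_m)$, so the rational functions $s/s'$ for $s,s'\in W_m$ generate (a subfield that has the same transcendence degree as, hence by birationality is cofinal in) the function field $\KK(X)$. The strategy for the group-generation statement mimics the proof of Lemma 1.15: we want to produce, for some fixed $m$, sections whose valuation vectors are $0, e_1,\dots,e_d$ together with a section in $W_{m+1}$ whose valuation vector agrees with one of these in the first $d$ coordinates; together these exhibit the standard basis of $\ZZ^{d+1}$ in the group generated by $\Gamma_{Y\bull}(W\bull)$.

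First I would fix $m$ large enough that $\phi_m$ is birational onto its image and $W_m\neq 0$. Using birationality, choose a point $x\in X$ (a general point) such that $\phi_m$ is a local isomorphism near $x$; then the sections in $W_m$, viewed in suitable local coordinates at $x$, have no common base point at $x$ and moreover separate tangent directions there. Now construct the flag by taking $Y_d=\{x\}$ and building $Y_{d-1}\supseteq\cdots\supseteq Y_1\supseteq Y_0=X$ downward, or rather upward from the point: pick local coordinates $z_1,\dots,z_d$ at $x$ and let $Y_i=\{z_1=\cdots=z_i=0\}$ locally, extended to global subvarieties (one can arrange the $Y_i$ to be smooth at $x$, which is all admissibility requires). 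The point of choosing $x$ general and using the separation-of-jets consequence of birationality of $\phi_m$ is that for each standard basis vector $e_i\in\ZZ^d$ we can find a section $s_i\in W_m$ with $\nu_{Y\bull}(s_i)=e_i$, and a section $s_0\in W_m$ with $\nu_{Y\bull}(s_0)=0$: indeed $\phi_m$ being an immersion at $x$ means the images of the $W_m$-sections in $\mathfrak{m}_x/\mathfrak{m}_x^2$ span, and with a bit more care (choosing the coordinates $z_i$ adapted to this span) one gets exactly the valuation vectors $0,e_1,\dots,e_d$. This gives $(0,m),(e_1,m),\dots,(e_d,m)\in\Gamma_{Y\bull}(W\bull)$.

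Next, to get the last generator of $\ZZ^{d+1}$, I would use that $W_m\cdot W_k\subseteq W_{m+k}$ and that $W_1$ (or some fixed $W_{m'}$ with $m'$ chosen so that $\gcd$-type issues vanish) is nonzero: take any $0\neq t\in W_{m'}$ with some valuation vector $f\in\NN^d$; then $f\cdot(\text{stuff})$... more cleanly, multiply $s_0\in W_m$ by a nonzero element of $W_{m'}$ to land in $W_{m+m'}$, and separately note $s_0\otimes(\text{same element})$ has a computable valuation vector by property (iii) of $\nu_{Y\bull}$; comparing the vector $(\nu(s_0)+\nu(t),m+m')$ with $(\nu(t),m')$ and the vectors $(0,m),(e_i,m)$ already obtained, one extracts a vector of the form $(0,1)$ (or $(*,1)$ which combined with the $e_i$'s gives $(0,1)$) in the group generated by $\Gamma$. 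Since we already have $e_1,\dots,e_d$ in the $m$-graded piece, the group generated by $\Gamma_{Y\bull}(W\bull)$ contains $e_1,\dots,e_d$ and $(0,1)$, hence all of $\ZZ^{d+1}$.

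The main obstacle I expect is the first construction step: translating "birational onto its image" into the concrete statement that a general point $x$ admits local coordinates in which $W_m$ realizes the valuation vectors $0,e_1,\dots,e_d$ for a suitably chosen flag through $x$. Birationality only directly gives generic injectivity of $\phi_m$, which after shrinking implies $\phi_m$ is an immersion on a dense open set (in characteristic $0$; in positive characteristic one must be slightly careful and perhaps pass to a power $W_{m}$ where the separable-generation survives, or invoke that generic smoothness of the normalization suffices to separate one jet). The cleanest route is probably: $\phi_m$ birational $\Rightarrow$ the subfield of $\KK(X)$ generated by ratios of sections in $W_m$ equals $\KK(X)$ $\Rightarrow$ for generic $x$ these ratios give local coordinates at $x$; then define $Y_i$ to be the (closure of the) common zero locus of the first $i$ of these coordinate functions' numerators, arranged to be smooth at $x$. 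I would also want to double-check that the $Y_i$ so produced can be taken irreducible of codimension exactly $i$ and smooth at $x$, which is a standard Bertini-type argument since the coordinates can be perturbed within the linear system.
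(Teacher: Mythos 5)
Your overall strategy is the same as the paper's: center the flag at a point where $\phi_m$ is a local isomorphism onto its image, realize the valuation vectors $0,e_1,\dots,e_d$, and then use a second degree to get the last generator of $\ZZ^{d+1}$. However, the mechanism you rely on in the middle of the argument --- jet separation at $x$ together with a flag that is merely ``adapted coordinates'' extended arbitrarily to global subvarieties --- has a genuine gap. The condition $\nu_{Y_{\bullet}}(s)=e_i$ for $i<d$ is not a condition on the $1$-jet of $s$ at $x$: it forces the iterated restriction of $s$ to $Y_{i-1}$ to vanish along \emph{all} of the positive-dimensional subvariety $Y_i$, since $\nu_i$ is an order of vanishing at the generic point of $Y_i$. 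A section whose $1$-jet at $x$ is $z_i$ but which does not vanish identically along $Y_i$ has $\nu_i=0$ and a completely different valuation vector, so for an arbitrary extension of the coordinate flag the vectors $e_1,\dots,e_{d-1}$ need not be realized by sections of $W_m$ at all. This is precisely why the paper does not stay inside $W_\ell$: it takes an arbitrary admissible flag centered at $y$ and produces $t_0,\dots,t_d$ in $W_{p\ell}$ for $p\gg 0$ by pulling back high-degree hypersurfaces of $\PP(W_\ell)$ through the images of the $Y_i$. Your last paragraph gestures at the alternative repair --- build $Y_i$ itself inside the zero locus of the chosen section, i.e.\ take $Y_i$ to be the component through $x$ of the zero locus of $s_i|_{Y_{i-1}}$, which near $x$ is smooth of codimension one and whose local equation divides $s_i|_{Y_{i-1}}$ exactly once with unit quotient --- and that version does work and keeps everything in $W_m$; but you present it as a side remark rather than as the necessary replacement for the jet argument.

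The final-generator step is also incorrect as written: the comparison of $(\nu(s_0)+\nu(t),m+m')$ with $(\nu(t),m')$ only returns $(0,m)$, which you already have. The fix is immediate from condition (B): since $W_k\neq 0$ for all $k\gg 0$, choose $0\neq t\in W_{m+1}$; then $(\nu(t),m+1)-(0,m)-\sum_i\nu_i(t)\,(e_i,0)=(0,1)$ lies in the group generated by $\Gamma_{Y_{\bullet}}(W_{\bullet})$, which together with $(e_1,0),\dots,(e_d,0)$ gives all of $\ZZ^{d+1}$. (The paper instead fixes a degree $q$ prime to $\ell$ with $y$ outside the base locus of $\linser{W_q}$, contributing the vector $(0,q)$.) Finally, your caveat about positive characteristic is unnecessary: ``birational onto its image'' already means an isomorphism over a dense open subset of the image, so at a smooth point $x$ of $X$ lying in that open set the pullbacks of hyperplanes separate tangent vectors with no generic-smoothness input.
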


\begin{proof} (Compare \cite{Okounkov03}.)
Assume  that  $\linser{W_\ell}$ determines a birational embedding \[ \phi \, = \,  \phi_\ell : X \dra \PP = \PP(W_\ell). \]
Let $y \in X$ be any smooth point at which $\phi_\ell$ is defined and locally an isomorphism onto its image, and which in addition is not contained in the base locus of $\linser{W_q}$ for some fixed large integer $q$ relatively prime to $\ell$.
Take
\[  Y_{\bull} \ \ : \ \ X \ \supseteq \   Y_1\, \supseteq \, Y_2 \, \supseteq \,\ldots \, \supseteq \, Y_{d-1} \,  \supseteq \,  Y_{d} = \{ y \} \]
to be any admissible flag centered at $y$.  Then for any $p \gg 0$      one can find (by pulling back suitable hypersurfaces in $\PP$)
 sections
$t_0, t_1, \ldots, t_d \in W_{p\ell}$ such that
\[  \nu_{Y\bull}(t_0) \, = \, 0 \ \ ,  \ \ \nu_{Y\bull}(t_i) \ = \ e_i    \ \ (1 \le i \le d),\]
where $e_i \in \NN^d$ is the $i^{\text{th}}$ standard basis vector. On the other hand,  there exists $s_0 \in W_q$ such that $\nu_{Y\bull}(s_0) = 0$. All told, this exhibits the vectors
\[  (0, p\ell) \, , \, (e_1, p\ell)\, , \, \ldots\, , \, (e_d, p\ell)\, , \, (0, q)  \ \in \ \NN^{d+1} \]
as lying in $\Gamma_{Y\bull}(W\bull)$, which proves the lemma.
\end{proof}

\begin{remark} \label{Very.General.Flags.OK}
Observe for later reference that given countably many graded linear series $W_{\bullet, \alpha}$ each satisfying condition (B), there exists a flag $Y\bull$ for which the conclusion of the Lemma holds simultaneously for all of them.  In fact, since we are working over an uncountable base-field, one can fix a smooth point $y \in X$ at which the countably many birational morphisms defined by the relevant linear series $\linser{W_{k, \alpha}}$ are all defined and locally isomorphisms onto their images. Then as in the previous proof it suffices to  take any admissible flag $Y\bull$ with $Y_{d} = \{  y\}$.   \end{remark}

\begin{remark} \textbf{(Characteristic zero.)} If the ground field $\KK$ has characteristic zero, one can show with a little more effort that the  conclusion of the Lemma holds assumng only that   $\phi_m$ is generically finite over its image. \qed \end{remark}

We also give a criterion to guarantee that \eqref{full} holds with respect to any flag $Y\bull$.
\begin{definition} \textbf{(Condition (C)).} \label{Def.of.Cond.C}
Assume that $X$ is projective, and that $W\bull$ is a graded linear series associated to a big divisor $D$. 
We say that $W\bull$ satisfies condition (C) if:
\begin{enumerate}
\item [(i).] For every $m \gg 0$ there exists an effective divisor $F_m$ on $X$ such that the divisor\[ A_m \ =_{\text{def}} \ mD -F_m \] is ample; and  \sbl
\item[(ii).]
For all sufficiently large $p$,
\[
\HH{0}{X}{\OO_X(pA_m)} \ = \ \HH{0}{X}{\OO_X(pmD - pF_m)} \ \subseteq \ W_{pm} \ \subseteq \  \HH{0}{X}{\OO_X(pmD)}
\]
where the inclusion of the outer groups is the natural one determined by $pF_m$. 
\end{enumerate}
 \end{definition}

\begin{remark}  \textbf{(Alternate criterion for Condition (C).)}
\label{Alt.Cond.C}
As above, it is equivalent to ask that (i) and (ii) hold for one value $m_0$ of $m$, and that $W_k \ne 0$ for all $k \gg 0$. In fact, suppose that (i) and (ii) hold for $m = m_0$, and let $E_k$ be the divisor of a non-zero section $s_k\in W_k$: so $E_k \lin kD$, and multiplication by $s_k^{\otimes p}$ determines for any $\ell$ an embedding $W_{\ell} \subseteq W_{\ell + kp}$. Then (i) and (ii) hold for $m = m_0 + k$ by taking \[
F_{m_0+k} \ = \ F_{m_0} + E_k. \]
In fact, $A_m := (m_0+k)D - F_{m_0+k} \lin A_{m_0}$ is ample, and one has inclusions
\[
\HH{0}{X}{\OO_X(pA_{m_0}} \ \subseteq \ W_{pm_0} \ \subseteq \ W_{pm_0 + pk}. \qed
\]
\end{remark}

%%%%%

\begin{example} \textbf{(Restricted sections of a big divisor).} \label{Restricted.Volumes.Satisfy.Conditions}
An important situation where condition (C) holds involves restricted sections  of a big line divisor. This is discussed  in the next subsection. \qed
\end{example}

As suggested, condition (C) implies that \eqref{full} holds with respect to any flag. 
\begin{lemma} \label{Graded.Series.Any.Flag.C}
If $W\bull$ satisfies condition \tn{(C)}, then for any admissible flag $Y\bull$ on $X$,  the graded semigroup $\Gamma_{Y\bull}(W\bull)$ generates $\ZZ^{d+1}$ as a group.
\end{lemma}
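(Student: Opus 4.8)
The plan is to mimic the proof of Lemma~\ref{Gamma.L.generates.Group}, with condition~(C) playing the role that the decomposition $D = A-B$ into very ample divisors, together with the bigness of $D$, played there. Write $\Gamma = \Gamma_{Y\bull}(W\bull)$ and let $\langle \Gamma\rangle \subseteq \ZZ^{d+1}$ be the subgroup it generates. First I would record two easy structural facts. Since $X$ is a variety, the section ring $\bigoplus_m \HH{0}{X}{\OO_X(mD)}$ is a domain, so $S = \{\, k \ge 1 \mid W_k \ne 0\,\}$ is a sub-semigroup of $\NN$; and condition~(C) forces $S$ to be cofinite. For the latter I would apply part~(ii) of the condition to each large value $m'$ of the parameter: together with the ampleness of $A_{m'}$ it gives $W_{pm'} \ne 0$ for all $p \gg 0$, so $S$ contains every sufficiently large multiple of $m'$ and hence $\gcd(S) \mid m'$; running this with $m'$ and with $m'+1$ shows $\gcd(S) = 1$, so $S$ is cofinite. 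Granting this, it is enough to put the vectors $(e_1,0),\dots,(e_d,0)$ into $\langle\Gamma\rangle$, where $e_i$ is the $i$-th standard basis vector of $\ZZ^d$: once $\ZZ^d \times \{0\} \subseteq \langle\Gamma\rangle$, any two elements of $\Gamma$ in consecutive degrees $k, k+1$ (which exist because $S$ is cofinite) yield $(0,\dots,0,1) \in \langle\Gamma\rangle$, whence $\langle\Gamma\rangle = \ZZ^{d+1}$.

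To produce the $(e_i,0)$, I would fix an integer $m \gg 0$ for which condition~(C) holds, so that $A_m = mD - F_m$ is ample for some effective divisor $F_m$ and $\HH{0}{X}{\OO_X(pA_m)} \subseteq W_{pm}$ for all $p \gg 0$; after enlarging $p$ I may also assume $pA_m$ is very ample. Then, exactly as in the proof of Lemma~\ref{Gamma.L.generates.Group}, I would choose sections $s_0, t_1, \dots, t_d \in \HH{0}{X}{\OO_X(pA_m)}$ with $\nu_{Y\bull}(s_0) = 0$ and $\nu_i(t_i) = e_i$ for $1 \le i \le d$ — arranging that $t_i$ is non-zero along $Y_{i-1}$ while $t_i|_{Y_{i-1}}$ vanishes simply along $Y_i$ near the point $Y_d$, which is possible since $pA_m$ is very ample. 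Writing $f \in \HH{0}{X}{\OO_X(pF_m)}$ for the defining section of $pF_m$ and $g = \nu_{Y\bull}(f) \in \NN^d$, the inclusion $\HH{0}{X}{\OO_X(pA_m)} \subseteq W_{pm} \subseteq \HH{0}{X}{\OO_X(pmD)}$ is multiplication by $f$, so by the additivity property~(iii) of $\nu_{Y\bull}$ the images of these sections in $W_{pm}$ have valuation vectors $g, g+e_1, \dots, g+e_d$. Hence $(g,pm)$ and $(g+e_i, pm)$ lie in $\Gamma$, and subtracting the first from each of the others gives $(e_i,0) \in \langle\Gamma\rangle$ for all $i$, as wanted.

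I expect the only delicate point to be the cofiniteness of $S$ used in the reduction: it is precisely here that one needs condition~(C) to be imposed for a whole range of values of $m$ rather than a single value, in order to guarantee non-zero sections in degrees with greatest common divisor~$1$ (the equivalent formulation in Remark~\ref{Alt.Cond.C} simply builds this in). Everything else is a direct transcription of the global argument of Lemma~\ref{Gamma.L.generates.Group}.
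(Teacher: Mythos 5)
Your proof is correct and follows essentially the same route as the paper's: both mimic Lemma \ref{Gamma.L.generates.Group}, using sections of a very ample multiple $pA_m$ pushed into $W_{pm}$ by multiplication by the section of $pF_m$ to place $(e_i,0)$ in the group generated by $\Gamma_{Y\bull}(W\bull)$, and then exploiting that condition (C) holds for more than one parameter value to generate the degree direction. The only (cosmetic) difference is at that last step: the paper invokes condition (C) a second time to get an element of $\Gamma$ in degree coprime to $pm$, whereas you show the semigroup of degrees $\{k \mid W_k \ne 0\}$ has $\gcd$ equal to $1$, hence is cofinite, and use two consecutive degrees — your version is a slightly more detailed bookkeeping of the same idea.
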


\begin{proof}
Arguing as in the proof of Lemma \ref{Gamma.L.generates.Group}, it follows from the definition that for suitable $m$, and for any sufficiently large $ p \gg 0$, one can realize   in $\Gamma = \Gamma_{Y\bull}(W\bull)$ all the vectors
\[  (pf_m, pm) \ , \ (pf_m+ e_1, pm) \ , \ \ldots \ , \ (pf_m + e_d, pm) \ \in \ \NN^{d+1} ,\]
 where $f_m$ is the valuation vector of a section defining $F_m$ and $e_i \in \NN^d$ is the standard basis vector. Applying the definition a second time, one can find $q, \ell$ relatively prime to $m$ so that
$(qf_\ell, q\ell) \in \Gamma$ for some vector $f_\ell \in \NN^d$. The lemma follows. \end{proof}

Just as in the case of global linear series, one then arrives at:
\begin{theorem}  \label{Vol.OB.GLS}
Assume that $W\bull$ satisfies conditions \tn{(A)} and \tn{(B)}, or \tn{(C)}. Let $Y\bull$ an admissible flag as specified in \ref{Cond.F.generates.generic.flag} or \ref{Graded.Series.Any.Flag.C}. 
Then
\[   \vol_{\RR^d}\big (\Delta(W\bull)\big) \ = \ \frac{1}{d!}\cdot \vol(W\bull), \]
where \[\vol(W\bull) \ =_{\text{\tn{def}}} \ \lim_{m \to \infty} \frac{\dim W_m}{m^d/d!}.  \qed \]
\end{theorem}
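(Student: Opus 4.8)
The plan is to reduce the statement to the semigroup result Proposition \ref{semigroup.proposition}, exactly as in the proof of Theorem \ref{Vol.eq.Vol}, the only difference being that we must invoke the hypotheses (A) and (B), or (C), to verify the three semigroup conditions \eqref{zero}--\eqref{full} for $\Gamma = \Gamma_{Y\bull}(W\bull)$. First I would observe that \eqref{zero} is immediate from $W_0 = \KK$, so that $\Gamma_0 = \{0\}$. For \eqref{bounded} and \eqref{full} there are two cases according to which set of hypotheses is assumed. If $W\bull$ satisfies (A) and (B): condition (A) says $\nu_i(s) \le mb$ for all $0 \ne s \in W_m$, which, exactly as in the proof of Lemma \ref{boundedness} and Lemma \ref{Gamma.L.generates.Group}, shows $\Gamma \subseteq B$ for the semigroup $B \subseteq \NN^{d+1}$ generated by the finitely many vectors $(a_1,\dots,a_d,1)$ with $0 \le a_i \le b$; this is \eqref{bounded}. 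Condition (B) is precisely the hypothesis of Lemma \ref{Cond.F.generates.generic.flag}, so for the flag $Y\bull$ chosen there the semigroup $\Gamma_{Y\bull}(W\bull)$ generates $\ZZ^{d+1}$, giving \eqref{full}. If instead $W\bull$ satisfies (C): then (C) includes the hypothesis that $X$ is projective and $D$ is big, so condition (A) holds automatically (as noted right after its definition, this was established in proving Lemma \ref{boundedness}), whence \eqref{bounded}; and Lemma \ref{Graded.Series.Any.Flag.C} shows that for \emph{any} admissible flag $\Gamma_{Y\bull}(W\bull)$ generates $\ZZ^{d+1}$, giving \eqref{full}.

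With \eqref{zero}--\eqref{full} verified for the relevant choice of $Y\bull$, Proposition \ref{semigroup.proposition} applies to $\Gamma = \Gamma_{Y\bull}(W\bull)$ and yields
\[
\vol_{\RR^d}\big(\Delta(W\bull)\big) \ = \ \lim_{m\to\infty} \frac{\#\,\Gamma(W\bull)_m}{m^d},
\]
where $\Delta(W\bull) = \Delta(\Gamma)$ by definition and $\Gamma(W\bull)_m$ is the image of $\nu_{Y\bull}$ on $W_m - \{0\}$. Next I would identify the right-hand limit with $\tfrac{1}{d!}\vol(W\bull)$: by Lemma \ref{No.of.Val.Vectors} applied to the finite-dimensional subspace $W_m \subseteq \HH{0}{X}{\OO_X(mD)}$, the number of distinct valuation vectors arising from nonzero sections of $W_m$ equals $\dim W_m$, i.e. $\#\,\Gamma(W\bull)_m = \dim W_m$. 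Hence the limit is $\lim_{m\to\infty} \dim W_m / m^d = \tfrac{1}{d!}\vol(W\bull)$ by the definition of $\vol(W\bull)$, and the theorem follows.

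There is no serious obstacle here: the real content was already packaged into Proposition \ref{semigroup.proposition} and Lemmas \ref{Cond.F.generates.generic.flag}, \ref{Graded.Series.Any.Flag.C}, and Lemma \ref{No.of.Val.Vectors}. The only point requiring a little care is bookkeeping: making sure that the flag $Y\bull$ used in the statement is precisely the one for which \eqref{full} has been arranged — the flag supplied by Lemma \ref{Cond.F.generates.generic.flag} in the (A)+(B) case, or an arbitrary admissible flag in the (C) case — and that condition (A) is genuinely automatic under (C) so that \eqref{bounded} is not left unverified in that branch. Once these dependencies are lined up, the proof is a two-line reduction, mirroring the proof of Theorem \ref{Vol.eq.Vol} verbatim.
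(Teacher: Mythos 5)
Your proposal is correct and coincides with the paper's intended argument: the paper states the theorem with a \qed precisely because, as you observe, it follows "just as in the case of global linear series" by verifying conditions \eqref{zero}--\eqref{full} via (A)/(B) or (C) together with Lemmas \ref{Cond.F.generates.generic.flag} and \ref{Graded.Series.Any.Flag.C}, then applying Proposition \ref{semigroup.proposition} and Lemma \ref{No.of.Val.Vectors} exactly as in Theorem \ref{Vol.eq.Vol}. Your bookkeeping about which flag is used in each branch, and that (A) is automatic under (C), matches the paper's setup.
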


\begin{remark} \textbf{(Volume of graded linear series as a limit).} The volume of a graded linear series  is usually defined to be the lim sup of the expression appearing in the Theorem. The fact that the limit exists assuming conditions (A) and (B) or (C) is new. \qed
\end{remark}

\begin{remark} {\textbf{(Complete linear series).}}
If $W\bull$ is the complete graded linear series associated to a big divisor $D$ on a   projective variety $X$ --- so that $W_m = \HH{0}{X}{\OO_X(mD)}$ --- then $W\bull$ satisfies condition (C) thanks to the basic properties of big divisors \cite[Chapter 2.2]{PAG}.  Hence the theory of Okounkov bodies $\Delta(D)$ for big $D$ is a special case of the more general picture for graded linear series. However in the interests of familiarity, we prefer to treat the classical case separately. \qed
\end{remark}

\subsection{Restricted Linear Series.}
\label{RLSS}

%% Restricted Volume Subsection %%

As they will come up on several occasions, we discuss briefly the graded linear series arising from restricted sections of a line bundle. 

We start by reviewing some definitions. Let $V$ be a projective variety, and let $D$ be a big divisor on $V$. Recall that the \textit{stable base locus} $\BBB(D) \subseteq V$  of $D$ is the intersection over all $m$ of the base loci $\Bs{mD}$. It is equivalent  to work with all sufficiently divisible $m$, so $\BBB(D)$ makes sense for any $\QQ$-divisor. (See \cite[Chapter 2.1]{PAG} for details and examples.) However this locus can behave somewhat unpredictably: for instance, it doesn't depend only on the numerical equivalence class of $D$. 
It turns out to be preferable to work instead with a variant obtained by perturbing $D$ slightly. Specifically, one defines the \textit{augmented base locus}  
$ \BBB_+(D) \ \subseteq  V$ to be:
\[  \BBB_+(D) \ = \ \BBB(D - A) \]
for any small ample $\QQ$-divisor $A$, this being independent of $A$ provided that it is sufficiently small. 
One can show that $\BBB_+(D)$ depends only on the numerical equivalence class of $D$, and so $\BBB_+(\xi)$ makes sense for any rational (or even real) numerical equivalence class $\xi$ on $V$. See \cite{ELMNP1} or \cite{ELMNP3} for details.

Now let $X \subseteq V$  be an irreducible subvariety of dimension $d$. Set
\[
W_m\ = \ \HH{0}{V|X}{\OO_V(mD)} \ =_{\text{def}}    \ \Image \Big (  \HH{0}{V}{\OO_V(mD)} \overset{\text{restr}} \lra \HH{0}{X}{\OO_X(mD)} \Big). 
\] 
These form a graded linear series on $X$ that we call the \textit{restricted complete linear series} of $D$ from $V$ to $X$. The \textit{restricted volume} of $D$ from $V$ to $X$ is by definition the volume of this graded series:
\[   \vol_{V|X}(D)   \ = \ \vol(W\bull). \] A detailed study of these restricted volumes appears   in the paper \cite{ELMNP3}.

\begin{lemma} \label{Restr.Lin.Series.Satisfies.Cond.C}
Assume that $X \not \subseteq \BBB_+(D)$. Then the restricted complete linear series $W\bull$ satisfies Condition \tn{(C)}. 
\end{lemma}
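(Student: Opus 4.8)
The plan is to verify conditions (i) and (ii) of Condition (C) directly, using the definition of the augmented base locus $\BBB_+(D)$ together with the fact that restricted sections are, essentially by construction, closed under the relevant multiplications. The key point is that $X \not\subseteq \BBB_+(D)$ means $X \not\subseteq \BBB(D - A)$ for some small ample $\QQ$-divisor $A$, which gives us, for suitable large $m$, a section of $\OO_V(mD - m A)$ (where $mA$ is integral) not vanishing identically on $X$; the divisor of such a section will play the role of $F_m$, with $A_m \sim mA$ ample.

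\begin{proof}
Since $X \not\subseteq \BBB_+(D)$, by definition there is a small ample $\QQ$-divisor $A$ on $V$ with $X \not\subseteq \BBB(D - A)$; replacing $A$ by a positive rational multiple we may assume $A$ is an integral ample divisor and that $D - A$ is big (as $D$ is big). Then $\BBB(D-A) = \bigcap_{p} \Bs{p(D-A)}$, and since $X$ is not contained in this intersection, there is some $p_0$ with $X \not\subseteq \Bs{p_0(D-A)}$; shrinking further (using that $\Bs{}$ of sufficiently divisible multiples stabilizes), we may take $m_0$ with $X \not\subseteq \Bs{m_0 D - m_0 A}$. Pick a section $s \in \HH{0}{V}{\OO_V(m_0 D - m_0 A)}$ whose zero divisor $F$ does not contain $X$, and set $m = m_0$, $F_m = F$, so that $A_m := mD - F_m \lin m_0 A$ is ample. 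This gives condition (i), and since $F_m$ restricts to an effective (not identically zero) divisor on $X$, multiplication by (the restriction of) $s^{\otimes p}$ gives, for every $p$, inclusions
\[
\HH{0}{V|X}{\OO_V(pA_m)} \ \hookrightarrow \ \HH{0}{V|X}{\OO_V(pmD)} \ = \ W_{pm},
\]
compatible with the inclusion $\HH{0}{X}{\OO_X(pA_m)} \subseteq \HH{0}{X}{\OO_X(pmD)}$ determined by $pF_m$.

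It remains to check the left-hand equality in (ii), i.e. that for $p \gg 0$ \emph{every} section in $\HH{0}{X}{\OO_X(pA_m)}$ extends to $V$, or more precisely lies in the image $\HH{0}{V|X}{\OO_V(pA_m)}$. Since $A_m$ is ample on $V$, the restriction map $\HH{0}{V}{\OO_V(pA_m)} \lra \HH{0}{X}{\OO_X(pA_m)}$ is surjective for all $p \gg 0$: this follows from Serre vanishing applied to the ideal sheaf sequence
\[
0 \lra \II_X \otimes \OO_V(pA_m) \lra \OO_V(pA_m) \lra \OO_X(pA_m) \lra 0,
\]
whose connecting map lands in $\HH{1}{V}{\II_X(pA_m)}$, which vanishes for $p \gg 0$. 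Hence $\HH{0}{V|X}{\OO_V(pA_m)} = \HH{0}{X}{\OO_X(pA_m)}$ for $p \gg 0$, and combining this with the inclusions of the previous paragraph gives exactly condition (ii). By Remark \ref{Alt.Cond.C} it suffices to have verified (i) and (ii) for the single value $m = m_0$ together with $W_k \neq 0$ for $k \gg 0$; the latter holds since $D$ (hence $mD$) is big and $X \not\subseteq \BBB_+(D) \supseteq \BBB(mD)$, so $X$ is not in the stable base locus and $W_k = \HH{0}{V|X}{\OO_V(kD)} \neq 0$ for $k \gg 0$. Thus $W\bull$ satisfies Condition (C).
\end{proof}

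\noindent The main obstacle is the surjectivity statement in the second paragraph — i.e. that sections of an ample divisor on $X$ extend to $V$ in high degree — which rests on Serre vanishing for the twisted ideal sheaf; everything else is bookkeeping with the definitions of $\BBB_+$ and of restricted linear series, modulo the reduction afforded by Remark \ref{Alt.Cond.C}.
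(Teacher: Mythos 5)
Your overall strategy coincides with the paper's: use $X \not\subseteq \BBB_+(D)$ to produce a divisor $E \in \linser{m_0D - A}$ meeting $X$ properly, take $F_{m_0} = E|_X$ so that $A_{m_0} \lin A|_X$ is ample on $X$, get condition (ii) from Serre vanishing (surjectivity of $\HH{0}{V}{\OO_V(pA)} \to \HH{0}{X}{\OO_X(pA)}$ for $p \gg 0$), and then invoke Remark \ref{Alt.Cond.C} to reduce to checking $W_k \ne 0$ for $k \gg 0$. Up to minor bookkeeping (your $F_m$ should explicitly be the restriction to $X$ of the divisor of $s$, since Condition (C) concerns divisors on $X$; and you only ever need $m_0A$ integral, not $A$ itself), this is the paper's argument.

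The one genuine gap is the final step. You assert that $X \not\subseteq \BBB(D)$ implies $W_k = \HH{0}{V|X}{\OO_V(kD)} \ne 0$ for \emph{all} $k \gg 0$. This does not follow formally: $\BBB(D) = \bigcap_m \Bs{mD}$ equals $\Bs{mD}$ only for sufficiently \emph{divisible} $m$, so $X \not\subseteq \BBB(D)$ only tells you that $W_m \ne 0$ for some $m_0$ and hence for all multiples of $m_0$; the set $\{k : W_k \ne 0\}$ is a numerical semigroup, and you still must show its gcd is $1$. This is exactly the point the paper spends the last third of its proof on: having chosen $A$ so that $A + D$ is also very ample, it writes $(m_0+1)D \lin (m_0D - A) + (A+D)$ to conclude $W_{m_0+1} \ne 0$ as well, and non-vanishing for two consecutive values forces $W_k \ne 0$ for all $k \gg 0$. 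Your construction as written only yields non-vanishing for multiples of $m_0$, and since you did not arrange $A + D$ to be ample, the coprimality step is genuinely missing rather than implicit. It is a small, fixable gap, but the implication you invoke is precisely part of what has to be proved.
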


\begin{proof}
Let $A$ be a very ample divisor on $V$ which is sufficiently positive so that $A  + D$ is also very ample. By hypothesis $X \not \subseteq \BBB(D  - \eps A)$ for every sufficiently small rational $\eps >0$. This implies that there is some large integer $m_0 \in \NN$ such that $X \not \subseteq \Bs{m_0 D - A}$: so one can fix a divisor $E_{m_0} \in \linser{m_0 D   - A}$ that meets $X$ properly. Let $F_{m_0} = E_{m_0|X}$, and put $A_{m_0}  = (m_0D - F_{m_0})_{|X}$. Then $A_{m_0} \lin A_{|X}$ is an ample divisor on $X$. Moreover the natural map 
\[  \HH{0}{V}{\OO_V(pA)} \lra \HH{0}{X}{\OO_X(pA)} \]
is surjective when $p \gg 0$ thanks to Serre vanishing, which shows that \ref{Def.of.Cond.C}
(ii) holds for $m = m_0$. In view of Remark \ref{Alt.Cond.C}, it remains only to show that $\HH{0}{V|X}{\OO_V(mD)} \ne 0$ for $m \gg 0$. Clearly  $\HH{0}{V|X}{\OO_V(m_0D)} \ne 0$ since $A$ is very ample. On the other hand,
\[  (m_0 +1) D \ \lin \ (m_0 D - A) + (A+ D) , \]
and by construction the second term on the right is very ample. Therefore also \[ \HH{0}{V|X}{\OO_V((m_0 + 1)D)}  \ \ne \ 0 ,\] and since $\HH{0}{V|X}{\OO_V(mD)} \ne 0$ for two consecutive values of $m$, the group in question is non-zero for all $m \gg 0$.  
\end{proof}

We will denote by 
\[  \Delta_{V|X} (D) \ \subseteq \ \RR^d \]
the Okounkov body of $W\bull$ (with respect to a fixed admissible flag). Thus
\begin{equation} \label{OBVol.Compute.Restr.Vol}
\vol_{\RR^d}\big( \Delta_{V|X}(D)\big) \ = \ \frac{1}{d!} \cdot \vol_{V|X}(D). 
\end{equation}
The fact (coming from Theorem \ref{Vol.OB.GLS}) that the volume on the right is computed as a limit (rather than a limsup) was established in \cite[Cor. 2.15]{ELMNP3}.

%%%%%%

\section{Fujita Approximations} \label{Fujita.Approximation.Section}

A very useful theorem of Fujita \cite{Fujita} (cf. \cite{DEL}, \cite[Chapter 11.4]{PAG}, \cite{Nakamaye}, \cite{Takagi}) asserts that the volume of any  big line bundle can be approximated arbitrarily closely by the self-intersection of an ample divisor on a modification. In this section we show how  the machinery developed so far can be used to give a new proof of this result, and extend it to the setting of graded linear series. As an application of this extension,  we also give a new proof of a result of the second author \cite{Mustata} concerning multiplicities of graded families of ideals, and establish the analogous statement on possibly singular varieties.

\subsection{Fujita's Approximation Theorem.}
We start with a variant of Proposition \ref{semigroup.proposition}. Specifically, consider again a sub-semigroup 
\[ \Gamma \ \subset \ \NN^{d+1}, \]
and define $\Delta = \Delta(\Gamma) \ \subseteq \RR^d$  and $\Gamma_m \subseteq \NN^d$ as in \eqref{K.Delta.Defn} and \eqref{Def.Gamma.m}.
\begin{proposition} \label{Semigroup.FAT}
Assume that $\Gamma$ satisfies conditions \eqref{zero} -- \eqref{full}, and fix $ \eps > 0$. There is an integer $p_0 = p_0(\eps)$ with the property that if $p \ge p_0$,  then 
\[
\lim_{k \to \infty} \ \frac{ \# \big (\, k *  \Gamma_p \,  \big)}{k^d p^d} \ \ge \ \vol_{\RR^d}(\Delta) - \eps,
\]
where the numerator on the left denotes the $k$-fold sum of points in $\Gamma_p$. 
\end{proposition}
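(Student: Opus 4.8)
The plan is to realize $k * \Gamma_p$ as a graded piece of a finitely generated subsemigroup, apply Proposition \ref{semigroup.proposition} to it, and then bound the resulting volume from below by approximating $\Delta$ by convex hulls of integer points living in a single level $\Gamma_M$.

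First I would observe that for a fixed $p$ the $k$-fold sumset $k*\Gamma_p$ is precisely the degree-$kp$ piece of the subsemigroup $S_p \subseteq \NN^{d+1}$ generated by $\{(v,p) : v \in \Gamma_p\}$; rescaling the last coordinate by $1/p$ turns $S_p$ into a semigroup $\widetilde S_p$ whose $k$-th graded piece is $k*\Gamma_p \subseteq \NN^d$ and whose associated convex body is $\Delta(\widetilde S_p) = \mathrm{conv}(\Gamma_p) \subseteq \RR^d$. Since $\Gamma_p$ is finite by \eqref{bounded}, $\widetilde S_p$ is finitely generated and trivially satisfies \eqref{zero} and \eqref{bounded}; the substantive point is that it satisfies \eqref{full} once $p$ is large, i.e. that $\Gamma_p - \Gamma_p = \ZZ^d$ for $p \gg 0$. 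For this I would argue as follows: since $\Gamma$ generates $\ZZ^{d+1}$ as a group, the last coordinates of elements of $\Gamma$ generate $\ZZ$ and hence span a numerical semigroup with finite complement in $\NN$, so $\Gamma_n \neq \emptyset$ for all $n \geq n_0$; moreover each standard basis vector $e_i \in \ZZ^d$ ($1\le i \le d$), having last coordinate $0$ when viewed in $\ZZ^{d+1}$, can be written $e_i = \gamma_i - \gamma_i'$ with $\gamma_i, \gamma_i'$ lying in one common level $\Gamma_{m_i}$, and then for $p \geq n_0 + \max_i m_i$ a choice of $c_i \in \Gamma_{p-m_i}$ exhibits $e_i = (\gamma_i + c_i) - (\gamma_i' + c_i) \in \Gamma_p - \Gamma_p$. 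Granting this, Proposition \ref{semigroup.proposition} applied to $\widetilde S_p$ gives, for all such $p$,
\[
\lim_{k \to \infty} \frac{\#\big(k * \Gamma_p\big)}{k^d p^d} \ = \ \frac{1}{p^d}\,\vol_{\RR^d}\big(\mathrm{conv}(\Gamma_p)\big) \ = \ \vol_{\RR^d}\Big(\mathrm{conv}\big(\tfrac1p \Gamma_p\big)\Big).
\]

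It then remains to show $\vol_{\RR^d}(\mathrm{conv}(\tfrac1p\Gamma_p)) \geq \vol_{\RR^d}(\Delta) - \eps$ for all large $p$. Since $\Delta$ is the closed convex hull of $\bigcup_{m\geq 1}\tfrac1m\Gamma_m$, I would pick finitely many points $x_1,\dots,x_r$ in this union with $\vol(\mathrm{conv}(x_1,\dots,x_r)) \geq \vol(\Delta) - \eps/2$; writing $x_j = w_j/m_j$ with $w_j \in \Gamma_{m_j}$ and using that $\tfrac{M}{m_j}w_j \in \Gamma_M$ for any common multiple $M$ of the $m_j$, one may assume all the $x_j$ lie in $\tfrac1M\Gamma_M$ for a single $M$, which we also take $\geq n_0$. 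For $p$ large write $p = Mq' + t$ with $q' \to \infty$ and $t$ bounded with $\Gamma_t \neq \emptyset$; picking $c \in \Gamma_t$, one has $q'w_j + c \in \Gamma_p$, so the points $z_j := \tfrac1p\big(q'w_j+c\big)\in\tfrac1p\Gamma_p$ converge to $\tfrac1M w_j = x_j$ as $p\to\infty$. By continuity of the volume of a convex hull in its vertices, $\vol(\mathrm{conv}(z_1,\dots,z_r))\geq \vol(\Delta)-\eps$ once $p\geq p_0(\eps)$, and $\mathrm{conv}(\tfrac1p\Gamma_p)\supseteq\mathrm{conv}(z_1,\dots,z_r)$; combining with the displayed limit gives the claim.

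The main obstacle is the verification that $\Gamma_p - \Gamma_p = \ZZ^d$ for $p \gg 0$: without it, $\widetilde S_p$ would generate only a finite-index sublattice of $\ZZ^d$, and Proposition \ref{semigroup.proposition} would then compute $\vol_{\RR^d}(\mathrm{conv}(\Gamma_p))$ divided by that index, which could destroy the lower bound $\vol(\Delta) - \eps$. By contrast the approximation of $\Delta$, the clearing of denominators, and the concluding continuity argument are all routine.
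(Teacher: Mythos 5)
Your argument is correct, but it reaches the statement by a genuinely different route from the paper's. The paper deduces both key ingredients from Khovanskii's structural results: it proves that $\Gamma_m$ generates $\ZZ^d$ for $m \gg 0$ via the translated-cone inclusion $(\Sigma+\gamma)\cap\NN^{d+1}\subseteq\Gamma$, uses that same inclusion (for finitely generated $\Gamma$) to get $\vol_{\RR^d}(\mathrm{conv}\,\Gamma_p)/p^d \to \vol_{\RR^d}(\Delta)$, quotes Khovanskii's Corollary 1 for $\#(k*\Gamma_p)/k^d \to \vol_{\RR^d}(\mathrm{conv}\,\Gamma_p)$, and then treats general $\Gamma$ by exhausting it by finitely generated subsemigroups $\Gamma'$ with $\vol(\Delta') \ge \vol(\Delta)-\eps/2$. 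You keep the same skeleton (count $k*\Gamma_p$ via the volume of $\mathrm{conv}(\Gamma_p)$, then show $\tfrac1p\,\mathrm{conv}(\Gamma_p)$ eventually captures $\vol(\Delta)-\eps$) but execute each step differently: the sumset asymptotics come from Proposition \ref{semigroup.proposition} applied to the finitely generated semigroup spanned by $\Gamma_p\times\{1\}$, so Khovanskii's Corollary 1 is in effect re-derived from material already in the paper; the lattice condition -- that the differences of elements of $\Gamma_p$ generate $\ZZ^d$, which is exactly the hypothesis needed for that application, and is what your padding argument with $e_i=\gamma_i-\gamma_i'$, $\gamma_i,\gamma_i'\in\Gamma_{m_i}$, $c_i\in\Gamma_{p-m_i}$ proves (your displayed ``$\Gamma_p-\Gamma_p=\ZZ^d$'' should be read as generation as a group, the difference set itself being finite) -- is established by elementary semigroup manipulation rather than via Khovanskii's theorem; and the estimate $\vol_{\RR^d}(\mathrm{conv}(\tfrac1p\Gamma_p))\ge\vol_{\RR^d}(\Delta)-\eps$ is proved directly for arbitrary, possibly non-finitely-generated $\Gamma$ by approximating $\Delta$ with finitely many points of $\bigcup_m\tfrac1m\Gamma_m$, clearing denominators into a single level $\Gamma_M$, and transporting them into $\tfrac1p\Gamma_p$ using $\Gamma_t\ne\emptyset$ for $t\ge n_0$, thereby avoiding the finitely generated reduction altogether. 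Your version is more elementary and self-contained (its only input is Proposition \ref{semigroup.proposition}), at the cost of a longer hands-on approximation; the paper's is shorter because it leans twice on Khovanskii's results.
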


\begin{lemma}
If $\Gamma \subseteq \NN^{d+1}$ is a semigroup that generates $\ZZ^{d+1}$ as a group, then $\Gamma_m \subseteq \NN^d$ generates $\ZZ^d$ as a group for all sufficiently large $m$.
\end{lemma}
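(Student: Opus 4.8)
The plan is to show directly that for all sufficiently large $m$ the subgroup $\langle \Gamma_m\rangle \subseteq \ZZ^d$ generated by the slice $\Gamma_m = \Gamma \cap (\NN^d \times \{m\})$ already contains each standard basis vector $e_i$. Since $\Gamma$ is only a semigroup, the one way to cash in the hypothesis that $\Gamma$ generates $\ZZ^{d+1}$ is to write a target lattice vector as a \emph{difference} of two elements of $\Gamma$ (collecting positive and negative coefficients); the point to exploit is that when the target has last coordinate $0$ — as $(e_i,0)$ does — those two elements automatically lie in one common slice, after which a padding argument pushes that slice up to a single large level.

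First I would record that $M = \{m : \Gamma_m \neq \varnothing\}$ is a sub-semigroup of $\NN$, since the last coordinate of a sum of elements of $\Gamma$ is the sum of their last coordinates. Projecting a finite generating set of $\ZZ^{d+1}$ to the last coordinate shows that $M$ generates $\ZZ$ as a group, i.e. $\gcd(M)=1$, so the classical numerical-semigroup fact gives that $M$ contains every sufficiently large integer; in particular $\Gamma_m \neq \varnothing$ for $m\gg 0$. Next, fix $i$. As $\Gamma$ generates $\ZZ^{d+1}$, write $(e_i,0)=\sum_k c_k\gamma_k$ with $c_k\in\ZZ$, $\gamma_k\in\Gamma$, and split into positive and negative parts $(e_i,0)=\gamma^{+}-\gamma^{-}$, where each $\gamma^{\pm}$ is a non-negative integer combination of the $\gamma_k$, hence either $0$ or again an element of $\Gamma$. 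The $i$-th coordinate of $(e_i,0)$ is positive, so $\gamma^{+}\neq 0$; if $\gamma^{-}=0$ I would instead write $(e_i,0)=(\gamma^{+}+\gamma_0)-\gamma_0$ for an arbitrary fixed $\gamma_0\in\Gamma$. In every case this yields $(e_i,0)=(v_i^{+},m_i)-(v_i^{-},m_i)$ with $(v_i^{\pm},m_i)\in\Gamma$ for some $m_i\in M$, the two last coordinates being forced equal precisely because the last coordinate of $(e_i,0)$ is $0$.

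Finally, choose $N$ large enough that $N-m_i\in M$ for every $i=1,\dots,d$ — possible since there are only $d$ of the $m_i$ and $M$ is cofinite — and for each $i$ pick $(w_i,N-m_i)\in\Gamma$. Then $(v_i^{+}+w_i,\,N)$ and $(v_i^{-}+w_i,\,N)$ both lie in $\Gamma$, hence in $\Gamma_N$, and their difference is $e_i$; thus $\langle\Gamma_N\rangle\supseteq\{e_1,\dots,e_d\}$ and equals $\ZZ^d$. The same choice of $N$ works for every larger value, since the finitely many conditions $N'-m_i\in M$ persist, so the conclusion holds for all $m\gg 0$. I expect the only genuine subtlety to be in the middle step: one must resist trying to coordinate the slices $\Gamma_m$ across different $m$ directly (they are not nested in any obvious way) and instead use that a lattice vector of last-coordinate $0$ is a difference of two \emph{equal-degree} semigroup elements; the degenerate bookkeeping (one of $\gamma^{\pm}$ vanishing, or $m_i$ happening to be $0$) is routine, and everything else reduces to the standard numerical-semigroup fact together with the slice padding.
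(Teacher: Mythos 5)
Your proof is correct, and it takes a genuinely different route from the paper's. The paper first reduces to the case that $\Gamma$ is finitely generated, then invokes Khovanskii's result that all lattice points of a translated cone $\gamma + \Sigma$ lie in $\Gamma$, and finally observes that, since $\Sigma$ is full-dimensional, the slice of $\gamma+\Sigma$ at level $m$ contains a ball of large radius once $m \gg 0$, whose integer points already generate $\ZZ^d$. You instead argue purely combinatorially: the set $M$ of occupied levels is a sub-semigroup of $\NN$ whose projection argument gives $\gcd(M)=1$, hence $M$ is cofinite; each $(e_i,0)$, having last coordinate zero, is a difference of two elements of $\Gamma$ lying in a \emph{common} level $m_i$ (with the harmless degenerate cases you note); and padding both by an element of level $N-m_i$ places the pair in $\Gamma_N$ for every large $N$, so that $e_1,\dots,e_d$ lie in the group generated by $\Gamma_N$. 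Your argument is more elementary and self-contained: it needs neither the reduction to finitely generated sub-semigroups nor Khovanskii's theorem, only the standard fact about numerical semigroups of gcd one. What the paper's convex-geometric argument buys in exchange is slightly stronger quantitative information — for $m\gg 0$ the slice $\Gamma_m$ itself contains all lattice points of a large ball, not merely a set generating $\ZZ^d$ — and it runs along the same Khovanskii-cone machinery already set up for Proposition \ref{semigroup.proposition}, so no new tools are introduced. For the lemma as stated, either proof suffices.
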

\begin{proof} We may assume without loss of generality that $\Gamma$ is finitely generated. As in \eqref{K.Delta.Defn}, denote by $\Sigma = \Sigma(\Gamma) \subseteq \RR^{d+1}$  the closed convex cone generated by $\Gamma$.  The plan is to use again Khovanskii's result \cite[\S 3, Proposition 3]{Khovanskii} that 
\begin{equation} \label{Cone.In.Semigroup.Recall}
 \big( \Sigma \, + \, \gamma \big) \, \cap \, \NN^{d+1} \ \subseteq \ \Gamma \end{equation}
for a suitable $\gamma \in \Gamma$. 
Since $\Gamma$ generates $\ZZ^{d+1}$ the cone $\Sigma$ has non-empty interior, and hence so too does its unit slice $\Delta $. It follows that the set
\[
 \big( \Sigma \, + \, \gamma \big)_m \ =_{\text{def}} \big( \Sigma \, + \, \gamma \big) \, \cap \, \big ( \RR^d \times \{m \} \big)\ \subseteq \ \RR^d
\]
contains a ball of radius $> 2 \sqrt{d}$ provided that $m \gg 0$. But the integer points in any such ball span $\ZZ^d$. 
\end{proof}

\begin{proof}[Proof of Proposition \ref{Semigroup.FAT}] Assume to begin with that $\Gamma$ is finitely generated. Given $p$, let 
\[  \Theta_p \,  = \, \textnormal{convex hull} ( \, \Gamma_p \, )  \ \subseteq \ \RR^d. \]
It follows from the inclusion \eqref{Cone.In.Semigroup.Recall} that 
\[
\lim_{p \to \infty} \ \frac{\vol_{\RR^d}\big ( \Theta_p \big)}{p^d} \ = \ \vol_{\RR^d}\big( \Delta \big).
\]
On the other hand, since $\Gamma_p$ generates $\ZZ^d$ as a group for large $p$, we can apply \cite[\S 3, Corollary 1]{Khovanskii} , which states that
\[ 
\lim_{k \to \infty} \frac{ \# \big(  k * \Gamma_p \big)}{k^d} \ = \ \vol_{\RR^d}\big( \, \Theta_p \, \big).
\]
Putting these together, we find that given $\eps > 0$ there is an integer $p_0 = p_0(\eps)$ such that
\[
\lim_{k \to \infty} \frac{ \# \big(  k * \Gamma_p \big)}{p^dk^d} \ \ge \ \vol_{\RR^d}\big( \, \Delta \, \big) \, - \, \frac{\eps}{2} \]
when $p > p_0$. This gives what we want when $\Gamma$ is finitely generated. In the general  case, choose a finitely generated subsemigroup $\Gamma^\pr \subseteq \Gamma$ satisfying \eqref{zero} -- \eqref{full} such that $\vol(\Delta^\pr) \ge \vol(\Delta) - \eps/2$, and use the inequality just established for $\Gamma^\pr$. 
\end{proof}

Applying this in the global setting, we get a statement  essentially equivalent to the Fujita approximation theorem.

\begin{theorem} \label{FAT.Big.Divisors}
Let $D$ be a big divisor on an irreducible projective variety $X$ of dimension $d$, and for $p, k > 0$ write
\[
V_{k,p} \ = \ \Image \, \Big( S^k \HH{0}{X}{\OO_X(pD)} \lra \HH{0}{X}{\OO_X(pkD)} \Big).
\]  
Given $\eps > 0$, there exists an integer $p_0 = p_0(\eps)$ having the property that if $p \ge p_0$, then
\[
\lim_{k \to \infty}   \ \frac{ \dim V_{k,p} }{p^dk^d/d!} \ \ge \ \vol_X(D) - \eps. 
\]
\end{theorem}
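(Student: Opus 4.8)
The plan is to reduce Theorem \ref{FAT.Big.Divisors} to the semigroup statement Proposition \ref{Semigroup.FAT}, exactly in the spirit of how Theorem \ref{Vol.eq.Vol} was deduced from Proposition \ref{semigroup.proposition}. Fix an admissible flag $Y\bull$ on $X$, let $\Gamma = \Gamma_{Y\bull}(D) \subseteq \NN^{d+1}$ be the graded semigroup of $D$, and recall from Lemma \ref{Gamma.L.generates.Group} that $\Gamma$ satisfies \eqref{zero}--\eqref{full}, so that Proposition \ref{Semigroup.FAT} applies and, by Theorem \ref{Vol.eq.Vol}, $\vol_{\RR^d}(\Delta(\Gamma)) = \tfrac{1}{d!}\vol_X(D)$. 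Given $\eps > 0$, choose $p_0 = p_0(\eps \cdot d!)$ as furnished by Proposition \ref{Semigroup.FAT}.

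The key step is to relate $\dim V_{k,p}$ to $\#(k * \Gamma_p)$. Since $\nu_{Y\bull}$ is valuation-like (property (iii)), for nonzero sections $s_1, \ldots, s_k \in \HH{0}{X}{\OO_X(pD)}$ one has $\nu_{Y\bull}(s_1 \cdots s_k) = \nu_{Y\bull}(s_1) + \cdots + \nu_{Y\bull}(s_k)$, so the image of $V_{k,p} - \{0\}$ under $\nu_{Y\bull}$ contains $k * \Gamma_p$. Combined with Lemma \ref{No.of.Val.Vectors}, which says the number of distinct valuation vectors arising from a finite-dimensional subspace of sections equals its dimension, this gives
\[
\dim V_{k,p} \ = \ \#\,\Image\big( (V_{k,p} - \{0\}) \overset{\nu}\lra \ZZ^d\big) \ \ge \ \#\big(k * \Gamma_p\big).
\]
Dividing by $p^d k^d / d!$ and letting $k \to \infty$, Proposition \ref{Semigroup.FAT} yields, for $p \ge p_0$,
\[
\lim_{k \to \infty} \frac{\dim V_{k,p}}{p^d k^d / d!} \ \ge \ d! \cdot \Big(\vol_{\RR^d}(\Delta(\Gamma)) - \tfrac{\eps}{d!}\Big) \ = \ \vol_X(D) - \eps,
\]
which is the assertion. (One should note that the limit on the left exists, since $k \mapsto \dim V_{k,p}$ is super-multiplicative in $k$ for fixed $p$ by the inclusion $V_{k,p} \cdot V_{\ell,p} \subseteq V_{k+\ell,p}$, so a Fekete-type argument applies; alternatively one may simply work with $\liminf$ throughout, which suffices for the inequality.)

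The main obstacle, such as it is, is bookkeeping rather than a genuine difficulty: one must be careful that the multiplication map $S^k \HH{0}{X}{\OO_X(pD)} \to \HH{0}{X}{\OO_X(pkD)}$ has image whose valuation set really does contain the $k$-fold sumset $k * \Gamma_p$ and not merely something comparable, and that Lemma \ref{No.of.Val.Vectors} is applied to $V_{k,p}$ sitting inside $\HH{0}{X}{\OO_X(pkD)}$ with respect to the same flag valuation $\nu_{Y\bull}$. Both are immediate from the multiplicativity property (iii) and the construction of $\nu_{Y\bull}$, so no new input beyond the results already established is required. An entirely parallel argument, substituting Lemma \ref{Cond.F.generates.generic.flag} or \ref{Graded.Series.Any.Flag.C} for Lemma \ref{Gamma.L.generates.Group}, will give Theorem \ref{FAT.Intro} for graded linear series satisfying (A) and (B), or (C).
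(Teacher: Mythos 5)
Your proposal is correct and follows essentially the same route as the paper: fix a flag, note via Lemma \ref{Gamma.L.generates.Group} that $\Gamma_{Y\bull}(D)$ satisfies \eqref{zero}--\eqref{full}, use multiplicativity of $\nu_{Y\bull}$ to get $k * \Gamma_p \subseteq \Image\big((V_{k,p}-\{0\}) \to \ZZ^d\big)$, count valuation vectors with Lemma \ref{No.of.Val.Vectors}, and conclude from Proposition \ref{Semigroup.FAT} together with $\vol_{\RR^d}(\Delta) = \tfrac{1}{d!}\vol_X(D)$. The only blemish is the harmless slip ``$p_0(\eps \cdot d!)$'' where you mean $p_0(\eps/d!)$, as your own displayed computation shows, and your aside on the existence of the limit (superadditivity or working with $\liminf$) is a reasonable extra precaution the paper leaves implicit.
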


\begin{remark}\textbf{(``Classical" statement of Fujita's theorem).}  \label{Explain.FAT.Traditional}
It may be worth explaining right away how \ref{FAT.Big.Divisors} implies more familiar formulations of Fujita's result, to the effect that one can approximate $\vol_X(D)$ by the volume of a big and nef (or even ample) divisor on a modification of $X$. Given $p$ such that $\linser{pD}$ is non-trivial, let \[
\mu : X^\pr =  X^\pr_p \lra X \]
be  the blowing-up of $X$ along the base-ideal $\frb\big( \linser{pD} \big)$, so that one can write
\[  
\mu^* \linser{pD} \ \subseteq \  \linser{M_p} + E_p,
\]
where $M_p$ is a basepoint-free divisor on $X^\pr$. Pullback of sections via $\mu$ determines a natural inclusion
\[  \Image \, \Big( S^k \HH{0}{X}{\OO_X(pD)} \lra \HH{0}{X}{\OO_X(pkD)} \Big) \ \subseteq \ \HH{0}{X^\pr}{\OO_{X^\pr}(kM_p)}. \]
Thus the theorem implies that
\[  \vol_{X^\pr}\big(\,  \tfrac{1}{p} M_p\, \big) \ \ge \ \vol_X(D) \, - \, \eps \]
when $p \ge p_0(\eps)$, 
which is one of the traditional statements of the result. \qed
\end{remark}

\begin{proof}[Proof of Theorem \ref{FAT.Big.Divisors}]
Fix any admissible flag $Y\bull$ on $X$, and consider the graded semigroup $\Gamma = \Gamma(D) $ of $D$ with respect to the corresponding valuation $\nu = \nu_{Y \bull}$. Thus $\Gamma_p$ consists precisely of the valuation vectors of non-zero sections of $\OO_X(pD)$:
\[  \Gamma_p \ = \ \Image \Big ( \big( \, \HH{0}{X}{\OO_X(pD)} -\{0 \} \, \big) \overset \nu  \lra \NN^d \, \Big).\]
Given  non-zero sections $s_1,\ldots , s_k \in \HH{0}{X}{\OO_X(pD)}$ one has
\[ \nu( s_1 \cdot \ldots \cdot s_k) \ = \  \nu(s_1) + \ldots + \nu(s_k), \] and it follows that
\[ k * \Gamma_p \ \subseteq \  \Image \Big ( \big( \, V_{k,p} -\{0 \} \, \big) \overset \nu  \lra \NN^d \, \Big).\]
But recall (Lemma \ref{No.of.Val.Vectors})  that the dimension of any space $W$ of sections counts the number of valuation vectors that $W$ determines, and that $\vol_{\RR^d}\big(\Delta\big) = \vol_X(D)/d!$. So the theorem is a consequence of \ref{Semigroup.FAT}.
\end{proof}

One of the advantages of the present approach is that the same argument immediately yields a version of Fujita approximations for suitable graded linear series. 
\begin{theorem} \label{FAT.GLS}
Let $X$ be an irreducible variety of dimension $d$,  let $W\bull$ be a graded linear series associated to a divisor $D$ on $X$,  and write \[
V_{k,p} \ = \ \Image \, \big( S^k W_p \lra W_{kp} \big).
\]  
Assume that $W\bull$ satisfies conditions \tn{(A)} and \tn{(B)}, or \tn{(C)}, and fix $\eps > 0$. There exists an integer $p_0 = p_0(\eps)$ having the property that if $p \ge p_0$, then
\[
\lim_{k \to \infty}   \ \frac{ \dim V_{k,p} }{p^dk^d/d!} \ \ge \ \vol_X(W\bull) - \eps.  \qed
\]
\end{theorem}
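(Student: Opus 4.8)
The plan is to reduce Theorem~\ref{FAT.GLS} to the semigroup statement Proposition~\ref{Semigroup.FAT}, exactly as Theorem~\ref{FAT.Big.Divisors} was reduced to it in the global case. First I would fix an admissible flag $Y\bull$ adapted to the hypotheses: if $W\bull$ satisfies (C) we may take any admissible flag (Lemma~\ref{Graded.Series.Any.Flag.C}), while if $W\bull$ satisfies (A) and (B) we choose the flag produced by Lemma~\ref{Cond.F.generates.generic.flag}, so that in either case the graded semigroup $\Gamma = \Gamma_{Y\bull}(W\bull) \subseteq \NN^{d+1}$ generates $\ZZ^{d+1}$ as a group. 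Condition \eqref{zero} is immediate from $W_0 = \KK$, and \eqref{bounded} holds: under (C) this is part of Lemma~\ref{Graded.Series.Any.Flag.C}'s hypotheses being met (it was noted that condition (C) forces \eqref{bounded}), while under (A) the defining inequality $\nu_i(s) \le mb$ exhibits $\Gamma$ inside the finitely generated semigroup $B$ spanned by the vectors $(a_1,\dots,a_d,1)$ with $0 \le a_i \le b$. Thus $\Gamma$ satisfies \eqref{zero}--\eqref{full} and Proposition~\ref{Semigroup.FAT} applies.

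Next I would connect the geometry to the semigroup. By construction $\Gamma_p$ is precisely the set of valuation vectors of non-zero sections in $W_p$, i.e. $\Gamma_p = \Image\big((W_p - \{0\}) \overset{\nu}{\lra} \NN^d\big)$. Given non-zero $s_1, \dots, s_k \in W_p$, property (iii) of $\nu_{Y\bull}$ gives $\nu(s_1 \cdots s_k) = \nu(s_1) + \dots + \nu(s_k)$, and since $s_1 \cdots s_k$ lies in $V_{k,p} = \Image(S^k W_p \to W_{kp})$ whenever it is non-zero, we obtain the inclusion
\[
k * \Gamma_p \ \subseteq \ \Image\big((V_{k,p} - \{0\}) \overset{\nu}{\lra} \NN^d\big).
\]
By Lemma~\ref{No.of.Val.Vectors} the right-hand set has cardinality exactly $\dim V_{k,p}$, so $\#(k * \Gamma_p) \le \dim V_{k,p}$ for all $k$.

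Finally I would assemble the estimate. By Theorem~\ref{Vol.OB.GLS} (whose hypotheses are exactly (A) and (B), or (C)) we have $\vol_{\RR^d}(\Delta(W\bull)) = \tfrac{1}{d!}\vol_X(W\bull)$, where $\Delta = \Delta(W\bull) = \Delta(\Gamma)$. Applying Proposition~\ref{Semigroup.FAT} with $\eps/d!$ in place of $\eps$ produces an integer $p_0$ such that for $p \ge p_0$,
\[
\liminf_{k \to \infty} \frac{\dim V_{k,p}}{p^d k^d/d!} \ \ge \ d! \cdot \lim_{k \to \infty} \frac{\#(k * \Gamma_p)}{p^d k^d} \ \ge \ d! \cdot \vol_{\RR^d}(\Delta) - \eps \ = \ \vol_X(W\bull) - \eps,
\]
which is the claimed inequality. (One should note that $\dim V_{k,p}$ is submultiplicative in $k$ in the appropriate sense, so the limit superior is in fact a limit; alternatively one simply states the conclusion with $\liminf$, or invokes the standard Fekete-type argument, since $S^{k}W_p \to W_{kp}$ and $S^{k'}W_p \to W_{k'p}$ multiply into $S^{k+k'}W_p \to W_{(k+k')p}$.) I do not expect any serious obstacle here: the work was already done in Proposition~\ref{Semigroup.FAT} and in verifying conditions \eqref{zero}--\eqref{full}; the only point requiring a little care is bookkeeping the two alternative hypothesis packages (A)+(B) versus (C) so that both the flag-dependent Lemma~\ref{Cond.F.generates.generic.flag} and the flag-independent Lemma~\ref{Graded.Series.Any.Flag.C} are correctly invoked, and confirming that in each case \eqref{bounded} genuinely holds.
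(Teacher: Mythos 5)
Your proposal is correct and follows essentially the same route as the paper: the paper proves Theorem \ref{FAT.GLS} by remarking that the argument for Theorem \ref{FAT.Big.Divisors} carries over verbatim, namely choosing a flag via Lemma \ref{Cond.F.generates.generic.flag} or \ref{Graded.Series.Any.Flag.C}, noting $k * \Gamma_p \subseteq \Image\big((V_{k,p}-\{0\}) \to \NN^d\big)$ by the valuative property, counting with Lemma \ref{No.of.Val.Vectors}, and invoking Proposition \ref{Semigroup.FAT} together with Theorem \ref{Vol.OB.GLS}. Your extra remarks on verifying \eqref{zero}--\eqref{full} under each hypothesis package and on the limit versus liminf issue are accurate bookkeeping that the paper leaves implicit.
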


\begin{remark} \textbf{(Fujita approximation for restricted volumes).}
Thanks to Lemma \ref{Restr.Lin.Series.Satisfies.Cond.C}, this implies via the argument of 
\ref{Explain.FAT.Traditional}
 the main Fujita-type results for restricted volumes established in \cite[\S2]{ELMNP3}, notably the first equality of Theorem 2.13 of that paper. \qed \end{remark}

\begin{remark}
Note that we have not used here the hypothesis that our ground field $\KK$ is uncountable, so the results of this subsection (and the next) hold for varieties over an arbitrary algebraically closed field. \qed
\end{remark}

\subsection{Application to   Multiplicities of Graded Families of Ideals.} As an application of Theorem \ref{FAT.GLS}, we extend (in the geometric setting) the main result  of \cite{Mustata} to the case of possibly singular varieties. 

Let $X$ be an irreducible variety of dimension $d$. Recall that a \textit{graded familty of ideals} $\fra\bull = \{ \fra_k \}$ on $X$ is a family of ideal sheaves $\fra_k \subseteq \OO_X$, with $\fra_0 = \OO_X$, such that 
\[   \fra_k \cdot \fra_\ell \ \subseteq \ \fra_{k + \ell} \]
for every $k, \ell \ge 0$. A typical example occurs by taking a $\ZZ$-valued valuation $v$ centered on $X$, and setting 
\[  \fra_k \ = \ \big \{  f \in \OO_X \, \big | \,  v(f) \ge k \, \big \}. \]
We refer to \cite[Chapter 2.4]{PAG} for further discussion and illustrations. 

Now fix a point $x \in X$ with maximal ideal $\frakm$, and consider a graded family $\fra\bull$ with the property  that each $\fra_k$ is $\frakm$-primary.\footnote{Recall that this is equivalent to asking that each $\fra_k$ vanishes only at $x$.} Then $\fra_k \subseteq \OO_X$ is of finite codimension, and we define
\[  \mult\big (\fra\bull \big)  \ = \ \limsup_{m \to \infty}\  \frac{\dim_{\KK} \big( \OO_X / \fra_m \big)}{m^d/d!}. \] This is the analogue for $\fra\bull$ of the Samuel multiplicity of an ideal, and it is natural to ask how this invariant compares with the multiplicities $e(\fra_p)$ of the individual $\fra_p$. We prove
\begin{theorem} \label{Mult.Vol.Sing.Var}
One has 
\[   \mult \big (\fra\bull \big)  \ = \ \lim_{p \to \infty} \, \frac{e(\fra_p)}{p^d}. \]
\end{theorem}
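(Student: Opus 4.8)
The plan is to encode both sides of the asserted identity through the volume of a single graded linear series on $X$, and then invoke the Fujita-type Theorem \ref{FAT.GLS}. Since $\mult(\fra\bull)$ and the $e(\fra_p)$ are local invariants at $x$, I would first replace $X$ by a projective variety containing a neighbourhood of $x$, extending each $\fra_k$ by $\OO_X$ off $x$; this preserves the graded-family structure and all numerical invariants. Fix $c$ with $\frakm^{c}\subseteq\fra_1$, so $\frakm^{c\ell}\subseteq\fra_\ell$ for all $\ell$, and choose a very ample $A$ on $X$ positive enough that, for $\pi:X'\to X$ the blow-up of $\frakm$ with exceptional divisor $E$, the divisor $\pi^*A-cE$ is ample on $X'$. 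Set
\[ W_m \ = \ \HH{0}{X}{\fra_m\otimes\OO_X(mA)} \ \subseteq \ \HH{0}{X}{\OO_X(mA)} ; \]
these form a graded linear series attached to the ample divisor $A$, with $W_0=\KK$.

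Next I would verify that $W\bull$ satisfies condition \tn{(C)} of \S\ref{Conds.GLS.subsection}, so that Theorem \ref{Vol.OB.GLS} applies with respect to any admissible flag and $\vol(W\bull)=\lim_m\dim W_m/(m^d/d!)$ is a genuine limit, equal to $\tfrac1{d!}\vol_{\RR^d}(\Delta(W\bull))$. Since $\pi^*A-cE$ is big, for $m\gg0$ one may fix an ample divisor $A_0$ and an effective $F_m\sim mA-A_0$ whose local equation at $x$ has $\frakm$-adic order $\ge cm$; then $\OO_X(-F_m)\subseteq\frakm^{cm}\subseteq\fra_m$, and for every $p$ the inclusion $\HH{0}{X}{pA_0}\hookrightarrow\HH{0}{X}{pmA-pF_m}$ has image inside $\HH{0}{X}{\OO_X(-pF_m)\otimes\OO_X(pmA)}\subseteq\HH{0}{X}{\fra_{pm}\otimes\OO_X(pmA)}=W_{pm}$, because $\OO_X(-pF_m)\subseteq\frakm^{cpm}\subseteq\fra_{pm}$. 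As $W_k\supseteq\HH{0}{X}{\frakm^{ck}\otimes\OO_X(kA)}\neq0$ for $k\gg0$, condition \tn{(C)} holds by Remark \ref{Alt.Cond.C}.

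The third step identifies $\mult(\fra\bull)$ with $(A^d)-\vol(W\bull)$. Because $\OO_X/\fra_m$ has $0$-dimensional support (so twisting by $\OO_X(mA)$ does not change its length), the long exact sequence of $0\to\fra_m\otimes\OO_X(mA)\to\OO_X(mA)\to\OO_X/\fra_m\to0$ gives, for $m\gg0$ (once $H^{\ge1}(\OO_X(mA))=0$),
\[ \dim_\KK(\OO_X/\fra_m) \ = \ \hh{0}{X}{\OO_X(mA)}-\dim W_m+\hh{1}{X}{\fra_m\otimes\OO_X(mA)}. \]
Here $\hh{0}{X}{\OO_X(mA)}=\tfrac{(A^d)}{d!}m^d+O(m^{d-1})$, and $\hh{1}{X}{\fra_m\otimes\OO_X(mA)}=O(m^{d-1})$: it is dominated by $\hh{1}{X}{\frakm^{cm}\otimes\OO_X(mA)}$ (the quotient being a skyscraper), which is at most $\hh{1}{X}{\pi_*\OO_{X'}(m(\pi^*A-cE))}+\ell\big(\pi_*\OO_{X'}(-cmE)/\frakm^{cm}\big)$; the first term vanishes for $m\gg0$ by Serre vanishing on $X'$ together with the Leray spectral sequence, and the second is $O(m^{d-1})$ since $\bigoplus_n\pi_*\OO_{X'}(-nE)$ is a finite module over the Rees algebra $\bigoplus_n\frakm^n$. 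Dividing by $m^d/d!$ and letting $m\to\infty$ yields $\mult(\fra\bull)=(A^d)-\vol(W\bull)$; in particular the limsup defining $\mult(\fra\bull)$ is a limit.

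Finally, the same exact sequence applied to the ordinary powers $\fra_p^k$ (fixed $p$, $k\to\infty$) gives $e(\fra_p)=p^d(A^d)-d!\cdot\lim_k\hh{0}{X}{\fra_p^k\otimes\OO_X(kpA)}/k^d$, the relevant $h^1$ again being $O(k^{d-1})$. The multiplication map produces the sandwich
\[ \dim\Image\!\big(S^kW_p\lra W_{kp}\big)\ \le\ \hh{0}{X}{\fra_p^k\otimes\OO_X(kpA)}\ \le\ \dim W_{kp}, \]
the left inequality since a product of $k$ sections of $\fra_p\otimes\OO_X(pA)$ lies in $\fra_p^k\otimes\OO_X(kpA)$, the right since $\fra_p^k\subseteq\fra_{kp}$. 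Using $\dim W_{kp}/k^d\to p^d\vol(W\bull)/d!$ on the right gives $e(\fra_p)/p^d\ge(A^d)-\vol(W\bull)=\mult(\fra\bull)$; using Theorem \ref{FAT.GLS} on the left, which for any $\eps>0$ forces $\lim_k\dim\Image(S^kW_p\to W_{kp})/(p^dk^d/d!)\ge\vol(W\bull)-\eps$ once $p\ge p_0(\eps)$, gives $e(\fra_p)/p^d\le\mult(\fra\bull)+\eps$ for $p\ge p_0(\eps)$. Letting $\eps\to0$ proves the theorem. The main obstacle is the bookkeeping of the last two steps — showing that the "error" cohomology $\hh{1}{X}{\fra_m\otimes\OO_X(mA)}$ and $\hh{1}{X}{\fra_p^k\otimes\OO_X(kpA)}$ is of lower order — where the positivity of $A$, Serre vanishing on the blow-up, and (for singular $x$) the finiteness of $\bigoplus_n\pi_*\OO_{X'}(-nE)$ over the Rees algebra all enter; note that the whole argument is characteristic-free.
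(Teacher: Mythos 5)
Your argument is correct, and its skeleton is the same as the paper's: globalize to a projective $X$, encode the family through the graded linear series $W_m=\HH{0}{X}{\fra_m\otimes\OO_X(mA)}$, identify $\mult(\fra\bull)$ and the $e(\fra_p)$ with the discrepancies between $h^0(\OO_X(mA))$ (resp. $h^0(\OO_X(kpA))$) and $\dim W_m$ (resp. $\hh{0}{X}{\fra_p^k\otimes\OO_X(kpA)}$), and then squeeze with Theorem \ref{FAT.GLS}. Where you differ is in the cohomological bookkeeping. The paper proves a single uniform statement (Lemma \ref{Proj.GSI.Lemma}): using Fujita vanishing on the blow-up of $\fra_1$ it produces an ample $D$ with $\HH{i}{X}{\OO_X(pkD)\otimes\fra_p^k}=0$ for \emph{all} $p,k>0$ and $i>0$, together with birationality of the maps $\phi_p$, so that Theorem \ref{FAT.GLS} applies via condition (B) and all the comparisons become exact equalities with no error terms. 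You instead verify condition (C) for $W\bull$ and replace the uniform vanishing by asymptotic estimates, bounding the error $h^1$'s by $O(m^{d-1})$ (resp. $O(k^{d-1})$) using the comparison $\frakm^{cm}\subseteq\fra_m$, Serre vanishing for the ample class $\pi^*A-cE$ on the blow-up of $\frakm$, and the finiteness of $\bigoplus_n\pi_*\OO_{X'}(-nE)$ over the Rees algebra. Both routes work and are characteristic-free; the paper's lemma is cleaner (no error estimates, and it handles all $p,k$ at once), while yours is more self-contained in that it only invokes Serre vanishing and standard Rees-algebra finiteness rather than Fujita vanishing, at the cost of the extra $O(\cdot^{d-1})$ bookkeeping. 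Your derivation of the easy inequality $e(\fra_p)/p^d\ge\mult(\fra\bull)$ through the right-hand side of the sandwich is also fine, though it follows more directly from $\fra_p^k\subseteq\fra_{pk}$, as the paper notes.

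One step is stated too loosely: the existence of $F_m\lin mA-A_0$ whose local equation lies in $\frakm^{cm}$ does not quite follow from bigness of $\pi^*A-cE$ by pushing a section down from $X'$, since that only places the local equation in $\pi_*\OO_{X'}(-cmE)$, which may be strictly larger than $\frakm^{cm}$ (it sits inside the integral closure $\overline{\frakm^{cm}}$). This is easily repaired: either note that $h^0\big(\frakm^{cm}\otimes\OO_X(mA-A_0)\big)\ge h^0(\OO_X(mA-A_0))-\dim_\KK(\OO_X/\frakm^{cm})>0$ for $m\gg0$, since ampleness of $\pi^*A-cE$ forces $(A^d)>c^d\,e(\frakm)$; or use the linear comparison $\overline{\frakm^{n}}\subseteq\frakm^{n-k_0}$ for a fixed $k_0$, which is exactly the Rees-finiteness statement you already invoke for the $h^1$ estimate, and absorb the shift $k_0$ into the constants. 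With that repair the proof is complete.
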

\noi This was established in \cite{ELS} when $\fra\bull$ is the family of valuation ideals associated to an Abhyankar valuation centered at a smooth point of $X$. 
For an arbitrary $\frakm$-primary graded family in a regular local ring containing a field, the equality was proven by the second author in \cite{Mustata} via a degeneration to monomial ideals. It was suggested in \cite[p. 183]{PAG} that Theorem \ref{Mult.Vol.Sing.Var} should hold also at singular points.\footnote{The invariant $\mult(\fra\bull)$ was called the volume $\vol(\fra\bull)$ of $\fra\bull$ in \cite{ELS} and \cite{Mustata}, but we prefer to stick with the terminology used in  \cite{PAG}. }

The plan is to reduce to the case when $X$ is projective. The following lemma will then allow us to relate the local question at hand to global data.

\begin{lemma} \label{Proj.GSI.Lemma}
Let $X$ be a projective variety, and let $\fra\bull$ be a graded family of $\frakm$-primary ideals. Then there exists an ample divisor $D$ on $X$ with the property that for every $p, k > 0$, one has
\begin{equation} \label{GSI.Lemma.Van.Eqn}
 \HH{i}{X}{\OO_X(pkD) \otimes \fra_p^k} \, = \, 0 \ \text{for } i > 0.
 \end{equation}
Moreover we can arrange that the rational mapping \[ \phi_p \, : \, X \dra \PP = \PP\HH{0}{X}{\OO_X(pD)} \]
  defined by the subspace $\HH{0}{X}{\OO_X(pD)\otimes \fra_p} \subseteq \HH{0}{X}{\OO_X(pD)}$ is birational over its image.
\end{lemma}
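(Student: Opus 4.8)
The plan is to produce the ample divisor $D$ in two stages, handling the cohomology vanishing \eqref{GSI.Lemma.Van.Eqn} first and then adjusting so that the birationality condition holds as well. For the vanishing, I would begin by choosing an auxiliary very ample divisor $H$ on $X$ and invoking a uniform Serre-type vanishing statement: since $\fra\bull$ is a graded family, the Rees-type sheaf of algebras $\bigoplus_k \fra_p^k \cdot \OO_X((pk)D)$ (or, more concretely, the ideals $\fra_p^k \subseteq \fra_{pk}$) can be controlled all at once. Concretely, write $D = (d_0 + 1)H$ for a suitable integer $d_0$; by Mumford regularity (Castelnuovo--Mumford regularity of the ideal sheaves $\fra_m$ can be bounded because $\OO_X/\fra_m$ is a finite-length sheaf supported at $x$, so in fact $\HH{i}{X}{\fra_m \otimes \OO_X(L)} = 0$ for $i>0$ as soon as $L$ is a large enough multiple of $H$, with a bound independent of $m$ once one knows $\fra_m \supseteq \frakm^{c\cdot m}$ or similar). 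The key point is that the twist $(pk)D$ grows at least linearly in the ``weight'' $pk$ of the ideal $\fra_p^k \subseteq \fra_{pk}$, so a single choice of $D$ works uniformly. I would phrase this via the exact sequence $0 \to \fra_p^k \otimes \OO_X(pkD) \to \OO_X(pkD) \to (\OO_X/\fra_p^k)\otimes\OO_X(pkD)\to 0$ and note that the quotient is a finite-length sheaf, so what must vanish is $\HH{1}{X}{\fra_p^k \otimes \OO_X(pkD)}$ together with $\HH{i}{X}{\OO_X(pkD)}$ for $i>0$; the latter is immediate for $D$ ample and $pk > 0$, and the former follows from the uniform regularity bound.

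For the birationality statement, I would enlarge $D$ further if necessary. Since $\fra_p$ is $\frakm$-primary, it vanishes only at $x$, so the linear series $\HH{0}{X}{\OO_X(pD)\otimes\fra_p}$ cuts out a subscheme supported at $x$; away from $x$ it behaves like the complete linear series $\linser{pD}$. Choosing $D$ ample enough that $\linser{D}$ (hence $\linser{pD}$) already gives an embedding of $X \setminus \{x\}$, and using that $\fra_p \supseteq \frakm^{N}$ for some $N = N(p)$ together with the fact that $\OO_X(pD)\otimes\frakm^{N}$ separates points on $X\setminus\{x\}$ for $p \gg 0$, the map $\phi_p$ restricted to $X \setminus \{x\}$ remains an embedding, hence $\phi_p$ is birational onto its image. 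Here I would be careful to note that we only need this for one value of $p$ (or for $p \gg 0$), which is harmless; and the uncountability of $\KK$ is not needed. The argument for all $p$ simultaneously works because once $D$ is chosen so that $\linser{D}$ embeds $X\setminus\{x\}$ and $\fra_p$ agrees with $\OO_X$ off $x$, pulling back sections in $\fra_p$ still separates points of $X\setminus\{x\}$.

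The main obstacle I anticipate is establishing the \emph{uniform} (in $k$) vanishing in \eqref{GSI.Lemma.Van.Eqn}: one must rule out that the regularity of $\fra_p^k$ grows faster than linearly in $k$. This is where the graded-family hypothesis $\fra_p^k \subseteq \fra_{pk}$ is essential — it lets one replace the powers $\fra_p^k$, whose regularity could a priori be hard to bound, by the ideals $\fra_{pk}$ of the graded family, and then bound the regularity of $\fra_m$ linearly in $m$ using, e.g., an inclusion $\frakm^{cm} \subseteq \fra_m$ valid for some constant $c$ (which holds because $\dim_\KK(\OO_X/\fra_m)$ is finite, forcing $\fra_m \supseteq \frakm^{\dim(\OO_X/\fra_m)}$, and $\dim(\OO_X/\fra_m)$ grows at most polynomially — but even the crude bound suffices once one is willing to take $D$ large). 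Once that linear regularity bound is in hand, choosing $D$ to be a large multiple of a fixed very ample divisor makes every required higher cohomology group vanish, and the birationality adjustment is routine.
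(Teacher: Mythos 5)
Your overall architecture (reduce to a uniform vanishing statement for a simpler family of ideals, then handle birationality by positivity away from $x$) is reasonable, but the vanishing part has two genuine problems. First, the comparison you lean on goes the wrong way: from the exact sequence $0 \to \fra_p^k \otimes \OO_X(pkD) \to \fra_{pk}\otimes\OO_X(pkD) \to Q \to 0$ with $Q$ of finite length, the long exact sequence gives
\[
\HH{0}{X}{Q} \lra \HH{1}{X}{\fra_p^k \otimes \OO_X(pkD)} \lra \HH{1}{X}{\fra_{pk}\otimes\OO_X(pkD)},
\]
so vanishing for the \emph{larger} ideal $\fra_{pk}$ does not give vanishing for $\fra_p^k$ unless you also prove surjectivity of $\HH{0}{X}{\fra_{pk}\otimes\OO_X(pkD)} \to \HH{0}{X}{Q}$. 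The useful inclusion is the reverse one, $\fra_1^{pk} \subseteq \fra_p^k$ (or $\frakm^{cpk}\subseteq\fra_1^{pk}\subseteq\fra_p^k$ with $\frakm^c\subseteq\fra_1$), whose finite-length quotient yields surjections $\HH{i}{X}{\fra_1^{pk}\otimes\OO_X(pkD)} \to \HH{i}{X}{\fra_p^k\otimes\OO_X(pkD)}$ for $i \ge 1$; this is exactly how the paper reduces everything to $p=1$, i.e.\ to powers of the single ideal $\fra_1$. Relatedly, your justification of $\frakm^{cm}\subseteq\fra_m$ with a constant $c$ is off: $\frakm^{\dim_\KK(\OO_X/\fra_m)}\subseteq\fra_m$ only gives an exponent growing like $m^d$, useless against the linear twist $mD$; the linear containment does hold, but via $\frakm^{cm}\subseteq\fra_1^m\subseteq\fra_m$, which is not what you wrote.

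Second, and more seriously, the crux — uniform vanishing $\HH{i}{X}{\fra_1^{k}\otimes\OO_X(kD)}=0$ for $i>0$ and all $k$ with one fixed ample $D$ (equivalently a linear-in-$k$ regularity bound for $\fra_1^k$ or $\frakm^k$) — is asserted ("Mumford regularity \ldots can be bounded \ldots with a bound independent of $m$") but never proved, and it is not a formal consequence of the containments you invoke; note also that $X$ may be singular here (that is the whole point of this application), so smooth-point regularity facts cannot be quoted off the shelf. The paper supplies precisely this content by blowing up $\fra_1$, using $\mu_*\OO_{X^\pr}(-kE)=\fra_1^k$ and $R^j\mu_*\OO_{X^\pr}(-kE)=0$ for $k\gg 0$, and applying Fujita vanishing to $\mu^*mD_0-E$ on the blow-up; some such argument (or a citation of the asymptotic linearity of regularity of powers of an ideal sheaf) is needed to close your gap. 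Your birationality discussion is fine in spirit, but the uniformity in $p$ is handled more cleanly as in the paper: the image of $S^p\HH{0}{X}{\OO_X(D)\otimes\fra_1}$ lies in $\HH{0}{X}{\OO_X(pD)\otimes\fra_1^p}\subseteq\HH{0}{X}{\OO_X(pD)\otimes\fra_p}$, so birationality for $p=1$ automatically implies it for every $p$.
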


\begin{proof}
By the definition of a graded family one has $\fra_1^{kp} \subseteq \fra_p^k$, and since $\fra_p^k / \fra_1^{kp}$ has zero-dimensional support,   the map
$\HH{i}{X}{\OO_X(kpD) \otimes \fra_1^{pk}} \lra\HH{i}{X}{\OO_X(kpD) \otimes \fra_p^{k}}  $ is surjective when $i > 0$. So it suffices to prove the vanishing \eqref{GSI.Lemma.Van.Eqn}
 in the case $p =  1$. For this, let 
\[  \mu  \, : \, X^\pr\, = \, \Bl_{\fra_1}(X) \lra X \]
be the blowing-up of $X$ along $\fra_1$, with exceptional divisor $E\subseteq X^\pr$. Let $D_0$ be an ample divisor on $X$. Since $-E$ is ample for $\mu$, we can suppose upon replacing $D_0$ by a large multiple that $\mu^*mD_0 - E$ is an ample divisor on $X^\pr$ for every $m \ge 1$. Recalling that 
\[  \mu_{*}\big(  \OO_{X^\pr}(- kE ) \big) \, = \, \fra_1^k \ \ , \ \  R^j\mu_{1} \big(\OO_{X^\pr}(- kE)\big) \, = \, 0 \ \ ( j > 0) \]
provided that $k \gg 0$, (cf. \cite[Lemma 5.4.24] {PAG}), it follows from  Fujita vanishing (\cite[]{PAG}) on $X^\pr$ and the Leray spectral sequence  that   \[
\HH{i}{X}{\OO_X(kmD_0) \otimes \fra_1^k} \ = \ 0  \ \ (i >0)  \]
 for every $m \ge 1$ and all sufficiently large $k$. By taking $m \ge m_1$ for suitable $m_1 > 0$ we can arrange that the vanishing in question holds for every $k$. So the first assertion of the Lemma will be satisfied with $D = mD_0$ for any choice of $m \ge m_1$.  Thanks to the inclusion
\[ 
\HH{0}{X}{\OO_X(pD)\otimes \fra_1^{p}} \ \subseteq \ \HH{0}{X}{\OO_X(pD)\otimes \fra_p},
\]
the  birationality of $\phi_p$ for arbitrary $p$ is implied by the case $p = 1$, and this can be achieved by increasing $m_1$.
 \end{proof}

\begin{proof}[Proof of Theorem \ref{Mult.Vol.Sing.Var}] By passing first to an affine neighborhood of $x$, and then taking a projective closure, we may assume without loss of generality that $X$ is projective. So we are in the setting of the previous lemma. Let $D$ to be the ample divisor constructed there, and set
\[  W_m \ = \ \HH{0}{X}{\OO_X(mD)\otimes \fra_m}. \]
These form a graded linear series associated to $D$, which satisfies condition (B) thanks to Lemma \ref{Proj.GSI.Lemma}.
Therefore Theorem \ref{FAT.GLS} applies.\footnote{Recall that condition (A) is automatic on a projective variety.}  Keeping the notation of that theorem, put
\[
V_{k,p} \  = \ \Image \Big(S^k \big ( \, \HH{0}{X}{\OO_X(pD) \otimes \fra_p}\, \big) \lra \HH{0}{X}{\OO_X(kpD) \otimes \fra_{pk}} \Big).
\]
The map on the right factors through $\HH{0}{X}{\OO_X(kpD) \otimes \fra_{p}^k} $, and hence
\[ 
V_{k,p} \ \subseteq \  \HH{0}{X}{\OO_X(kpD) \otimes \fra_{p}^k}.
\]
Therefore the vanishing $\HH{1}{X}{\OO_X(kpD) \otimes \fra_p^k} = 0$ from \ref{Proj.GSI.Lemma} gives
\begin{align*}
\dim \, V_{k,p} \ &\le \    \hh{0}{X}{\OO_X(kpD) \otimes \fra_{p}^k} \\
&= \ \hh{0}{X}{\OO_X(kpD) } \, - \, \dim \, \big( {\OO_X}/{\fra_p^k} \big).
\end{align*}
Thus 
\[\lim_{k \to \infty}   \ \frac{ \dim V_{k,p} }{p^dk^d/d!} \ \le \ \vol_X(D) - \frac{e(\fra_p)}{p^d}.
\]
On the other hand, Lemma \ref{Proj.GSI.Lemma}  similarly implies that
\[
\vol(W\bull) \ = \ \vol_X(D) \, - \, \mult(\fra\bull).
\] 
We deduce that given $\eps > 0$, there exists $p_0 = p_0(\eps)$ such that 
\[
\frac{e(\fra_p)}{p^d} \ \le \ \mult(\fra\bull) + \eps
\]  
for $p \ge p_0$. Since in any event $e(\fra_p)/p^d \ge \mult(\fra\bull)$, the Theorem follows. 
\end{proof}

\section{Variation of Okounkov Bodies} \label{Varation.of.OB.Section}

In this section we study the variation of $\Delta(D)$ as a function of the divisor $D$. We start by showing that $\Delta(D)$ depends only on the numerical equivalence class of $D$, and that it scales linearly with $D$. Therefore $\Delta(\xi)$ is naturally defined for any numerical equivalence class $\xi \in N^1(X)_{\QQ}$. The main result appears in the second subsection, where we prove Theorem \ref{Global.Body.Intro.Theorem} stated in the Introduction, showing that these occur as fibres of a closed convex cone \[
\Delta(X) \ \subseteq \ \RR^d \times N^1(X)_{\RR}. \]
 This is generalized to the setting of graded linear series in \S \ref{Multi-graded.Lin.Series.Subsection}:  the key here is to study linear series with an $\NN^r$-grading. Finally, we discuss  slices of Okounkov bodies in \S \ref{Slices.Subsection}, proving Corollary \ref{Integration.Intro}.

\subsection{Okounkov Body of a Rational Class.} 
Let $X$ be  an irreducible projective variety of dimesion $d$, and fix any admissible flag $Y\bull $ on $X$ with respect to which all the Okounkov bodies are constructed.
\begin{proposition} \label{Num.Nature.Homogenity} 
 Let $D$ be a big divisor on $X$.
\begin{enumerate}
\item[(i).] The Okounkov body $\Delta(D)$ depends only on the numerical equivalence class of $D$.
\item[(ii).]  For any integer $p >0$, one has
\[  \Delta(pD)  \ = \ p \cdot \Delta(D), \]
where the expression on the right denotes the homothetic image of $\Delta(D)$ under scaling by the factor $p$. 
\end{enumerate}
\end{proposition}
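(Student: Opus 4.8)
The plan is to prove (ii) by a direct computation with the graded semigroup, and to reduce (i) to a single inclusion handled by a multiplication argument on sections.

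\emph{Part (ii).} Directly from the definitions one has $\Gamma(pD)_m = \Gamma(D)_{mp}$ for every $m$, so the alternative description \eqref{Alt.Descr.Okounkov.Body} gives $\Delta(pD) = \textnormal{closed convex hull}\big(\bigcup_{m\ge 1}\tfrac1m\Gamma(D)_{mp}\big)$. Writing $\tfrac1m\Gamma(D)_{mp} = p\cdot\big(\tfrac1{mp}\Gamma(D)_{mp}\big)$ and using that the homothety $x\mapsto px$ commutes with taking the closed convex hull, the right-hand side is contained in $p\cdot\Delta(D)$; this proves $\Delta(pD)\subseteq p\cdot\Delta(D)$. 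For the reverse inclusion, take any $v\in\Gamma(D)_n$, represented by a non-zero $s\in\HH{0}{X}{\OO_X(nD)}$ with $\nu_{Y\bull}(s)=v$; then property (iii) of $\nu_{Y\bull}$ gives $\nu_{Y\bull}(s^{\otimes p}) = p\,v$, and $s^{\otimes p}$ is a non-zero section of $\OO_X(n\cdot pD)$, so $\tfrac pn v\in\tfrac1n\Gamma(pD)_n\subseteq\Delta(pD)$. Passing to the closed convex hull over all $n$ yields $p\cdot\Delta(D)\subseteq\Delta(pD)$, hence equality.

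\emph{Part (i).} Since $-P$ is numerically trivial whenever $P$ is, it suffices by symmetry to prove $\Delta(D)\subseteq\Delta(D')$ for big divisors with $D\num D'$. By \eqref{Alt.Descr.Okounkov.Body}, $\Delta(D)$ is the closed convex hull of the rational points $v/m$ with $v\in\Gamma(D)_m$, and $\Delta(D')$ is closed and convex, so it is enough to show $v/m\in\Delta(D')$ for every such $v$. Fix a non-zero $s\in\HH{0}{X}{\OO_X(mD)}$ with $\nu_{Y\bull}(s)=v$, and set $P=D-D'$, a numerically trivial divisor. Choose a very ample divisor $A$ such that $A\otimes\OO_X(N)$ is globally generated for every numerically trivial line bundle $\OO_X(N)$; since $D'$ is big, also fix $\ell\gg 0$ and an effective divisor $G$ with $A\lin \ell D' - G$, and put $w=\nu_{Y\bull}(g)\in\NN^d$ for a section $g$ cutting out $G$. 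Now for each $k\ge 1$ the line bundle $\OO_X(A-kmP)$ is numerically equivalent to $A$, hence globally generated, so it carries a section $t_k$ not vanishing at the point $Y_d$; such a section vanishes identically on none of the $Y_i$, so $\nu_{Y\bull}(t_k)=0$. Then $s^{\otimes k}\otimes t_k$ is a non-zero section of $\OO_X(kmD)\otimes\OO_X(A-kmP)=\OO_X(kmD'+A)$ with $\nu_{Y\bull}=kv$ by property (iii); transporting it through the isomorphism $\OO_X(kmD'+A)\cong\OO_X((km+\ell)D'-G)$, which does not change the valuation vector, and then multiplying by $g$ produces a non-zero section of $\OO_X((km+\ell)D')$ with valuation vector $kv+w$. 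Hence $\tfrac{kv+w}{km+\ell}\in\Delta(D')$ for all $k\ge 1$, and letting $k\to\infty$ gives $v/m\in\Delta(D')$ because $\Delta(D')$ is closed. This proves $\Delta(D)\subseteq\Delta(D')$, and the opposite inclusion follows by exchanging $D$ and $D'$.

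\emph{The main obstacle.} The one non-formal ingredient is the choice of a single very ample $A$ with $A\otimes\OO_X(N)$ globally generated for all numerically trivial $N$; this is precisely where one uses that $D$ and $D'$ agree only up to numerical equivalence, and it rests on the fact that $\textnormal{Pic}^\tau(X)$ is of finite type over $\KK$, so that the numerically trivial line bundles form a bounded family and can be brought into the globally generated range by a fixed ample twist. (In the argument above one applies this only to the bounded subfamily $\{\OO_X(-kmP)\}_{k\ge 1}$.) Everything else is bookkeeping with properties (i)--(iii) of $\nu_{Y\bull}$, the description \eqref{Alt.Descr.Okounkov.Body}, and the remark that an isomorphism of line bundles preserves the valuation vector of a section.
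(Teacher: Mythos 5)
Your proof is correct. For part (i) it is essentially the paper's own argument in a slightly different arrangement: the paper fixes, via \cite[Lemma 2.2.42]{PAG} (which rests on Fujita's vanishing theorem), a divisor $B$ such that $B+kP$ is very ample for every $k\in\ZZ$, represents $B+(m+a)P$ by a divisor missing the flag point (hence with valuation vector zero), uses bigness to write $aD-B\lin F$, and deduces $\Gamma(D)_m+f\subseteq\Gamma(D+P)_{m+a}$ before passing to the limit; you do the same thing by powering up a single section ($s^{\otimes k}$), twisting by one ample $A$ with $A-kmP$ globally generated for all $k$, and absorbing $A$ into $\ell D'$ using bigness of $D'$. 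Your appeal to the finiteness of $\operatorname{Pic}^{\tau}(X)$ to get the uniform global generation is legitimate but heavier than necessary: since numerically trivial classes are nef, Fujita vanishing (already the ingredient behind the paper's reference, and valid in all characteristics) gives the same uniform statement without invoking the Picard scheme, and in fact you only need it for the single ray $\{-kmP\}_{k\ge 1}$, exactly as in the paper. For part (ii) your route is genuinely simpler than the paper's: the inclusion $\Delta(pD)\subseteq p\cdot\Delta(D)$ is formal from $\Gamma(pD)_m=\Gamma(D)_{mp}$, and you obtain the reverse inclusion by sending $s\in\HH{0}{X}{\OO_X(nD)}$ to $s^{\otimes p}$, so that $p\cdot\tfrac{v}{n}\in\Delta(pD)$ for every $v\in\Gamma(D)_n$ and every $n$, whereas the paper interpolates the residues $r$ modulo $p$ by fixing auxiliary effective divisors $E_r\in\linser{rD}$ and $F_r\in\linser{(q_0p-r)D}$ as in \cite[Lemma 2.2.38]{PAG}. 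Both arguments are valid; the power trick avoids those auxiliary choices, while the paper's bookkeeping follows the same pattern it reuses elsewhere (compare the proof of Lemma \ref{Gamma.L.generates.Group}), and your explicit limit $k\to\infty$ also makes precise the paper's terser ``letting $m\to\infty$'' step in part (i).
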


\begin{proof}
For (i), we need to show that $\Delta(D+P) = \Delta(D)$ for any numerically trivial divisor $P$. Arguing as in \cite[Lemma 2.2.42]{PAG}, there exists a fixed divisor $B$ such that $B + kP$ is very ample for every $k \in \ZZ$.\footnote{The arguments in \cite{PAG} rely on Fujita's vanishing theorem, which is valid in all characteristics.}  Choose a large integer $a$ such that $aD -B \lin F$ for some effective divisor $F$, and write
\[
(m+a)\big( D +  P \big)  \ \lin \ mD + (aD - B) + \big(B + (m+a)P \big).
\]
Upon representing $B + (m+a)P$ by a divisor not  passing through any of the subvarieties $Y_i $ in the flag $Y\bull$, one finds for all $m$ an inclusion
\[  \Gamma(D)_m  \, + \, f  \ \subseteq \  \Gamma(D+P)_{m+a},\]
where $f$ is the valuation vector of the section defining $F$. Letting $m \to \infty$ it follows that $\Delta(D) \subseteq \Delta(D+P)$. Replacing $D$ by $D+P$ and $P$ by $-P$ yields the reverse inclusion. For (ii), one argues as in the proof of \cite[Lemma 2.2.38]{PAG}. Specifically, choose an integer $r_0$ such that $\linser{rD} \ne\emptyset$ for $r > r_0$, and take $q_0$ with $q_0 p - (r_0 + p) >r_0$. Then for each $r \in [r_0 + 1, r_0 + p]$ we can fix effective divisors
\[  E_r \in \linser{rD} \ \ , \ \  F_r \in \linser{(q_0 p - r)D}. \]
This gives rise for every $r \in [r_0 + 1, r_0 +p]$ to inclusions
\[  \linser{mpD} + E_r + F_r \ \subseteq \ \linser{(mp +r)D}  + F_r\ \subseteq \ \linser{(m + q_0)pD}, \]
and hence also
\[ \Gamma(pD)_m + e_r + f_r \ \subseteq \ \Gamma(D)_{mp + r} + f_r \ \subseteq \ \Gamma(pD)_{m+q_0}\]
where $e_r$ and $f_r$ denote respectively the valuation vectors of $E_r$ and $F_r$. 
Letting $m \to \infty$ this gives \[
\Delta(pD) \ \subseteq \ p \cdot \Delta(D) \  \subseteq \ \Delta(pD), \]
as required.
\end{proof}

\begin{remark}
The homogenity $\Delta(pD) = p \cdot \Delta(D)$ is actually a consequence of Theorem \ref{Global.Okounkov.Body.Theorem} below, but it seems clearest for the development to establish it directly. \qed
\end{remark}

It follows from Lemma  \ref{Num.Nature.Homogenity} that the Okounkov body
\[  \Delta(\xi)  \ \subseteq \ \RR^d \]
is defined in a natural way for any big rational numerical equivalence class $\xi \in N^1(X)_\QQ$:
\begin{definition}\textbf{(Rational classes).} \label{Def.of.OB.for.Rat.Class}
Given a big class $\xi \in N^1(X)_\QQ$, choose any $\QQ$-divisor $D$ representing $\xi$, and  fix an integer $p \gg 0$ clearing the denominators of $D$. Then  set 
\[   \Delta(\xi) \ = \ \frac{1}{p} \cdot \Delta (pD) \  \subseteq \ \RR^d. \ \  \qed \] 
\end{definition}
 \noi The Lemma implies that this is independent of the choice of $D$ and $p$. Furthermore, the analogue of Theorem \ref{Vol.eq.Vol}
 remains valid:
 \begin{proposition} For any big class $\xi \in N^1(X)_{\QQ}$, one has 
  \[   \vol_{\RR^d}\big(\Delta(\xi) \big)\ = \ \frac{1}{d!} \, \cdot \, \vol_X(\xi). \]
 \end{proposition}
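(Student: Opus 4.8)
The plan is to reduce the statement for rational classes to the already-established Theorem \ref{Vol.eq.Vol} for integral big divisors, using the homogeneity property (ii) of Proposition \ref{Num.Nature.Homogenity} together with the homogeneity of the volume function $\vol_X$. First I would write $\xi \in N^1(X)_\QQ$ as $\xi = \frac{1}{p}[D]$ for a genuine big divisor $D$ on $X$ and a positive integer $p$, as in Definition \ref{Def.of.OB.for.Rat.Class}, so that $\Delta(\xi) = \frac{1}{p}\cdot\Delta(D)$ by definition (this is well-defined by Proposition \ref{Num.Nature.Homogenity}).

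Next I would compute: since scaling a region in $\RR^d$ by a factor $\frac{1}{p}$ scales its Euclidean volume by $\frac{1}{p^d}$, we get
\[
\vol_{\RR^d}\big(\Delta(\xi)\big) \ = \ \vol_{\RR^d}\Big(\tfrac{1}{p}\cdot\Delta(D)\Big) \ = \ \frac{1}{p^d}\cdot\vol_{\RR^d}\big(\Delta(D)\big).
\]
By Theorem \ref{Vol.eq.Vol} applied to the integral big divisor $D$, the right-hand side equals $\frac{1}{p^d}\cdot\frac{1}{d!}\cdot\vol_X(D)$. Finally, the volume function $\vol_X$ is homogeneous of degree $d$ — so $\vol_X(D) = \vol_X(p\xi) = p^d\cdot\vol_X(\xi)$ — which gives $\vol_{\RR^d}(\Delta(\xi)) = \frac{1}{d!}\cdot\vol_X(\xi)$, as desired.

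The only points requiring any care are the two homogeneity facts being invoked. The homogeneity $\vol_X(p\xi) = p^d\vol_X(\xi)$ for integral classes is immediate from the defining limit $\vol_X(D) = \lim_m \hh{0}{X}{\OO_X(mD)}/(m^d/d!)$ by reindexing $m \mapsto pm$; one should also note that $\vol_X$ on $\QQ$-divisors is \emph{defined} precisely so that this scaling holds, which makes the statement consistent. The fact that $\vol_{\RR^d}(\lambda\cdot K) = \lambda^d\vol_{\RR^d}(K)$ for a convex body $K$ and $\lambda > 0$ is elementary. Since there is genuinely no obstacle here — the proposition is a formal consequence of results already in hand — the "main step" is simply to check that the two notions of scaling (of Okounkov bodies via Proposition \ref{Num.Nature.Homogenity}(ii), and of volumes) are compatible, after which the computation is a one-line chain of equalities.
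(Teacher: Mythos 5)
Your proposal is correct and follows essentially the same route as the paper: choose an integral divisor $pD$ representing $p\xi$, apply Theorem \ref{Vol.eq.Vol} to it, and use the degree-$d$ homogeneity of both $\vol_{\RR^d}$ under scaling of convex bodies and of $\vol_X$ (the latter being built into the definition for $\QQ$-classes). Nothing further is needed.
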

 
\begin{proof}
In fact, choose a $\QQ$-divisor $D$ representing $\xi$ and an integer $p \gg 0$ clearing the denominators of $D$.  Then
$ \vol_X(\xi) = \frac{1}{p^d} \cdot \vol_X(pD)$ by definition. Since likewise $\vol_{\RR^d}(\Delta(\xi)) = \frac{1}{p^d} \vol_{\RR^d}(\Delta(pD))$, the assertion follows from Theorem \ref{Vol.eq.Vol}
 \end{proof}

\subsection{Global Okounkov Body} \label{Glob.Ok.Body.Subsection}
We now show that the convex bodies $\Delta(\xi)$ fit together nicely. As above, $X$ is an irreducible projective variety of dimension $d$, and we fix an admissible flag $Y\bull$ on $X$ with respect to which all the constructions are made.
\begin{theorem} \label{Global.Okounkov.Body.Theorem}
There exists a closed convex cone 
\[ \Delta(X) \     \subseteq \ \RR^d \times N^1(X)_{\RR}\] characterized by the property that in the diagram 
 \[
\xymatrix{
\Delta(X)\ar[dr]      & \subseteq & \RR^d \times N^1(X)_{\RR} \ar[dl]^{\tn{pr}_2}  \\
&  N^1(X)_{\RR},
}
\]
 the fibre 
of $\Delta(X)$ over any big class $\xi \in N^1(X)_{\QQ}$ is $\Delta(\xi) $, i.e. 
 \[ \tn{pr}_2^{-1}(\xi) \cap \Delta(X)  \ = \ \Delta(\xi) \    \subseteq \ \RR^d \times \{\xi\} = \RR^d.\] 
\end{theorem}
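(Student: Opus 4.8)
The plan is to realize $\Delta(X)$ as the base (unit slice in an auxiliary coordinate) of a single closed convex cone built out of a ``universal graded semigroup'' that simultaneously records valuation vectors, the multiplicity $m$, and the numerical class. Concretely, fix a basis of $N^1(X)$, so that $N^1(X)_{\RR} \cong \RR^\rho$, and work inside $\RR^d \times \RR \times \RR^\rho$. For each effective (or big) integral divisor $D$ with class $[D] \in \ZZ^\rho$, the graded semigroup $\Gamma_{Y\bull}(D) \subseteq \NN^d \times \NN$ sits inside this bigger space via $(\nu(s),m) \mapsto (\nu(s), m, m[D])$. The first step is to take the union of these over all $D$ — or more robustly, to define directly
\[
\widetilde{\Gamma} \ = \ \big\{ (\nu_{Y\bull}(s), m, c) \ \big| \ c \in \ZZ^\rho,\ 0 \ne s \in \HH{0}{X}{\OO_X(mD)} \text{ for some divisor } D \text{ with } m[D] = c \big\},
\]
which one checks is a sub-semigroup of $\NN^d \times \NN \times \ZZ^\rho$ using property (iii) of $\nu_{Y\bull}$ together with additivity of divisor classes. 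Then set $\Delta(X)$ to be the image, under projection forgetting the $m$-coordinate along the slice $m=1$, of the closed convex cone $\Sigma(\widetilde{\Gamma}) \subseteq \RR^d \times \RR \times \RR^\rho$; more precisely $\Delta(X) = \tn{pr}_{1,3}\big( \Sigma(\widetilde\Gamma) \cap (\RR^d \times \{1\} \times \RR^\rho)\big)$, which is manifestly a closed convex cone in $\RR^d \times N^1(X)_\RR$.

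The core of the proof is then to identify the fibre of this cone over a big rational class $\xi$ with $\Delta(\xi)$. One inclusion — that $\Delta(\xi)$, together with the $\xi$-coordinate, lies in $\Delta(X)$ — is essentially immediate: if $D$ represents $p\xi$ with $p$ clearing denominators, then $\frac1m\Gamma_{Y\bull}(D)_m$ lies in the $\xi$-slice of $\Sigma(\widetilde\Gamma)$ after rescaling, so $\frac1p\Delta(pD) = \Delta(\xi)$ injects. The reverse inclusion is the substantive point: one must show that a point of $\Sigma(\widetilde\Gamma)$ lying over $\xi$ in the class-coordinate actually comes from sections of divisors whose class is (a multiple of) $\xi$, rather than being an ``accidental'' convex combination of vectors attached to divisors of nearby but distinct classes. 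This is exactly the kind of statement the Appendix is advertised to handle — intersecting a semigroup, or the cone it spans, with a linear subspace (here the subspace $\{c = t\,\xi\}$). So the plan is: invoke the Appendix's result to conclude that $\Sigma(\widetilde\Gamma) \cap (\RR^d \times \RR \times \RR_{\ge 0}\xi)$ is the closed convex cone spanned by $\widetilde\Gamma \cap (\ZZ^d \times \ZZ \times \ZZ\xi)$, i.e.\ by the graded semigroups of integral divisors in the ray $\RR_{\ge 0}\xi$ — and the latter is precisely the cone whose unit slice (after renormalizing for the multiple of $\xi$) is $\Delta(\xi)$, by Proposition \ref{Num.Nature.Homogenity} and Definition \ref{Def.of.OB.for.Rat.Class}.

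**The main obstacle**, as just indicated, is controlling the intersection of the big cone $\Sigma(\widetilde\Gamma)$ with a rational line through the origin in the $N^1$-directions: a priori the spanning cone could be strictly larger over the ray than the cone spanned by the semigroup elements actually lying over that ray, and ruling this out (equivalently, showing the relevant semigroup is ``saturated enough'' transverse to the class-projection) is the technical heart. This is precisely why the authors isolate a lemma in the Appendix about semigroups and cones meeting linear subspaces; modulo that, the rest is bookkeeping. A secondary point to be careful about is closedness and that the fibres over big classes are genuinely $d$-dimensional convex bodies — this follows because $\xi$ big forces $\Gamma_{Y\bull}(D)$ to generate $\ZZ^{d+1}$ (Lemma \ref{Gamma.L.generates.Group}), hence $\Sigma(\widetilde\Gamma)$ has full-dimensional slices near $\xi$ — and one should note the characterization is only claimed over \emph{big rational} classes, so no statement about the boundary of $\Effbar(X)$ is needed here.
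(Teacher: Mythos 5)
Your overall strategy is the right one --- and is in fact the paper's: build a single ``universal'' graded semigroup recording valuation vectors together with the numerical class, take the closed convex cone it spans, and use the Appendix result on intersecting that cone with a rational linear subspace to identify the fibre over a big rational ray with the cone spanned by the sub-semigroup lying over that ray, whose base is $\Delta(\xi)$ by homogeneity. But your concrete construction has a genuine gap at the very first step: the set $\widetilde{\Gamma}$ you define is not a semigroup. If $s \in \HH{0}{X}{\OO_X(mD)}$ and $s' \in \HH{0}{X}{\OO_X(m'D')}$, the sum of the corresponding elements is $(\nu(s)+\nu(s'),\, m+m',\, m[D]+m'[D'])$, and for this to lie in $\widetilde{\Gamma}$ you would need an integral divisor $D''$ with $(m+m')[D''] = m[D]+m'[D']$ carrying a section with the right valuation vector; in general $m[D]+m'[D']$ is not even divisible by $m+m'$ in $N^1(X)$ (already with $\rho=1$, take classes $h$ and $2h$ with $m=m'=1$). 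This is not cosmetic: the Appendix's Proposition \ref{Mircea.Prop} and Corollary \ref{Mircea.Corollary}, which you rightly single out as the technical heart, are statements about sub-semigroups (their proofs rest on Khovanskii-type saturation), so they cannot be invoked for a set that is not closed under addition. A second, related problem is the definition $\Delta(X) = \pro_{1,3}\big(\Sigma(\widetilde{\Gamma})\cap(\RR^d\times\{1\}\times\RR^\rho)\big)$: a level-one slice of a cone is a base, not a cone, and the image of a closed set under a projection need not be closed, so ``manifestly a closed convex cone'' is unjustified --- nothing in your setup forces $(tx,1,tc)\in\Sigma(\widetilde{\Gamma})$ for $t>1$ whenever $(x,1,c)$ lies there. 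Finally, the Appendix statements are for semigroups in $\NN^n$, whereas your class coordinate ranges over $\ZZ^\rho$ with no positivity arrangement.

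The paper repairs all of this at once by dispensing with the auxiliary degree coordinate $m$ and grading by $\NN^r$ via a \emph{fixed} collection of divisors $D_1,\dots,D_r$ whose classes form a basis of $N^1(X)$, chosen --- after proving that $\Effbar(X)$ is pointed --- so that every effective divisor is numerically an $\NN$-combination of the $D_i$. Then sections of $\Vec{m}D$ and $\Vec{m}'D$ genuinely multiply to sections of $(\Vec{m}+\Vec{m}')D$ (divisors, not just classes, add), the resulting $\Gamma(X)$ is a sub-semigroup of $\NN^{d+r}$ as the Appendix requires, and $\Delta(X)$ is taken to be the closed convex cone $\Sigma(\Gamma(X))\subseteq\RR^{d+r}$ itself, so closedness and conicality are automatic. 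The fibre over a big integral class $\Vec{a}$ is then identified with $\Delta(\Vec{a}D)$ exactly by Proposition \ref{TTT.PPP} (i.e.\ Corollary \ref{Mircea.Corollary}), after checking via Lemma \ref{Global.SG.Gens.Grp} that $\Gamma(X)$ generates $\ZZ^{d+r}$ --- a global spanning statement you only address fibrewise. If you modify your construction along these lines (fixed divisors as the grading, positivity of the class coordinates via pointedness of the pseudo-effective cone, no $m$-slice), your argument becomes essentially the paper's.
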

\noi We emphasize that $\Delta(X)$ depends on the flag $Y\bull$, and we write $\Delta_{Y\bull}(X)$ when we wish to stress this dependence. Note also that $\Delta(X)$ is not a convex body but rather a closed convex cone in the vector space $\RR^d \times N^1(X)_{\RR}$. Nonetheless we will generally  refer to it as the global Okounkov body of $X$ (with respect to the given flag). We recall that the situation is illustrated schematically in Figure  
\ref{Global.Body.Picture} appearing in the Introduction.

To prove the theorem, the plan is to adapt the constructions of Section \ref{Okounkov.Construction.Section} to the multigraded setting. In order that we can limit ourselves to $\NN^r$-gradings, we start with a lemma about the pseudo-effective cone on a projective variety.

\begin{lemma}
Let $X$ be an irreducible projective variety of dimension $d$. Then the pseudo-effective cone $\overline{\tn{Eff}}(X)$ of $X$ is pointed, i.e. if $0 \ne \xi \in \overline{\tn{Eff}}(X)$ then $- \xi \not \in \overline{\tn{Eff}}(X)$.
\end{lemma}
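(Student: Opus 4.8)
The plan is to fix an ample divisor $H$ on $X$ and prove that $\Effbar(X)$ lies in the closed half-space $\{\xi\in N^1(X)_{\RR}:\xi\cdot H^{d-1}\ge 0\}$ and meets the bounding hyperplane only at the origin; the Lemma is then immediate, for if $0\ne\xi$ and $-\xi$ both lay in $\Effbar(X)$ then $\xi\cdot H^{d-1}=0$, whence $\xi=0$. The half-space inclusion is routine: for a nonzero effective divisor $D=\sum a_iD_i$ (the $D_i$ distinct prime, $a_i>0$) each $D_i\cdot H^{d-1}$ equals the degree of the ample class $H|_{D_i}$ on the $(d-1)$-dimensional variety $D_i$ (computed on its normalization if necessary), hence is positive, so $D\cdot H^{d-1}>0$; passing to limits of classes of effective $\QQ$-divisors gives $\xi\cdot H^{d-1}\ge 0$ for all $\xi\in\Effbar(X)$. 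So the substance is the assertion that a \emph{nonzero} pseudo-effective class has strictly positive intersection with $H^{d-1}$.

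I would reduce this to the case $\dim X=2$. Since the ground field is uncountable, choose $d-2$ very general members $H_1,\dots,H_{d-2}$ of $|mH|$ with $m\gg 0$ and set $S=H_1\cap\dots\cap H_{d-2}$; by Bertini $S$ is a smooth irreducible surface. Fixing a sequence of effective $\QQ$-classes converging to a given $\xi\in\Effbar(X)$, only countably many prime divisors occur in their chosen representatives, so a very general such $S$ is contained in none of them; hence restriction $N^1(X)_{\RR}\to N^1(S)_{\RR}$ carries $\Effbar(X)$ into $\Effbar(S)$. Applying the Grothendieck--Lefschetz theorem along the flag $X\supset H_1\supset H_1\cap H_2\supset\dots\supset S$ shows this restriction map is injective, so it suffices to see $\xi|_S=0$; and since $\xi|_S\cdot(mH)|_S=m^{d-1}\,\xi\cdot H^{d-1}=0$ with $(mH)|_S$ ample on $S$, everything is reduced to surfaces.

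On a smooth projective surface $S$ with ample divisor $A$, the key point is that there is a constant $\Lambda=\Lambda(S,A)$ with $\|[D]\|\le\Lambda\,(D\cdot A)$ for every effective divisor $D$ on $S$. Indeed, pick ample classes $\Gamma_1,\dots,\Gamma_{\rho}$ forming a basis of $N^1(S)_{\RR}=N_1(S)_{\RR}$ (possible since the ample cone is open and nonempty); because $A$ is ample there are $\lambda_k>0$ with $\lambda_kA-\Gamma_k$ nef, so for every irreducible curve $C$ on $S$ one has $0\le C\cdot\Gamma_k\le\lambda_k\,(C\cdot A)$, and summing over the components of $D$ gives $|[D]\cdot\Gamma_k|\le\lambda_k\,(D\cdot A)$ for each $k$; thus $\|[D]\|\le\Lambda\,(D\cdot A)$ with $\Lambda$ a fixed multiple of $\max_k\lambda_k$. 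By continuity $\|\eta\|\le\Lambda\,(\eta\cdot A)$ for every $\eta\in\Effbar(S)$, so $\eta\cdot A=0$ forces $\eta=0$, which finishes the reduction and hence the proof.

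The main obstacle is the reduction to surfaces; within it, the surface estimate is entirely elementary, so the only genuinely non-formal ingredient is the injectivity $N^1(X)_{\RR}\hookrightarrow N^1(S)_{\RR}$, i.e.\ the Grothendieck--Lefschetz step (unconditional in every characteristic for the N\'eron--Severi group of a sufficiently ample complete-intersection surface), which should be cited with care. Everything else is a matter of unwinding definitions and passing to limits.
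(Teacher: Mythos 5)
Your surface estimate and the half-space inclusion are fine, but the reduction to surfaces has a genuine gap at the step you yourself flag: the injectivity of the restriction $N^1(X)_{\RR}\to N^1(S)_{\RR}$ for a very general complete-intersection surface $S$. First, the Lemma is stated for an arbitrary irreducible projective variety $X$ (possibly singular, any characteristic), so Bertini does not give you a smooth $S$, and the Grothendieck--Lefschetz theorems (SGA~2) do not apply as quoted: they concern $\operatorname{Pic}$, require smoothness/lci-type hypotheses and dimension bounds, and in any case injectivity of $\operatorname{Pic}(X)\to\operatorname{Pic}(S)$ does not yield injectivity on $N^1$ -- a nonzero class $\xi$ could restrict to a line bundle on $S$ that is nonzero in $\operatorname{Pic}(S)$ but numerically trivial, which is exactly what you must exclude. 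What you actually need is: $\xi|_S\equiv 0$ implies $\xi\equiv 0$; for smooth complex $X$ this follows from Hard Lefschetz, and in characteristic $p$ for smooth $X$ it needs numerical $=$ homological equivalence for divisors together with the degree-one Lefschetz-type statement, while for singular $X$ in characteristic $p$ there is no resolution to fall back on. So this step is not a citable routine fact in the generality of the Lemma, and without it your argument only shows $\xi\cdot C=0$ for curves $C$ lying on some very general complete-intersection surface, which is not enough, since a given curve never lies on such a very general surface.

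The paper's proof is arranged precisely to avoid any Lefschetz input: to prove $\xi\equiv 0$ it fixes an arbitrary irreducible curve $C$ and chooses the ample hypersurface $Y$ \emph{through} $C$, very general in a linear system of suitably ample divisors containing $C$ (so $Y$ is irreducible and not contained in the supports of the countably many effective divisors approximating $\pm\xi$); then $\xi|_Y,-\xi|_Y\in\Effbar(Y)$, and induction on dimension gives $\xi|_Y\equiv 0$, hence $(\xi\cdot C)=0$, with the base case $d=2$ handled by the duality between $\Effbar$ and the full-dimensional cone $\Nef$. Your argument would be repaired by the same device -- test against each curve separately and choose the cutting divisor through that curve, inducting on dimension -- at which point the norm estimate you prove on surfaces is a perfectly good (indeed slightly more explicit) substitute for the paper's $d=2$ duality argument, but the ``one fixed very general surface plus Lefschetz injectivity'' route does not work as written.
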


\begin{proof}
We proceed by induction on $d$.
If $d=1$, then the assertion is trivial, while if $d=2$, it follows from the fact that the effective cone is the dual of the nef cone, which has full dimension. Suppose then that
$d\geq 3$;  we need to show that if $\xi$, $-\xi\in\Effbar(X)$, then $(\xi\cdot C)=0$ for every irreducible curve $C$ on $X$. Arguing by  induction, it is enough to show that there is an irreducible hypersurface $Y\subset X$ containing $C$ such that $\xi\vert_Y$, $-\xi\vert_Y\in\Effbar(Y)$. 
To this end,  write \[
\xi \ = \  \lim_{m\to\infty}d_m \ = \ -  \lim_{m\to\infty}e_m, \] where $d_m$ and $e_m$
are the classes of effective $\RR$-divisors $D_m$ and $E_m$ on $X$.  It is enough to find  a divisor $Y$  containing $C$ and not contained in
the support of any $D_m$ or $E_m$. But since $d \ge 3$ and we are working over an uncountable ground field, one can just take $Y$ to be a very general element of a linear series of  suitably ample divisors passing through $C$. 
\end{proof}

\begin{remark}
The Lemma remains valid for varieties over an arbitrary algebraically closed field. In fact, given such a variety one can always extend the ground field to an uncountable one without changing $\overline{\tn{Eff}}(X)$. \qed
\end{remark}

Returning to the construction of $\Delta(X)$, fix divisors $D_1, \ldots, D_r $ on $X$ whose classes form a $\ZZ$-basis of $N^1(X)$. Thanks to the previous lemma, we may -- and do -- choose the $D_i$ in such a way that every effective divisor  on $X$ is numerically equivalent to an $\NN$-linear combination of the $D_i$. 
 The choice of the $D_i$ determines  identifications 
\[  N^1(X)   \ = \ \ZZ^r \ \ \ , \ \ \ N^1(X)_{\RR} \ = \ \RR^r \]
 which we henceforth use without further comment. Observe  that under this isomorphism, the pseudo-effective cone $\overline{\tn{Eff}}(X)$ lies in the positive orthant of $\RR^r$. Given a vector $\Vec{m} = (m_1, \ldots , m_r) \in  \NN^r$, we write $\Vec{m} D = m_1 D_1 + \ldots + m_r D_r$.

We start by extending Definition \ref{Graded.Semigroup.of.Divisor}:
\begin{definition}
The \textit{multigraded semigroup} of $X$  (with respect to the fixed divisors $D_i$) is the additive sub-semigroup of $\NN^{d+r} = \NN^d \times \NN^r$ given by
\[
\Gamma(X) \ = \ \Gamma(X; D_1, \ldots , D_r) \ = \ \big \{ \, \big(\nu(s), \Vec{m} \big) \, \big | \, 0 \ne s \in \HH{0}{X}{\OO_X(\Vec m D)}\, \big \}. \ \qed
\]
\end{definition}
\noi Here of course the valuation $\nu$ is the one determined by the fixed admissible flag $Y\bull$.

Now denote by 
$ \Sigma(X)=  \Sigma(\Gamma  )  \subseteq  \RR^{d+r} $
the closed convex cone spanned by $\Gamma(X)$. Then we simply take
\begin{equation}
\Delta(X) \ = \ \Sigma(X) \ \subseteq \ \RR^{d} \times \RR^r.
\end{equation}
Note that while the construction of $\Delta(X)$ involves the choice of the divisors $D_1, \ldots, D_r$, it follows from Theorem \ref{Global.Okounkov.Body.Theorem} that after the identification $N^1(X)_{\RR} = \RR^d$ determined by the $D_i$, it is intrinsically defined. For the proof of the theorem, the essential point  will be to show that if  $\Vec{a}$ is an integer vector such that $\Vec{a}D$ is big, then the fibre of $\Sigma(X)$ over $\Vec{a} \in \RR^r$ coincides with $\Delta(\Vec{a}D)$. 
As in Section \ref{Volume.Ok.Body.Section}, it will be convenient to deduce this from some general statements about sub-semigroups of $\NN^d \times \NN^r$.

Consider then an additive semigroup 
\[ \Gamma\  \subseteq \ \NN^{d+r}   = \ \NN^d \times \NN^r, \]
and denote by 
\[  \Sigma \ = \ \Sigma(\Gamma) \ \subseteq \ \RR^{d+r} \]
the closed convex cone that it generates.  Define the \textit{support}
\[ \Supp(\Gamma) \ \subseteq \ \RR^{r} \]
of $\Gamma$ to be the image of $\Sigma$ under the projection to $\RR^r$: this is the same as the closed convex cone spanned by the image of $\Gamma$ under the projection $\NN^{d} \times \NN^r \lra \NN^r$.\footnote{Observe for this that the image of $\Sigma$ in $\RR^r$ is closed since $\Sigma $ -- being a pointed cone -- can be realized as the cone over a convex compact set.} Finally, given a vector $\Vec{a} \in \NN^r$, set
\begin{gather*}
\Gamma_{\NN \Vec{a}} \ = \ \Gamma \, \cap \, \big(\, \NN^d \times \NN \Vec{a}\, \big ) \\
\Sigma_{\RR \Vec{a}} \ = \ \Sigma(\Gamma)_{\RR \Vec{a}} \ = \ \Sigma \, \cap \, \big( \, \RR^d \times \RR \Vec{a} \, \big)
\end{gather*}
We view $\Gamma_{\NN\Vec{a}}$ as a sub-semigroup of $\NN^d \times \NN\Vec{a}= \NN^{d+1}$, and we denote by
\[ 
\Sigma(\Gamma_{\NN\Vec{a}} ) \ \subseteq \RR^d \times \RR\Vec{a} 
\]
the closed convex cone that it spans. 

\begin{proposition} \label{TTT.PPP}
Assume that $\Gamma$ generates a subgroup of finite index in $\ZZ^{d+r}$, and let $\Vec{a} \in \NN^r$ be a vector lying in the interior of $\Supp(\Gamma)$. Then
\[\Sigma(\Gamma_{\NN\Vec{a}})\ = \  \Sigma(\Gamma)_{\RR\Vec{a}}\]
\end{proposition}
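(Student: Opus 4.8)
The plan is to prove the two inclusions separately, all the substance being in one of them. The inclusion $\Sigma(\Gamma_{\NN\Vec{a}})\subseteq\Sigma(\Gamma)_{\RR\Vec{a}}$ is immediate: $\Gamma_{\NN\Vec{a}}$ is contained in $\Gamma\subseteq\Sigma(\Gamma)$ and also in $\NN^d\times\NN\Vec{a}\subseteq\RR^d\times\RR\Vec{a}$, hence it lies in the closed convex cone $\Sigma(\Gamma)\cap(\RR^d\times\RR\Vec{a})=\Sigma(\Gamma)_{\RR\Vec{a}}$, and therefore so does the cone it spans. The content is the reverse inclusion $\Sigma(\Gamma)_{\RR\Vec{a}}\subseteq\Sigma(\Gamma_{\NN\Vec{a}})$, and this is where the hypothesis $\Vec{a}\in\interior\Supp(\Gamma)$ is used. (This is essentially a special case of the technical result on intersections of semigroups with linear subspaces proved in the Appendix; I sketch the argument directly.)

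First I would reduce to the case where $\Gamma$ is finitely generated. Write $L=\RR^d\times\RR\Vec{a}$, a rational linear subspace since $\Vec{a}$ is an integer vector, and let $\pi\colon\RR^{d+r}\to\RR^r$ be the projection, so that $\pi(\Sigma(\Gamma))=\Supp(\Gamma)$ by definition. Since a linear map carries the relative interior of a convex set onto the relative interior of its image, $\pi$ maps the relative interior of $\Sigma(\Gamma)$ onto the relative interior of $\Supp(\Gamma)$, which is $\interior\Supp(\Gamma)$; thus the hypothesis on $\Vec{a}$ furnishes a point of $L$ lying in the relative interior of $\Sigma(\Gamma)$. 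By the line–segment principle, every point of $\Sigma(\Gamma)\cap L$ is then a limit of points of $L$ lying in the relative interior of $\Sigma(\Gamma)$, so, since $\Sigma(\Gamma_{\NN\Vec{a}})$ is closed, it suffices to show that each $q\in L$ lying in the relative interior of $\Sigma(\Gamma)$ belongs to $\Sigma(\Gamma_{\NN\Vec{a}})$. But the relative interior of $\Sigma(\Gamma)$ coincides with that of the (not necessarily closed) convex cone generated by $\Gamma$, so such a $q$ is a finite non-negative combination $q=\sum_{i=1}^{k}\lambda_i\gamma_i$ with $\gamma_i\in\Gamma$. Setting $\Gamma'=\langle\gamma_1,\dots,\gamma_k\rangle\subseteq\Gamma$, we have $q\in\Sigma(\Gamma')\cap L$ and $\Gamma'_{\NN\Vec{a}}\subseteq\Gamma_{\NN\Vec{a}}$, so it is enough to prove the proposition for the finitely generated semigroup $\Gamma'$.

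For a finitely generated sub-semigroup $\Gamma'\subseteq\NN^{d+r}$ the argument is soft and Gordan-type. The cone $\Sigma(\Gamma')$ is rational polyhedral, hence so is $\Sigma(\Gamma')\cap L$, and the latter is spanned by finitely many integral rays $\RR_{\ge0}w_1,\dots,\RR_{\ge0}w_n$. Each $w_j$ lies in $\Sigma(\Gamma')$, so it is a non-negative combination of the integral generators of $\Gamma'$; taking such a combination with rational coefficients and clearing denominators — including an extra factor chosen so that the last $r$ coordinates land in $\NN\Vec{a}$ rather than merely in $\QQ_{\ge0}\Vec{a}$ — yields an integer $N_j>0$ with $N_j w_j\in\Gamma'\cap(\NN^d\times\NN\Vec{a})=\Gamma'_{\NN\Vec{a}}$. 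Hence each $w_j\in\Sigma(\Gamma'_{\NN\Vec{a}})$, and since these rays span $\Sigma(\Gamma')\cap L$ we obtain $\Sigma(\Gamma')_{\RR\Vec{a}}\subseteq\Sigma(\Gamma'_{\NN\Vec{a}})$, which with the trivial reverse inclusion gives equality. (The finite-index hypothesis on $\Gamma$ is not really needed for the statement itself, but it holds in the intended application to the multigraded semigroup $\Gamma(X)$, and it is convenient to carry along.)

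The main obstacle is the first reduction rather than the finitely generated case: a point of $\Sigma(\Gamma)\cap L$ lying on the boundary of $\Sigma(\Gamma)$ need not itself be a finite combination of elements of $\Gamma$, so it must be reached by approximation from within the relative interior — and for that approximation to remain inside the subspace $L$ one genuinely needs $L$ to meet the relative interior of $\Sigma(\Gamma)$, i.e. one needs $\Vec{a}\in\interior\Supp(\Gamma)$, via the identity $\pi(\text{relative interior of }\Sigma(\Gamma))=\interior\Supp(\Gamma)$. Once this is secured, the rest is routine.
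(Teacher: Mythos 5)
Your proof is correct, and its skeleton is the same as that of the argument the paper actually uses (the paper's own proof of this Proposition simply defers to Corollary \ref{Mircea.Corollary} of the Appendix): the trivial inclusion, reduction of the nontrivial one to points of $L=\RR^d\times\RR\Vec{a}$ lying in the (relative) interior of $\Sigma(\Gamma)$ via the segment principle, reduction to a finitely generated sub-semigroup, and a finish by rational polyhedrality plus the saturation fact that an integral point of the cone of a finitely generated semigroup has a positive multiple in the semigroup. The genuine differences are in two substeps. First, where you produce a point of $L$ interior to $\Sigma(\Gamma)$ from $\Vec{a}\in\interior\Supp(\Gamma)$ by the identity $\pi(\mathrm{ri}\,\Sigma)=\mathrm{ri}(\pi(\Sigma))$, the paper proves the corresponding statement (Corollary \ref{Mircea.Corollary}) by passing to a polyhedral approximation and running a supporting-functional/face argument; your route is shorter, and since you phrase it with relative interiors it also shows, as you remark, that the finite-index hypothesis is not really needed. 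Second, where the paper reduces to finitely generated semigroups via the exhaustion lemma ($\interior\Sigma=\bigcup_i\interior\Sigma^i$), you use $\mathrm{ri}(\overline{C})=\mathrm{ri}(C)$ to write an interior point as a finite non-negative combination of elements of $\Gamma$ and pass to the semigroup they generate; the two devices are interchangeable here, though the paper's lemma is also reused in its proof of the Corollary. You also re-derive the saturation fact by choosing rational coefficients and clearing denominators where the paper quotes (0.5)/Oda, and your explicit extra scaling factor to land in $\NN\Vec{a}$ rather than $\QQ_{\ge 0}\Vec{a}$ treats carefully a point the paper glosses over when it identifies $\Gamma\cap p^{-1}(\RR\Vec{a})$ with $\Gamma_{\NN\Vec{a}}$ for possibly non-primitive $\Vec{a}$ (harmless in the end, since only the spanned cones matter, but your care is warranted).
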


\begin{remark}
The assumption on $\Gamma$ is equivalent to asking that $\Sigma$ have non-empty interior in $\RR^{d+r}$. Note that
the statement can fail if $\Vec{a} \not \in \tn{int}( \Supp (\Gamma))$: for instance, it could happen that $\Gamma_{\NN \Vec{a}} = \emptyset$, while $\dim \Sigma_{\RR\Vec{a}} > 0$.  \qed
\end{remark}

\begin{proof} [Proof of Proposition \ref{TTT.PPP}]
This is a special case of the results from Appendix \ref{Appendix.Section}. In fact, let $p : \RR^{d+r} \lra \RR^r$ denote the projection, and set $L = \RR\cdot \Vec{a} \subseteq \RR^r$. The assumption on $\Vec{a}$ implies that $L$ meets the interior of $p(\Sigma)$. Moreover,
\[
\Gamma\cap p^{-1}(L) \ = \Gamma_{\NN\Vec{a}} \ \ , \ \ \Sigma(\Gamma)_{\RR\Vec{a}} \ = \ \Sigma \, \cap \, p^{-1}(L).
\]
So the equality in the Proposition is exactly the assertion of Corollary \ref{Mircea.Corollary}.
\end{proof}

Returning to the setting of Theorem \ref{Global.Okounkov.Body.Theorem}, we start by showing that $ \Gamma(X; D_1, \ldots, D_r)$ verifies the hypothesis of the Proposition. 

\begin{lemma} \label{Global.SG.Gens.Grp}
The semigroup $\Gamma(X) \subseteq \NN^{d+r}$ generates $\ZZ^{d+r}$ as a group. 
\end{lemma}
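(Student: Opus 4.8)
The plan is to treat the ``valuation directions'' $\ZZ^d \times \{0\}$ and the ``multidegree directions'' $\{0\} \times \ZZ^r$ separately: the first will come directly from Lemma \ref{Gamma.L.generates.Group} applied to one big divisor, and the second from a short perturbation argument. Write $G \subseteq \ZZ^{d+r}$ for the subgroup generated by $\Gamma(X)$.

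First I would fix a very ample divisor $H$ on $X$. Since $H$ is effective, our choice of the $D_i$ gives $[H] = \Vec{a}$ for a unique $\Vec{a} \in \NN^r$, and the divisor $D^{\pr} := \Vec{a}D = \sum a_i D_i$ is numerically equivalent to $H$, hence big. Applying Lemma \ref{Gamma.L.generates.Group} to $D^{\pr}$, the graded semigroup $\Gamma_{Y\bull}(D^{\pr}) \subseteq \NN^{d+1}$ generates $\ZZ^{d+1}$ as a group. The group homomorphism $\iota : \ZZ^{d+1} \lra \ZZ^{d+r}$, $(\Vec{w}, k) \mapsto (\Vec{w}, k\Vec{a})$, carries $\Gamma_{Y\bull}(D^{\pr})$ into $\Gamma(X)$, since a non-zero section $s \in \HH{0}{X}{\OO_X(kD^{\pr})} = \HH{0}{X}{\OO_X(k\Vec{a}D)}$ contributes the vector $(\nu(s), k\Vec{a})$ to $\Gamma(X)$. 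Hence $\iota(\ZZ^{d+1}) \subseteq G$, and taking $k = 0$ shows $\ZZ^d \times \{0\} \subseteq G$.

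It then remains to show that the projection $\pro_2 : G \lra \ZZ^r$ is surjective; combined with $\ZZ^d \times \{0\} \subseteq G$ this forces $G = \ZZ^{d+r}$, because any $(\Vec{w}, \Vec{m}) \in \ZZ^{d+r}$ differs from an element of $G$ projecting to $\Vec{m}$ by an element of $\ZZ^d \times \{0\}$. Now $\pro_2(\Gamma(X))$ is the semigroup $S = \{ \Vec{m} \in \NN^r \mid \hh{0}{X}{\OO_X(\Vec{m}D)} \ne 0\}$, and it is enough to produce, for each $j$, an integer $N \gg 0$ with $N\Vec{a} \in S$ and $N\Vec{a} + e_j \in S$, for then $e_j \in S - S$. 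The first membership holds for $N \gg 0$ since $D^{\pr}$ is big, so $\hh{0}{X}{\OO_X(ND^{\pr})} > 0$. For the second, write $D_j \lin A_j - B_j$ with $A_j, B_j$ very ample; the divisor $(N\Vec{a} + e_j)D = ND^{\pr} + D_j$ is then linearly equivalent to $(ND^{\pr} - B_j) + A_j$, and since $D^{\pr}$ is big and $B_j$ is ample, $ND^{\pr} - B_j$ is linearly equivalent to an effective divisor for $N \gg 0$ (the bigness criterion recalled in (0.3)), while $A_j$ is effective; hence $\hh{0}{X}{\OO_X((N\Vec{a}+e_j)D)} \ne 0$. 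Choosing $N$ large enough to serve all $j$ at once completes the argument.

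The one point that requires care — and the reason the proof is organized this way rather than by quoting numerical representatives directly — is that $h^0$ is not a numerical invariant, so one cannot replace the specific divisor $\Vec{m}D$ by a numerically equivalent ample or effective one. The device of fixing the single big divisor $D^{\pr} = \Vec{a}D$ and invoking ``$D^{\pr}$ big $\Rightarrow$ $ND^{\pr} - (\text{ample})$ is effective for $N \gg 0$'' is exactly what converts the available statements about numerical classes of the $D_i$ into statements about the concrete divisors $\Vec{m}D$ defining $\Gamma(X)$.
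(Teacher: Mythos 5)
Your proof is correct, and it takes a somewhat different route from the paper's. The paper argues by spanning: using the openness of $\tn{Big}(X)$ in $N^1(X)_{\RR}$ it chooses big integral classes spanning $N^1(X)$, writes each as $\Vec{a}_j D$ with $\Vec{a}_j \in \NN^r$ (possible by the normalization of the $D_i$), and applies Lemma \ref{Gamma.L.generates.Group} to each divisor $E_j = \Vec{a}_j D$; the sub-semigroups $\Gamma(E_j) \subseteq \Gamma(X)$ then generate $\ZZ^d \times \ZZ\Vec{a}_j$, hence all of $\ZZ^{d+r}$ since the $\Vec{a}_j$ span $\ZZ^r$. You instead invoke Lemma \ref{Gamma.L.generates.Group} only once, for a single big divisor $\Vec{a}D$ in the class of a very ample divisor, which already gives $\ZZ^d \times \{0\}$, and you then recover the $N^1(X)$-directions by hand, showing that $\pro_2$ of the generated group is all of $\ZZ^r$ via the Kodaira-type perturbation $D_j \lin A_j - B_j$ and bigness of $\Vec{a}D$, so that both $N\Vec{a}$ and $N\Vec{a}+e_j$ lie in the effective monoid for $N \gg 0$. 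That second half is essentially the same effectivity device used inside the proof of Lemma \ref{Gamma.L.generates.Group} itself ($mD \lin B + F_m$), so your route trades the paper's appeal to the openness of the big cone (needed to find $r$ spanning big classes) for a repetition of that trick; what it buys is that only one application of the lemma and one big vector in $\NN^r$ are needed. Both arguments rely in the same way on the choice of the $D_i$ (every effective class an $\NN$-combination) to keep $\Vec{a}$ and $N\Vec{a}+e_j$ in $\NN^r$. Two cosmetic points: ``$H$ is effective'' and ``$A_j$ is effective'' should read ``linearly equivalent to an effective divisor,'' which is all you use; and your $e_j$ (standard basis of $\ZZ^r$) collides notationally with the paper's $e_j$ (spanning big classes), though this does not affect your argument.
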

\begin{proof} 
 Since the big cone $\tn{Big}(X)$ is an open subset of $N^1(X)_{\RR}$, there exist big divisor classes $e_1, \ldots, e_r \in N^1(X)$    spanning that free $\ZZ$-module. The conditions on   $D_1, \dots, D_r$ imply that each $e_j$ is an $\NN$-linear combination of (the classes of) the $D_i$, say $e_j \num \Vec{a}_j D$ for some $\Vec{a}_j \in \ZZ^r$.   Set \[
E_j \ = \  \Vec{a}_j D   \]
(as divisors). Then the graded semigroups $\Gamma(E_j)$ sit in a natural way as sub-semigroups of $\Gamma(X)$, and  Lemma \ref{Gamma.L.generates.Group} shows that 
 that $\Gamma(E_j)$ generates $\ZZ^d \times \ZZ \cdot \Vec{a_j}$ as a group. The Lemma then follows from the fact that  $\Vec{a}_1, \ldots, \Vec{a}_r$ span $\ZZ^r$.  \end{proof}

 \begin{proof}[Proof of Theorem \ref{Global.Okounkov.Body.Theorem}] 
Set $\Gamma = \Gamma(X; D_1, \ldots, D_r)$. Then the support of $\Gamma$ consists of the closed cone spanned by all vectors $\Vec{a} \in \ZZ^r = N^1(X)$ such that $\HH{0}{X}{\OO_X(\Vec{a}D)} \ne 0$: this is the pseudo-effective cone $\overline{\tn{Eff}}(X)$ of $X$,  whose interior is the big cone $\tn{Big}(X)$ (cf. \cite[Chapter.2.2.B]{PAG}).  So $\Vec{a} \in \tn{interior}(\Supp(\Gamma))$ if and only if $\OO_X(\Vec{a}D)$ is big. Given such a vector $\Vec{a}$, it follows from the definitions that
\[  \Gamma(X)_{\NN\Vec{a}} \ = \ \Gamma(\Vec{a}D)\ \subseteq \ \NN^d \times \NN\Vec{a}, \]
and hence the Okounkov body $\Delta(\Vec{a}D)$ is the base of the cone
$\Sigma(\Gamma_{\NN\Vec{a}})$, i.e.
\begin{equation}
\Delta(\Vec{a}D) \ = \ \Sigma\big(\Gamma_{\NN\Vec{a}}\big) \cap \big( \RR^d \times \{\Vec{a} \}\big).
\end{equation}
But the Proposition implies that this coincides with the fibre $\Delta(X)_{\Vec{a}}$ of $\Delta(X)$ over $\Vec{a} \in \RR^d$, which verifies the Theorem for integral vectors $\Vec{a}$. The case of rational classes follows since both sides of the desired equality $\Delta(\xi) = \Delta(X)_{\xi}$ scale linearly with $\xi$. 
\end{proof}

As noted in the Introduction, the Theorem implies some basic properties of the volume function:

\begin{corollary}
There is a uniquely defined continuous function
\[
\vol_X : \tn{Big}(X) \lra \RR 
\]
that computes the volume of any big rational class. This function  is homogeneous of degree $d$, and log-conave, i.e.
\begin{equation} \vol_X(\xi + \xi^\pr)^{1/d} \ \ge \ \vol_X(\xi)^{1/d} + \vol_X(\xi^\pr)^{1/d} \tag{*} \end{equation}
for any $\xi , \xi^\pr \in \tn{Big}(X)$.
\end{corollary}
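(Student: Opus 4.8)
The plan is to extract everything from Theorem~\ref{Global.Okounkov.Body.Theorem} together with elementary convex geometry, exactly as indicated in the Introduction.

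First I would define the candidate function. Given a big rational class $\xi \in N^1(X)_{\QQ}$, the body $\Delta(\xi) = \Delta(X)_\xi$ is a convex body (it has nonempty interior because $\Vec{a}D$ is big, hence $\vol_X(\xi) > 0$), and by the Proposition preceding Theorem~\ref{Vol.eq.Vol} applied to rational classes one has $\vol_{\RR^d}(\Delta(\xi)) = \tfrac1{d!}\vol_X(\xi)$. So set $f(\xi) = d! \cdot \vol_{\RR^d}(\Delta(X)_\xi)$ for every $\xi$ in the open cone $\tn{Big}(X) = \tn{int}(\pro_2(\Delta(X)))$, where now $\Delta(X)_\xi \subseteq \RR^d$ denotes the fibre over an \emph{arbitrary} real big class. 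By Theorem~\ref{Global.Okounkov.Body.Theorem}, $f$ agrees with $\vol_X$ on rational big classes, so it is the unique candidate for a continuous extension provided I can show $f$ itself is continuous; uniqueness of a continuous extension is then automatic since $N^1(X)_\QQ$ is dense.

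Next, continuity. Since $\Delta(X)$ is a closed convex cone with $\tn{int}(\pro_2(\Delta(X))) = \tn{Big}(X)$, I would fix a compact base $S$ of $\Delta(X)$ (slice it by an affine hyperplane transverse to the cone) and observe that for $\xi$ in a compact subset $K$ of $\tn{Big}(X)$ the fibres $\Delta(X)_\xi$ form a family of convex bodies that vary continuously in the Hausdorff metric. This is a standard fact about convex bodies: the fibres of a closed convex set over the interior of its projection depend Hausdorff-continuously on the base point (uniform boundedness comes from the compact base $S$, and upper/lower semicontinuity of the fibre both follow from convexity and closedness). Then I invoke continuity of Euclidean volume with respect to Hausdorff convergence of convex bodies to conclude that $\xi \mapsto \vol_{\RR^d}(\Delta(X)_\xi)$, hence $f$, is continuous on $\tn{Big}(X)$.

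Homogeneity and log-concavity then come directly from the cone structure and Brunn--Minkowski. Homogeneity of degree $d$: since $\Delta(X)$ is a cone with vertex at the origin, $\Delta(X)_{t\xi} = t \cdot \Delta(X)_\xi$ for $t > 0$, so $\vol_{\RR^d}(\Delta(X)_{t\xi}) = t^d \vol_{\RR^d}(\Delta(X)_\xi)$. Log-concavity: again using that $\Delta(X)$ is convex, for big classes $\xi, \xi^\pr$ the set $\tfrac12(\Delta(X)_\xi \times\{\xi\}) + \tfrac12(\Delta(X)_{\xi^\pr}\times\{\xi^\pr\})$ lies in $\Delta(X)$ over the midpoint, giving the Minkowski-sum containment $\Delta(X)_\xi + \Delta(X)_{\xi^\pr} \subseteq \Delta(X)_{\xi + \xi^\pr}$; the classical Brunn--Minkowski inequality $\vol(A+B)^{1/d} \ge \vol(A)^{1/d} + \vol(B)^{1/d}$ then yields $(\ast)$ after multiplying through by $(d!)^{1/d}$. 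The main obstacle is purely the continuity step: one must be careful that the Hausdorff-continuity of fibres genuinely holds throughout the \emph{open} big cone (it can degenerate on the boundary, which is exactly why the Introduction flags that boundary continuity does not follow from this viewpoint), but on compact subsets of $\tn{Big}(X)$ the uniform bound from a compact base of the cone makes this routine.
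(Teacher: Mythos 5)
Your proposal is correct in substance, and for two of the three assertions it coincides with the paper: the definition $\vol_X(\xi)=d!\cdot\vol_{\RR^d}\big(\Delta(X)_\xi\big)$, the degree-$d$ homogeneity coming from the cone structure, and the log-concavity via the containment $\Delta(\xi)+\Delta(\xi^\pr)\subseteq\Delta(\xi+\xi^\pr)$ (the paper cites \cite[\S 5]{Ball} for this) followed by Brunn--Minkowski are exactly what the paper does. Where you genuinely diverge is continuity. The paper never looks at the fibres again: having homogeneity and the inequality (*), it notes that these say precisely that $\xi\mapsto\vol_X(\xi)^{1/d}$ is concave on the open cone $\tn{Big}(X)$, and a concave function is automatically continuous on the interior of its domain (\cite[Theorem 2.2]{Gruber}). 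That buys a two-line continuity proof requiring no control of the fibres. Your route --- Hausdorff continuity of $\xi\mapsto\Delta(X)_\xi$ over the interior of $\pro_2(\Delta(X))$ plus continuity of Euclidean volume under Hausdorff convergence --- is more work but proves strictly more, namely that the bodies themselves, not merely their volumes, vary continuously on $\tn{Big}(X)$. One step of yours does need shoring up: a compact base of the pointed cone $\Delta(X)$ does not by itself bound the fibres (a closed pointed cone in $\RR^d\times\RR^r$ may contain a ray of the form $\RR_{\ge 0}\cdot(x,0)$ with $x\ne 0$, in which case fibres over interior points of the projection are unbounded). The uniform boundedness you need is true here, but it comes from Remark \ref{Extn.of.Bddness.Lemma.Remark}: the estimate $\nu_i(s)\le b\cdot\sum |m_i|$ places the multigraded semigroup, hence the closed cone $\Delta(X)$, inside the region $\big\{(x,\xi)\,:\,0\le x_i\le b\textstyle\sum_j|\xi_j|\big\}$, which gives $\Delta(X)\cap\big(\RR^d\times\{0\}\big)=\{0\}$ and uniform boundedness of fibres over compact subsets of $N^1(X)_{\RR}$; with that input your semicontinuity arguments (interior points persist into nearby fibres by convexity; limit points of nearby fibres land in the central fibre by closedness) go through.
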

\begin{proof} One takes of course
\[
\vol_X(\xi) \ = \ d! \cdot \vol_{\RR^d}\big( \Delta(\xi) \big), 
\]
where $\Delta(\xi) = \Delta(X)_\xi$   is the fibre of the projection  $\Delta(X) \lra \RR^r$. Then the assertions are standard results from convex geometry. In fact, as explained in \cite[\S 5]{Ball} the convexity of $\Delta(X)$ implies that 
\[ \Delta(\xi) + \Delta(\xi^\pr) \ \subseteq \ \Delta(\xi + \xi^\pr),
\]
and so (*) follows from the Brunn-Minkowski theorem. In view of  the homogenity of $\vol_X$, (*) means that the function $\xi \to \vol_X(\xi)^{1/d}$ is concave. But any concave function is continuous on the interior of its domain (c.f. \cite[Theorem 2.2]{Gruber}), which gives the first statement of the Corollary.
\end{proof}

\begin{remark} It  was established in \cite[Corollary 2.2.45]{PAG}  that  $\vol_X$ actually extends to a continuous function on all of $N^1(X)_{\RR}$ that is zero outside $\tn{Big}(X)$. Besides the continuity appearing in the Corollary, this includes the assertion  that $\vol_X(\xi) \to 0$ as $\xi$ approaches a point $\xi_0 \in {\overline{\tn{Eff}}(X)}$ on the boundary of the pseudo-effective cone.  \qed
\end{remark}

\subsection{ Multi-Graded Linear Series.} 
\label{Multi-graded.Lin.Series.Subsection}
We now wish to extend the previous discussion to the setting of graded linear series. To this end, it is natural to work with $\NN^r$-graded linear series. 

We start with some definitions. Let $X$ be an irreducible variety of dimension $d$, and fix divisors $D_1, \ldots, D_r$ on $X$. For $\Vec{m} = (m_1,\ldots, m_r) \in \NN^r$ we write as above $\Vec{m}D = \sum m_i D_i$, and we put $|\Vec{m}| = \sum |m_i|$. 
\begin{definition}
A \textit{multigraded linear series} $W\vecbull$ on $X$ associated to the $D_i$ consists of finite-dimensional subspaces
\[  W_{\Vec{k}} \ \subseteq \ \HH{0}{X}{\OO_X(\Vec{k}D)}\]
for each $\Vec{k} \in \NN^r$, with $W_{\Vec{0}} = \KK$, such that
\[  W_{\Vec{k}} \cdot W_{\Vec{m}} \ \subseteq \ W_{\Vec{k} + \Vec{m}}. \]
As in the singly graded case, the multiplication on the left denotes the image of $W_{\Vec{k}} \otimes W_{\Vec{m}} $ under the natural map $\HH{0}{X}{\OO_X(\Vec{k}D) } \otimes \HH{0}{X}{\OO_X(\Vec{m}D) } \lra \HH{0}{X}{\OO_X((\Vec{k}+\Vec{m})D) } $.
\end{definition}
Given $\Vec{a} \in \NN^r$, denote by $W_{\Vec{a},\bullet}$ the singly graded linear series  associated to the divisor $\Vec{a}D$ given by the subspaces $W_{k \Vec{a}} \subseteq \HH{0}{X}{\OO_X(k\Vec{a}D)}$. Then put
\[ \vol_{W\vecbull}(\Vec{a}) \ = \ \vol(W_{\Vec{a},\bullet}) \]
(assuming that this quantity is finite). In this way a multi-graded linear series defines a volume function on $\NN^r$ (and later on $\RR^r$). Similarly, having fixed an admissible flag $Y\bull$ on $X$, write
$\Delta(\Vec{a}) = \Delta(W_{\Vec{a}, \bullet})$. Finally,   the \textit{support} \[ \Supp(W\vecbull) \ \subseteq  \ \RR^r \] of $W\vecbull$ is the closed convex cone spanned by all indices $\Vec{m} \in \NN^r$ such that $W_{\Vec{m}} \ne 0$.

We seek conditions on $W\vecbull$, extending those introduced in Section \ref{Volume.Ok.Body.Section}, to guarantee that these constructions work well. This is most easily achieved by reducing to the singly graded case. We start with:
\begin{definition} $W\vecbull$ satisfies Condition (B$^\pr$) (or Condition (C$^\pr$)) if the following hold:
\begin{enumerate}
\item[(i).]  $\Supp(W\vecbull) \subseteq \RR^r$ has non-empty interior;
\item[(ii).]  For any integer vector $\Vec{a} \in \interior (\Supp(W\vecbull))$, \[  W_{k\Vec{a}} \ \ne \ 0 \ \ \text{for} \ k \gg 0; \]
\item[(iii).] There exists an integer vector $\Vec{a}_0 \in  \interior\big(\Supp(W\vecbull)\big)$ such that the $\NN$-graded linear series $W_{\Vec{a}_0 ,\bull}$ satisfies Condition (B) (or Condition (C)). 
\end{enumerate}
\end{definition}
\noi Recall that Condition (C) (and hence also (C$^\pr$)) includes the requirement that $X$ be projective.

This definition implies that   the singly-graded linear series determined  by $W\vecbull$ have the corresponding property:
\begin{lemma}
Assume that $W\vecbull$ satisfies Condition \tn{(B$^\pr$)} or \tn{(C$^\pr$)}. If 
\[ \Vec{a}  \ \in \ \interior  \big(\Supp(W\vecbull)\big)
\] is any integer vector, then  $W_{\Vec{a},\bull}$ satisfies the corresponding condition $\tn{(B)}$ or $\tn{(C)}$.
\end{lemma}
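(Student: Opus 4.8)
The statement to prove is that if a multigraded linear series $W\vecbull$ satisfies Condition (B$^\pr$) or (C$^\pr$), then for any integer vector $\Vec a \in \interior(\Supp(W\vecbull))$ the singly graded series $W_{\Vec a,\bull}$ satisfies the corresponding Condition (B) or (C). The key is to leverage part (iii) of the definition of (B$^\pr$)/(C$^\pr$): there is \emph{one} good direction $\Vec a_0$, and we must ``transport'' goodness from $\Vec a_0$ to an arbitrary integral $\Vec a$ in the interior of the support cone. The mechanism is multiplication of sections, exactly as in Remark \ref{Alt.Cond.C}: a nonzero section in some $W_{\Vec m}$ gives embeddings $W_{\Vec\ell}\hookrightarrow W_{\Vec\ell+p\Vec m}$, and both Conditions (B) and (C) are stable under such perturbations by an effective divisor.

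\textbf{Step 1: reduce to a statement about nonzero sections in intermediate gradings.} Since $\Vec a$ and $\Vec a_0$ both lie in the interior of the rational polyhedral cone $\Supp(W\vecbull)$, for any integer $N>0$ the vectors $N\Vec a - \Vec a_0$ and, more usefully, suitable positive integer combinations of $\Vec a$ and $\Vec a_0$ again lie in the interior. Concretely, I would fix positive integers $p,q$ so that $\Vec b := p\Vec a_0 + q\Vec a \in \interior(\Supp(W\vecbull))$ and also $q\Vec a$ is ``reachable'': by part (ii) of the definition, $W_{k\Vec a}\neq 0$ and $W_{k\Vec a_0}\neq 0$ for all $k\gg 0$, so I may choose nonzero sections $s_0 \in W_{k_0\Vec a_0}$ and $s\in W_{k_1\Vec a}$ for appropriate $k_0,k_1$. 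Multiplying by powers of these sections realizes, for large $j$, inclusions $W_{k_0\Vec a_0}^{\otimes j}\cdot(\text{stuff}) \subseteq W_{jk_0\Vec a_0 + \cdots}$, linking the $\Vec a_0$-ray and the $\Vec a$-ray inside the algebra $R(W\vecbull)$.

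\textbf{Step 2: handle Condition (B).} Condition (B) for $W_{\Vec a_0,\bull}$ says $\phi_m\colon X\dra \PP(W_{m\Vec a_0})$ is birational onto its image for $m\gg 0$ (equivalently, for some single $m$). I would argue: pick $m$ with $\phi_{m}$ birational for $W_{\Vec a_0,\bull}$, and pick $k$ with $W_{k\Vec a}\neq 0$, with $m\Vec a_0$ and $k\Vec a$ chosen so that $m\Vec a_0 + \ell k\Vec a$ is a multiple of some common vector $\Vec c$ lying on the ray through $\Vec a$ — more simply, one works along the ray $\NN\Vec a$ directly. Fix a nonzero $t\in W_{k\Vec a}$. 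The maps $\linser{W_{m\Vec a_0}}$ and $\linser{W_{N\Vec a}}$ for $N\gg 0$ both dominate, via multiplication by powers of $t$ and of a section in $W_{\Vec a_0}$-direction, a common larger linear series $\linser{W_{\Vec e}}$ with $\Vec e$ a large multiple of a vector comparable to both; since the pullback of $\phi_{m}$-coordinates already separates general points, $\phi_{\Vec e}$ is birational, and hence so is $\phi_{N\Vec a}$ for $N\gg 0$. This is the content of "condition (B) is inherited under passing to a larger, compatibly generated linear subseries." I would also invoke Remark \ref{Very.General.Flags.OK} if a common flag is needed, though for (B) the birationality statement alone suffices.

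\textbf{Step 3: handle Condition (C).} Here $X$ is projective and $W_{\Vec a_0,\bull}$ satisfies (i)–(ii) of Definition \ref{Def.of.Cond.C} for the big divisor $\Vec a_0 D$: there is an effective $F_{m_0}$ with $A_{m_0} = m_0\Vec a_0 D - F_{m_0}$ ample and $\HH{0}{X}{\OO_X(pA_{m_0})}\subseteq W_{pm_0\Vec a_0}$ for $p\gg 0$. I want the analogous data for the divisor $\Vec a D$. Fix, using part (ii), a nonzero $s\in W_{k\Vec a}$ with divisor $E_k \lin k\Vec a D$; then multiplication by $s^{\otimes p}$ gives $W_{pm_0\Vec a_0}\subseteq W_{pm_0\Vec a_0 + pk\Vec a}$. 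Iterating, one can reach a grading of the form $n\Vec a$ (large $n$) that dominates $pm_0\Vec a_0$ up to an effective correction: precisely, choose $n$ so that $n\Vec a - m_0\Vec a_0 =: \Vec g \in \NN^r$ lies in the support, pick $0\neq u\in W_{\Vec g}$ (again by (ii), after replacing $\Vec g$ by a positive multiple and adjusting $n$), with divisor $G\lin \Vec g D$. Then $A_{m_0} + G$ is big (it is ample plus effective) and $\HH{0}{X}{\OO_X(p(A_{m_0}+G))}$... — more cleanly, I would instead set $F'_{n} = F_{m_0} + G'$ where $G'$ is chosen so that $n\Vec a D - F'_n \lin A_{m_0}$ is ample, and check $\HH{0}{X}{\OO_X(pA_{m_0})}\subseteq W_{pn\Vec a}$ via the inclusion $W_{pm_0\Vec a_0}\hookrightarrow W_{pn\Vec a}$ supplied by multiplication by the $u^{\otimes p}$-type section. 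Together with $W_{k\Vec a}\neq 0$ for all $k\gg0$ (part (ii)), this is exactly Conditions (i) and (ii) of Definition \ref{Def.of.Cond.C} for $m = n$, applied via Remark \ref{Alt.Cond.C}.

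\textbf{Main obstacle.} The real friction is bookkeeping with the cone geometry: ensuring that the auxiliary vectors ($n\Vec a - m_0\Vec a_0$, the various $\Vec g$, etc.) can simultaneously be taken in $\NN^r$, inside $\interior(\Supp(W\vecbull))$, and that the nonvanishing from part (ii) applies to them (it is only asserted along rays through interior integer points, so one must pass to multiples and absorb the multiple into $n$). Arranging all the divisibilities and positivity simultaneously — so that a single $n$ works and the multiplication maps land in the right graded piece — is the delicate part; the rest is a routine adaptation of Remark \ref{Alt.Cond.C} and the proof of Lemma \ref{Cond.F.generates.generic.flag}. I expect the write-up reduces, after these reductions, to two short paragraphs, one for (B) and one for (C).
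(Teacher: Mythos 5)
Your proposal is essentially the paper's own argument: transport the Condition (B)/(C) data from the ray through $\Vec{a}_0$ to the ray through $\Vec{a}$ by multiplying with a fixed nonzero section in the difference grading, then conclude with Remark \ref{Alt.Cond.C} together with part (ii) of the definition for nonvanishing along $\NN\Vec{a}$. The bookkeeping you flag as the main obstacle is settled in one line in the paper: since $\Vec{a}\in\interior\big(\Supp(W\vecbull)\big)$, for $k\gg 0$ the integer vector $\Vec{b}=k\Vec{a}-\Vec{a}_0$ again lies in $\interior\big(\Supp(W\vecbull)\big)$, so part (ii) gives $W_{m\Vec{b}}\neq 0$ for $m\gg 0$, and the identity $mk\Vec{a}=m\Vec{a}_0+m\Vec{b}$ supplies all the needed inclusions at once, with no divisibility arrangements (and no need for the attempted alignment $m\Vec{a}_0+\ell k\Vec{a}\in\NN\Vec{a}$ hinted at in your Step 2, which taken literally would force $\Vec{a}_0$ proportional to $\Vec{a}$).
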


\begin{proof}
We will write the proof for Condition (C$^\pr$), the case of (B$^\pr$) being similar but simpler. By definition, for any sufficiently large integer $m \gg 0$, there is an effective divisor $F_{m \Vec{a}_0 }$ such that
\begin{equation}m\Vec{a}_0 D - F_{m\Vec{a}_0} \ \lin \ A_{m\Vec{a}_0}  \notag
\end{equation}
is ample, and
\begin{equation} \HH{0}{X}{\OO_X(pA_{m\Vec{a}_0})} \ \subseteq \ W_{pm\Vec{a}_0} \ \subseteq \ \HH{0}{X}{\OO_X(pm\Vec{a}_0D)}. 
\tag{*} \end{equation}
Now let  $\Vec{a} \in \interior (\Supp(W\vecbull))$ be any integer vector. Then for some large $k \in \NN$,
\[ k\Vec{a} \ = \ \Vec{a}_0 + \Vec{b} \]
where $\Vec{b} $ also lies in the interior of $\Supp(W\vecbull)$. Therefore $W_{m\Vec{b}} \ne 0$ for $m \gg 0$: let $E_{m \Vec{b}} $ be the divisor of a non-zero section $s_{m \Vec{b}} \in W_{m \Vec{b}}$, so that $E_{m \Vec{b}} \lin m\Vec{b}D$. Then $mk\Vec{a}D = m\Vec{a}_0D + m\Vec{b}D$, and consequently
\[
mk\Vec{a}D - F_{m\Vec{a}_0} - E_{m\Vec{b}} \ \lin \ A_{m \Vec{a}_0} 
\]
is ample. Moreover, for all $p \gg 0$
\[  \HH{0}{X}{\OO_X(pA_{m \Vec{a}_0} )} \ \subseteq \ W_{pm\Vec{a}_0} \ \subseteq \ W_{pmk\Vec{a}}, \] the first inclusion coming from (*), and the second arising from  multiplication by $s_{m \Vec{b}}^{\otimes p}$.
This shows that $W_{\Vec{a}, \bullet}$ satisfies the two properties (i) and (ii) in Definition \ref{Def.of.Cond.C}
 for one value of the parameter appearing there, and then it follows from Remark \ref{Alt.Cond.C} that Condition (C) itself holds. 
\end{proof}

Now fix an admissible flag $Y\bull$ on $X$. For the boundedness questions, we propose:
\begin{definition}
$W\vecbull$ satisfies Condition (A$^\pr$)  with respect to $Y\bull$ if there is an integer $b \gg 0$ such that for every $\Vec{m} \in \NN^r$ and every $0 \ne s \in W_{\Vec{m}}$, 
\begin{equation} \nu_i(s) \ \le \ b \cdot |\Vec{m}| \end{equation}
for all $1 \le i \le d$. \qed
\end{definition}
\noi This evidently implies that any of the simply graded linear series $W_{\Vec{a}, \bullet}$ (for $\Vec{a} \in \NN^r$) satisfy Condition (A). 
Remark \ref{Extn.of.Bddness.Lemma.Remark}
 shows that it holds automatically when $X$ is projective. 

It follows from the Lemma and the results of Section \ref{Conds.GLS.subsection}  that if Conditions (A$^\pr$) and (B$^\pr$) or (C$^\pr$)  hold for $W\vecbull$, then with respect to a suitable flag $Y\bull$, the Okounkov bodies $\Delta(\Vec{a})$ are defined and compute $\vol_{W\vecbull}(\Vec{a})$ for every integer vector  $\Vec{a}$ lying in the interior of $\Supp(W\vecbull)$. Our next task is to realize these as the fibres of a global cone $\Delta(W\vecbull) \subseteq \RR^d \times \RR^r$. 

 Fix an admissible flag $Y\bull$ on $X$. The \textit{multi-graded semigroup} of $W\vecbull$  with respect to $Y\bull$ is defined to be 
 \[  \Gamma(W\vecbull) \ = \ \Gamma_{Y\bull}(W_{\vecbull})  \ = \ \big \{ \, (\nu(s), \Vec{m}) \, \big  | \, 0 \ne s \in W_{\Vec{m}} \, \big \}  \ \subseteq \ \NN^{d+r}. \]
\begin{lemma} \label{Good.Flags.Multigraded.Systems}
If $W\vecbull$ satisfies Condition \tn{(B$^\pr$)}, then there exists a flag $Y\bull$ for which $\Gamma_{Y\bull}(W_{\vecbull}) $ generates $\ZZ^{d + r}$ as a group. If $W\vecbull$ satisfies Condition \tn{(C$^\pr$)}, then the same statement holds for any admissible flag $Y\bull$. 
\end{lemma}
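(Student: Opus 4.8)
The plan is to reduce the statement to the singly-graded criteria of \S\ref{Conds.GLS.subsection} by restricting $W\vecbull$ along finitely many rays. The first step is to produce integer vectors lying in the interior $C := \interior\big(\Supp(W\vecbull)\big)$ which together generate $\ZZ^r$ \emph{as a group} — not merely a finite-index subgroup. Since $C$ is a nonempty open convex cone, it contains an integer vector $\Vec{v}$, and being open it contains some ball $B(\Vec{v},\eps)$; as $C$ is a cone it then contains $B(N\Vec{v}, N\eps)$ for every $N > 0$, so once $N\eps > 1$ the $r+1$ vectors $\Vec{a}_0 = N\Vec{v}$ and $\Vec{a}_i = N\Vec{v}+e_i$ (for $1 \le i \le r$, with $e_i$ the standard basis of $\ZZ^r$) all lie in $C \cap \ZZ^r$. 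Since $\Vec{a}_i - \Vec{a}_0 = e_i$, these vectors generate $\ZZ^r$.

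Next I would invoke the previous Lemma (reducing Conditions (B$^\pr$)/(C$^\pr$) for $W\vecbull$ to Conditions (B)/(C) for the singly-graded series): each $W_{\Vec{a}_i,\bull}$, for $0 \le i \le r$, satisfies Condition (B) in the (B$^\pr$) case, respectively Condition (C) in the (C$^\pr$) case. For any admissible flag $Y\bull$ the singly-graded semigroup $\Gamma_{Y\bull}(W_{\Vec{a}_i,\bull}) \subseteq \NN^{d+1}$ maps into $\Gamma_{Y\bull}(W\vecbull) \subseteq \NN^{d+r}$ under the inclusion $\NN^d \times \NN \hookrightarrow \NN^d \times \NN^r$ sending the grading index $k$ to $k\Vec{a}_i$, and this map carries the group $\ZZ^{d+1}$ onto $\ZZ^d \times \ZZ\Vec{a}_i$.

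In the (C$^\pr$) case I would then apply Lemma \ref{Graded.Series.Any.Flag.C} to each $W_{\Vec{a}_i,\bull}$: for \emph{any} admissible flag $Y\bull$, the group spanned by $\Gamma_{Y\bull}(W_{\Vec{a}_i,\bull})$ is all of $\ZZ^d \times \ZZ\Vec{a}_i$. Hence the group spanned by $\Gamma_{Y\bull}(W\vecbull)$ contains $\sum_{i=0}^{r}\big(\ZZ^d \times \ZZ\Vec{a}_i\big)$, which is already $\ZZ^d \times \ZZ^r = \ZZ^{d+r}$, since the factor $\ZZ^d \times \{0\}$ appears (take $i=0$) and the $\Vec{a}_i$ generate $\ZZ^r$. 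In the (B$^\pr$) case the only modification is that Lemma \ref{Cond.F.generates.generic.flag} yields, for each $i$ separately, merely \emph{some} flag with the stated property; but Remark \ref{Very.General.Flags.OK} — which is exactly where the uncountability of $\KK$ enters — furnishes a single admissible flag $Y\bull$ that works simultaneously for the finitely many series $W_{\Vec{a}_0,\bull},\dots,W_{\Vec{a}_r,\bull}$, and for such a flag the same computation gives that $\Gamma_{Y\bull}(W\vecbull)$ generates $\ZZ^{d+r}$.

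The main obstacle, such as it is, lies entirely in the first step: one must be careful that the chosen integer vectors span $\ZZ^r$ exactly rather than a proper sublattice, since $r$ linearly independent integer vectors extracted from an open cone generally fail to do so — this is why I add the extra vector $\Vec{a}_0$ and use $\Vec{a}_i - \Vec{a}_0 = e_i$. Once those vectors are in hand, the argument is purely formal, built on the previous Lemma together with Lemmas \ref{Cond.F.generates.generic.flag} and \ref{Graded.Series.Any.Flag.C} and (for the (B$^\pr$) case) Remark \ref{Very.General.Flags.OK}.
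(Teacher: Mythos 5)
Your proof is correct and follows essentially the same route as the paper: restrict to the singly-graded series $W_{\Vec{a}_i,\bull}$ along integer vectors in $\interior(\Supp(W\vecbull))$ via the preceding Lemma, invoke Lemma \ref{Cond.F.generates.generic.flag} together with Remark \ref{Very.General.Flags.OK} (case (B$^\pr$)) or Lemma \ref{Graded.Series.Any.Flag.C} (case (C$^\pr$)), and conclude because the chosen vectors generate $\ZZ^r$, exactly as in the paper's reduction to the argument of Lemma \ref{Global.SG.Gens.Grp}. The only cosmetic difference is that you use $r+1$ vectors $N\Vec{v},\,N\Vec{v}+e_1,\dots,N\Vec{v}+e_r$ to guarantee the full lattice $\ZZ^r$, while the paper simply chooses $r$ integer vectors in the interior spanning $\ZZ^r$; both are fine.
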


\begin{proof} Given an integer vector $\Vec{a} \in \NN^r$ lying in the interior of $\Supp(W\vecbull)$, denote by
\[  \Gamma_{\Vec{a}} \ = \ \Gamma_{Y\bull} (W_{\Vec{a}, \bullet}) \ \subseteq \ \NN^{d} \times \NN\Vec{a} \ \subseteq \ \NN^d \times \NN^r\]
the graded semi-group of $W_{\Vec{a}, \bullet}$ with respect to $Y\bull$, which is naturally a sub-semigroup of $\Gamma(W\vecbull)$.
Bearing in mind Remark \ref{Very.General.Flags.OK}, we can suppose  that  each $\Gamma_{\Vec{a}}$ generates $\ZZ^d \times \ZZ\Vec{a}$ as a group, and then the argument proceeds as in the Proof of Lemma \ref{Global.SG.Gens.Grp}. In fact, if we choose $\Vec{a_1}, \ldots, \Vec{a_r}$ spanning $\ZZ^r$, then the corresponding $\Gamma_{\Vec{a}_i}$ together generate $\ZZ^{d+r}$.\end{proof}

Now let
\[  \Sigma(W\vecbull) \ \subseteq \ \RR^d \times \RR^r \]
be the closed convex cone spanned by $\Gamma(W\vecbull)$, set \[  \Delta(W\vecbull) \ = \ \Sigma(W\vecbull), \]
and consider the diagram:
\[
\xymatrix{
\Delta(W\vecbull)\ar[dr]      & \subseteq & \RR^d \times \RR^r \ar[dl]^{\tn{pr}_2}  \\
&    \RR^r.
}
\]
Then just as in the global case, one has
\begin{theorem}
Assume that $W\vecbull$ satisfies Conditions \tn{(A$^\pr$)} and \tn{(B$^\pr$)}, or \tn{(C$^\pr$)}, and let $Y\bull$ be an admissible flag  as specified in Lemma \ref{Good.Flags.Multigraded.Systems}. Then for any integer vector 
\[ \Vec{a} \in \interior\big( \Supp (W\vecbull) \big), \] the fibre of $\Delta(W\vecbull)$ over $\Vec{a}$ is the corresponding Okounkov body of $W_{\Vec{a}, \bullet}$\tn{:}
\[  \Delta(W\vecbull)_{\Vec{a}} \ = \ \Delta(\Vec{a}).  \qed \]
\end{theorem}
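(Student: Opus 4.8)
The strategy is to reduce this multigraded statement to Proposition~\ref{TTT.PPP} exactly as Theorem~\ref{Global.Okounkov.Body.Theorem} was reduced to it in the global case. Set $\Gamma = \Gamma_{Y\bull}(W\vecbull) \subseteq \NN^{d+r}$. First I would record that under Conditions (A$^\pr$) and (B$^\pr$) (resp. (C$^\pr$)), the flag $Y\bull$ chosen as in Lemma~\ref{Good.Flags.Multigraded.Systems} makes $\Gamma$ generate $\ZZ^{d+r}$ as a group, hence (a fortiori) a subgroup of finite index, so the hypothesis of Proposition~\ref{TTT.PPP} is met. Next I would identify the support: $\Supp(\Gamma) \subseteq \RR^r$ — the image of $\Sigma(W\vecbull)$ under $\pro_2$ — is by definition the closed convex cone spanned by those $\Vec m$ with $W_{\Vec m} \ne 0$, which is precisely $\Supp(W\vecbull)$. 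Therefore a vector $\Vec a \in \NN^r$ lies in $\interior(\Supp(\Gamma))$ exactly when $\Vec a \in \interior(\Supp(W\vecbull))$, which is the hypothesis on $\Vec a$ in the statement.

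The key identification is then $\Gamma_{\NN\Vec a} = \Gamma_{Y\bull}(W_{\Vec a,\bull})$ as sub-semigroups of $\NN^d \times \NN\Vec a \cong \NN^{d+1}$. This is immediate from the definitions: an element of $\Gamma$ lying over a multiple of $\Vec a$ is exactly a pair $(\nu(s), k\Vec a)$ with $0 \ne s \in W_{k\Vec a}$, and $\{W_{k\Vec a}\}_k$ is by definition the singly graded series $W_{\Vec a,\bull}$. Consequently the Okounkov body $\Delta(\Vec a) = \Delta(W_{\Vec a,\bull})$ is the unit-$\Vec a$ slice of the cone $\Sigma(\Gamma_{\NN\Vec a})$ spanned by $\Gamma_{\NN\Vec a}$; here I should note that $W_{\Vec a,\bull}$ satisfies the relevant boundedness/spanning conditions (via the intermediate Lemma comparing $W\vecbull$ to the $W_{\Vec a,\bull}$ together with the results of \S\ref{Conds.GLS.subsection}), so that $\Delta(\Vec a)$ is genuinely a well-defined convex body computing $\vol_{W\vecbull}(\Vec a)$. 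Applying Proposition~\ref{TTT.PPP} with this $\Vec a$ gives $\Sigma(\Gamma_{\NN\Vec a}) = \Sigma(\Gamma)_{\RR\Vec a}$, and slicing both sides at $\RR^d \times \{\Vec a\}$ yields
\[
\Delta(\Vec a) \ = \ \Sigma(\Gamma)_{\RR\Vec a} \cap \big(\RR^d \times \{\Vec a\}\big) \ = \ \Delta(W\vecbull)_{\Vec a},
\]
which is the assertion.

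I expect the proof to be essentially formal once the dictionary is set up; the only point requiring a little care — and the analogue of the main obstacle in Theorem~\ref{Global.Okounkov.Body.Theorem} — is verifying the finite-index (equivalently, full-dimensionality of $\Sigma(W\vecbull)$) hypothesis of Proposition~\ref{TTT.PPP}, which is exactly what Lemma~\ref{Good.Flags.Multigraded.Systems} provides, and confirming that the interior of $\Supp(W\vecbull)$ is non-empty so that such $\Vec a$ exist at all — but this is built into Conditions (B$^\pr$)/(C$^\pr$)(i). There is no need to separately treat rational or real $\Vec a$ here since the statement is only claimed for integer vectors $\Vec a$.
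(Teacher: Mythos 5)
Your proposal is correct and follows exactly the route the paper intends: the theorem is stated with its proof omitted as being "just as in the global case," i.e.\ the argument of Theorem~\ref{Global.Okounkov.Body.Theorem}, which is precisely what you carry out — identify $\Gamma_{\NN\Vec a}$ with $\Gamma_{Y\bull}(W_{\Vec a,\bull})$, use Lemma~\ref{Good.Flags.Multigraded.Systems} for the group-generation hypothesis, and apply Proposition~\ref{TTT.PPP} before slicing at $\RR^d\times\{\Vec a\}$. Your added remarks (that $\Supp(\Gamma)=\Supp(W\vecbull)$ and that the singly graded series $W_{\Vec a,\bull}$ inherit the relevant conditions so $\Delta(\Vec a)$ is well defined) are exactly the points the paper relies on via its preceding lemmas.
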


\noi Note that it follows from the Theorem that 
\begin{equation} \Delta(p \Vec{a}) \ = \ p \cdot \Delta(\Vec{a}) \ \ \ 
\text{and}  \ \ \ \vol_{W\vecbull}(p\Vec{a}) \ =\  p^d \cdot \vol_{W\vecbull}(\Vec{a}).
\end{equation}
 (This can also be shown directly.) Therefore  $\Delta(\Vec{\alpha})$ and $\vol_{W\vecbull}(\Vec{\alpha})$ are naturally defined by homogeneity (as in \ref{Def.of.OB.for.Rat.Class}) for any rational vector $\Vec{\alpha} \in \QQ^r$ lying in the interior of $\Supp(W\vecbull)$, and one has
\begin{equation}
  \Delta(W\vecbull)_{\Vec{\alpha}} \ = \ \Delta(\Vec{\alpha}). \notag
\end{equation}

\begin{corollary} \label{Continuity.Abstract.Volume.Function}
Under the hypotheses of the Theorem,
the function $\Vec{a} \mapsto \vol_{W\vecbull}(\Vec{a})$ extends uniquely to a continuous function
\[
\vol_{W\vecbull} : \interior \big(\Supp(W\vecbull)\big) \lra \RR 
\]
which is homogeneous of degree $d$, and the resulting function is  log-concave. \qed
\end{corollary}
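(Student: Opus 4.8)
The plan is to repeat, essentially verbatim, the argument that was used to deduce the corollary following Theorem \ref{Global.Okounkov.Body.Theorem}, with $N^1(X)_{\RR}$ and $\overline{\tn{Eff}}(X)$ played now by $\RR^r$ and $\Supp(W\vecbull)$. First I would define the extension directly from the global cone: for a rational vector $\Vec{\alpha}$ in the interior of $\Supp(W\vecbull)$, set $\vol_{W\vecbull}(\Vec{\alpha}) = d!\cdot\vol_{\RR^d}\big(\Delta(W\vecbull)_{\Vec{\alpha}}\big)$, where $\Delta(W\vecbull)_{\Vec{\alpha}} = \tn{pr}_2^{-1}(\Vec{\alpha})\cap\Delta(W\vecbull)$ is the fibre of the projection. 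By the Theorem this agrees, for an integer vector $\Vec{a}\in\interior(\Supp(W\vecbull))$, with $d!\cdot\vol_{\RR^d}(\Delta(\Vec{a})) = \vol(W_{\Vec{a},\bullet})$ --- the last equality being Theorem \ref{Vol.OB.GLS} applied to the singly graded series $W_{\Vec{a},\bullet}$, which satisfies the requisite conditions by the lemmas of this subsection --- and since both sides scale by $p^d$ under $\Vec{\alpha}\mapsto p\Vec{\alpha}$, the two definitions agree on all rational vectors in the interior of the support.

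Next I would extract the two structural properties. Because $\Delta(W\vecbull)$ is convex, for rational $\Vec{\alpha},\Vec{\beta}\in\interior(\Supp(W\vecbull))$ one has $\Delta(W\vecbull)_{\Vec{\alpha}} + \Delta(W\vecbull)_{\Vec{\beta}}\subseteq\Delta(W\vecbull)_{\Vec{\alpha}+\Vec{\beta}}$ --- the elementary fact about sections of a convex cone recorded in \cite[\S 5]{Ball} --- so the Brunn--Minkowski inequality yields $\vol_{W\vecbull}(\Vec{\alpha}+\Vec{\beta})^{1/d}\ge\vol_{W\vecbull}(\Vec{\alpha})^{1/d}+\vol_{W\vecbull}(\Vec{\beta})^{1/d}$. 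Combined with the homogeneity $\vol_{W\vecbull}(p\Vec{\alpha}) = p^d\vol_{W\vecbull}(\Vec{\alpha})$, this says exactly that $\Vec{\alpha}\mapsto\vol_{W\vecbull}(\Vec{\alpha})^{1/d}$ is concave on the open convex set $\interior(\Supp(W\vecbull))$. A concave function is automatically continuous on the interior of its domain (\cite[Theorem 2.2]{Gruber}), so the function extends uniquely and continuously there --- uniqueness being forced by density of the rational points --- and it is then homogeneous of degree $d$ and log-concave by construction.

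The one point requiring a little care, and the place where the argument is not purely formal, is checking that the fibres $\Delta(W\vecbull)_{\Vec{\alpha}}$ over \emph{every} point of $\interior(\Supp(W\vecbull))$ --- not merely over the integer vectors to which the Theorem was literally applied --- are bounded, full-dimensional convex bodies, so that $\vol_{\RR^d}$ of them is well behaved. Boundedness of all fibres follows from Condition (A$^\pr$), which places $\Gamma(W\vecbull)$ inside the pointed cone $\{\,(\xi,\Vec{m})\mid \xi_i\le b\,|\Vec{m}|\,\}$, whose intersection with $\RR^d\times\{0\}$ is trivial; full-dimensionality, equivalently positivity of the volume, follows by combining the generation statement established in the proof of Lemma \ref{Good.Flags.Multigraded.Systems} (each $\Gamma_{\Vec{a}}$ generates $\ZZ^d\times\ZZ\Vec{a}$) with the Khovanskii machinery of Section \ref{Volume.Ok.Body.Section}, exactly as the identity $\interior(\overline{\tn{Eff}}(X)) = \tn{Big}(X)$ was used in the global case: the interior of $\Supp(W\vecbull)$ is precisely the locus over which the fibre has nonempty interior.
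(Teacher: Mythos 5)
Your argument is correct and is essentially the paper's own: the Corollary is stated with a \qed precisely because it follows by repeating the proof given after Theorem \ref{Global.Okounkov.Body.Theorem}, namely defining the extension via the fibres of the cone $\Delta(W\vecbull)$, using convexity of the cone plus Brunn--Minkowski to get log-concavity, and invoking concavity of $\vol^{1/d}$ for continuity on the interior. Your extra paragraph on boundedness (via Condition (A$^\pr$)) of the fibres is a reasonable care point that the paper leaves implicit, but it does not change the route.
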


\begin{remark}
It is not hard to construct an example of a multigraded linear series $W\vecbull$, together with an integer vector $\Vec{a}$ lying on the boundary of $\Supp(W\vecbull)$, such that $W_{\Vec{a}, \bullet}$ is perfectly well-behaved -- e.g. satisfies Condition (C) -- but where nonetheless  $\vol_{W\vecbull}(\Vec{\alpha})$ does not converge to $\vol_{W\vecbull}(\Vec{a})$ as $\Vec{\alpha} \to \Vec{a}$. \qed
\end{remark}

\begin{example} \textbf{(Restricted volume function.)} \label{Restricted.Vol.Fn.Example}
Let $V$ be an irreducible projective variety, and fix as in \S \ref{Glob.Ok.Body.Subsection} divisors $D_1, \ldots, D_r$ on $V$ whose classes span $N^1(V)_{\RR}$. Given an irreducible subvariety $X \subseteq V$ of dimension $d$, consider the $\NN^r$-graded linear series $W\vecbull$ given by
\[
W_{\Vec{m}} \ = \ \HH{0}{V|X}{\OO_V(\Vec{m}{D})}.
\]
It follows from Lemma \ref{Restr.Lin.Series.Satisfies.Cond.C} that this satisfies condition (C$^\pr$), and
the interior of the support of $W\vecbull$ is the set
\[ 
\tn{Big}^+\big( V|X \big) \ = \ \big\{ \Vec{\alpha} \in \RR^r \, \big | \, X \not \subseteq \BBB_+(\Vec{\alpha}D) \, \big \}.
\]
It follows first of all that  we get a global Okounkov body, which one might denote by
\[  \Delta(V|X) \ \subseteq \ \RR^d \times \RR^r, \]
with fibre $\Delta_{V|X}(\Vec{\alpha})$ over $\Vec{\alpha} \in \tn{Big}^+(V|X)$. 
So Corollary \ref{Continuity.Abstract.Volume.Function} also
 yields the continuity and log-concavity of the restricted volume function
\[
\vol_{V|X} : \tn{Big}^+\big( V|X \big)  \lra \RR
\]
established in \cite[Theorem A]{ELMNP3}. Note however that one does not recover  the most substantial  result of that paper, namely  that $\vol_{V|X}(\xi) \to 0$ as $\xi \to \xi_0$ when $\xi_0$ is a class on the boundary of $\tn{Big}^+\big( V|X \big)$ such that $X$ is an irreducible component of $\BBB_+(\xi_0)$. \qed
\end{example}

\begin{remark} \label{Numerical.Nature.Restricted.OB}
In the situation of the previous Example, one can show as in the proof  of  Proposition \ref{Num.Nature.Homogenity}   that if $D$ is a big divisor on $V$ such that $X \not \subseteq \BBB_+(D)$, then the Okounkov body $\Delta_{V|X}(D)$ only depends on the numerical equivalence class of $D$. Therefore, as in the global setting it is meaningful to speak of $\Delta_{V|X}(\xi)$ for any numerical equivalence class $\xi \in \tn{Big}^+(V|X)$.   \qed  \end{remark}

\subsection{Slices of Okounkov Bodies.} \label{Slices.Subsection}

Let $X$ be an irreducible projective variety of dimension $d$, and let $E \subseteq X$ be an irreducible (and reduced) Cartier divisor on X.\footnote{Observe that the hypothesis that $E$ be Cartier implies in particular that $E$ is not contained in the singular locus of $X$.} In this subsection we study Okounkov bodies computed with respect to an admissible flag $Y\bull$ 
\begin{equation} \label{Flag.With.E}
 X   \ \supseteq \   E\  \supseteq \ Y_2 \ \supseteq \ \ldots \ \supseteq \ Y_{d-1} \  \supseteq \  Y_{d} \  = \  \{ \text{pt} \} \end{equation}
with divisorial component $Y_1 = E$. In particular, we prove Corollary \ref{Integration.Intro} from the Introduction.

Let $\xi \in N^1(X)_{\RR}$ be a big class, and consider the Okounkov body 
\[  \Delta(\xi)  \ = \ \Delta(X)_{\xi} \ \subseteq \ \RR^d \]
computed with respect to the flag $Y\bull$. 
Write $\pro_1 : \Delta(\xi) \lra \RR$ for projection onto the first coordinate, and set
\begin{align*} \Delta(\xi)_{\nu_1 = t}\ &= \ \pro_1^{-1}(t) \ \subseteq \ \{t \} \times \RR^{d-1}  \ = \ \RR^{d-1}
\\ \Delta(\xi)_{\nu_1\ge t} \ &=  \ \pro_1^{-1}\big( [t, \infty) \big) \ \subseteq \
 \RR^d. \end{align*}
Our purpose is to interpret these sets in terms of Okounkov bodies associated to the divisor class $\xi - te$, where $e \in N^1(X)$ is the class of $E$. 

We assume that $E \not \subseteq \BBB_+(\xi)$ (see \S \ref{RLSS}), which guarantees that  $\Delta(\xi)_{\nu_1 = 0} \ne \emptyset$. Put 
\begin{equation}  \mu(\xi;e) \ = \ \sup \, \{ \, s > 0 \  | \  \xi - s \cdot e \in \tn{Big}(X)\, \}.\tag{*}
\end{equation}
This invariant computes  the right-hand endpoint of the image of $\Delta(\xi)$ under the projection $\pro_1 : \RR^d \lra \RR$: one checks that $E\not\subseteq \BBB_+(\xi - se)$ when $0 \le s \le \mu(\xi,e)$.
\begin{theorem} \label{Slicing.Theorem}
Continue to assume that  $E \not \subseteq \BBB_+(\xi)$,  and fix any real number $t$ with $0 \le t < \mu(\xi; e)$. 
Then 
\begin{equation} \label{First.Slicing.Equation}
 \Delta(\xi)_{\nu_1 \ge t} \ = \ \Delta(\xi - te) + t \cdot \Vec{e}_1, \end{equation}
where $\Vec{e}_1 = (1, 0, \ldots, 0) \in \NN^d$ is the first standard unit vector. Furthermore,  
\begin{equation} \label{Second.Slicing.Equation}
 \Delta(\xi)_{\nu_1 = t} \ = \ \Delta_{X|E}(\xi - te\big).
 \end{equation}
\end{theorem}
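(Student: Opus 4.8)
The plan is to work at the level of graded semigroups and their cones, passing from the two-variable semigroup $\Gamma(X; D, E)$ (with $D$ a divisor representing $\xi$, if $\xi$ is rational, and then passing to the limit for real $\xi$) to its slices. Concretely, fix divisors $D_1, \dots, D_r$ spanning $N^1(X)$ as in \S\ref{Glob.Ok.Body.Subsection}, but arrange that one of them, say $D_r = E$, is the fixed divisor cutting out $Y_1$. The point of putting $Y_1 = E$ is that the very first coordinate of the valuation, $\nu_1(s) = \ord_E(s)$, is directly readable off the semigroup: a section $s \in \HH{0}{X}{\OO_X(mD)}$ has $\nu_1(s) \ge t$ precisely when $s$ is divisible by the $t$-th power of the local equation of $E$, i.e. $s$ comes from $\HH{0}{X}{\OO_X(mD - tE)}$ via multiplication by that power. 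This divisibility statement is exactly what makes the relation between $\Delta(\xi)$ and $\Delta(\xi - te)$ transparent.

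First I would prove \eqref{First.Slicing.Equation}. The key observation is that translation by $t\Vec{e}_1$ carries $\Gamma(mD - tmE)_m$ (where I abuse notation: I mean the level-$m$ slice of the graded semigroup of the class $\xi - te$) into $\Gamma(mD)_m \cap \{\nu_1 \ge tm\}$, via the map $s' \mapsto (\text{equation of }E)^{tm}\cdot s'$, and this is a bijection because dividing out the power of the local equation of $E$ is the inverse. Passing to the closed convex hulls as in \eqref{Alt.Descr.Okounkov.Body}, one gets $\Delta(\xi - te) + t\Vec{e}_1 \subseteq \Delta(\xi)_{\nu_1 \ge t}$. For the reverse inclusion one uses the global Okounkov cone $\Delta(X)$ of Theorem \ref{Global.Okounkov.Body.Theorem} (or rather its restricted analogue) together with the fact that $\pro_1$ is a linear functional, so $\Delta(\xi)_{\nu_1 \ge t}$ is a fibre of a face-type intersection; the semigroup-theoretic content is that any lattice point with $\nu_1 \ge tm$ in $\Gamma(mD)_m$ is, after subtracting $tm\Vec{e}_1$, a lattice point coming from a section of $mD - tmE$, which is where one needs $E \not\subseteq \BBB_+(\xi - se)$ for $s < \mu(\xi;e)$ to guarantee the target linear series is big and the convex-geometry argument (the Appendix results on intersecting cones with subspaces, via Proposition \ref{TTT.PPP}) goes through. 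The continuity of both sides in $\xi$ then upgrades the statement from rational to real big classes.

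Next, \eqref{Second.Slicing.Equation}: the slice $\Delta(\xi)_{\nu_1 = t}$ sits inside $\{t\}\times\RR^{d-1}$, and I claim that after the translation from the previous part, $\Delta(\xi)_{\nu_1 = t} - t\Vec{e}_1$ is exactly the Okounkov body of the restricted linear series of $\xi - te$ from $X$ to $E$, computed with respect to the truncated flag $E \supseteq Y_2 \supseteq \cdots \supseteq Y_d$ on $E$. The mechanism is the restriction map: a section $s'$ of $\OO_X(m(D - tE))$ not vanishing along $E$ restricts to $s'|_E \in \HH{0}{E}{\OO_E(m(D-tE))}$, and by the very inductive construction of $\nu$, the coordinates $(\nu_2(s'), \dots, \nu_d(s'))$ are computed on $E$ as $\nu_{Y\bull|_E}(s'|_E)$. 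So the level-$m$ valuation vectors of $\Gamma$ with $\nu_1 = 0$ map onto the level-$m$ valuation vectors of the restricted series $W_m = \HH{0}{X|E}{\OO_E(m(D-tE))}$. Taking convex hulls and invoking \eqref{Alt.Descr.Okounkov.Body.GLS} gives the equality, once one knows the restricted series satisfies Condition (C) so that its Okounkov body behaves well — and that is exactly Lemma \ref{Restr.Lin.Series.Satisfies.Cond.C}, applicable because $E \not\subseteq \BBB_+(\xi - te)$.

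The main obstacle, I expect, is the interface between the ``open'' slice $\nu_1 = t$ and the boundary behavior of convex bodies: the set of valuation vectors with $\nu_1$ exactly $mt$ at finite level $m$ need not be the honest slice of $\frac1m\Gamma_m$, and one must be careful that passing to closures and to the limit $m\to\infty$ genuinely produces $\pro_1^{-1}(t)\cap\Delta(\xi)$ rather than something smaller (this is a place where the irrationality phenomena in the abelian-surface example warn that slices can be subtle). Controlling this requires the cone-theoretic input — that $\Delta(\xi)$ is the base of a cone with nonempty interior and $\pro_1$ restricted to it is an open map onto its image interval $[0,\mu(\xi;e))$ — so that the slice at $t$ is the limit of slices at nearby rational heights, where the semigroup bijections above are available. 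I would isolate this as the technical heart of the argument and handle the rest by the bookkeeping indicated above.
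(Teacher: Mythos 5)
Your proposal is correct and takes essentially the same route as the paper's proof: the semigroup-level bijection given by multiplication by powers of the equation of $E$ identifies $\Gamma(D)_{\nu_1\ge a}$ with a translate of $\Gamma(D-aE)$, the slice $\nu_1 = a$ is identified with the restricted graded series $\Gamma_{X|E}(D-aE)$ computed via the truncated flag on $E$, the passage from sliced semigroup to sliced cone is exactly the Appendix result (Proposition \ref{Mircea.Prop}/Corollary \ref{Mircea.Corollary}) you point to, and homogeneity plus continuity handle rational and then real classes. The only difference is bookkeeping at the endpoint: the paper disposes of $t=0$ at the outset by replacing $\xi$ with $\xi+\eps e$ and applying the interior case, rather than by the limiting-slices argument you sketch.
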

\noi   Naturally enough, the Okounkov body $\Delta_{X|E}$   appearing in \eqref{Second.Slicing.Equation}
 is computed with respect to the flag
\[ Y\bull\, | \, E \ \  : \ \ \ E\  \supseteq \ Y_2\  \supseteq\  \ldots\  \supseteq \ Y_{d}\]
 on $E$. Note that \eqref{Second.Slicing.Equation} implicitly assumes the fact stated without proof in Remark \ref{Numerical.Nature.Restricted.OB}, that $\Delta_{X|E} $ is well-defined on numerical equivalence classes. However this is purely for typographical convenience: nothing would change by working with specific divisors in the theorem and the next corollary. 
We refer again to  Figure \ref{Slices.Picture}
 in the Introduction for a schematic illustration of the result.

 Before proving the theorem, we note some consequences:
\begin{corollary} \label{Cor.of.Slicing.Theorem}
Keep the assumptions of the theorem. 
\begin{itemize}
\item[(i).]
One has
\[
\vol_{\RR^{d-1}} \big( \Delta(\xi)_{\nu_1 = t}\big) \ = \ \frac{1}{(d-1)!} \cdot \vol_{X|E}(\xi - te).
\]
\item[(ii).]
For any  $0 < a < \mu(\xi;e)$,
 \[
\vol_X(\xi) - \vol_X(\xi - ae) \ = \ d \cdot \int_{-a}^{0} \vol_{X|E}(\xi + te) \,  dt.
\]
\item[(iii).] The function $t \mapsto \vol_X(\xi + te)$ is differentiable at $t = 0$, and
\[
\frac{d}{dt} \,\big( \vol_X(\xi  + te) \big) |_{t = 0} \ = \ d \cdot \vol_{X|E}(\xi).
\]
\end{itemize}
\end{corollary}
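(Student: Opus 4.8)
# Proof Proposal for Corollary \ref{Cor.of.Slicing.Theorem}

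\textbf{The plan.} The whole corollary is extracted from the Slicing Theorem \ref{Slicing.Theorem} together with the volume formula \eqref{OBVol.Compute.Restr.Vol} and Fubini's theorem. I would prove the three parts in the order stated, each building on the previous one.

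\textbf{Part (i).} This is the most direct: combine the slicing identity \eqref{Second.Slicing.Equation}, which says $\Delta(\xi)_{\nu_1 = t} = \Delta_{X|E}(\xi - te)$, with the volume formula \eqref{OBVol.Compute.Restr.Vol} applied on $E$, namely $\vol_{\RR^{d-1}}\big(\Delta_{X|E}(D)\big) = \tfrac{1}{(d-1)!}\vol_{X|E}(D)$ for $D$ integral, extended to rational and then real big classes in $\tn{Big}^+(X|E)$ by the homogeneity and continuity of the restricted volume function (Example \ref{Restricted.Vol.Fn.Example}, Remark \ref{Numerical.Nature.Restricted.OB}). Here one needs $E \not\subseteq \BBB_+(\xi - te)$, which holds for $0 \le t < \mu(\xi;e)$ as noted just before the theorem. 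There is a small point worth stating: $\Delta_{X|E}(\xi - te)$ genuinely has nonempty interior in $\RR^{d-1}$ precisely when $\xi - te \in \tn{Big}^+(X|E)$, so it is a convex body and its $(d-1)$-dimensional volume is what we want.

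\textbf{Part (ii).} Here I would integrate the $(d-1)$-dimensional slice volumes of $\Delta(\xi)$ over the $\nu_1$-axis. By Fubini,
\[
\vol_{\RR^d}\big(\Delta(\xi)\big) \ = \ \int_0^{\mu(\xi;e)} \vol_{\RR^{d-1}}\big(\Delta(\xi)_{\nu_1 = t}\big)\, dt.
\]
Using \eqref{First.Slicing.Equation}, the truncation $\Delta(\xi)_{\nu_1 \ge a}$ is a translate of $\Delta(\xi - ae)$, so its $d$-dimensional volume equals $\vol_{\RR^d}(\Delta(\xi - ae)) = \tfrac{1}{d!}\vol_X(\xi - ae)$ by Theorem \ref{Vol.eq.Vol} (extended to real classes). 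Subtracting,
\[
\tfrac{1}{d!}\vol_X(\xi) - \tfrac{1}{d!}\vol_X(\xi - ae) \ = \ \int_0^{a}\vol_{\RR^{d-1}}\big(\Delta(\xi)_{\nu_1 = t}\big)\, dt \ = \ \tfrac{1}{(d-1)!}\int_0^a \vol_{X|E}(\xi - te)\, dt,
\]
where the last equality is part (i). Multiplying by $d!$ and substituting $t \mapsto -t$ in the integral gives the displayed formula. One should remark that the integrand $t \mapsto \vol_{X|E}(\xi - te)$ is continuous on $[0,a]$ (by the continuity from Example \ref{Restricted.Vol.Fn.Example}, together with the fact that $\xi - ae$ is big with $E \not\subseteq \BBB_+$ for $a < \mu(\xi;e)$), so the integral is unambiguous.

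\textbf{Part (iii).} This is now immediate from (ii) and the fundamental theorem of calculus: the function $a \mapsto \vol_X(\xi - ae)$ has, by (ii), the form $\vol_X(\xi) - d\int_0^a \vol_{X|E}(\xi - te)\,dt$ on $(-\mu(\xi; -e), \mu(\xi;e))$ — applying (ii) with $\xi$ replaced by $\xi + \epsilon e$ to handle a neighborhood of $t = 0$ on both sides — and since the integrand is continuous at $t = 0$, the derivative at $t=0$ exists and equals $d\cdot\vol_{X|E}(\xi)$ (with the sign working out because $\tfrac{d}{dt}\vol_X(\xi + te) = -\tfrac{d}{da}\vol_X(\xi - ae)$ at the matching point).

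\textbf{Main obstacle.} The genuine content is all in Theorem \ref{Slicing.Theorem}; modulo that, the corollary is Fubini plus bookkeeping. The one place demanding a little care is the passage from integral/rational classes to arbitrary real big classes $\xi$ and the handling of the endpoint behavior near $t = \mu(\xi;e)$: one must know that $\vol_{X|E}$ is continuous up to the relevant part of the boundary of the integration interval and that $\mu(\xi;e)$ really is the right endpoint of $\pro_1(\Delta(\xi))$, which is exactly the observation recorded after \eqref{*} (that $E \not\subseteq \BBB_+(\xi - se)$ for $0 \le s \le \mu(\xi;e)$). I would make sure to invoke that observation explicitly rather than leave it implicit.
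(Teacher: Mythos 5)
Your proposal is correct and follows essentially the same route as the paper: part (i) is \eqref{Second.Slicing.Equation} plus \eqref{OBVol.Compute.Restr.Vol}, part (ii) is the Fubini-type computation of the volume of $\Delta(\xi)$ by integrating slice volumes, and part (iii) uses exactly the paper's trick of replacing $\xi$ by $\xi+\eps e$ to obtain two-sided differentiability at $t=0$. Your write-up is somewhat more explicit about the translation identification from \eqref{First.Slicing.Equation} and the continuity of the integrand, but there is no substantive difference.
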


\begin{proof}[Proof of Corollary \ref{Cor.of.Slicing.Theorem}]  The first assertion follows from \eqref{Second.Slicing.Equation} and \eqref{OBVol.Compute.Restr.Vol}, and (ii) is the statement that one can compute the volume of $d$-dimensional convex body by integrating the $(d-1)$-dimensional volumes of the fibres of an orthogonal projection to $\RR$. For (iii), the one additional point to observe is that the hypothesis on $E$ implies that $E \not \subset \BBB_+(\xi + \eps e)$ for $0<\eps \ll 1$. Therefore we can apply the Theorem with $\xi$ replaced by $\xi + \eps e$, and then (ii) yields the two-sided differentiability  of $t  \mapsto \vol_X(\xi + te)$ at $t = 0$. 
\end{proof}

\begin{remark} \textbf{(Corollary C.)}
If $E$ is a very ample divisor that is general in its linear series, then 
  the condition $E \not \subseteq \BBB_+(\eta)$ holds whenever $\eta$ is big. Thus the theorem and the corollary reduce in this case to the statements appearing in the Introduction. \qed
\end{remark}

\begin{remark} \textbf{(Differentiability of volume.)} Continuing the train of thought of the previous remark, consider  a basis of $N^1(X)_{\RR}$ consisting of the classes of very ample divisors. Then statement (iii) of the corollary (together with Example \ref{Restricted.Vol.Fn.Example})  implies that the volume function $\vol_X$ has continuous partials in all directions at any point $\xi \in \tn{Big}(X)$, i.e. the function
\[
\vol_X : \tn{Big}(X) \lra \RR 
\]
is $\mathcal{C}^1$. Boucksom--Favre--Jonsson give a different proof of \ref{Cor.of.Slicing.Theorem} (ii) and (iii) in \cite{BFJ}, where they study in detail the differentiability properties of $\vol_X$ and its consequences. \qed
\end{remark}

Turning to the proof of the Theorem, one piece of notation will be helpful. Namely, given a graded semigroup $\Gamma \subseteq \NN^d \times \NN$, and an integer $a  > 0$, denote by $\Gamma_{\nu_1 \ge a} \subseteq \Gamma$ and $\Gamma_{\nu_1 = a} \subseteq \Gamma$ the sub-semigroups
\begin{align*}
\Gamma_{\nu_1 \ge a}  \ &= \ \big\{ (\nu_1, \ldots, \nu_d, m) \in \Gamma \, \big | \, \nu_1 \ge am \, \big \} \\
\Gamma_{\nu_1 = a}  \ &= \ \big\{ (\nu_1, \ldots, \nu_d, m) \in \Gamma \, \big | \, \nu_1 = am \, \big \}.
\end{align*}
\begin{proof}[Proof of Theorem \ref{Slicing.Theorem}] As in the proof of Corollary \ref{Cor.of.Slicing.Theorem}, it is enough to prove the Theorem when $t >0$ since we can replace $\xi$ by $\xi + \eps e$ for $0 < \eps \ll 1$ to get the original statement with $t = 0$.  As always, write $\nu = \nu_{Y\bull}$ for the valuation determined by $Y\bull$. 
For the first statement, consider to begin with an integral divisor $D$ and an integer $a > 0$ such that $D - aE$ is big. 
Then for any $m \ge 0$, $\HH{0}{X}{\OO_X(mD - maE)}$ sits naturally as a subgroup of $\HH{0}{X}{\OO_X(mD)}$, and in fact
\begin{align*}
 \HH{0}{X}{\OO_X(mD - maE)} \ &= \ \big\{ s \in \HH{0}{X}{\OO_X(mD)} \ \big | \ \ord_E(s) \ge ma \ \big\}. \\
 &= \ \big\{ s \in \HH{0}{X}{\OO_X(mD)} \ \big | \ \nu_1(s) \ge ma \ \big\}.
 \end{align*}
In view of the definition of $\nu_{Y\bull}$, this means that  $\Gamma(D)_{\nu_1 \ge a}$ is the image of $\Gamma(D - aE)$ under the map
\[  \varphi_a : \NN^d \times \NN \lra \NN^d \times \NN \ \ , \ \ (\nu, m) \mapsto (\nu + ma \Vec{e}_1, m), \]
where as above $\Vec{e}_1 = (1,0, \ldots, 0) \in \NN^d$ is the first standard basis vector. Passing to cones, it follows that 
\[ \Sigma\big( \Gamma(D)_{\nu_1 \ge a} \big) \ = \ \varphi_{a,\RR} \Big( \Sigma\big (\Gamma(D - aE)\big) \Big),\]
where $\varphi_{a, \RR} : \RR^d \times \RR \lra \RR^d \times \RR$ is the evident map on vector spaces determined by $\varphi_a$. This implies that
\[
\Delta(D - aE) \, + \, a \Vec{e_1} \ = \ \Delta(D)_{\nu_1 \ge a},
\]
and hence (upon replacing $D$ by a multiple)
\begin{equation}
\Delta(pD - qE) \, + \, q \Vec{e_1} \ = \ \Delta(pD)_{\nu_1 \ge q} \tag{*}
\end{equation}
whenever $pD - qE$ is big. But both sides of (*) scale linearly, and therefore \eqref{First.Slicing.Equation} holds when both $\xi$ and $t$ are rational. The general case then follows by continuity. 

The proof of the second statement is similar. Specifically, start again with an integral divisor $D$, fix $a > 0$, and denote by 
\[  \Gamma_{X|E}(D-aE) \ \subseteq \ \NN^{d-1} \times \NN \]
the graded semigroup (with respect to the flag $Y\bull|E$) computing the Okounkov body $\Delta_{X|E}(D - aE)$. Then it follows as above from the definition of $\nu_{Y\bull}$ that $\Gamma(D)_{\nu_1 = a} \subseteq \NN^d \times \NN$ coincides with the image of $\Gamma_{X|E}(D - aE)$ under the map
\[
\NN^{d-1} \times \NN \lra \NN^d \times \NN \ \ , \ \ (\nu_2, \ldots, \nu_d, m) \mapsto (m a, \nu_2, \ldots, \nu_d, m). 
 \]
 We assert that
 \begin{equation} \label{diagonal.slice.equation}
 \Sigma \big ( \Gamma(D)_{\nu_1 =a} \big) \ = \ \Sigma\big(\Gamma(D)\big)_{\nu_1 = a},
 \end{equation}
 where the left-hand side denotes the cone generated by the semigroup $\Gamma(D)_{\nu_1 = a}$, and the right-hand side indicates the intersection of $\Sigma\big(\Gamma(D)\big)$ with the evident subspace of $\RR^d \times \RR$. 
Granting this, it follows that $\Delta(D)_{\nu_1 = a} = \Delta_{X|E}(D - aE)$, and hence that $\Delta(pD)_{\nu_1 = q} = \Delta_{X|E}(pD - qE)$ whenever $pD - qE$ is big and $q >0$. As in the previous paragraph, this implies \eqref{Second.Slicing.Equation}. It remains to prove \eqref{diagonal.slice.equation}, but it is a special case of Proposition \ref{Mircea.Prop} from the Appendix.
\end{proof}

\begin{remark} \textbf{(Surfaces.)} The Theorem gives a convenient way to compute $\Delta(D)$ when $\dim X = 2$: see \S \ref{Surface.Subsection}.  \end{remark}

%%%%%

\section{Generic Infinitesimal Flags} \label{Infinitesimal.Constructions.Section}

In this section we study the variation of Okounkov bodies when the relevant data  -- notably the flag $Y\bull$ -- move in flat families. One finds that the resulting body is constant for a very general choice of the parameter. The interest in this is that it allows one to make canonical constructions. Specifically, by working with flags in the exceptional divisor on the blow-up $\Bl_x(X)$ of $X$ at a very general point $x \in X$, one arrives at  Okounkov bodies that do not depend on the arbitrary choice of a global flag on $X$. The exposition here will be a little more condensed than in previous sections.

\subsection{Variation in Families} 
We start by fixing notation. Let
\[ \pi : \mathscr{X} \lra T\] 
be a flat surjective morphism of vareties, of relative dimension $d$, and let $\mathscr{D}$ be a (Cartier) divisor on $\mathscr{X}$ which is flat over $T$. We assume given a flag of subvarieties
\[ \mathscr{Y}\bull \ \ : \ \ \mathscr{X} \ = \ \mathscr{Y}_0 \  \supseteq \ \mathscr{Y}_1 \ \supseteq \ \ldots \ \supseteq \ \mathscr{Y}_d, \]
where $\mathscr{Y}_i$ has codimension $i$ in $\mathscr{X}$, and is flat and surjective over $T$. Denote scheme-theoretic fibres over $t \in T$ with subscripted Roman fonts, so that 
\[ X_t \ = \ \pi^{-1}(t) \ \ , \ \ D_t \ = \ \mathscr{D} | X_t \ \ \text{ and } \ \ Y_{i,t} = \pi^{-1}(t) \cap \mathscr{Y}_i, \] etc. We assume that $T$ is irreducible, and that for every $t \in T$:
\begin{itemize}
\item[(i).]  $X_t$ and each $Y_{i, t}$ are reduced and irreducible;
\item[(ii).] Each $Y_{\bullet, t}$ is an admissible flag on $X_t$.
\end{itemize}
For simplicity, we will also assume
\begin{itemize}
\item[(iii).]  For all $i$,  $\mathscr{Y}_{i+1}$ is a Cartier divisor in $\mathscr{Y}_{i}$ 
\end{itemize}
(and hence the same is true for each $Y_{i+1,t}$ in $Y_{i,t}$).\footnote{Condition (iii) is presumably not necessary for what follows, but it simplifies the discussion and holds in the application we have in mind.}
The Okounkov bodies
\[  \Delta_{Y_{\bullet, t}}\big( X_t; D_t \big) \ \subseteq \ \RR^d \]
are therefore defined provided that $\pi$ is projective.

The main result of this subsection is:
\begin{theorem} \label{Constancy.Of.OB.Thm}
Keeping the notation and hypotheses  just introduced,  assume that $\pi$ is projective and that $D_t$ is big on $X_t$ for all $t \in T$. Then there is a subset 
\[  \mathcal{B} \ = \ \cup  \, B_{m} \ \subset \ T, \] consisting of a countable union of proper Zariski-closed subsets $B_m \subsetneqq T$, such that the Okounkov bodies $ \Delta_{Y_{\bullet, t}} \big( X_t; D_t \big)  $ all coincide for $t \not \in \mathcal{B}$, i.e. 
\[ \Delta_{Y_{\bullet, t}}\big( X_t; D_t \big) \ \subseteq \ \RR^d \ \ \text{ is  independent of $ t$} \] for $t \in T - \mathcal{B}$.
\end{theorem}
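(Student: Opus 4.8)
The plan is to reduce the statement to the invariance of the finite sets $\Gamma(D_t)_m \subseteq \NN^d$ of valuation vectors of non-zero sections of $\OO_{X_t}(mD_t)$, treating one value of $m$ at a time. By the alternative description \eqref{Alt.Descr.Okounkov.Body},
\[
\Delta_{Y_{\bullet, t}}\big(X_t; D_t\big) \ = \ \textnormal{closed convex hull}\Big( \bigcup_{m \ge 1} \tfrac{1}{m}\cdot \Gamma(D_t)_m\Big),
\]
so it is enough to produce, for each $m \ge 1$, a proper Zariski-closed subset $B_m \subsetneq T$ such that $\Gamma(D_t)_m$ is independent of $t$ for $t \in T - B_m$; then $\mathcal B = \bigcup_{m\ge1} B_m$ does the job. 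First I would record a uniform boundedness statement: there is an integer $b > 0$, independent of $t$ and $m$, such that $\Gamma(D_t)_m \subseteq [0,mb]^d\cap\NN^d$ for all $t$ and $m$. This follows from the proof of Lemma \ref{boundedness}: fixing a divisor $\mathscr H$ on $\mathscr X$ that is ample on every fibre and writing $H_t = \mathscr H|_{X_t}$, the intersection numbers $(D_t\cdot H_t^{d-1})$, $\big((D_t - aY_{1,t})|_{Y_{1,t}}\cdot H_t^{d-2}\big)$, and so on, that occur in that proof are constant in the flat family, so the bounds $b_1,\dots,b_d$ produced there may be chosen independently of $t$, and one puts $b = \max_i b_i$.

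The key point is that for each fixed $\sigma \in \ZZ^d$, the function $t \mapsto \dim\,(W_{m,t})_{\ge\sigma}$ — where $W_{m,t} = \HH{0}{X_t}{\OO_{X_t}(mD_t)}$ and $(W_{m,t})_{\ge\sigma} = \{\,s : \nu_{Y_{\bullet,t}}(s)\ge\sigma\,\}$, with $\ZZ^d$ ordered lexicographically — is constant on a dense Zariski-open subset of $T$. For this I would check that the sheafification of Remark \ref{sheafification.remark} works in families. Using hypothesis (iii), the iterative construction of that remark, carried out on $\mathscr X$ for the line bundle $\OO_{\mathscr X}(m\mathscr D)$ and the flag $\mathscr Y_\bullet$, produces a coherent subsheaf $\mathscr L^{\ge\sigma}\subseteq\OO_{\mathscr X}(m\mathscr D)$; at every stage the cokernel of the relevant inclusion is a line-bundle twist of the structure sheaf of an infinitesimal neighborhood of $\mathscr Y_{i+1}$ in $\mathscr Y_i$, which is flat over $T$, and hence so is the kernel. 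Thus $\mathscr L^{\ge\sigma}$ is flat over $T$, and a parallel induction shows $\mathscr L^{\ge\sigma}\otimes\OO_{X_t} = \OO_{X_t}(mD_t)^{\ge\sigma}$ for every $t$. Therefore $\dim(W_{m,t})_{\ge\sigma} = \hh{0}{X_t}{\mathscr L^{\ge\sigma}\otimes\OO_{X_t}}$ is upper-semicontinuous in $t$ by the semicontinuity theorem, and hence is constant on a dense open subset of the irreducible variety $T$.

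To finish: by Lemma \ref{No.of.Val.Vectors} one has $\dim(W_{m,t})_{\ge\sigma} - \dim(W_{m,t})_{>\sigma}\in\{0,1\}$, the value being $1$ precisely when $\sigma\in\Gamma(D_t)_m$, and since the order is lexicographic $(W_{m,t})_{>\sigma} = (W_{m,t})_{\ge\sigma+e_d}$, where $e_d = (0,\dots,0,1)$. Let $U_m\subseteq T$ be the intersection of the finitely many dense opens on which $t\mapsto\dim(W_{m,t})_{\ge\tau}$ is constant, as $\tau$ ranges over the finite set $\big([0,mb]^d\cap\NN^d\big)\cup\big([0,mb]^d\cap\NN^d+e_d\big)$; then for $t\in U_m$ and each $\sigma$ in the box $[0,mb]^d\cap\NN^d$ the condition $\sigma\in\Gamma(D_t)_m$ does not depend on $t$, and since $\Gamma(D_t)_m$ lies in that box by the boundedness step, $\Gamma(D_t)_m$ is independent of $t\in U_m$. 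Setting $B_m = T - U_m$ and $\mathcal B = \bigcup_{m\ge1}B_m$ then gives the theorem. I expect the main obstacle to be the base-change identity $\mathscr L^{\ge\sigma}\otimes\OO_{X_t} = \OO_{X_t}(mD_t)^{\ge\sigma}$: one must verify that every step of the construction in Remark \ref{sheafification.remark} commutes with restriction to the fibres, which is exactly where hypothesis (iii) and the flatness of $\mathscr D$ and of the $\mathscr Y_i$ over $T$ are used — via the standard fact that if $0\to\mathscr A\to\mathscr B\to\mathscr C\to0$ is exact with $\mathscr B,\mathscr C$ flat over $T$, then $\mathscr A$ is flat over $T$ and the sequence stays exact after restricting to any fibre.
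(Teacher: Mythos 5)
Your proposal is correct and takes essentially the same route as the paper: for each $m$ you sheafify the valuation filtration of Remark \ref{sheafification.remark} over the family (hypothesis (iii) giving flatness of $\mathscr{L}^{\ge\sigma}$ and compatibility with restriction to fibres), apply semicontinuity to get constancy of $\dim (W_{m,t})_{\ge \sigma}$ on a dense open set, and combine this with the uniform bound coming from the proof of Lemma \ref{boundedness} and the counting Lemma \ref{No.of.Val.Vectors} to pin down the image of $\nu_{Y_{\bullet,t}}$, taking $B_m$ to be the complement of a finite intersection of dense opens. Your detection of membership via the lexicographic successor $\sigma + e_d$ is only a cosmetic variant of the paper's equal-cardinality bookkeeping.
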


\begin{lemma}
Let $\mathscr{E}$ be a Cartier divisor on $X$, flat over $T$, and fix $\sigma \in \ZZ^d$. Then there is a non-empty open subset $U \subseteq T$ such that the dimensions
\[  \dim \, \HH{0}{X_t}{\OO_{X_t}(E_t)}^{\ge \sigma}\]
are constant for $t \in U$, where the group on the right denotes the subspace of $\HH{0}{X_t}{\OO_{X_t}(E_t)}$ consisting of sections having valuation $\ge \sigma$ with respect to $\nu_{Y_{\bullet, t}}$.
\end{lemma}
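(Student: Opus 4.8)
The plan is to reduce the statement to the semicontinuity theorem for cohomology, by producing a \emph{single} coherent sheaf $\mathscr{F}$ on $\mathscr{X}$, flat over a dense open of $T$, whose restriction to a general fibre $X_t$ is precisely the subsheaf $\OO_{X_t}(E_t)^{\ge\sigma}$ of Remark \ref{sheafification.remark}. Concretely: first relativize the iterated sheafification of Remark \ref{sheafification.remark} over $T$; then invoke generic flatness to shrink $T$ so that the construction commutes with restriction to fibres; then conclude by upper semicontinuity of $h^0$.

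First I would carry out the sheafification of Remark \ref{sheafification.remark} in the family. Write $\mathscr{L} = \OO_{\mathscr{X}}(\mathscr{E})$. Hypothesis (iii) says each $\mathscr{Y}_{i+1}$ is a Cartier divisor in $\mathscr{Y}_i$, so one can run the iterative procedure of that remark globally, with no passage to a neighbourhood of $\mathscr{Y}_d$ needed: starting from $\mathscr{L}^{\ge(\sigma_1)} = \mathscr{L}(-\sigma_1\mathscr{Y}_1)$ one obtains inductively coherent subsheaves $\mathscr{L}^{\ge(\sigma_1,\ldots,\sigma_k)}\subseteq\mathscr{L}$, each fitting into a short exact sequence
\[
0 \ \lra \ \mathscr{L}^{\ge(\sigma_1,\ldots,\sigma_{k+1})} \ \lra \ \mathscr{L}^{\ge(\sigma_1,\ldots,\sigma_k)} \ \lra \ \mathscr{Q}_{k+1} \ \lra \ 0,
\]
where $\mathscr{Q}_{k+1}$ is a coherent quotient supported on $\mathscr{Y}_k$. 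By generic flatness there is a nonempty open $U_1\subseteq T$ over which $\mathscr{X}$, all the $\mathscr{Y}_i$, all the subsheaves $\mathscr{L}^{\ge(\sigma_1,\ldots,\sigma_k)}$, and all the quotients $\mathscr{Q}_k$ are flat over $T$. Over $U_1$, restricting the above sequences to a fibre $X_t$ stays exact; moreover $\mathscr{Y}_i\cap X_t = Y_{i,t}$ by definition, and since $\mathrm{codim}_{X_t}Y_{i,t}=i$ by the admissibility hypothesis (ii), each twist $\OO_{\mathscr{Y}_{i-1}}(-a\mathscr{Y}_i)$ restricts on $X_t$ to $\OO_{Y_{i-1,t}}(-aY_{i,t})$. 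Comparing term by term with the construction of $\OO_{X_t}(E_t)^{\ge\sigma}$ applied on $X_t$ itself, an induction on $k$ gives
\[
\mathscr{L}^{\ge\sigma}\big|_{X_t} \ = \ \OO_{X_t}(E_t)^{\ge\sigma} \qquad\text{for all } t\in U_1.
\]

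Finally, $\mathscr{L}^{\ge\sigma}$ is coherent and flat over $U_1$, and $\pi$ is projective, so the semicontinuity theorem applies: the function
\[
t \ \longmapsto \ \dim\,\HH{0}{X_t}{\OO_{X_t}(E_t)}^{\ge\sigma} \ = \ h^0\!\big(X_t,\ \mathscr{L}^{\ge\sigma}|_{X_t}\big)
\]
is upper semicontinuous on $U_1$, hence attains its minimum value on a nonempty Zariski-open subset $U\subseteq U_1\subseteq T$, on which it is therefore constant. This $U$ is the one required. The hard part is the middle step — checking that the iterated sheafification commutes with restriction to the general fibre; this is exactly where (i)--(iii) and generic flatness are used in tandem, one verifying at each stage both that the relevant short exact sequence remains exact after restriction (flatness of $\mathscr{Q}_{k+1}$) and that the restricted line-bundle twists are the ones occurring in the fibrewise construction. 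Once that identification is in hand, the semicontinuity conclusion is routine.
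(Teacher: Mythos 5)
Your proposal is correct and follows essentially the same route as the paper: relativize the sheafification of Remark \ref{sheafification.remark} to get a coherent subsheaf $\mathscr{L}^{\ge\sigma}\subseteq\OO_{\mathscr{X}}(\mathscr{E})$ whose fibrewise restriction is $\OO_{X_t}(E_t)^{\ge\sigma}$, then conclude by semicontinuity. The only difference is cosmetic: the paper asserts flatness of $\mathscr{L}^{\ge\sigma}$ and compatibility with restriction for all $t\in T$ directly from the construction (using hypothesis (iii)), whereas you obtain them after shrinking $T$ by generic flatness, which is equally sufficient since the lemma only requires a nonempty open $U$.
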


\begin{proof}
Write $\mathscr{L} = \OO_{\mathscr{X}}(\mathscr{E})$ and $L_t = \mathscr{L} | X_t$. Viewing $\mathscr{Y}\bull$ as a partial flag on $\mathscr{X}$, denote by $\mathscr{L}^{\ge \sigma}\subseteq \mathscr{L}$ the subsheaf of $\mathscr{L}$ introduced in Remark \ref{sheafification.remark}. It follows from the construction that $\mathscr{L}^{\ge \sigma}$ is flat over $T$, and that 
\[
\mathscr{L}^{\ge \sigma} \otimes \OO_{X_t} \ = \ \big( L_t\big) ^{\ge \sigma}
\]
for all $t \in T$.\footnote{Thanks to our simplifying hypothesis (iii), the sheaves in question are computed globally on $\mathscr{X}$ and $X_t$ by the iterative procedure in Remark \ref{sheafification.remark}, without requiring recourse to equation \eqref{Close.Up.Sheaf.Eqn}.
}
 Since
\[ \HH{0}{X_t}{L_t^{\ge \sigma}} \ = \ \HH{0}{X_t}{L_t}^{\ge \sigma}, \]
the assertion of the Lemma follows from the semicontinuity theorem.\end{proof}

\begin{proof}[Proof of Theorem \ref{Constancy.Of.OB.Thm}] Fix $m \ge 0$, and consider the maps
\begin{equation}
\nu_{Y_{\bullet,t}} \, : \, \big( \HH{0}{X_t}{\OO_{X_t}(mD_t)} \, - \{ 0 \} \,   \big) 
 \ \lra \ \ZZ^d.
\notag
\end{equation}
It is enough to show that there is a non-empty open set $U_m $ such that the image of $\nu_{Y_{\bullet,t}}$ is independent of $t$ for $t \in U_m$, for then one can take  $B_m = T - U_m$. To this end, note first that there is an open set $U_m^\pr \subseteq T$  on which the dimension of the groups $\HH{0}{X_t}{\OO_X(mD_t)}$ is constant. Thus the images of $\nu_{Y_{\bullet, t}}$ for $t \in U_m^\pr$ -- which are ordered subsets of $\ZZ^d$ -- all have the same cardinality. Furthermore, it follows from the proof of Lemma \ref{boundedness}
 that these images all lie in a fixed finite subset of $\ZZ^d$. The Theorem then follows by applying the previous Lemma to the elements of this finite set.
 \end{proof}

 \subsection{Infinitesimal Okounkov Bodies} 
 \label{Infinitesimal.OB.Subsection}
 We now indicate how the results of the previous subsection lead to the possibility of eliminating the choice of a fixed global flag on $X$. The idea is to use infinitesimal data -- which automatically vary in families -- to get a flag on the blow-up of $X$ at a very general point. 
 
 As usual, let $X$ be an irreducible projective variety of dimension $d$. Fix a smooth point $x \in X$, as well as a complete flag $V\bull$ of subspaces
 \[
 T_xX \ = \ V_0 \ \supseteq \ V_1 \ \supseteq \ V_2 \ \supseteq \ \ldots \ \supseteq \ V_{d-1} \ \supseteq \ \{ 0 \}
 \] 
 in the tangent space to $X$ at $x$. Consider the blowing up 
 \[  \mu : X^\pr \, = \, \Bl_x(X) \lra X 
 \] of $X$ at $x$, with exceptional divisor $E$. The projectivizations of the $V_i$ give rise to a flag $F\bull = F(x; V\bull)$ in $X^\pr$:
 \[
 X^\pr \ \supseteq \ E = \PPP(T_xX) \ \supseteq \ \PPP(V_1) \  \supseteq \ \PPP(V_2) \ \supseteq \ \ldots \ \PPP(V_{d-1})  \, = \, \{ \text{pt} \}.
 \]
 
 On the other hand, let $D$ be any divisor on $X$, and write $D^\pr = \mu^* D$.  Then
 \[ \HH{0}{X}{\OO_X(mD)} = \HH{0}{X^\pr}{\OO_{X^\pr}(mD^\pr)} \]
 for all $m$. Therefore the choice of a flag $Y^\pr\bull$ on $X^\pr$ determines a valuation also on sections of $D$, and we write $\Delta_{Y^\pr \bull}(D)$ for the corresponding convex body. In other words,  
 \[
 \Delta_{Y^\pr \bull} (D) \ =_{\text{def}} \ \Delta_{Y^\pr\bull}(D^\pr),
 \]
 the object on the right being constructed on $X^\pr$.
 In particular, the choice of a flag $F\bull$ as above gives rise to a convex body $\Delta_{F\bull}(D) \subseteq \RR^d$. As in \S \ref{Glob.Ok.Body.Subsection}, these occur as the fibres of closed convex cones 
 \[  \Delta_{F\bull}(X) \subseteq \ \RR^d \times N^1(X)_{\RR}.
\]
 
 \begin{proposition} \label{Generic.Infinitesimal.Body}
 Let $D$ be any big divisor on $X$. Then the corresponding Okounkov bodies
 \[
 \Delta_{F(x, V\bull) } (D) \ \subseteq \ \RR^d 
 \]
 all coincide for a very general choice of $x \in X$ and the flag $V\bull$. The analogous statement holds for the global bodies $\Delta_{(F, V\bull)}(X)$. 
  \end{proposition}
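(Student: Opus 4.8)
The plan is to exhibit every body $\Delta_{F(x;V\bull)}(D)$, as $(x,V\bull)$ varies, among the Okounkov bodies attached to the fibres of one flat family, and then to invoke Theorem \ref{Constancy.Of.OB.Thm}. First I would take the parameter space to be the complete flag bundle $q : T \to X_{\tn{sm}}$ of the tangent bundle over the smooth locus $X_{\tn{sm}} \subseteq X$, so that a point of $T$ is precisely a pair $(x,V\bull)$ as in \S\ref{Infinitesimal.OB.Subsection}; since $X$ is irreducible, $X_{\tn{sm}}$ and hence $T$ are irreducible (and smooth over $\KK$). For the universal family, let $\mathscr{S} \subseteq X \times T$ be the graph of the composite $T \overset{q}\lra X_{\tn{sm}} \hookrightarrow X$: it is closed ($X$ being separated) and, being a section of the smooth morphism $X_{\tn{sm}} \times T \to T$, it is regularly embedded in $X \times T$ of codimension $d$. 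Set
\[ \mu_T \, : \, \mathscr{X} \ = \ \Bl_{\mathscr{S}}(X\times T) \ \lra \ X \times T, \qquad \pi \ = \ \tn{pr}_T \circ \mu_T \, : \, \mathscr{X} \lra T, \]
with exceptional divisor $\mathscr{E}$. Because the blow-up of a regular embedding commutes with base change, the fibre of $\pi$ over $t = (x,V\bull)$ is $\Bl_x(X) = X^\pr$ with $\mathscr{E}_t = E = \PPP(T_xX)$; hence $\pi$ is surjective with integral fibres, it is projective (blow-up of $X \times T \to T$, which is projective since $X$ is), and it is flat: on $\mathscr{X}\setminus\mathscr{E}\cong (X\times T)\setminus\mathscr{S}$ flatness is inherited from $X\times T\to T$, while near $\mathscr{E}$ the total space is smooth over $\KK$ and all fibres have dimension $d$, so miracle flatness applies.

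Next I would build the relative flag. Over $T$ there is the tautological flag of subbundles $\mathscr{V}\bull$ of $q^*T_{X_{\tn{sm}}}$, and I put
\[ \mathscr{Y}_1 \ = \ \mathscr{E} \ = \ \PPP\big(q^*T_{X_{\tn{sm}}}\big), \qquad \mathscr{Y}_{i+1} \ = \ \PPP(\mathscr{V}_i) \quad (1 \le i \le d-1). \]
Then $\mathscr{Y}_i$ has codimension $i$ in $\mathscr{X}$, is smooth --- hence flat --- and surjective over $T$, and $\mathscr{Y}_{i+1}$ is a Cartier divisor in $\mathscr{Y}_i$: it is cut out in the relative projective space $\PPP(\mathscr{V}_{i-1})$ by a section vanishing along $\PPP(\mathscr{V}_i)$ (locally a linear form with kernel $\mathscr{V}_i$), and likewise $\mathscr{E}$ is Cartier in $\mathscr{X}$ since $\mathscr{S}$ is a regular embedding. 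The fibre $\mathscr{Y}\bulll{t}$ is exactly the flag $F(x;V\bull)$, which is admissible on $X^\pr$. Thus $(\pi,\mathscr{Y}\bull)$ satisfies the standing hypotheses (i)--(iii) of Theorem \ref{Constancy.Of.OB.Thm}.

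It remains to supply the divisor and run the theorem. Take $\mathscr{D} = \mu_T^*\tn{pr}_X^*D$. The one subtlety is that $\mathscr{D}$ need not be flat over $T$ along the locus where $x \in \Supp(D)$ (there $\mu^*D$ picks up a component supported on $E$), so I would restrict to the dense open set $T^\circ = T \setminus q^{-1}(\Supp D)$ --- dense because $D$ is big, hence nonzero, so $\Supp(D)$ is a proper closed subset of $X$. Over $T^\circ$ the center $\mathscr{S}$ is disjoint from $\tn{pr}_X^{-1}(\Supp D)$, so $\mathscr{D}|_{T^\circ}$ is just the pullback of $D \times T^\circ$, hence flat over $T^\circ$ with fibres isomorphic to $D$, and $D^\pr_t = \mu^*D$ is big on $X^\pr_t$ for every $t$. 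Theorem \ref{Constancy.Of.OB.Thm} then yields a countable union $\bigcup_m B_m$ of proper Zariski-closed subsets of $T^\circ$ outside of which $\Delta_{\mathscr{Y}\bulll{t}}(X^\pr_t; D^\pr_t) = \Delta_{F(x;V\bull)}(D)$ is independent of $t$; replacing each $B_m$ by its closure in $T$ and adjoining $q^{-1}(\Supp D)$ gives a countable union of proper closed subsets of the irreducible variety $T$ off which $\Delta_{F(x;V\bull)}(D)$ is constant, which is the first assertion. For the global cones $\Delta_{(F,V\bull)}(X) \subseteq \RR^d \times N^1(X)_\RR$ I would argue similarly: either rerun the proof of Theorem \ref{Constancy.Of.OB.Thm} in the multigraded setting of \S\ref{Multi-graded.Lin.Series.Subsection} --- fixing divisors $D_1,\ldots,D_r$ spanning $N^1(X)$, whose pullbacks together with $[E]$ trivialize $N^1$ of each fibre, and applying the semicontinuity step simultaneously to the relevant multigraded pieces $\HH{0}{X^\pr_t}{\OO_{X^\pr_t}(m_1 D^\pr_1 + \cdots + m_r D^\pr_r)}$ twisted by the flag as in Remark \ref{sheafification.remark} --- or deduce it from the body statement: the set of big rational classes in $N^1(X)_\QQ$ is countable, each fibre $\Delta_{(F,V\bull)}(X)_\xi = \Delta_{F(x;V\bull)}(\xi)$ is constant off a countable union of proper closed subsets (apply the body statement to a divisor representing $\xi$, using Proposition \ref{Num.Nature.Homogenity}), and a global Okounkov cone is recovered from its fibres over big rational classes (Theorem \ref{Global.Okounkov.Body.Theorem}), so off the union of all these exceptional sets the cones coincide.

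The part I expect to demand the most care is organizational rather than conceptual: verifying that $\mathscr{X} = \Bl_{\mathscr{S}}(X\times T)$ and the relative flag $\mathscr{Y}\bull$ genuinely fulfil every hypothesis of Theorem \ref{Constancy.Of.OB.Thm} --- flatness and projectivity of $\pi$, flatness and the correct codimension of the $\mathscr{Y}_i$, and the Cartier property of each $\mathscr{Y}_{i+1}\subseteq\mathscr{Y}_i$ --- together with the minor but real nuisance that $\mathscr{D}$ is flat over $T$ only after deleting the locus $\{x \in \Supp D\}$. No idea beyond Theorem \ref{Constancy.Of.OB.Thm} itself should be required.
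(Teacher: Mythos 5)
Your proof is correct and follows essentially the same route as the paper: the paper's own argument simply observes that the data move in an algebraic family parametrized by (a suitable open subset of) the full flag manifold of the tangent bundle over the smooth locus and invokes Theorem \ref{Constancy.Of.OB.Thm}, then handles countably many big divisors simultaneously to get the global statement. Your explicit construction of the family $\Bl_{\mathscr{S}}(X\times T)\to T$ with the tautological relative flag, the flatness checks, and the deletion of the locus $x\in\Supp(D)$ are exactly the details hidden in the paper's phrase ``suitable open subset,'' so there is nothing to add.
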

  
  \begin{definition}
  We denote by \[ \Delta^\pr(D) \subseteq \RR^d   \ \ , \ \  \Delta^{\pr}(X) \, \subseteq \, \RR^d \times N^1(X)_{\RR} \]
  the sets $ \Delta_{F(x, V\bull) }(D)$ and  $ \Delta_{F(x, V\bull) }(X)$ for very general choices of $F(x, V\bull)$. \qed
    \end{definition}
 
 \begin{proof}[Proof of Proposition \ref{Generic.Infinitesimal.Body}] The first assertion  follows immediately from Theorem \ref{Constancy.Of.OB.Thm} since the data at hand move in an  algebraic familiy parametrized by a suitable open subset of the manifold of full flags in the tangent bundle to the smooth locus of $X$. We can assume moreover that the statement holds simultaneously for countably many big divisors $D$ on $X$, and then the assertion for the global Okounkov bodies follows. 
 \end{proof}
 
 \begin{remark} It is interesting to ask what geometric information these convex bodies encode.
 One can show using Theorem \ref{Slicing.Theorem}
and the results of \cite{DeFKL} that  $\Delta^\pr(D)$ determines the Seshadri constant $\eps(D;x)$ of an ample divisor $D$ at a very general point of $x$. The well-known difficulty of calculating these invariants reinforces our own experience that the convex bodies $\Delta^\pr(D)$ are in general very hard to compute. (See   \cite[Chapter 5]{PAG} for an overview of Seshadri constants.)  \end{remark}

 \section{Examples} \label{Examples.Section}
 This section is devoted to some examples and computations. We start with toric varieties. In the second subsection we describe completely the Okounkov body of a big divisor on a smooth surface.  Finally, in \S \ref{Round.Cone.Ex} we give an example to show that $\Delta(D)$ need not be polyhedral.  For simplicity, we work here over the complex numbers $\CC$.
 
 \subsection{Toric Varieties}
 We show that on a smooth toric variety, the Okounkov construction recovers the familiar correspondence between divisor classes and lattice polytopes.
 
We start by fixing some notation.  Let $X$ be a $d$-dimensional smooth projective toric variety,  corresponding to a fan  in $N_{\RR}\simeq\RR^d$, so that the torus $T=N\otimes_{\ZZ}\KK^*$ acts on $X$. Let 
$D$ be a $T$-invariant divisor on $X$
(see \S 3.4 in \cite{Fulton}  for  notation and basic facts about divisors on toric varieties). 
Every lattice point in the dual space $M_{\RR} = N_{\RR}^*$ determines a rational function
$\chi^u$ on $X$. 
One associates to $D$ a polytope $P_D$ in 
$M_{\RR}$, such that the lattice points in $P_D$ are those $u\in M$ with
$D+{\rm div}(\chi^u)\geq 0$. In this way, $P_D\cap M$ gives a basis of isotypical sections
 of  $H^0(X,\cO(D))$.
If we replace $D$ by a linearly equivalent divisor, then the polytope changes accordingly: 
$P_{D+{\rm div}(\chi^w)}=P_D-w$.
Moreover, we have $P_{mD}=mP_D$
for every positive integer $m$, which allows us to define in the obvious way $P_D$
when $D$ is an invariant $\QQ$-divisor. 

Suppose that the flag $Y_{\bull}$ consists of $T$-invariant subvarieties of $X$.
Since $X$ is smooth, we can order the prime $T$-invariant divisors $D_1,\ldots,D_s$ of $X$ such that
$Y_i=D_1\cap\ldots\cap D_i$, for $i\leq d$. 
If we denote by $v_i$ the primitive generator of the ray corresponding to $D_i$, then 
$v_1,\ldots,v_d$
form a basis of $N$, and they generate a maximal cone $\sigma$ in the fan of $X$.
We get an isomorphism $\ZZ^d\simeq N$, and a dual isomorphism $\phi\colon M\to\ZZ^d$,
 given by $\phi(u)=(\langle u,v_i\rangle)_{1\leq i\leq d}$, which in turn determines a linear map
$\phi_\RR:M_{\RR} \overset{\cong} \lra \RR^d.$
 
 On every smooth toric variety there is an exact sequence
 $$0\lra M\overset{\iota}\lra\ZZ^s\overset{q}\lra{\rm Pic}(X)\lra 0,$$
 and ${\rm Pic}(X) = N^1(X)$ has no torsion.
If we identify $\ZZ^s$ with the group of $T$-invariant divisors, then $q$ is the map taking a divisor to its class,
and $ \iota(u)={\rm div}(\chi^u)=\sum_{i=1}^s\langle u,v_i\rangle D_i.$
The above choice of a basis for $N$ induces a splitting of this exact sequence, and
consequently an isomorphism
\[ \psi\, \colon \, \ZZ^d\times {\rm Pic}(X)\lra \ZZ^s,\] such that 
$\psi^{-1}(D)=(p(D), q(D))$, $p\colon\ZZ^s\lra\ZZ^d$
being the projection onto the first $d$ components. 

\begin{proposition} \label{Toric.Example.Prop}
Let $X$ be a smooth projective toric variety, and let $Y_{\bull}$ be a flag
of invariant subvarieties chosen as above. 
\begin{enumerate}
\item[(i).] Given any big line bundle $L$ on $X$, let $D$ be the unique $T$-invariant divisor such that
$L\simeq\cO_X(D)$ and $D\vert_{U_{\sigma}}=0$. Then\[ \Delta(L)\ =\ \phi_{\RR}(P_D). \]
\lbl
\item[(ii).] The global Okounkov body $\Delta(X)$ is the inverse image under the isomorphism
 \[ \psi_{\RR} \ : \ \RR^d \times N^1(X)_{\RR} \overset{\cong} \lra \RR^s \]
of the non-negative orthant $\RR_+^s \subseteq \RR^s$.
\end{enumerate}
\end{proposition}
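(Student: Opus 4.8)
The plan is to prove (i) by computing $\nu_{Y\bull}$ directly on torus-invariant sections, and then to deduce (ii) by identifying the multigraded semigroup $\Gamma(X)$ with the lattice points of the orthant under $\psi$. Recall that $\HH{0}{X}{\OO_X(D)} = \bigoplus_{u\in P_D\cap M}\KK\cdot\chi^u$, and that the zero divisor of $\chi^u$ viewed as a section of $\OO_X(D) = \OO_X\big(\sum_i a_i\bar D_i\big)$ (I write $\bar D_1,\dots,\bar D_s$ for the prime invariant divisors, freeing $D_1,\dots,D_r$ for later) is $D+\divisor(\chi^u)=\sum_i(a_i+\langle u,v_i\rangle)\bar D_i$. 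The crux is the identity
\[ \nu_{Y\bull}(\chi^u) \ = \ \phi(u) \ = \ \big(\langle u,v_1\rangle,\dots,\langle u,v_d\rangle\big),\qquad u\in P_D\cap M. \]
One checks this inside the affine chart $U_\sigma$, which contains the point $Y_d$: since $\sigma$ is smooth, $U_\sigma\cong\mathbf{A}^d$ with coordinates $x_j=\chi^{u_j}$, where $u_1,\dots,u_d$ is the basis of $M$ dual to $v_1,\dots,v_d$; one has $Y_i\cap U_\sigma=\{x_1=\dots=x_i=0\}$; and since $D\vert_{U_\sigma}=0$ the section $\chi^u$ is, in the resulting trivialization, the monomial $\prod_j x_j^{\langle u,v_j\rangle}$. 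Carrying out the iterated recipe defining $\nu_{Y\bull}$ in these coordinates — strip off $x_1^{\langle u,v_1\rangle}$, restrict to $\{x_1=0\}$, strip off $x_2^{\langle u,v_2\rangle}$, and so on — produces exactly $\phi(u)$. The vectors $\phi(u)$ are pairwise distinct since $\phi$ is injective, and the monomials furnish $\dim\HH{0}{X}{\OO_X(mD)}$ of them in $\HH{0}{X}{\OO_X(mD)}$, so Lemma~\ref{No.of.Val.Vectors} shows they exhaust $\Gamma(L)_m$. Hence $\bigcup_m\tfrac1m\Gamma(L)_m=\phi_\RR(P_D)\cap\QQ^d$; since $L$ is big, $P_D$ is a full-dimensional rational polytope, so $\phi_\RR(P_D)$ is the closed convex hull of its rational points, and \eqref{Alt.Descr.Okounkov.Body} gives $\Delta(L)=\phi_\RR(P_D)$.

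\textbf{Part (ii): reduction to a statement about $\Gamma(X)$.} Fix, as in \S\ref{Glob.Ok.Body.Subsection}, divisors $D_1,\dots,D_r$ whose classes form a $\ZZ$-basis of $N^1(X)$ with respect to which every effective class is a nonnegative combination; this identifies $N^1(X)=\ZZ^r$ (with $r=s-d$) and makes $\psi\colon\ZZ^d\times\ZZ^r\overset{\cong}\lra\ZZ^s$. Observe that $\nu_{Y\bull}$ is an invariant of a section together with the line bundle carrying it — it is read off from the zero divisor of the section, which is unchanged when the section is transported along an isomorphism of line bundles. I claim
\[ \psi\big(\Gamma(X)\big) \ = \ \NN^s. \]
Granting this, $\Delta(X)=\Sigma\big(\Gamma(X)\big)=\psi_\RR^{-1}\big(\Sigma(\NN^s)\big)=\psi_\RR^{-1}(\RR^s_+)$, since the linear isomorphism $\psi_\RR$ carries the closed convex cone spanned by a set of lattice points onto the closed convex cone spanned by its image, and $\Sigma(\NN^s)=\RR^s_+$; this is exactly assertion (ii).

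\textbf{Proof of $\psi(\Gamma(X))=\NN^s$.} For $\subseteq$: an element of $\Gamma(X)$ has the form $(\nu_{Y\bull}(s),m)$ with $0\ne s\in\HH{0}{X}{\OO_X(m_1D_1+\dots+m_rD_r)}$ and $m=(m_1,\dots,m_r)\in\NN^r$. Identify this line bundle with $\OO_X(D')$, where $D'$ is the unique invariant divisor of class $\sum m_j[D_j]$ with $D'\vert_{U_\sigma}=0$. Writing $s=\sum\lambda_u\chi^u$ with $u\in P_{D'}\cap M$, part (i) and Lemma~\ref{No.of.Val.Vectors} give $\nu_{Y\bull}(s)=\phi(u_0)$ for some $u_0\in P_{D'}\cap M$; then $\psi(\nu_{Y\bull}(s),m)$ is the coefficient vector of the effective divisor $D'+\divisor(\chi^{u_0})$ — this vector has class $[D']$ and first $d$ coordinates $\phi(u_0)$ — and so lies in $\NN^s$. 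For $\supseteq$: given $b\in\NN^s$, the effective invariant divisor $\sum_i b_i\bar D_i$ has class $q(b)$ equal to a nonnegative combination $m\in\NN^r$ of the $[D_j]$; let $D'$ be the normalized ($D'\vert_{U_\sigma}=0$) representative of $q(b)$ and pick $w\in M$ with $D'=\sum_i b_i\bar D_i+\divisor(\chi^w)$. The coefficient vector of $D'+\divisor(\chi^{-w})$ is exactly $b\in\NN^s$, so $-w\in P_{D'}\cap M$, and by (i) $\nu_{Y\bull}(\chi^{-w})=\phi(-w)=p(b)$. Transporting $\chi^{-w}$ to a nonzero section of $\OO_X(m_1D_1+\dots+m_rD_r)$ exhibits $(p(b),q(b))=\psi^{-1}(b)$ as an element of $\Gamma(X)$.

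\textbf{Main obstacle.} Apart from the valuation identity $\nu_{Y\bull}(\chi^u)=\phi(u)$ in (i), everything is bookkeeping with the Cox-type exact sequence $0\to M\to\ZZ^s\to\mathrm{Pic}(X)\to 0$. The one point in that identity requiring care is the justification for running the iterated construction of $\nu_{Y\bull}$ entirely inside the single chart $U_\sigma$; this is legitimate because $U_\sigma$ is an open neighborhood of $Y_d$ through which the whole flag passes, so each order of vanishing along $Y_{i+1}\subseteq Y_i$ may be computed there.
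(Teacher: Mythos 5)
Your proof is correct and takes essentially the same route as the paper's: the heart of (i) is the identity $\nu_{Y\bull}(\chi^u)=\phi(u)$ (which the paper gets from the simple normal crossings divisor $\sum_i D_i$ rather than explicit coordinates on $U_\sigma$, an equivalent computation), combined with Lemma \ref{No.of.Val.Vectors} to see that the monomial sections exhaust all valuation vectors. For (ii) the paper proves $\Phi(S)=\NN^s$ for its auxiliary semigroup $S\subseteq M\times{\rm Pic}(X)$ of pairs $(u,L)$ with $u\in P_D$, which is the same Cox-sequence bookkeeping as your $\psi(\Gamma(X))=\NN^s$; your version argues directly on the multigraded semigroup and so passes to the cone statement immediately, a cosmetic rather than substantive difference.
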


\begin{remark}
The statement in (ii) was pointed out to us by Diane Maclagan. \qed
\end{remark}
\begin{proof}
Since $X$ is smooth, the divisor
 $\sum_{i=1}^s D_i$ has simple normal crossings.
It follows that if $s\in\HH{0}{X}{L}$ is a section with zero locus $\sum_{i=1}^sa_iD_i$, then 
\[ \nu_{Y_{\bull}}(s)\ =\ (a_1,\ldots,a_d). \]
Now consider a lattice point $u\in P_D$. Then the zero-locus of the corresponding section $\chi^u\in \HH{0}{X}{\OO_X(D)} $ is
$D+\sum_{i=1}^r\langle u,v_i\rangle D_i$. By assumption, $D\vert_{U_{\sigma}}=0$, hence
$\nu_{Y_{\bull}}(\chi^u)=\phi(u)$. Since $\phi$ is injective, and we have precisely 
$h^0(L)$ lattice points in $P_D\cap M$, it follows that  \[ \Image \left( \, \big( \HH{0}{X}{L} - \{0 \} \big) \overset{\nu_{Y\bull}}\lra \ZZ^d  \right) \ = \ \phi(P_D\cap M). \]
But now suppose that $m$ is any positive integer such that $mP_D$ has all its vertices in the lattice. In this case,
the convex hull of $\frac{1}{m}\phi(mP_D\cap M)$ is equal to $\phi_{\RR}(P_D)$. In particular, it is independent of $m$, and we conclude that $\phi_{\RR}(P_D)=\Delta(D)$. 
This shows i), and in fact, we get the same assertion for every element in ${\rm Pic}(X)_{\QQ}$.

Consider now the semigroup $S$ in $M\times {\rm Pic}(X)$ consisting of pairs $(u,L)$ with the property
 that if $D$ is the unique $T$-invariant divisor with $\cO(D)\simeq L$ and $D\vert_{U_{\sigma}}=0$,
 then $u\in P_D$. It follows from what we showed so far that in order to prove ii), it is enough to show
 that $\Phi(S)= \NN^s$, where $\Phi\colon M\times{\rm Pic}(X)\to\ZZ^s$ is the isomorphism $\psi\circ (\phi, {\rm Id})$.  If we identify $\ZZ^d$ with the group of $T$-invariant divisors, then $\Phi^{-1}(E)=(u,[E])$,
 where $u\in M$ is such that $E\vert_{U_{\sigma}}={\rm div}(\chi^u)\vert_{U_{\sigma}}$. 
 We have $(u,[E])\in S$ if and only if $u\in P_{E-{\rm div}(\chi^u)}=u+P_E$. This is the case if and only if
 $0\in P_E$, that is, $E$ is effective. Hence $\Phi^{-1}(E)\in S$ if and only if $E\in\NN^s$.
\end{proof}

\subsection{Surfaces} \label{Surface.Subsection}
In this subsection, we use the results of \S \ref{Slices.Subsection} to 
describe the Okounkov body of any big divisor on a surface.

Let $X$ be a smooth complex projective surface, and let $D$ be a big $\QQ$-divisor on $X$. Recall that any such divisor has a  \textit{Zariski decomposition}     $D=P+N$,  where $P$ (the positive part of $D$) is a nef $\QQ$-divisor, and $N $ 
(the negative part of $D$)  is an effective $\QQ$-divisor, with the property
such that whenever $mD$ and $mN$ are integral divisors, multiplication by the section defining $mN$ induces an isomorphism
\begin{equation} \label{Zariski.Decomp.Equation}
H^0(X,\cO(mP))\overset{\cong} \lra H^0(X,\cO(D)).
\end{equation}
The key point for us is that inside the big cone the Zariski decomposition varies in a piecewise linear way. More precisely, there are disjoint open convex subcones ${\mathcal C}_i \subseteq \tn{Big}(X)$ of the big cone with
the following properties:
\begin{enumerate}
\item[i)] For every index $i$ there are irreducible curves $T_1,\ldots,T_r$ such that for all big divisors
$D\in \overline{{\mathcal C}_i}$, the negative part of $D$ is supported on $T_1\cup\ldots\cup T_r$, and the map taking $D$ to its negative part is linear on the intersection of $\overline{{\mathcal C}_i}$
with the big cone.
\lbl
\item[ii)] Around every point in the big cone, each $\overline{{\mathcal C}_i}$ is rational and polyhedral,
and there are only finitely many such cones.  
\end{enumerate}
For a proof and details, see \cite{BKS} or 
\cite[Example~3.7]{ELMNP3}. It follows from these linearity properties that the Zariski decomposition $D = P + N$ is naturally defined for any big $\RR$-divisor. 

Fix henceforth an admissible flag
\begin{equation} \label{Flag.For.Sf} X \ \supseteq \ C \ \supseteq \ \{x\}, \end{equation}
on $X$, where $C \subseteq X$ is an irreducible curve and $x \in C$ is a smooth point. The first thing to note is that the Zariski decomposition of a divisor $D$ determines the Okounkov body of the restricted linear series  of $D$ from $X$ to $C$:\begin{lemma} \label{Sfs.Ex.Lemma}
Let $D$ be a big $\QQ$-divisor on $X$ with Zariski decomposition $D = P + N$. Assume  that $C \not \subseteq \BBB_+(D)$, so that in particular  $C \not \subset \Supp(N)$, and set
\[ \alpha(D) \ = \ \ord_x \big( N_{|C} \big ) \ \ , \ \ \beta(D) \ = \ \ord_x \big( N_{|C} \big ) \ + \ \big( C \cdot P \big). 
\]
Then the Okounkov body of the restricted complete linear series of $D$ is the interval
\[
\Delta_{X|C}(D) \ = \ [\, \alpha(D) \ , \    \beta(D) \, ] \ \subseteq \ \RR.\]
\end{lemma}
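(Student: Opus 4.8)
The plan is to compute $\Delta_{X|C}(D)$ directly from its definition as $\mathrm{closedconvexhull}\big(\bigcup_m \tfrac1m\Gamma(W\bull)_m\big)$ where $W_m = \HH{0}{X|C}{\OO_X(mD)}$ and $\nu_{Y\bull|C} = \ord_x$ is the single-point valuation on $C$ from Example \ref{OB.Curves.Ex}. Since we are in dimension one on $C$, each $\Gamma(W\bull)_m \subseteq \NN$ is a finite set of integers, and $\Delta_{X|C}(D)$ is the interval $[\, \liminf \tfrac1m \min \Gamma(W\bull)_m, \ \limsup \tfrac1m \max\Gamma(W\bull)_m\,]$. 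So everything reduces to identifying the two endpoints, and by Proposition \ref{Num.Nature.Homogenity}-style scaling I may clear denominators and assume $D$, $P$, $N$ are all integral.

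For the left endpoint $\alpha(D)$: I would use the Zariski isomorphism \eqref{Zariski.Decomp.Equation}, which identifies $\HH{0}{X}{\OO_X(mP)}$ with $\HH{0}{X}{\OO_X(mD)}$ via multiplication by the section cutting out $mN$. Restricting to $C$ (legitimate since $C\not\subseteq\Supp N$, so $N_{|C}$ is an honest effective divisor on $C$ of degree $\ord_x(N_{|C}) + (\text{rest})$), every section in $W_m$ vanishes to order at least $m\,\ord_x(N_{|C})$ at $x$, coming from the factor $mN_{|C}$; hence $\min\Gamma(W\bull)_m \ge m\,\ord_x(N_{|C}) = m\alpha(D)$. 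For the reverse: because $P$ is nef and $C\not\subseteq\BBB_+(D)$, one has $C\not\subseteq\BBB_+(P)$ and the restricted linear series of $P$ to $C$ grows like a line bundle of degree $(C\cdot P)$ with no base point at a very general — here any — choice; more precisely $\HH{0}{X|C}{\OO_X(mP)}$ contains sections not vanishing at $x$ for $m\gg0$ (this is where I invoke that $C\not\subseteq\BBB_+(P)$, so $\BBB_+(P)\cap C$ is finite and one can take $m$ large enough that the restricted series separates $x$ from a fixed finite bad set). Multiplying such a section by the $mN$-section gives an element of $W_m$ with $\ord_x$ exactly $m\,\ord_x(N_{|C})$, so $\min \Gamma(W\bull)_m = m\alpha(D)$ for $m\gg0$.

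For the right endpoint $\beta(D)$: after factoring off $mN_{|C}$ as above, what remains is to compute $\limsup\tfrac1m\max\big(\Image(\HH{0}{X|C}{\OO_X(mP)}\to\ZZ,\ \ord_x)\big)$, i.e. the top of the Okounkov body $\Delta_{X|C}(P)$, and show it equals $(C\cdot P)$. Upper bound: a section of $\OO_X(mP)$ restricting nonzero to $C$ gives a nonzero section of $\OO_C(mP_{|C})$, a line bundle of degree $m(C\cdot P)$, so its order of vanishing at $x$ is at most $m(C\cdot P)$. Lower bound: this is the substantive point and the main obstacle — I need $\ord_x$ of some restricted section of $mP$ to be close to $m(C\cdot P)$. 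This is exactly the statement $\vol_{X|C}(P) = (C\cdot P)$, the restricted-volume computation for nef divisors on a surface, which combined with the dimension-one fact that $\vol_{\RR}(\Delta_{X|C}(P)) = \vol_{X|C}(P)$ (from \eqref{OBVol.Compute.Restr.Vol}) forces the interval $\Delta_{X|C}(P)$ to have length $(C\cdot P)$; together with the left endpoint being $0$ (by the $\alpha$-argument applied to $P$, whose negative part is zero) this gives $\Delta_{X|C}(P) = [0, (C\cdot P)]$. I would establish $\vol_{X|C}(P)=(C\cdot P)$ by using asymptotic Riemann–Roch for the restricted sections: $\hh{0}{X|C}{\OO_X(mP)} = \hh{0}{X}{\OO_X(mP)} - \hh{0}{X}{\OO_X(mP - C)}$ up to lower-order terms (using $\HH{1}$-vanishing for $mP$ and $mP-C$ provided by Fujita vanishing, since $P$ is nef), and both $h^0$'s are computed by $\vol_X(P) = P^2$ via Theorem \ref{Vol.eq.Vol}, so the difference is $m^2(P^2 - (P-C)\cdot P)/2 + O(m) = m^2 (C\cdot P)/2 + O(m)$, whence $\vol_{X|C}(P) = (C\cdot P)$. (One must be slightly careful that $mP-C$ need not be nef, but bigness and the relevant vanishing still hold for $m\gg 0$; alternatively cite \cite{ELMNP3, Example~3.7} directly.)

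Finally I would assemble: $\Delta_{X|C}(D)$ is obtained from $\Delta_{X|C}(P) = [0,(C\cdot P)]$ by translation by $\ord_x(N_{|C}) = \alpha(D)$ (multiplication by the $N$-section shifts the valuation by this constant, and by \eqref{Zariski.Decomp.Equation} this is a bijection on all graded pieces), giving $\Delta_{X|C}(D) = [\alpha(D),\ \alpha(D)+(C\cdot P)] = [\alpha(D),\beta(D)]$. The passage from $\QQ$-divisors back from the integral case is harmless since both endpoints and the construction scale linearly. The main obstacle, as noted, is the lower bound $\vol_{X|C}(P)\ge(C\cdot P)$ — i.e. exhibiting restricted sections of $mP$ with near-maximal vanishing at $x$ — which I handle via the Riemann–Roch / volume bookkeeping above rather than by constructing sections by hand.
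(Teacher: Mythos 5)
Your overall strategy --- factor off the canonical section of $mN$ via the Zariski isomorphism \eqref{Zariski.Decomp.Equation}, trap the valuation vectors of the restricted series inside $[m\alpha(D),\,m\beta(D)]$ using the degree of the relevant line bundle on $C$, and then force equality of the interval by comparing its length with the restricted volume via \eqref{OBVol.Compute.Restr.Vol} --- is exactly the route the paper takes: its (sketched) proof simply quotes $\vol_{X|C}(D)=(C\cdot P)$ from \cite{ELMNP3} and combines it with \eqref{Zariski.Decomp.Equation} and Example \ref{OB.Curves.Ex}. So, modulo your fallback of citing \cite{ELMNP3} directly, you are giving the intended argument.

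The genuine gap is in your attempt to reprove $\vol_{X|C}(P)=(C\cdot P)$ by Riemann--Roch. The bookkeeping is at the wrong order: $\dim W_m=h^0(mP)-h^0(mP-C)$ has main term $m\,(C\cdot P)$, which is \emph{linear} in $m$ (your displayed ``$m^2\bigl(P^2-(P-C)\cdot P\bigr)/2$'' conflates the surface and curve normalizations; note $(mP)^2-(mP-C)^2=2m(P\cdot C)-C^2$). Hence the error terms must be controlled to $o(m)$, not $O(m)$. But $P$ is only nef and big: Fujita vanishing does not apply without an ample twist, and $h^1(X,mP)$, $h^1(X,mP-C)$ can genuinely grow linearly in $m$ for nef and big divisors, i.e.\ at the same order as the main term, so ``bigness and the relevant vanishing still hold for $m\gg0$'' is not available and the naive computation does not give the lower bound $\vol_{X|C}(P)\ge C\cdot P$. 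That lower bound is precisely the content of the \cite{ELMNP3} formula, so it should simply be cited, as the paper does. Two smaller points: your claim that the restricted series of $mP$ contains sections not vanishing at $x$ for $m\gg0$ is unjustified --- the hypothesis only says $C\not\subseteq\BBB_+(D)$, and the point $x$ itself may well lie in $\BBB_+(D)$ or in the stable base locus of $P$ --- but it is also unnecessary, since the containment $\Delta_{X|C}(D)\subseteq[\alpha(D),\beta(D)]$ together with length $=C\cdot P$ already pins down both endpoints; and passing to $P$ in order to invoke $C\not\subseteq\BBB_+(P)$ and apply \eqref{OBVol.Compute.Restr.Vol} to $P$ requires the inclusion $\BBB_+(P)\subseteq\BBB_+(D)$, which on a surface follows from the Zariski-chamber description but is an extra unproved step; it is cleaner to run the squeeze for $D$ itself, where Lemma \ref{Restr.Lin.Series.Satisfies.Cond.C} applies verbatim.
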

\begin{proof}[Sketch of Proof] Recalling  from \cite{ELMNP3}  that $\vol_{X|C}(D) = \big( C \cdot P \big) $, this follows easily from \eqref{Zariski.Decomp.Equation} and Example \ref{OB.Curves.Ex}. 
\end{proof}

Consider  now   a big $\QQ$-divisor $D$, and write
\[
\mu = \mu(D; C) \ = \ \sup \{ \, s > 0 \mid D - sC \text{ is big } \}.
\]
\begin{theorem}\label{surface_example}
With the above notation, there are continuous functions 
\[
\alpha \ , \ \beta\, : \, [a, \mu] \lra \RR_+
\]
for some $0 \le a \le \mu$, with $\alpha$ convex, $\beta$ concave, and  $\alpha\leq\beta$, such that $\Delta(D) \subseteq \RR^2$ is the region bounded by the graphs of $\alpha$ and $\beta$:
$$\Delta(D)=\big \{\, (t,y)\in\RR^2\mid a\leq t\leq \mu,\ \text{and} \ \alpha(t)\leq y\leq\beta(t)\, \big \}.$$
 Moreover, both $\alpha$ and $\beta$ are   piecewise linear and rational on every
interval $[a,\mu']$  with $\mu'<\mu$. In particular, the intersection of $\Delta(D)$ with 
$[0,\mu']\times\RR$ is a rational polytope.
\end{theorem}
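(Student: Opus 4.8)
The plan is to combine the Slicing Theorem (Theorem \ref{Slicing.Theorem}) with the description of restricted Okounkov bodies on the curve $C$ provided by Lemma \ref{Sfs.Ex.Lemma}. For each real $t$ with $0 \le t < \mu$, the class $\xi_t := D - tC$ is big; moreover one checks (exactly as in the proof of the Slicing Theorem, using that $C \not\subseteq \BBB_+(D)$ implies $C \not\subseteq \BBB_+(\xi_t)$ for $t$ in a suitable range) that $C \not\subseteq \BBB_+(\xi_t)$ for $t$ in an interval $[0,\mu)$, or more precisely for $t \ge a$ where $a$ is the infimum of the $s$ for which this holds (one has $a = 0$ whenever $C \not\subseteq \BBB_+(D)$, but it is harmless to carry along a general $a$). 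By \eqref{Second.Slicing.Equation}, the vertical slice $\Delta(D)_{\nu_1 = t}$ equals $\Delta_{X|C}(\xi_t)$, which by Lemma \ref{Sfs.Ex.Lemma} is the interval $[\alpha(t),\beta(t)]$ with $\alpha(t) = \ord_x\big(N_t|_C\big)$ and $\beta(t) = \ord_x\big(N_t|_C\big) + (C \cdot P_t)$, where $\xi_t = P_t + N_t$ is the Zariski decomposition. This already exhibits $\Delta(D)$ as the region $\{(t,y) : a \le t \le \mu,\ \alpha(t) \le y \le \beta(t)\}$, since a convex body is the union of its vertical slices; one also needs the elementary remark that $\pr_1(\Delta(D)) = [a,\mu]$ (the right endpoint is $\mu$ by the definition of $\mu$; compactness of $\Delta(D)$ gives the closed interval, with $\alpha(\mu)=\beta(\mu)$ possibly a single point — and here I would note that the value at the endpoint $t = \mu$ should be defined by continuity from the left).

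Next I would establish the regularity of $\alpha$ and $\beta$. The key input is the piecewise-linear, rational variation of the Zariski decomposition on $\tn{Big}(X)$ recalled in properties i)–ii) above: the half-line $\{\xi_t : t \in [a,\mu)\}$ meets only finitely many of the locally polyhedral chambers $\overline{\mathcal C_i}$ on any compact subinterval $[a,\mu']$, and on the portion of the line inside each $\overline{\mathcal C_i}$ the map $t \mapsto N_t$ is affine-linear with rational coefficients. Hence $t \mapsto \ord_x(N_t|_C) = \alpha(t)$ is piecewise linear and rational on $[a,\mu']$, and likewise $(C \cdot P_t) = (C \cdot \xi_t) - (C \cdot N_t)$ is piecewise linear and rational there, so $\beta(t) = \alpha(t) + (C\cdot P_t)$ is too. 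Continuity of $\alpha,\beta$ on all of $[a,\mu]$ then follows from continuity of the Zariski decomposition on $\tn{Big}(X)$, and the convexity of $\alpha$ and concavity of $\beta$ are forced by the convexity of $\Delta(D)$: a planar convex set with nonempty interior is exactly the region between the graph of a convex lower boundary and a concave upper boundary over the projection to the first axis. Finally, intersecting with $[0,\mu'] \times \RR$ bounds everything to a single chamber decomposition with finitely many pieces, so $\Delta(D) \cap ([0,\mu']\times\RR)$ is cut out by finitely many rational linear inequalities, i.e.\ is a rational polytope.

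The main obstacle I anticipate is the behaviour as $t \to \mu^{-}$: the Zariski chambers can accumulate at the boundary of the big cone, so one genuinely cannot expect $\alpha,\beta$ to be polyhedral on the closed interval $[a,\mu]$ — this is precisely why the statement restricts polyhedrality to $[a,\mu']$ with $\mu' < \mu$, and I would be careful to invoke property ii) only in its local form near interior points. A secondary technical point is justifying that $C \not\subseteq \BBB_+(\xi_t)$ persists on the whole interval $[a,\mu)$ so that Lemma \ref{Sfs.Ex.Lemma} applies for every such $t$; here I would argue as in the paragraph following \eqref{*} in \S\ref{Slices.Subsection}, using that $\mu(\xi;e)$ is exactly the right-hand endpoint of $\pr_1(\Delta(\xi))$ and that $\BBB_+$ of a big class excludes $C$ once $C$ is not contained in the augmented base locus of some larger class along the same ray. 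Everything else is bookkeeping with the Zariski decomposition and elementary planar convex geometry.
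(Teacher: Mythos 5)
Your argument is correct and is essentially the paper's own proof: slice $\Delta(D)$ via Theorem \ref{Slicing.Theorem}, identify each slice with the interval of Lemma \ref{Sfs.Ex.Lemma} given by the Zariski decomposition of $D-tC$, deduce convexity of $\alpha$ and concavity of $\beta$ from convexity of $\Delta(D)$, and get piecewise-linear rationality on $[a,\mu']$ from the locally finite, rational polyhedral Zariski chamber structure, with the endpoint values handled by closedness/continuity. The only (harmless) difference is the normalization of the left endpoint: the paper takes $a$ to be the coefficient of $C$ in the negative part $N_0$ and reduces to the case $a=0$ via the translation $\Delta(D)=\Delta(D-aC)+(a,0)$, whereas you characterize $a$ through the augmented base locus condition, relying on the same assertions from \S\ref{Slices.Subsection} about $\mu(\xi;e)$ and persistence of $C\not\subseteq\BBB_+(D-tC)$.
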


\noi The theorem is illustrated schematically in Figure \ref{Surfaces.Picture}.

 \begin{figure}\vskip 5pt
\includegraphics[scale = .6]{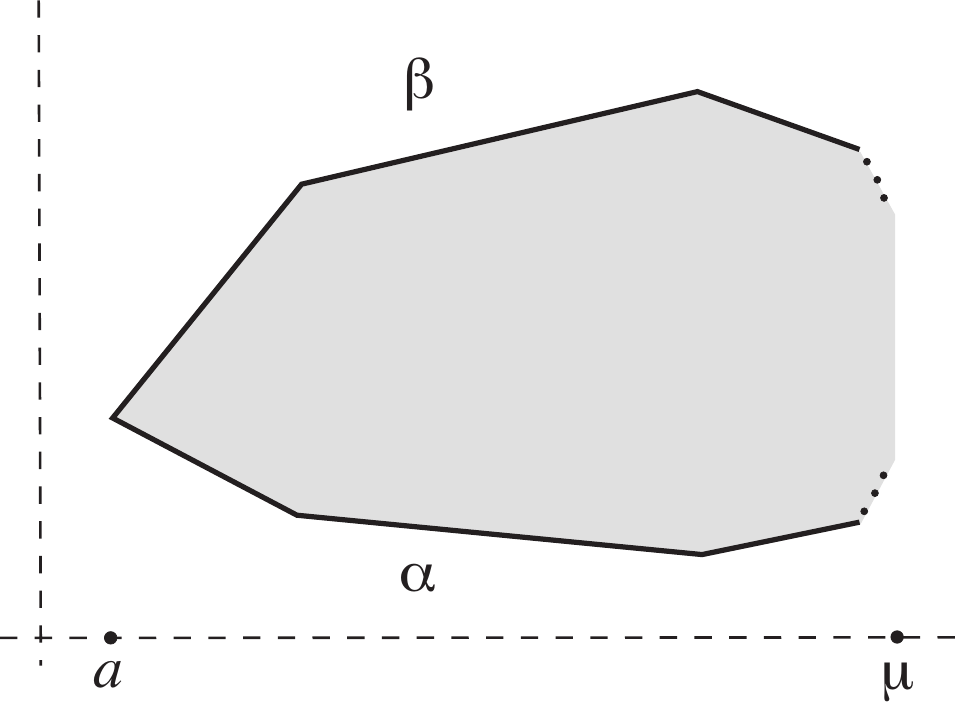}
\caption{Okounkov body of a divisor on  a surface}
 \label{Surfaces.Picture}
\end{figure}

\begin{proof}[Proof of Theorem \ref{surface_example}] For $t\in [0,\mu)$, put $D_t = D - tC$, and write $D_t = P_t + N_t$ for its Zariski decomposition. 
Let $a$ be the coefficient of $C$ in $N_0$. Since  $D-aC$ is big, and since $\Delta(D)=\Delta(D-aC)+(a,0)$,
we may replace $D$ by $D-aC$. Therefore we may suppose  that $C$ does not appear in $N_0$.  Note that in this case $C$ does not appear in the support of any $N_t$, with $t<\mu$.  

Let
\[
\alpha(t) \ = \ \ord_x(N_t{_{|C}}) \ \ , \ \ \beta(t) \ = \  \ord_x(N_t{_{|C}}) \, + \, \big( C \cdot P_t)
\]
be the two quantities appearing in the statement of Lemma \ref{Sfs.Ex.Lemma}. It follows from Theorem \ref{Slicing.Theorem} and the Lemma that $\Delta(D)$ is the region bounded by the graphs of $\alpha(t)$ and $\beta(t)$. The fact that $\alpha$ is convex, and $\beta$ is concave is a consequence of the convexity
of $\Delta(D)$. If we put $\alpha(\mu):=\min\{y\geq 0\mid (\mu,y)\in\Delta(D)\}$, and
$\beta(\mu):=\max\{y\geq 0\mid (\mu,y)\in\Delta(D)\}$, then $\alpha$ and $\beta$ are continuous
on $[0,\mu]$. The piecewise linearity properties of $\alpha$ and $\beta$ follow from the facts quoted at the beginning of this subsection concerning the variation of Zariski decomposition. \end{proof}

\begin{example}\textbf{(Abelian surfaces.)} The example of a divisor $D$ on an abelian surface considered in the Introduction  (see Figure \ref{Okounkov.Body.Picture}) follows at once from the theorem. In this case $D_t = D  - tC$ is nef for all $t \le \mu(D)$, so the negative part of the Zariski decomposition does not occur. \qed
\end{example}

 This picture extends to describe 
the global Okounkov body $\Delta(X)$. In particular, the decomposition of the big cone
induced by the cones ${\mathcal C}_i$ gives the following corollary.

\begin{corollary}\label{global_body_surface}
Let $X$ be a smooth complex projective surface. Fix a flag \eqref{Flag.For.Sf}, and let $c \in N^1(X)$ denote the class of $C$. If \[ \Delta(X)\ \subseteq 
\ N^1(X)_{\RR}\times \RR^2\] is the corresponding global Okounkov body of $X$,
then
$$\Delta(X)\subseteq \{(\xi,t,y)\mid \xi-tc\in\Effbar(X)\},$$
and $\Delta(X)$ is a rational polytope in the neighborhood of
  every point $(\xi,t,y)$ with  $\xi-tc\,\text{big}$.
\end{corollary}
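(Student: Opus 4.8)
The plan is to prove the two assertions in turn. For the inclusion $\Delta(X)\subseteq\{(\xi,t,y)\mid\xi-tc\in\Effbar(X)\}$, I would argue directly from the definition of $\Delta(X)$ as the closed convex cone spanned by the multigraded semigroup $\Gamma(X)$. The set $\mathcal S=\{(y_1,y_2,\xi)\mid \xi-y_1c\in\Effbar(X)\}$ is a closed convex cone, being the preimage of $\Effbar(X)$ under a linear map, so it is enough to check that every generator $(\nu(s),\Vec a)$ of $\Gamma(X)$ lies in $\mathcal S$; but for $0\ne s\in\HH{0}{X}{\OO_X(\Vec aD)}$ one has $\nu_1(s)=\ord_C(s)$, so $\Vec aD-\nu_1(s)C$ is effective and hence $\Vec a-\nu_1(s)c\in\Effbar(X)$. (This also drops out of the slicing theorem, Theorem \ref{Slicing.Theorem}.)

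For the local rational polyhedrality near a point $(\xi_0,t_0,y_0)\in\Delta(X)$ with $\eta_0:=\xi_0-t_0c$ big (there is nothing to prove at points off the closed set $\Delta(X)$), the strategy is to combine the description of $\Delta(X)$ fibre by fibre with the piecewise linearity of Zariski decomposition. By Theorem \ref{Global.Okounkov.Body.Theorem} the fibre of $\Delta(X)$ over any big class $\xi$ is $\Delta(\xi)$, and $\xi_0$ is itself big (as $\xi_0=\eta_0+t_0c$ with $\eta_0$ big and $t_0\ge 0$), hence so are all $\xi$ near $\xi_0$; by the slicing theorem \ref{Slicing.Theorem} together with Lemma \ref{Sfs.Ex.Lemma} and Remark \ref{Numerical.Nature.Restricted.OB}, each horizontal slice is the interval $\Delta(\xi)_{\nu_1=t}=[\alpha(\xi-tc),\beta(\xi-tc)]$, where $\alpha(\eta)=\ord_x(N(\eta)|_C)$ and $\beta(\eta)=\ord_x(N(\eta)|_C)+(C\cdot P(\eta))$ are built from the Zariski decomposition $\eta=P(\eta)+N(\eta)$. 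Thus a neighbourhood of $(\xi_0,t_0,y_0)$ in $\Delta(X)$ is cut out by the inequalities $\alpha(\xi-tc)\le y\le\beta(\xi-tc)$ together with the requirement that $\xi-tc$ stay big; since $\eta_0$ is big, this last requirement is locally a finite intersection of rational half-spaces (and at most points simply inactive). The key step is then to observe that $\alpha$ and $\beta$ are continuous, piecewise linear and rational as functions of $(\xi,t)$ near $(\xi_0,t_0)$: by the facts recalled at the start of \S\ref{Surface.Subsection} (see \cite{BKS} and \cite[Example~3.7]{ELMNP3}), near the big class $\eta_0$ the big cone is covered by finitely many rational polyhedral Zariski chambers on each of which $\eta\mapsto N(\eta)$ and $\eta\mapsto P(\eta)$ are linear with rational coefficients; composing with the affine map $(\xi,t)\mapsto\xi-tc$ and with the rational linear functionals $N\mapsto\ord_x(N|_C)$ and $D'\mapsto(C\cdot D')$ gives the claim. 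Hence $\Delta(X)$ is locally cut out by finitely many rational affine inequalities, i.e.\ it is a rational polytope in a neighbourhood of $(\xi_0,t_0,y_0)$.

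The step I expect to be the main obstacle is the book-keeping at the boundary of the region rather than in its interior: one must make sure that the formula $\Delta(\xi)_{\nu_1=t}=[\alpha(\xi-tc),\beta(\xi-tc)]$ really describes $\Delta(X)$ throughout a whole neighbourhood, which is cleanest when $C\not\subseteq\BBB_+(\xi-tc)$ and needs extra care when $C$ lies in the support of the negative part, or in the augmented base locus, of classes near $\eta_0$ (there the naive Zariski formula no longer closes off the region correctly). To deal with this I would exploit the shear symmetry $\Delta(\xi)=\Delta(\xi-ac)+(a,0)$ coming from Proposition \ref{Num.Nature.Homogenity} and \eqref{First.Slicing.Equation}: translation by $(a,0,ac)$ preserves $\Delta(X)$ and fixes the class $\eta_0$, so a suitable rational $a$ reduces to the case where $C$ does not appear in $N(\eta_0)$, exactly as in the normalization at the start of the proof of Theorem \ref{surface_example}. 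With this normalization in place the finiteness of Zariski chambers does the rest, and the statement near $(\xi_0,t_0,y_0)$ follows.
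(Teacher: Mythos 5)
Your overall route is the one the paper intends (it offers no separate argument for the corollary beyond pointing to Theorem \ref{surface_example} and the Zariski-chamber decomposition of \cite{BKS}), and your first part is fine: the set $\{(\xi,t,y)\mid \xi-tc\in\Effbar(X)\}$ is a closed convex cone and contains every semigroup generator $(\nu(s),\Vec{m})$ because $\nu_1(s)=\ord_C(s)$, so the spanned cone $\Delta(X)$ lies inside it.

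The gap is in the boundary bookkeeping for the second part, and your proposed repair does not work. The shear $(\xi,t,y)\mapsto(\xi+ac,t+a,y)$ fixes the class $\eta=\xi-tc$, so it cannot ``reduce to the case where $C$ does not appear in $N(\eta_0)$'': whether $C$ lies in the negative part of $\eta_0$ is a property of $\eta_0$ alone, unchanged by the shear. (Moreover the identity you invoke, $\Delta(\xi)=\Delta(\xi-ac)+(a,0)$, holds only when $a$ is the coefficient of $C$ in $N(\xi)$ -- in general \eqref{First.Slicing.Equation} gives only $\Delta(\xi)_{\nu_1\ge a}=\Delta(\xi-ac)+a\Vec{e}_1$ -- and the translation maps $\Delta(X)$ into itself but is not a symmetry of it.) The substantive point you must address is that your final description $\{\alpha(\xi-tc)\le y\le\beta(\xi-tc)\}$ is wrong on part of any neighbourhood in which some classes $\eta=\xi-tc$ have $C\subseteq\Supp N(\eta)$ with positive coefficient: there the slice $\Delta(\xi)_{\nu_1=t}$ is \emph{empty} (all sections of $m\xi$ vanish along $C$ to order larger than $mt$), while $\alpha,\beta$ are not even defined by Lemma \ref{Sfs.Ex.Lemma}; also the hypothesis of Theorem \ref{Slicing.Theorem} is $C\not\subseteq\BBB_+(\xi)$, a condition on the fibre class $\xi$, which can fail near $\xi_0=\eta_0+t_0c$ even though $\eta_0$ is big.

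The fix is easy and stays within your framework: work fibrewise with Theorem \ref{surface_example} itself (whose only hypothesis is that the class be big). For $\xi$ near the big class $\xi_0$ let $a(\xi)$ be the coefficient of $C$ in $N(\xi)$; then $\Delta(\xi)=\Delta(\xi-a(\xi)c)+(a(\xi),0)$, the slice at $\nu_1=t$ is empty for $t<a(\xi)$, and for $a(\xi)\le t$ (and $t<\mu(\xi)$, which is inactive locally since $\xi-tc$ stays big) the slice is $[\,\ord_x(N(\xi-tc)|_C),\ \ord_x(N(\xi-tc)|_C)+(C\cdot P(\xi-tc))\,]$, where now $C\not\subseteq\Supp N(\xi-tc)$ automatically. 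Hence locally
\[
\Delta(X)\ =\ \big\{(\xi,t,y)\ \big|\ t\ge a(\xi),\ \alpha(\xi-tc)\le y\le\beta(\xi-tc)\big\},
\]
and since both $\xi_0$ and $\eta_0$ are big, the chamber structure of \cite{BKS} makes $a(\cdot)$, $\alpha(\cdot)$, $\beta(\cdot)$ piecewise rational linear on neighbourhoods of $\xi_0$ and $\eta_0$ respectively, with finitely many chambers; a convex set cut out locally by finitely many rational affine inequalities (equivalently, a convex finite union of rational polytopes) is a rational polytope, which gives the statement. With this extra inequality $t\ge a(\xi)$ included (and the rational-to-real passage handled by continuity of these piecewise linear data, as in the paper), your argument goes through.
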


\subsection{A Non-Polyhedral Okounkov Body} 
\label{Round.Cone.Ex} We establish the existence of a big divisor $D$ on a fourfold $X$ for which $\Delta(D)$ is not polyhedral. The idea is to use a construction of Cutkosky, as explained in \cite[Chapter 2.3]{PAG}.

Let $V$ be an abelian surface having Picard number $\rho(V) = 3$, so that $\tn{Nef}(V)  = \overline{\tn{Eff}}(V)$ is a circular cone in $\RR^3$.  Choose ample divisors $A, B_1, B_2$ on $V$, and let
\[  \mathscr{E} \ = \ \OO_V(A_1) \oplus \OO_V(-B_1) \oplus \OO_V(-B_2). \]
Put $X = \PP(\mathscr{E})$, with $\pi : \PP(\mathscr{E}) \lra V$ the bundle map. For $D$ we take a divisor on $X$ such that $\OO_X(D) = \OO_{\PP(\mathscr{E})}(1)$, and we consider a flag $Y\bull$ where
\[  Y_1 \ = \ \PP \big( \OO_V(A) \oplus \OO_V(-B_1)\big) \ \ , \ \ Y_2 \ = \ \PP \big( \OO_V(A) \big) . \]
We assert that the corresponding Okounkov body $\Delta(D) \subseteq \RR^4$ cannot be polyhedral provided that we make suitably general choices of $A, B_1$ and $B_2$.

To see this, we consider slices of $\Delta(D)$ as in \S \ref{Slices.Subsection}.  Specifically, observe first that $\OO_X(Y_1) = \OO_X(D + \pi^* B_2)$. As we are dealing with decomposable projective bundles, one finds that the restriction maps
\[ 
\HH{0}{X}{\OO_X(pD - qY_1)} \lra \HH{0}{Y_1}{\OO_{Y_1}(pD - qY_1)} \]
are surjective for all $p, q > 0$. This implies in particular that
\begin{equation}
\Delta_{X|Y_1}(D-tY_1) \ = \ \Delta\big(Y_1; (D - tY_1)_{|Y_1}\big) \tag{*}
\end{equation}
for all $0 \le t < 1$. 
Now assume that $\Delta(D)$ is polyhedral. Then so too are all of its slices $\Delta_{X|Y_1}(D - tY_1)$, and moreover these vary piecewise linearly with $t$.  More precisely,  the invariant
\[
\mu(t) \ =_{\text{def}} \ \sup \, \big\{ s > 0 \mid (D-tY_1)_{|Y_1} - sY_2 \ \text{ is a big divisor on $Y_1$ } \big \}
\]
measuring the right-hand endpont of $\Delta\big(Y_1; (D - tY_1)_{|Y_1}\big) $ under projection to the $\nu_2$-axis  must vary linearly with $t$ for small $t > 0$. On the other hand, note that \begin{multline*}
\HH{0}{Y_1}{\OO_{Y_1}( (pD - qY_1)_{|Y_1} - rY_2)} \\
= \ \HHH{0}{V}{S^{p-q-r}\big( \OO_V(A) \oplus \OO_V(-B_1)\big ) \otimes \OO_V(-qB_2 - rB_1) }.
\end{multline*}
Therefore 
\begin{align*}\mu(t) \ &= \ \sup \, \big \{ s > 0 \mid (1 - t -s)A - tB_2 - sB_1 \text{ is a big divisor on $V$ } \big \} \\
&=  \ \sup \, \big \{ s > 0 \mid \big( (1 - t -s)A - tB_2 - sB_1\big)^2   \ge 0 \big \}.
\end{align*}
But as in \cite[Chapter 2.3B]{PAG}, for general choices of $A, B_1, B_2$ this is a non-linear function of $t$ since the pseudo-effective cone of $V$ is circular.

  \section{Questions and Open Problems} \label{Open.Problem.Section}
 
 We pose  here a few questions and open problems.

It is natural to ask whether the constructions we study here behave particularly well on special classes of varieties. For instance, if $X$ is a smooth toric variety, then  as we have seen  $\Delta(X)$ is a polytope.
One of the consequences of the spectacular recent progress \cite{BCHMcK} on the minimal model program is that linear series on a smooth Fano variety have many toric-like features. This suggests:
 \begin{problem} \label{OK.Body.Fanos}
 Let $X$ be a smooth complex projective Fano variety. Does there exist an admissible flag on $X$ with respect to which $\Delta(X)$ is a rational polyhedral cone?
 \end{problem}
 \noi This would of course imply that $\Delta(D)$ is polyhedral for every big divisor on $D$. More generally, one could ask the same question for the ``Mori dream spaces" studied by Hu and Keel in \cite{HK}. Knowing that $\Delta(X)$ is polyhedral would for example recover the fact that the volume function $\vol_X$ is piecewise polynomial for Fanos (and for Mori dream spaces in general).
 
The nature of the volume function $\vol_X: N^1(X)_{\RR} \lra \RR$ on a smooth projective variety presents several intriguing questions. Corollary \ref{Continuity.Abstract.Volume.Function}
 shows that the continuity and log-concavity of this function are quite formal -- i.e. they hold already for multi-graded linear series -- and presumably the differentiability properties established in \S \ref{Slices.Subsection}
 could also be formalized with some additional hypotheses. By analogy with  the work \cite{Wolfe} of Wolfe on a related invariant, one nonetheless expects that there exist  multi-graded linear series $W_{\Vec \bullet}$ for which the corresponding volume function $\vol_{W_{\Vec{\bullet}}}$ is rather wild:
 \begin{problem}
 Construct examples of multi-graded linear series for which $\vol_{W_{\Vec{\bullet}}}$ is nowhere $\mathcal{C}^\infty$ (or even $\mathcal{C}^3$) on an open set.
 \end{problem}
 \noi On the other hand, it is difficult to imagine that this sort of behavior can occur for the volume function $\vol_X$ on a projective variety $X$.  So one anticipates that $\vol_X$ should have some good properties beyond those already known, but it has not been clear how to  make plausible conjectures about what these might be. One possible approach is to look for special features of the global Okounkov body $\Delta(X)$ (with respect to a suitable flag), the hope being that geometric information about $\Delta(X)$ is a natural way to express regularity properties of the volume function. 
\begin{question}
 What can one say about the boundary of $\Delta(X)$? Is it for example ``almost everywhere" defined by algebraic equations?
 \end{question}
  \noi The issue we have in mind here is whether there is an analogue of a theorem of Campana and Peternell (\cite{CP}, \cite[Chapter 1.5.E]{PAG}) according to which the boundary of the nef cone on a projective variety is generically defined by algebraic hypersurfaces. At the moment, of course, this question is completely speculative: we know of very few examples where one can actually compute $\Delta(X)$, and so up to now there is very little evidence one way or the other.   
  
  The canonical Okounkov bodies $\Delta^\pr(D)$ and $\Delta^\pr(X)$ appearing in \S \ref{Infinitesimal.OB.Subsection}  also  merit further investigation. The most important invariant of a convex body $K \subseteq \RR^d$ is its volume, but convex geometers have studied many other invariants as well, for example the Minkowski surface area, and the sequence of intrinsic volumes of $K$ (cf \cite[Chapter 6.3]{Gruber}). As the convex bodies $\Delta^\pr(D)$ are intrinsically defined, it seems reasonable to pose
      \begin{problem}
     Find algebro-geometric interpretations of convex-geometric invariants of $\Delta^\pr(D)$.
  \end{problem}  
\noi Unfortunately, the bodies $\Delta^\pr(D)$  seem very hard to compute --  for instance,  we do not know how to describe  them already when $D$ is an ample divisor on   the product   $X = \PP^1 \times \ldots \times \PP^1$  of $d\ge 4$ copies of $\PP^1$.   This suggests:
  \begin{problem}
  Are there other constructions that lead to canonically defined Okounkov bodies that are more amenable to computation?
  \end{problem}
  \noi It seems likely that one could globalize the description in \cite{BayerMumford} of the reverse lex order on polynomials, although it is not clear whether the resulting Okounkov bodies will be much more tractable. 
    
  Finally, asymptotic invariants of linear series have appeared in other settings, and it is natural to wonder whether the machinery developed here extends as well.    Paoletti and others \cite{Paoletti04}, \cite{Paoletti05}, \cite{Vedova.Paoletti} have studied equivariant volume functions and related invariants in the presence of a group action. This suggests:
  \begin{problem}
  Extend the theory in the present paper to an equivariant setting.
  \end{problem}
  \noi The original paper \cite{Okounkov96} of Okounkov, as well as \cite{Okounkov97}, \cite{Alexeev.Brion}, \cite{KK}, \cite{Kaveh},  might be relevant. 
 There has also been some very interesting recent work on arithmetic analogues of the volume function \cite{Yuan}, \cite{Moriwaki}, which leads to:
  \begin{question}
  Can one construct ``arithmetic Okounkov bodies"?
  \end{question}
  \noi When $X$ is a compact complex manifold, Boucksom \cite{Boucksom}, \cite{Boucksom04} has defined and studied the volume (and other invariants) of an arbitrary pseudo-effective $(1,1)$-class $\alpha$ on $X$. It is natural to wonder whether one can realize these volumes by convex bodies  as well. 
  
    %%% APPENDIX %%%

  \appendix
  \section{Semigroups and Subspaces} \label{Appendix.Section}
  
  We prove here a result on the relation between   semigroups and the cones they span upon intersecting with a subspace.

  Let $\Gamma\subseteq\NN^n$ be a sub-semigroup, and denote by  
  \[ \Sigma\ = \ \Sigma(\Gamma)\ \subseteq \
\RR^n \]
 be the closed convex cone generated by $\Gamma$. 
 Given a linear subspace $L \subseteq \RR^n$ defined over $\QQ$, 
 we may intersect $\Gamma $ with $L$ to get a semigroup $\Gamma \cap L  \subseteq L$, which in turn determines a cone $\Sigma(\Gamma \cap L) \subseteq L$. On the other hand, we may intersect $\Sigma(\Gamma)$ with $L$ to get another cone in $L$. We seek conditions under which  these two cones coincide.
 
 \begin{proposition} \label{Mircea.Prop}
 Assume that $\Gamma$ generates a subgroup of finite index in $\ZZ^n$, and that $L$ meets the interior $\interior(\Sigma)$ of $\Sigma$. Then
 \[
 \Sigma(\Gamma) \, \cap \, L \ = \ \Sigma (\Gamma \cap L).
 \]
 \end{proposition}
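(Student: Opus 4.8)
The plan is to prove the equality by checking the two inclusions separately, the easy one directly and the hard one by reducing to the case of a finitely generated semigroup and then passing to a limit. The inclusion $\Sigma(\Gamma\cap L)\subseteq\Sigma(\Gamma)\cap L$ is immediate: $\Gamma\cap L$ lies both in $\Gamma$ and in the closed convex cone $L$, hence in the closed convex cone $\Sigma(\Gamma)\cap L$, and $\Sigma(\Gamma\cap L)$ is by definition the smallest such cone. So the whole content is the reverse inclusion $\Sigma(\Gamma)\cap L\subseteq\Sigma(\Gamma\cap L)$.

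First I would dispose of the finitely generated case: if $\Gamma'\subseteq\NN^n$ is finitely generated, then $\Sigma(\Gamma')$ is rational polyhedral, hence so is $\Sigma(\Gamma')\cap L$ (using that $L$ is defined over $\QQ$), and by the facts recalled in (0.5) its rational --- hence, after scaling, integral --- points are dense in it, while $\Sigma(\Gamma'\cap L)$ is closed; so it suffices to see that an integral vector $v\in\Sigma(\Gamma')\cap L$ lies in $\Sigma(\Gamma'\cap L)$. But the saturation property of finitely generated sub-semigroups of $\NN^n$ (again (0.5)) gives $N>0$ with $Nv\in\Gamma'$, and since $v\in L$ this puts $Nv\in\Gamma'\cap L$, so $v=\tfrac1N(Nv)\in\Sigma(\Gamma'\cap L)$. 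This establishes $\Sigma(\Gamma')\cap L=\Sigma(\Gamma'\cap L)$ for \emph{every} finitely generated $\Gamma'\subseteq\Gamma$, needing no hypothesis on $\Gamma'$ or on $L$ beyond rationality.

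Next I would globalize. Let $C$ be the convex cone generated by $\Gamma$ --- not closed in general --- so that $\Sigma(\Gamma)=\overline C$. Since any finite subset of $\Gamma$ sits inside a common finitely generated $\Gamma'\subseteq\Gamma$, and $\Sigma(\Gamma')$ (the convex cone generated by $\Gamma'$) is closed for such $\Gamma'$, one has $C=\bigcup_{\Gamma'}\Sigma(\Gamma')$. The hypothesis that $L$ meets $\interior(\Sigma)$ forces $\Sigma=\Sigma(\Gamma)$ to be full-dimensional, so $C$ is a convex set of full affine dimension, hence $\interior(C)\neq\emptyset$ and $\interior(\Sigma)=\interior(\overline C)=\interior(C)\subseteq C$. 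Feeding in the finitely generated case together with the evident inclusion $\Gamma'\cap L\subseteq\Gamma\cap L$, I get
\[
\interior(\Sigma)\cap L\ \subseteq\ C\cap L\ =\ \bigcup_{\Gamma'}\bigl(\Sigma(\Gamma')\cap L\bigr)\ =\ \bigcup_{\Gamma'}\Sigma(\Gamma'\cap L)\ \subseteq\ \Sigma(\Gamma\cap L).
\]
Finally, fixing $w\in\interior(\Sigma)\cap L$ (which is nonempty by hypothesis), for any $v\in\Sigma(\Gamma)\cap L$ the points $v+\tfrac1k w$ lie in $\interior(\Sigma)\cap L$ and tend to $v$, so $\interior(\Sigma)\cap L$ is dense in $\Sigma(\Gamma)\cap L$; since $\Sigma(\Gamma\cap L)$ is closed and contains this dense set, it contains all of $\Sigma(\Gamma)\cap L$, giving the reverse inclusion.

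The hard part is this globalization step, and it is precisely where the hypothesis that $L$ meets $\interior(\Sigma)$ is indispensable: a finitely generated $\Gamma'\subseteq\Gamma$ can have $\Sigma(\Gamma')$ much smaller than $\Sigma(\Gamma)$, so one must know that the cones $\Sigma(\Gamma')$ collectively exhaust the \emph{interior} of $\Sigma(\Gamma)$ --- this is exactly what lets the density argument close up. Without that hypothesis the statement fails, as the Remark after the Proposition notes. Everything else (the easy inclusion, the closedness reduction, and the finitely generated case, which is nothing but Khovanskii's saturation lemma) is routine.
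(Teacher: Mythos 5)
Your proof is correct, and its skeleton is the same as the paper's: the finitely generated case is handled identically (rational polyhedrality of $\Sigma(\Gamma')\cap L$ plus the saturation statement from (0.5)), and the general case is reduced to points of $\interior(\Sigma)\cap L$ by sliding along a segment toward an interior point, exactly as in the paper's reduction via the segment $[\gamma_0,\gamma)$. Where you genuinely diverge is in how you get interior points of $\Sigma$ into the cone of a finitely generated subsemigroup: the paper proves a separate Lemma, $\interior(\Sigma)=\bigcup_i\interior(\Sigma^i)$ for an exhausting chain $\Gamma^1\subseteq\Gamma^2\subseteq\cdots$, by an explicit perturbation of a point by $n$ linearly independent nearby vectors, whereas you work with the non-closed cone $C$ generated by $\Gamma$, note $C=\bigcup_{\Gamma'}\Sigma(\Gamma')$ over all finitely generated $\Gamma'\subseteq\Gamma$, and invoke the standard convexity fact $\interior(\overline{C})=\interior(C)$ for a full-dimensional convex set. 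Your route is a bit more economical and makes transparent that the equality $\Sigma(\Gamma')\cap L=\Sigma(\Gamma'\cap L)$ in the finitely generated case needs neither the finite-index nor the interior hypothesis, these entering only in the exhaustion/density step; the paper's route has the advantage that its Lemma is a reusable statement, and indeed it is invoked again in the proof of Corollary \ref{Mircea.Corollary}, so with your argument alone that corollary would need the polyhedral reduction redone (e.g.\ by the same $\interior(\overline{C})=\interior(C)$ observation).
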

 \noi Note that the hypothesis on $\Gamma$ is equivalent to asking that $\Sigma$ be full-dimensional, i.e. that $\Sigma$ has non-empty interior in $\RR^n$.
 
The plan is to approximate $\Gamma$ by finitely generated semigroups.  So fix a sequence of finitely generated 
sub-semigroups $\Gamma^1\subseteq\Gamma^2\subseteq\ldots\subseteq\Gamma$, each generating a subgroup of finite index in $\ZZ^n$, such that
$\Gamma=\cup_i\Gamma^i$. Let $\Sigma^i=\Sigma(\Gamma^i)\subseteq\RR^n$ be the corresponding cones. Evidently $\Sigma=\overline{\cup_i\Sigma^i}$.

\begin{lemma}
One has ${\rm int}(\Sigma)=\bigcup_i{\rm int}(\Sigma^i)$. 
\end{lemma}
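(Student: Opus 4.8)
The plan is to prove the two inclusions separately, the nontrivial one by producing, around any interior point of $\Sigma$, a full-dimensional simplex that is contained in a single $\Sigma^i$.

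First I would dispose of the easy inclusion $\bigcup_i \interior(\Sigma^i) \subseteq \interior(\Sigma)$: since $\Sigma^i \subseteq \Sigma$, the set $\interior(\Sigma^i)$ is an open subset of $\Sigma$, hence is contained in the largest such subset, which is $\interior(\Sigma)$ by definition.

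For the reverse inclusion, fix $x \in \interior(\Sigma)$. Since $\Sigma$ is a full-dimensional closed convex cone, I would choose $\rho > 0$ with $\overline{B(x,\rho)} \subseteq \Sigma$ and then pick points $v_0, \dots, v_n$ on the sphere of radius $\rho$ about $x$ forming a non-degenerate simplex with barycenter $x$; thus $x$ lies in the interior of $T = \mathrm{conv}(v_0, \dots, v_n) \subseteq \Sigma$. Because $\Sigma = \overline{\bigcup_i \Sigma^i}$, the union $\bigcup_i \Sigma^i$ is dense in $\Sigma$, so I can choose $z_j \in \bigcup_i \Sigma^i$ with $\|z_j - v_j\|$ as small as I like. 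A routine perturbation argument — affine independence of the $v_j$ and strict positivity of the barycentric coordinates of $x$ relative to them are open conditions — shows that once the $\|z_j - v_j\|$ are small enough, the $z_j$ remain affinely independent and $x \in \interior\bigl(\mathrm{conv}(z_0, \dots, z_n)\bigr)$.

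It then remains to observe that this perturbed simplex lives in one of the $\Sigma^i$. Each $z_j$ belongs to some $\Sigma^{i_j}$; setting $i = \max_j i_j$ and using that the semigroups $\Gamma^k$ — hence the cones $\Sigma^k$ — increase with $k$, I get $z_0, \dots, z_n \in \Sigma^i$, so by convexity of $\Sigma^i$ the whole simplex $\mathrm{conv}(z_0, \dots, z_n)$ lies in $\Sigma^i$. Therefore $x$ lies in the interior of a subset of $\Sigma^i$, i.e. $x \in \interior(\Sigma^i) \subseteq \bigcup_k \interior(\Sigma^k)$, as desired. The only point requiring any care is the elementary perturbation step; everything else is forced by the monotonicity of the $\Gamma^i$ together with the density of $\bigcup_i \Sigma^i$ in $\Sigma$.
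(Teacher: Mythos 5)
Your argument is correct and is essentially the paper's proof: both hinge on the density of $\bigcup_i\Sigma^i$ in $\Sigma$, the openness under small perturbation of the condition that the point lie in the interior of the hull of a surrounding configuration, and the nestedness of the $\Sigma^i$ to place all the approximating points in a single cone. The only cosmetic difference is that you perturb the vertices of a full-dimensional simplex with barycenter at the point, while the paper perturbs the generators $\gamma+\tfrac{1}{m}v_j$ of a simplicial cone containing $\gamma$ in its interior; the easy inclusion is handled the same way in both.
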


\begin{proof}
If $\gamma\in {\rm int}(\Sigma)$,
choose linearly independent $v_1,\ldots,v_n$ such that
$\gamma$ lies in the interior of the convex cone generated by the $v_i$. 
Fix $m\gg 0$, such that each
 $\gamma+\frac{1}{m}v_j\in {\rm int}(\Sigma)$ for every $j$, and so that in addition these $n$
 vectors are linearly independent.
 In this case, $\gamma$ lies in the interior of the convex cone
 generated by 
 \[ \gamma+\frac{1}{m}v_1\ , \ \ldots\ , \ \gamma+\frac{1}{m}v_n. \] Furthermore, if $w_j$ is close enough to
 $\gamma+\frac{1}{m}v_j$, then $\gamma$ lies in the interior of the convex cone generated by
 $\{w_1,\ldots,w_n\}$. We can find such $w_1,\ldots,w_n$ and $i$ with 
 $w_j\in \Sigma^i$ for all $j$. Therefore $\gamma\in {\rm int}(\Sigma^i)$. This proves the lemma, since the reverse
 inclusion is trivial.
\end{proof}

\begin{proof}  [Proof of Proposition \ref{Mircea.Prop}]
It is enough to prove the inclusion $\Sigma\cap L\subseteq\Sigma(\Gamma\cap L)$, as the 
reverse inclusion is clear. Suppose that  $\gamma\in \Sigma\cap L$. By assumption, we can choose a vector
$\gamma_0\in {\rm int}(\Sigma)\cap L$. Since the line segment $[\gamma_0,\gamma)$
is contained in ${\rm int}(\Sigma)\cap L$, and since it is enough to show that this segment is contained 
in $\Sigma(\Gamma\cap L)$, we may assume that $\gamma\in {\rm int}(\Sigma)\cap L$.

It follows from the Lemma that   $\gamma\in {\rm int}(\Sigma^i)$ for some $i$. So after replacing
$\Gamma$ by $\Gamma^i$,  we may assume that $\Gamma$ is finitely generated.  In this case,
$\Sigma$ and $\Sigma\cap L$ are rational polyhedral cones. In particular, $\Sigma\cap L$
is the convex cone generated by the semigroup $\Sigma\cap L\cap\ZZ^n$. Furthermore, given any
$\delta\in\Sigma\cap\ZZ^n$, there is $m\geq 1$ such that $m\delta\in\Gamma$. (See (0.5) in \S \ref{Notation.Conventions.Section}). In particular,
$\Gamma\cap L$ and $\Sigma\cap\ZZ^n\cap L$ generate the same convex cone, which completes the proof.
\end{proof}

\begin{corollary} \label{Mircea.Corollary}
Keep the assumptions of the Proposition, and consider for 
$m\leq n$ the projection  $p\colon \RR^n\to \RR^m$  onto the last $m$ components. Let
$L\subseteq \RR^m$ be a linear subspace defined over $\QQ$
such that ${\rm int}(\overline{p(\Sigma)})\cap L\neq\emptyset$. Then $$\Sigma(\Gamma)\ \cap \ p^{-1}(L)=\Sigma(\Gamma \, \cap \, p^{-1}(L)).$$
\end{corollary}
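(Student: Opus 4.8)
The plan is to deduce this immediately from Proposition \ref{Mircea.Prop}, taking for the ambient subspace the preimage $L^\pr \;=\; p^{-1}(L)\subseteq\RR^n$. Since $p$ is the coordinate projection onto the last $m$ factors and $L$ is cut out in $\RR^m$ by $\QQ$-linear equations, the subspace $L^\pr$ is again defined over $\QQ$; moreover one has the tautological identities $\Gamma\cap p^{-1}(L)=\Gamma\cap L^\pr$ and $\Sigma(\Gamma)\cap p^{-1}(L)=\Sigma(\Gamma)\cap L^\pr$. So the Corollary follows from the Proposition the moment we know that $L^\pr$ meets the interior of $\Sigma=\Sigma(\Gamma)$.

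The one point that requires an argument is to convert the hypothesis $\interior\big(\overline{p(\Sigma)}\big)\cap L\neq\emptyset$ into the statement $\interior(\Sigma)\cap L^\pr\neq\emptyset$. I would do this via the elementary identity
\[
\interior\big(\overline{p(\Sigma)}\big)\ =\ p\big(\interior(\Sigma)\big).
\]
Because $\Gamma$ generates a finite-index subgroup of $\ZZ^n$, the cone $\Sigma$ is full-dimensional, hence $\Sigma=\overline{\interior(\Sigma)}$; applying the continuous surjection $p$ and passing to closures gives $\overline{p(\Sigma)}=\overline{p(\interior(\Sigma))}$. The set $C:=p(\interior(\Sigma))$ is convex, being a linear image of a convex set, and open, since a surjective linear map is an open map; and for any nonempty open convex set $C$ one has $\interior(\overline{C})=C$. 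Combining these yields the displayed identity. Given now a point $y\in\interior\big(\overline{p(\Sigma)}\big)\cap L$, we may write $y=p(x)$ with $x\in\interior(\Sigma)$, and then $x\in p^{-1}(L)=L^\pr$ since $p(x)=y\in L$; thus $L^\pr\cap\interior(\Sigma)\neq\emptyset$, as needed.

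With this verified, Proposition \ref{Mircea.Prop} applied to $\Gamma\subseteq\NN^n$ and the rational subspace $L^\pr\subseteq\RR^n$ gives $\Sigma(\Gamma)\cap L^\pr=\Sigma(\Gamma\cap L^\pr)$, which is precisely the assertion of the Corollary. The main obstacle --- really the only nonformal ingredient --- is the convex-geometry fact that the interior of the closure of a projection of $\Sigma$ equals the projection of its interior; the remainder is bookkeeping about rational subspaces and about reconciling the various cones with the definitions of $\Sigma(\Gamma)$ and $\Gamma\cap L^\pr$.
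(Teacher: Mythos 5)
Your reduction is the same as the paper's: both proofs apply Proposition \ref{Mircea.Prop} to the rational subspace $p^{-1}(L)\subseteq\RR^n$, so everything hinges on showing that $p^{-1}(L)$ meets $\interior(\Sigma)$. Where you genuinely differ is in how this is established. The paper picks $\delta\in\interior(\overline{p(\Sigma)})\cap L$, reduces via the approximation lemma to the case of a polyhedral cone, and then argues by contradiction with a supporting linear functional: if $p^{-1}(\RR\cdot\delta)$ missed $\interior(\Sigma)$, a separating functional would vanish on $\ker p$, hence factor as $\overline{\ell}\circ p$ with $\overline{\ell}\ge 0$ on $p(\Sigma)$ and $\overline{\ell}(\delta)=0$, contradicting interiority of $\delta$. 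You instead prove the identity $\interior\big(\overline{p(\Sigma)}\big)=p\big(\interior(\Sigma)\big)$ directly, using that $\Sigma=\overline{\interior(\Sigma)}$ (full-dimensionality from the finite-index hypothesis), that the surjective linear map $p$ is open, and that $\interior(\overline{C})=C$ for a nonempty open convex $C$; each of these steps is a standard and correct convexity fact, and the deduction that any $y\in\interior(\overline{p(\Sigma)})\cap L$ lifts to a point of $p^{-1}(L)\cap\interior(\Sigma)$ is immediate. Your route is more elementary and self-contained: it bypasses the reduction to finitely generated subsemigroups and the separation argument entirely, and it also handles cleanly the possible non-closedness of $p(\Sigma)$, which is exactly the reason the closure appears in the hypothesis. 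The paper's route, on the other hand, stays within the toolkit (the approximation lemma and polyhedral cones) already set up for the Proposition. Both are valid; yours is arguably the shorter and more transparent argument for this particular step.
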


\begin{proof}
By assumption we can find  
$\delta\in {\rm int}(\overline{p(\Sigma)})\cap L$, and the assertion will
follow from the Proposition if we show that
\[ p^{-1}(\RR\cdot \delta) \ \cap\  {\rm int}(\Sigma)\neq\emptyset. \]
Thanks to the Lemma, we may replace $\Sigma$ by one of the $\Sigma^i$, and hence assume that
$\Sigma$ is polyhedral. 

By the choice of $\delta$, the intersection 
 $p^{-1}(\RR \cdot \delta)\cap \Sigma$ is nonempty. If $p^{-1}(\RR\cdot \delta)$ does not meet the interior
 of $\Sigma$, then it is contained in one of the faces of $\Sigma$ (here we use the fact that
 $\Sigma$ has full dimension). In this case we can find a linear function $\ell$ on $\RR^n$ that is nonnegative
 on $\Sigma$ and vanishes on $p^{-1}(\RR\cdot \delta)$ such that 
 $$p^{-1}(\RR\cdot \delta)\, \cap\,  \Sigma \ \subseteq \ \Sigma\cap (\ell=0).$$
 We get an induced linear function $\overline{\ell}$ on $\RR^m$ such that $\ell=\overline{\ell}
 \circ p$. Since $\overline{\ell}$ is nonnegative on $p(\Sigma)$, and  vanishes on $\delta$,
 this contradicts the fact that $\delta\in {\rm int}(p(\Sigma))$. 
\end{proof}

%%%%%% BIBLIOGRAPHY %%%

\end{document}